\title{The structure and stability of persistence modules}
\author{Fr\'{e}d\'{e}ric Chazal}
\author{Vin de Silva}
\author{Marc Glisse}
\author{Steve Oudot}
\date{2013--mar--18}
\numberwithin{equation}{section}
\theoremstyle{plain}
\newtheorem{theorem}[equation]{Theorem}
\newtheorem{lemma}[equation]{Lemma}
\newtheorem{proposition}[equation]{Proposition}
\newtheorem{corollary}[equation]{Corollary}
\newtheorem*{lemma*}{Lemma}
\theoremstyle{definition}
\newtheorem{definition}[equation]{Definition}
\newtheorem*{definition*}{Definition}
\newtheorem{example}[equation]{Example}
\newtheorem*{example*}{Example}
\newtheorem*{notation*}{Notation}
\theoremstyle{remark}
\newtheorem*{remark}{Remark}
 \newenvironment{vlist}%
 { \begin{list}%
         {$\bullet$}%
         {\setlength{\labelwidth}{20pt}%
          \setlength{\leftmargin}{25pt}%
          \setlength{\topsep}{0pt}
          \setlength{\itemsep}{1.5ex}
          \setlength{\parsep}{0pt}}}%
 { \end{list} }
\newcommand{\half}{\frac{1}{2}}
\newcommand{\ringfont}{\mathbf}
  \newcommand{\Rr}{{\ringfont{R}}}
  \newcommand{\Zz}{{\ringfont{Z}}}
  \newcommand{\Nn}{{\ringfont{N}}}
  \newcommand{\Ss}{{\ringfont{S}}}
  \newcommand{\Tt}{{\ringfont{T}}}
  	\newcommand{\rR}{\reflectbox{$\Rr$}}
	\newcommand{\ov}[1]{\underline{#1}}
	\newcommand{\ord}{\text{ord}}
	\newcommand{\rel}{\text{rel}}
	\newcommand{\ext}{\text{ext}}
  \newcommand{\RR}{\overline{\Rr}{}}
  \newcommand{\kk}{\mathbf{k}}
\newcommand{\regionfont}{\mathcal}
   \newcommand{\Dd}{\regionfont{D}}
   \newcommand{\rintsup}{\raisebox{0.75ex}{$\sqbullet$}}
   \newcommand{\intsup}{\raisebox{0.75ex}{$\circ$}}
   \newcommand{\drint}{{\Dd\rintsup}}
   \newcommand{\dint}{{\Dd\intsup}}
   \newcommand{\frint}{{\regionfont{F}\rintsup}}
   \newcommand{\fint}{{\regionfont{F}\intsup}}
   \newcommand{\fin}{\regionfont{F}}
   \newcommand{\gin}{\regionfont{G}}
   \newcommand{\hin}{\regionfont{H}}
   \newcommand{\qin}{\regionfont{Q}}
\newcommand{\upper}{\regionfont{H}}     
   \newcommand{\Upper}{\overline{\upper}}
   \newcommand{\UpperRint}{\Upper\rintsup}
   \newcommand{\UpperInt}{\Upper\intsup}
\newcommand{\Hgr}{\operatorname{H}}
\newcommand{\End}{\operatorname{End}}
\newcommand{\Hom}{\operatorname{Hom}}
\newcommand{\sub}{\mathrm{sub}}
\newcommand{\ep}{\textsc{ep}}
\newcommand{\pmfont}{\mathbb}
  \newcommand{\Uu}{{\pmfont{U}}}
  \newcommand{\Vv}{{\pmfont{V}}}
  \newcommand{\Ww}{{\pmfont{W}}}
  \newcommand{\Xx}{{\pmfont{X}}}
  \newcommand{\Ii}{{\pmfont{I}}}
  \newcommand{\Jj}{{\pmfont{J}}}
  \newcommand{\aaa}{\pmfont{A}}
  \newcommand{\bbb}{\pmfont{B}}
  \newcommand{\ccc}{\pmfont{C}}
  \newcommand{\ddd}{\pmfont{D}}
\newcommand{\multifont}{\mathsf}
\newcommand{\Dgm}{\multifont{Dgm}}
\newcommand{\dgm}{\multifont{dgm}}
\newcommand{\Int}{\multifont{Int}}
\newcommand{\intt}{\multifont{int}}
\newcommand{\Aa}{\multifont{A}}
\newcommand{\Bb}{\multifont{B}}
\newcommand{\Cc}{\multifont{C}}
\newcommand{\Mm}{\multifont{M}}
\newcommand{\Ff}{\multifont{F}}
\newcommand{\metricfont}{\mathrm}
  \newcommand{\bottle}{\metricfont{d_b}}
  \newcommand{\inter}{\metricfont{d_i}}
  \newcommand{\ellinf}{{\metricfont{d}^\infty}}
  \newcommand{\exit}{{\metricfont{ex}^\infty}}
  \newcommand{\rk}{\operatorname{\metricfont{r}}}
  \newcommand{\mult}{\metricfont{m}}
  \newcommand{\rect}{\metricfont{Rect}}
\newcommand{\rank}{\operatorname{\mathrm{rank}}}
\newcommand{\nullity}{\operatorname{\mathrm{nullity}}}
\newcommand{\conullity}{\operatorname{\mathrm{conullity}}}
\newcommand{\img}{\operatorname{\mathrm{im}}}
\newcommand{\coimg}{\operatorname{\mathrm{coim}}}
\renewcommand{\ker}{\operatorname{\mathrm{ker}}}
\newcommand{\coker}{\operatorname{\mathrm{coker}}}
\newcommand{\card}{\operatorname{card}}
\newcommand{\qlen}{1em}
\newcommand{\Qlen}{1.5em}
\newcommand{\qem}{\makebox[\qlen]{---}}
\newcommand{\qno}{\makebox[\qlen]{---}}  
\newcommand{\qon}[1]{\makebox[\qlen]{$_{\phantom{}}\bullet_{#1}$}}
\newcommand{\qoff}[1]{\makebox[\qlen]{$_{\phantom{}}\circ_{#1}$}}
\newcommand{\Qno}{\makebox[\Qlen]{--{}--{}--}}
\newcommand{\Qon}[1]{\makebox[\Qlen]{$_{\phantom{}}\bullet_{#1}$}}
\newcommand{\Qoff}[1]{\makebox[\Qlen]{$_{\phantom{}}\circ_{#1}$}}
\begin{document}

\begin{abstract}
We give a self-contained treatment of the theory of persistence modules indexed over the real line. We give new proofs of the standard results. Persistence diagrams are constructed using measure theory. Linear algebra lemmas are simplified using a new notation for calculations on quiver representations. We show that the stringent finiteness conditions required by traditional methods are not necessary to prove the existence and stability of the persistence diagram. We introduce weaker hypotheses for taming persistence modules, which are met in practice and are strong enough for the theory still to work. The constructions and proofs enabled by our framework are, we claim, cleaner and simpler.
\end{abstract}

\maketitle
\tableofcontents

\section*{Introduction}

We intend this paper to be a self-contained treatment of the theory of persistence modules  over the real line. 
We give the best proofs we know of the most important results. Each theorem is located at the appropriate level of abstraction (we believe).

Many authors have studied persistence modules in recent years, and many of the theorems presented here are not original in themselves. The originality lies in the methods we use, which give easier proofs and sharper results.
Our main innovations are these:
\begin{vlist}
\item
We use measure theory to construct persistence diagrams. The existence of a diagram is equivalent to the existence of a certain kind of measure on rectangles in the plane.

\item
We introduce `decorated' real numbers to remove ambiguity about interval endpoints. Decorations are also what make the measure theory work.

\item
We define several kinds of `tameness' for a persistence module. These occur naturally in practice. The {most} restrictive of these, finite type, is what is normally seen in the literature. We show how to work effectively with the less restrictive hypotheses. 

\item
We introduce a special notation for calculations on quiver representations. This considerably simplifies the linear algebra (for instance, in proving the `box lemma').

\end{vlist}

Our goal in introducing these ideas is to enable other authors to define persistence diagrams cleanly, in a wide variety of situations, without imposing unnecessary restrictions (such as assuming a function to be Morse). For instance, it will be seen in forthcoming work that the approach here can be used to define the levelset zigzag persistence of~\cite{Carlsson_dS_M_2009} quite broadly.

This paper owes much to~\cite{Chazal_CS_G_G_O_2008} (and its published journal version~\cite{Chazal_CS_G_G_O_2009}), which established the existence and stability of persistence diagrams for modules whose persistence maps are of finite rank. In the present work we call these modules `q-tame'.

Traditionally, continuous persistence diagrams have been treated in one of two ways. Most commonly, one makes the aggressive assumption that the situation being studied has only finitely many `critical values'. Alternatively, as carried out in~\cite{Chazal_CS_G_G_O_2008} for q-tame modules, the diagram is constructed using a careful limiting process through ever-finer discretisations of the parameter. The former strategy may be appropriate when working with real-world data, where every persistence module really is finite in every way, but it excludes very common theoretical situations. The latter strategy was devised to overcome these restrictions, but unfortunately the limiting arguments turn out to be quite complicated.
Our new approach gives the best of both worlds; we are able to work with broader classes of persistence modules, and we can reason about their diagrams in a clean way using arguments of a finite nature.

The other debt to~\cite{Chazal_CS_G_G_O_2008} is the recasting of stability as a statement about interleaved persistence modules, the so-called `algebraic stability theorem'. In this paper we re-recast the result as a statement about 1-parameter families of measures. This allows us to prove stability results for even quite badly behaved persistence modules.

\bigskip{\bf Overview.}
The paper is organised as follows.

In section~\ref{sec:pmodules}, we set up the basic properties of persistence modules. These can be defined over any partially ordered set; we are primarily interested in persistence modules over the real line. In the best case a persistence module can be expressed as a direct sum of `interval modules', which can be thought of as the atomic building blocks of the theory. Not all persistence modules decompose in this way, so we spend much of this paper developing techniques that work without this assumption. These techniques depend on a thorough understanding of finitely-indexed persistence modules known as `$A_n$-quiver representations'~\cite{Gabriel_1972,Derksen_Weyman_2005}. We introduce a special notation for performing calculations on these quiver representations. This `quiver calculus' is used throughout the paper.\footnote{%
Readers who wish to adopt our notation are invited to contact us for the \LaTeX\ macros.
}

Section~\ref{sec:measures} addresses the question of how to define the diagram of a persistence module. This is easy for modules which decompose into intervals. To handle the general case, we establish an equivalence between diagrams and a certain kind of measure defined on rectangles in the plane. This means that whenever a persistence diagram is sought, it is enough to construct the corresponding persistence measure;
and theorems about a diagram can be replaced by simpler-to-prove theorems about its measure.
The diagram exists wherever the measure takes finite values. This leads to several different notions of `tameness'. There are large classes of examples of naturally occurring persistence modules which are tame enough for their diagrams to be defined everywhere or almost everywhere.

In order to make the measure theory work, we use real numbers that are `decorated' with a superscript $^+$ or~$^-$. We use decorations also to indicate whether a real interval is open, closed or half-open. For persistence modules which decompose into interval modules, these two uses match up perfectly.

There is a `snapping principle' by which our abstractly defined diagrams are seen to agree with the diagrams that are produced by the standard algorithms~\cite{Edelsbrunner_L_Z_2002,Zomorodian_Carlsson_2005}, for example when working with finite simplicial complexes derived from real data.

In section~\ref{sec:interleaving}, we study interleavings. An interleaving is an approximate isomorphism between two persistence modules. They occur naturally in applications when the input data are known only up to some bounded error. After presenting the basic properties, we give a clean proof of the technical lemma (from~\cite{Chazal_CS_G_G_O_2008}) that two interleaved modules can be interpolated by a 1-parameter family.

Section~\ref{sec:isometry} is devoted to the isometry theorem, which asserts that the interleaving distance between two persistence modules is equal to the bottleneck distance between their persistence diagrams. The two inequalities that comprise this result are treated separately. One direction is the celebrated stability theorem of~\cite{CohenSteiner_E_H_2007}. The more recent converse inequality appears in~\cite{Lesnick_2011}.
We formulate the stability theorem as a statement about measures and their diagrams. The proof of this more abstract result closely follows the original proof in~\cite{CohenSteiner_E_H_2007}.

Our version of the isometry theorem supposes that the persistence modules are `q-tame'. We consider this to be the natural realm of the theorem. We also prove a more general version of the stability theorem which allows us to compare diagrams of persistence modules with no assumptions on their tameness: wherever the two diagrams are defined, they must be close to each other.

Given the length of this paper, the reader may wonder if the framework developed here is truly simpler than existing approaches. In section~\ref{sec:examples}, we give examples of how to use the results and ideas in this paper.

\bigskip{\bf Related Work.}
The early history of persistence is concerned with the quantity
\[
\rk_s^t = \rank(\Hgr(X_s) \to \Hgr(X_t))
\]
for an object~$X$ represented at two different scales $s,t$, and where $\Hgr$ is homology.
This appeared in the early 1990s in the work of Frosini~\cite{Frosini_1992}, with different notation and under the name `size function'.
Independently, a few years later, Robins~\cite{Robins_1999} introduced the term `persistent Betti numbers' for quantities of the form $\rk_{\epsilon}^{\epsilon+\rho}$, and noted their stability with respect to Hausdorff distance.

The modern theory of persistence is built on three central pillars.

\begin{vlist}
\item
The persistence diagram, and an algorithm for computing it, were introduced by Edelsbrunner, Letscher and Zomorodian~\cite{Edelsbrunner_L_Z_2002}.

\item
Zomorodian and Carlsson~\cite{Zomorodian_Carlsson_2005} defined persistence modules, indexed by the natural numbers and viewed as graded modules over the polynomial ring~$\kk[t]$.

\item
Cohen-Steiner, Edelsbrunner and Harer~\cite{CohenSteiner_E_H_2007} proved the stability theorem.
\end{vlist}

All three papers make strong finiteness assumptions about the starting data. In~\cite{Chazal_CS_G_G_O_2008}, the results are generalised to persistence modules parametrised over the real line, under the assumption that $\rk_s^t < \infty$ for $s < t$.
The present paper extends that work, with many new concepts and proofs.

We draw attention to two recent papers which share our goal of understanding continuous-parameter persistence modules.
Lesnick~\cite{Lesnick_2011} gives an extensive algebraic treatment of modules over one or more real parameters. The converse stability inequality (and hence the isometry theorem) appears for the first time there.
Bubenik and Scott~\cite{Bubenik_Scott_2012} develop the category-theoretical view of persistence modules.

Both \cite{Lesnick_2011} and~\cite{Bubenik_Scott_2012} appeared during the writing of this paper. There is a fair amount of overlap between the three papers, most of it reached independently, but not quite all: from Lesnick~\cite{Lesnick_2011} we learned of the results of Webb~\cite{Webb_1985}, which resolved a sticking-point for us.

\section*{Multisets}

Persistence diagrams are multisets rather than sets. For our purposes, a multiset is a pair $\Aa = (S, \mult)$
where $S$ is a set and
\[
\mult: S \to \{1, 2, 3, \dots \} \cup \{ \infty \}
\]
is the multiplicity function, which tells us how many times each element of $S$ occurs in~$\Aa$.

Here are our conventions regarding multisets.
\begin{vlist}
\item
The cardinality of $\Aa = (S, \mult)$ is defined to be
\[
\card \Aa = \sum_{s\in S} \mult(s)
\]
which takes values in $\{ 0, 1, 2, \dots \} \cup \{ \infty \}$. We do not distinguish between different infinite cardinals.

\item
We never form the intersection of two multisets, but we will sometimes restrict a multiset~$\Aa$ to a set~$B$:
\[
\Aa|_B = (S \cap B, \mult|_{S \cap B})
\]
We may write this as $\Aa \cap B$ when $\Aa|_B$ is typographically inconvenient.

\item
A pair $(B, \mult)$ where
\[
\mult: B \to \{0, 1, 2, \dots \} \cup \{ \infty \}
\]
is implicitly regarded as defining a multiset $\Aa = (S, \mult|_S)$ where $S = B - \mult^{-1}(0)$ is the support of~$\mult$.

\item
If $\Aa = (S, \mult)$ is a multiset and $f: S \to B$ where $B$ is a set, then the notation
\[
\{ f(a) \mid a \in \Aa \}
\]
is interpreted as the multiset in $B$ with multiplicity function
\[
\mult'(b) = \sum_{f^{-1}(b)} \mult(s)
\]

\end{vlist}

Except in definitions like these, we seldom refer explicitly to $S$. 

\section{Persistence Modules}
\label{sec:pmodules}

All vector spaces are taken to be over an arbitrary field~$\kk$, fixed throughout the paper.

\subsection{Persistence modules over a real parameter}
\label{subsec:11}

A {persistence module} $\Vv$ over the real numbers~$\Rr$ is defined to be an indexed family of vector spaces
\[
(V_t \mid t \in \Rr),
\]
and a doubly-indexed family of linear maps
\[
(v_s^t: V_s \to V_t \mid s \leq t)
\]
which satisfy the composition law
\[
v_s^t \circ v_r^s = v_r^t
\]
whenever $r \leq s \leq t$, and where $v_t^t$ is the identity map on $V_t$.

\begin{remark}
Equivalently, a persistence module is a functor from the real line (viewed as a category with a unique morphism $s \to t$ whenever $s \leq t$) to the category of vector spaces. The uniqueness of the morphism $s \to t$ corresponds to the fact that all possible compositions
\[
v^t_{s_{n-1}} \circ v^{s_{n-1}}_{s_{n-2}} \circ \dots \circ v^{s_2}_{s_1} \circ v^{s_1}_s
\]
from $V_s$ to $V_t$ are equal to each other, and in particular to $v_s^t$.
\end{remark}

Here is the standard class of examples from applied topology.
Let $X$ be a topological space and let $f : X \to \Rr$ be a function (not necessarily continuous). Consider the sublevel sets:
\[
X^t = 
(X,f)^t =
\left\{
x \in X \mid f(x) \leq t
\right\}
\]
The inclusion maps
\[
(i_s^t: X^s \to X^t \mid s \leq t)
\]
trivially satisfy the composition law
\[
i_s^t \circ i_r^s = i_r^t
\]
whenever $r \leq s \leq t$, and  $i_t^t$ is the identity on $X^t$. Collectively this information is called the {\bf sublevelset filtration} of $(X,f)$ and may be denoted $\Xx_\sub$ or $\Xx^f_\sub$.

We obtain a persistence module by applying any functor from topological spaces to vector spaces. For example, let $\Hgr = \Hgr_p(-; \kk)$ be the functor `$p$-dimensional singular homology  with coefficients in~$\kk$'. We define a persistence module $\Vv$ by setting
\[
V_t = \Hgr(X^t),
\]
and
\[
v_s^t = \Hgr(i_s^t) : \Hgr(X^s) \to \Hgr(X^t)
\]
(the maps on homology induced by the inclusion maps).
More simply:
\[
\Vv = \Hgr(\Xx_\sub)
\]

In the applied topology literature, there are many examples $(X,f)$ whose persistent homology is of interest. Very often $X$ is a finite simplicial complex and each $X^t$ is a subcomplex. It follows that the vector spaces $\Hgr(X^t)$ are finite-dimensional; and as $t$ increases there are finitely many `critical values' at which the complex changes, growing by one or more new cells. Suppose these critical values are
\[
a_1 < a_2 < \dots < a_n.
\]
Then all the information in the persistence module is contained in the {finite} diagram
\[
\Hgr(X^{a_1}) \to
\Hgr(X^{a_2}) \to
\dots \to
\Hgr(X^{a_n})
\]
of {finite}-dimensional vector spaces and linear maps.
In this situation,
\begin{vlist}
\item
the isomorphism type of $\Hgr(\Xx_\sub)$ admits a compact description~\cite{Edelsbrunner_L_Z_2002,Zomorodian_Carlsson_2005};
\item
there is a fast algorithm for computing this description~\cite{Edelsbrunner_L_Z_2002,Zomorodian_Carlsson_2005};
\item
the description is continuous (indeed 1-Lipschitz) in $f$~\cite{CohenSteiner_E_H_2007}.
\end{vlist}	
This description is the famous \textbf{persistence diagram}, or \textbf{barcode}.

In practical applications all the persistence modules that we encounter are finite, for the trivial reason that a computer only stores finite data. However, there are good grounds for extending the results of~\cite{Edelsbrunner_L_Z_2002,Zomorodian_Carlsson_2005,CohenSteiner_E_H_2007} to a more general setting. For example, theoretical guarantees are commonly formulated in terms of an idealised model, such as a continuous space to which the data form an approximation.  Finiteness becomes unnatural and difficult to enforce in these ideal models, but one still wants the main results to be true.

Following~\cite{Chazal_CS_G_G_O_2008}, we say that a persistence module $\Vv$ is {\bf tame} if
\[
\rk_s^t =
\rank(v_s^t) < \infty
\quad
\text{whenever $s < t$}.
\]
Since the word `tame' is overloaded with too many meanings in the persistence literature, we offer {\bf q-tame} as an alternative (for reasons that will be explained later).

It is shown in~\cite{Chazal_CS_G_G_O_2008} that persistence diagrams can be constructed for q-tame persistence modules, and that these diagrams are stable with respect to certain natural metrics. We reproduce these results here, using different methods for many of the arguments. We complete the picture by showing that the map from q-tame persistence modules to persistence diagrams is an isometry.
This isometry theorem is due independently to Lesnick~\cite{Lesnick_2011}.

We believe that q-tame persistence modules are the `right' class of objects to work with. This is for two complementary reasons: (i) we can prove almost everything we want to prove about q-tame modules and their persistence diagrams; and (ii) they occur in practice. For example, a continuous function on a finite simplicial complex has q-tame sublevelset persistent homology (Theorem~\ref{thm:poly-q-tame}).
See~\cite{Chazal_dS_O_2012} for many other examples.

\subsection{Different index sets}
\label{subsec:index}

We can define persistence modules over any partially ordered set~$\Tt$, formally in the same way as for~$\Rr$, by specifying indexed families
\[
( V_t \mid t \in \Tt )
\quad
\text{and}
\quad
( v_s^t \mid s,t \in \Tt,\, s \leq t )
\]
of vector spaces and linear maps, for which $v_r^t = v_s^t \circ v_r^s$ whenever $r \leq s \leq t$, and where $v_t^t$ is the identity on~$V_t$.
The resulting collection of data is called a {\bf $\Tt$-persistence module} or a persistence module {\bf over $\Tt$}.

If $\Vv$ is a $\Tt$-persistence module and $\Ss \subset \Tt$, then we get an $\Ss$-persistence module by considering only those spaces and maps with indices in~$\Ss$. This is called the \textbf{restriction} of $\Vv$ to~$\Ss$, and may be written $\Vv_\Ss$.

Most commonly, we work with locally finite subsets $\Tt \subset \Rr$, that is, those subsets with no accumulation points in~$\Rr$. We collect information about an $\Rr$-persistence module by considering its restriction to finite or locally finite subsets. This works well because persistence modules over $\{1,2,\dots,n\}$ or over the integers~$\Zz$ are well understood.

In section~\ref{sec:interleaving}, we will consider some other posets.

\subsection{Module categories}
\label{subsec:mod-cat}

A homomorphism $\Phi$ between two $\Tt$-persistence modules $\Uu, \Vv$ is a collection of linear maps
\[
( \phi_t : U_t \to V_t \mid t \in \Tt )
\]
such that the diagram
\[
\begin{diagram}
\dgARROWLENGTH=2em
\node{U_s}
  \arrow{e,t}{u_s^t}
  \arrow{s,l}{\phi_s}
\node{U_t}
  \arrow{s,r}{\phi_t}
\\
\node{V_s}
  \arrow{e,t}{v_s^t}
\node{V_t}
\end{diagram}
\]
commutes for all $s \leq t$. Composition is defined in the obvious way, as are identity homomorphisms. This makes the collection of persistence modules into a category.

The category contains kernel, image, and cokernel objects for every map $\Phi$, and there is a zero object.

Write
\begin{align*}
\Hom(\Uu, \Vv) &= \{ \text{homomorphisms $\Uu \to \Vv$} \},
\\
\End(\Vv) &= \{ \text{homomorphisms $\Vv \to \Vv$} \}.
\end{align*}
Note that $\End(\Vv)$ is a $\kk$-algebra.

Later we will consider homomorphisms that shift the index. We will introduce these when we consider the interleaving relation between persistence modules.

\subsection{Interval modules}
\label{subsec:intervals}

The basic building blocks of persistence are the {interval modules}. One seeks to understand a persistence module by decomposing it into intervals. This is not always possible, but it is possible often enough for our purposes.

Let $\Tt \subseteq \Rr$, and let $J \subseteq \Tt$ be an interval.\footnote{%
By this we mean that if $r < s < t$ are elements of~$T$ with $r, t \in J$, then $s \in J$.}
Then $\Ii^J$ is defined to be the $\Tt$-persistence module with spaces
\[
I_t = 
\left\{
\begin{array}{ll}
 \kk \quad& \text{if $t \in J$}\\
 0 & \text{otherwise}
\end{array}
\right.
\]
and maps
\[
i_s^t = 
\left\{
\begin{array}{ll}
 1 \quad& \text{if $s, t \in J$}\\
 0 & \text{otherwise}
\end{array}
\right.
\]
In informal language, $\Ii^J$ represents a `feature' which `persists' over the interval~$J$ but is absent elsewhere. We write $\Ii^J_\Tt$ when we wish to specify the index set unambiguously.

We now establish notation for describing interval modules.

If $\Tt$ is a locally finite subset of $\Rr$, then any bounded interval contains its endpoints. We write these as closed intervals, and unbounded intervals as open or half-open intervals. For example, in the case of $\Zz$-persistence modules there are four kinds of interval module:
\[
\Ii^{[m,n]}
\qquad
\Ii^{(-\infty , n]}
\qquad
\Ii^{[m, +\infty)}
\qquad
\Ii^{(-\infty, +\infty)}
\]
For visual clarity, we sometimes lower the superscripts, writing
\[
\Ii{[m,n]}
\qquad
\Ii{(-\infty , n]}
\qquad
\Ii{[m, +\infty)}
\qquad
\Ii{(-\infty, +\infty)}
\]
instead.

For persistence modules over~$\Rr$, it becomes important to distinguish intervals which have the same endpoints but different topology (open, closed, half-open). For this purpose we introduce \textbf{decorated real numbers}, which are written as ordinary real numbers but with a superscript $^+$ (plus) or $^-$ (minus). For finite intervals we adopt the following dictionary:
\begin{align*}
\lgroup p^-, q^- \rgroup &\quad\text{means}\quad [p,q) \\
\lgroup p^-, q^+ \rgroup &\quad\text{means}\quad [p,q] \\
\lgroup p^+, q^- \rgroup &\quad\text{means}\quad (p,q) \\
\lgroup p^+, q^+ \rgroup &\quad\text{means}\quad (p,q]
\end{align*}
We require $p < q$, except for the special case $\lgroup r^-, r^+ \rgroup$, which represents the 1-point interval $[r,r]$.

We also include the symbols $-\infty$ and $+\infty$, for infinite intervals. Since real intervals are always open at infinity these implicitly carry the superscripts $-\infty^+$ and $+\infty^-$, but we usually omit the superscripts. The interval notation extends in the obvious way, so
\[
\lgroup -\infty, q^+ \rgroup
\quad\text{means}\quad
(-\infty, q]
\]
for instance.

When we wish to refer to a decorated real number but we don't know what the decoration is, we use an asterisk. Thus $p^*$ means $p^+$ or $p^-$. The notation for an arbitrary interval is $\lgroup p^*, q^* \rgroup$, where $p^* < q^*$ in the obvious ordering on decorated reals.

Here are some visual conventions for representing interval modules over~$\Rr$.

We work in the half-plane
\[
\upper = \left\{ (p,q) \mid p \leq q \right\}
\]
of points in~$\Rr^2$ which lie on or above the diagonal.
Any finite interval module $\Ii{\lgroup p^*, q^* \rgroup}$ may be represented in several different ways (see Figure~\ref{fig:interval}):
\begin{vlist}
\item
as an interval in the real line;
\item
as $\rank(i_s^t)$, viewed as a function $\upper \to \{0, 1\}$;
\item
as a point $(p,q)$ in $\upper$, with a tick to specify the decoration.
\end{vlist}
\begin{figure}
\centerline{
\includegraphics[scale=0.75]{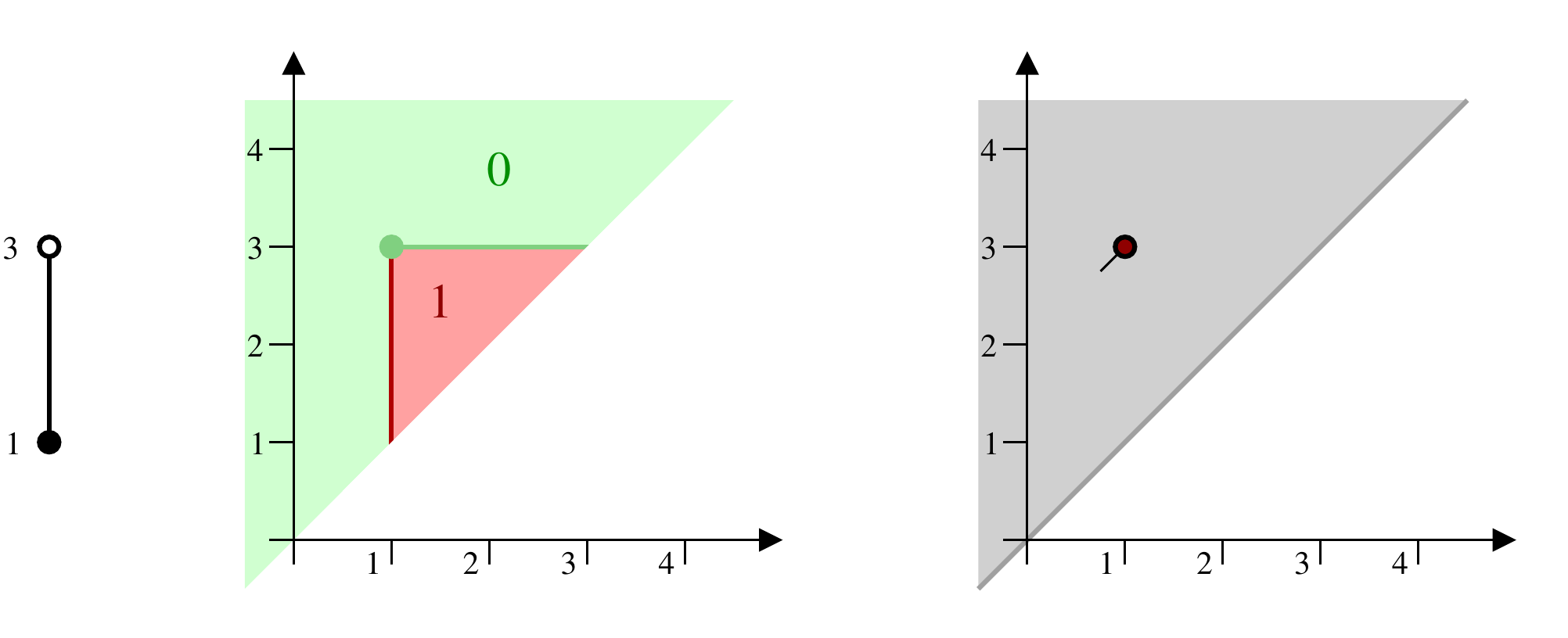}
}
\caption{The interval, rank function, and decorated point representations of the interval module $\Ii^{[1,3)} = \Ii^{\lgroup 1^-, 3^- \rgroup}$.
}
\label{fig:interval}
\end{figure}
Here are the four tick directions explicitly:
\begin{alignat*}{2}
\lgroup p^-, q^+ \rgroup &=
\raisebox{-0.8ex}{\includegraphics[scale=0.75]{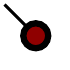}}
&\qquad
\lgroup p^+, q^+ \rgroup &=
\raisebox{-0.8ex}{\includegraphics[scale=0.75]{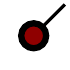}}
\\
\lgroup p^-, q^- \rgroup &=
\raisebox{-0.8ex}{\includegraphics[scale=0.75]{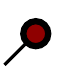}}
&\qquad
\lgroup p^+, q^- \rgroup &=
\raisebox{-0.8ex}{\includegraphics[scale=0.75]{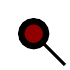}}
\end{alignat*}
The convention is that the tick points into the quadrant suggested by the decorations.

A fourth option is to draw the point $(p,q)$ without indicating the decoration. This `forgetful' representation is the classical convention for persistence diagrams, and is adequate for most purposes. However, the extra precision provided by decoration is important for the correspondence between diagrams and measures.

To represent an infinite interval as a (decorated or undecorated) point, we work in the extended half-plane
\[
 \Upper
\;\;=\;\;
\upper
\;\;\cup\;\; \{ -\infty \} \times \Rr
\;\;\cup\;\; \Rr \times \{ + \infty \}
\;\;\cup\;\; \{ (-\infty, +\infty) \}
\]
which can be drawn schematically as a triangle. See Figure~\ref{fig:deltabar}.
\begin{figure}
\centerline{
\includegraphics[height=2.25in]{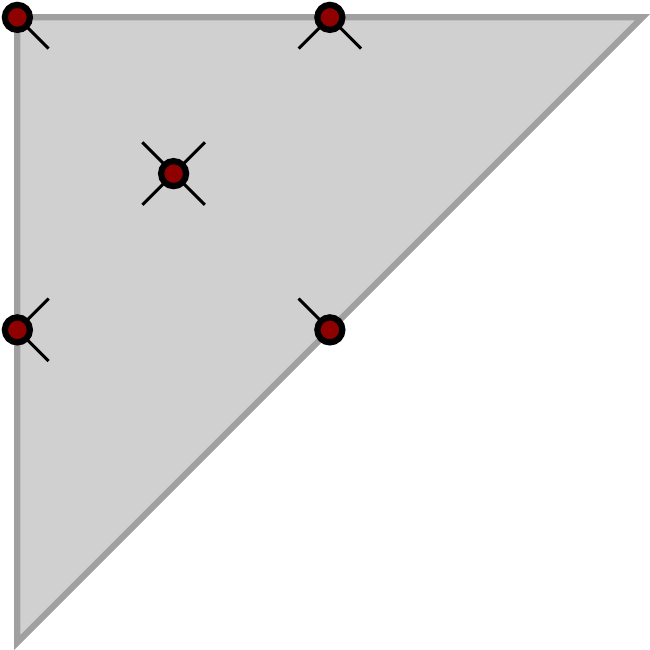}
}
\caption{The extended half-plane $\Upper$ with examples of each type of interval, drawn as points with ticks.}
\label{fig:deltabar}
\end{figure}
%

\subsection{Interval decomposition}
\label{subsec:decomp}

The \textbf{direct sum} $\Ww = \Uu \oplus \Vv$ of two persistence modules $\Uu, \Vv$ is defined as follows:
\[
W_t = U_t \oplus V_t,
\quad
w_s^t = u_s^t \oplus v_s^t
\]
This generalises immediately to arbitrary (finite or infinite) direct sums.

A persistence module $\Ww$ is {\bf indecomposable} if the only decompositions $\Ww = \Uu \oplus \Vv$ are the trivial decompositions $\Ww \oplus 0$ and $0 \oplus \Ww$.

Direct sums play both a \textit{synthetic} role and an \textit{analytic} role in our theory. On the one hand, given an indexed family of intervals $\left( J_\ell \mid \ell \in L \right)$ we can synthesise a persistence module
\[
\Vv = \bigoplus_{\ell \in L} \Ii^{J_\ell}
\]
whose isomorphism type depends only on the multiset $\{ J_\ell \mid \ell \in L \}$. In light of the direct-sum decomposition, we can think of $\Vv$ as having an independent feature for each $\ell \in L$, supported over the interval $J_\ell$.
On the other hand, we can attempt to analyse a given persistence module $\Vv$ by decomposing it into interval modules.

We now present the necessary theory for this.
A `building block' in a module category can be characterised by having a comparatively simple endomorphism ring. Interval modules have the simplest possible:

\begin{proposition}
\label{prop:EndI}
Let $\Ii = \Ii^J_\Tt$ be an interval module over $\Tt \subseteq \Rr$; then $\End(\Ii) = \kk$.
\end{proposition}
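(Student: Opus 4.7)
The plan is to analyse an endomorphism $\Phi = (\phi_t)_{t\in\Tt}$ of $\Ii$ componentwise and show that every such $\Phi$ is determined by a single scalar $\lambda \in \kk$.

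First I would split the index set into the regions $t\in J$ and $t\notin J$. For $t\notin J$, the space $I_t$ is zero, so $\phi_t = 0$ with no choice involved. For $t\in J$, the space $I_t = \kk$ is one-dimensional, so $\phi_t$ is multiplication by a uniquely determined scalar $\lambda_t \in \kk$. The whole endomorphism is thus encoded by the family $(\lambda_t \mid t\in J)$.

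Next I would exploit the commutativity of the defining squares to show that all the scalars $\lambda_t$ coincide. For any $s \leq t$ both in $J$, the structure map $i_s^t$ is the identity $\kk\to\kk$, so the square
\[
\begin{diagram}
\dgARROWLENGTH=2em
\node{\kk}\arrow{e,t}{1}\arrow{s,l}{\lambda_s}\node{\kk}\arrow{s,r}{\lambda_t}\\
\node{\kk}\arrow{e,t}{1}\node{\kk}
\end{diagram}
\]
forces $\lambda_s = \lambda_t$. Since $J$ is an interval in the totally ordered set $\Tt$, any two of its elements are comparable, so all the scalars $\lambda_t$ are equal to a common value $\lambda$. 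The squares whose source or target lies outside $J$ impose no further constraint, because one of the two spaces involved is zero and the composition vanishes on both sides automatically.

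Conversely, for each $\lambda\in\kk$ the assignment $\phi_t = \lambda\cdot\mathrm{id}_{I_t}$ is visibly a well-defined endomorphism. The map $\End(\Ii)\to\kk$, $\Phi\mapsto\lambda$ is therefore a bijection, and it is immediate from the construction that it respects the $\kk$-algebra structure (sums, scalar multiples, composition, identity). The only mildly subtle point is the propagation of equality $\lambda_s=\lambda_t$ across all of $J$; once one notices that $J$ is totally ordered as a subset of $\Tt\subseteq\Rr$ this is entirely routine, so I would not expect any serious obstacle.
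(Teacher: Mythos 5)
Your proof is correct and matches the paper's argument: the paper likewise observes that each $\phi_t$ on a nonzero $I_t=\kk$ is a scalar, and that the commutation law forces all these scalars to agree. You have simply written out the details that the paper leaves implicit.
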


\begin{proof}
Any endomorphism of $\Ii$ acts on each nonzero $I_t = \kk$ by scalar multiplication. By the commutation law, it is the same scalar for each~$t$.
\end{proof}

\begin{proposition}
\label{prop:indecomp}
Interval modules are indecomposable.
\end{proposition}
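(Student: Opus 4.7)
The plan is to deduce indecomposability directly from Proposition~\ref{prop:EndI}, using the standard fact that a module is indecomposable whenever its endomorphism ring has no nontrivial idempotents.

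Concretely, suppose $\Ii = \Uu \oplus \Vv$. The two projections $\pi_\Uu, \pi_\Vv : \Ii \to \Ii$ (onto the summands, viewed as endomorphisms of $\Ii$ in the obvious way) are elements of $\End(\Ii)$ satisfying $\pi_\Uu^2 = \pi_\Uu$, $\pi_\Vv^2 = \pi_\Vv$, and $\pi_\Uu + \pi_\Vv = \mathrm{id}_\Ii$. By Proposition~\ref{prop:EndI} we have $\End(\Ii) \cong \kk$, so $\pi_\Uu$ must correspond to some scalar $\lambda \in \kk$; the idempotency equation $\lambda^2 = \lambda$ forces $\lambda \in \{0,1\}$. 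If $\lambda = 0$ then $\pi_\Uu$ is the zero endomorphism and $\Uu = 0$, so the decomposition is trivial; if $\lambda = 1$ then symmetrically $\Vv = 0$.

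One small care point: I would make sure the identification of $\pi_\Uu$ with an endomorphism of $\Ii$ is spelled out — it is the composition $\Ii \to \Uu \hookrightarrow \Ii$ where the first map is the projection from the direct-sum decomposition and the second is the inclusion of the summand. This is a bona fide element of $\End(\Ii)$, and the verification that $\pi_\Uu + \pi_\Vv = \mathrm{id}_\Ii$ and $\pi_\Uu^2 = \pi_\Uu$ is just the universal property of the direct sum applied levelwise. I do not expect any real obstacle here; the whole proof is essentially one line once Proposition~\ref{prop:EndI} is in hand.
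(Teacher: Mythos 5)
Your proof is correct and is essentially identical to the paper's: both use Proposition~\ref{prop:EndI} to conclude $\End(\Ii) = \kk$ and then observe that the projections onto the summands are idempotents, forcing them to be $0$ or $1$. The paper states this in two sentences; you have just spelled out the same argument in more detail.
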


\begin{proof}
Given a decomposition $\Ii = \Uu \oplus \Vv$, the projection maps onto $\Uu$ and $\Vv$ are idempotents in the endomorphism ring of~$\Ii$. But the only idempotents of $\End(\Ii) = \kk$ are 0 and 1.
\end{proof}

\begin{theorem}[Krull--Remak--Schmidt--Azumaya]
\label{thm:azumaya}
Suppose that a persistence module $\Vv$ over $\Tt \subseteq \Rr$ can be expressed as a direct sum of interval modules in two different ways:
\[
\Vv
  = \bigoplus_{\ell \in L} \Ii^{J_\ell}
  = \bigoplus_{m \in M} \Ii^{K_m}
\]
Then there is a bijection $\sigma: L \to M$ such that $J_\ell = K_{\sigma(\ell)}$ for all~$\ell$.
\end{theorem}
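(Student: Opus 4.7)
The plan is to invoke the classical Krull--Remak--Schmidt--Azumaya theorem as a black box and check that its hypotheses hold in the category of $\Tt$-persistence modules. Azumaya's theorem says that if an object in a sufficiently nice additive category admits two direct-sum decompositions into indecomposables whose endomorphism rings are all local, then the two index sets are related by a bijection that matches summands up to isomorphism. The category of $\Tt$-persistence modules is abelian (as noted in \S\ref{subsec:mod-cat}), which is the setting in which Azumaya's theorem is established.

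First I would verify the hypotheses for each interval summand. By Proposition~\ref{prop:indecomp}, every interval module is indecomposable, so both sides of the given equation are decompositions into indecomposables. By Proposition~\ref{prop:EndI}, $\End(\Ii^J) = \kk$, which is a field and hence in particular a local ring. So Azumaya applies, producing a bijection $\sigma : L \to M$ with $\Ii^{J_\ell} \cong \Ii^{K_{\sigma(\ell)}}$ for every $\ell$.

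Second, I would show that the isomorphism class of an interval module determines its interval: an isomorphism $\phi : \Ii^J \to \Ii^K$ restricts at each index $t$ to a vector-space isomorphism $\phi_t : I^J_t \to I^K_t$, so $\{ t \in \Tt \mid I^J_t \neq 0\} = \{ t \in \Tt \mid I^K_t \neq 0 \}$; the former set is $J$ and the latter is $K$, so $J = K$. Combined with the previous step, this gives $J_\ell = K_{\sigma(\ell)}$ for all $\ell$, as required.

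The main obstacle, and the only nontrivial content, is that the index sets $L$ and $M$ may be infinite, so one needs the form of Azumaya's theorem that handles arbitrary direct sums. This is the classical result, valid whenever the summands have local endomorphism rings and the ambient category admits the kernels, images, and cokernels it relies on; the category of $\Tt$-persistence modules meets these requirements since it is the functor category from a poset into vector spaces, where all constructions are performed pointwise. If one preferred a self-contained proof, one could instead exploit the structure of interval modules directly via rank functions $\rk_s^t$, from which the multiplicities of intervals can be recovered by inclusion--exclusion on rectangles (anticipating the measure-theoretic viewpoint of \S\ref{sec:measures}); but citing Azumaya is cleaner.
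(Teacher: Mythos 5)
Your proof is correct and takes essentially the same route as the paper: both cite Azumaya's theorem, verify the endomorphism-ring hypothesis via $\End(\Ii^J) = \kk$ (you phrase it as ``local ring,'' the paper as ``non-isomorphisms are closed under addition,'' which is the same condition), and close with the observation that $\Ii^J \cong \Ii^K$ forces $J = K$.
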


\begin{proof}
This is from Azumaya~\cite{Azumaya_1950} (Theorem~1), plus the trivial observation that $\Ii^J \cong \Ii^K$ implies $J = K$.
The theorem requires a certain condition on the endomorphism ring of each possible interval module: if $\alpha, \beta \in \End(\Ii)$ are non-isomorphisms then $\alpha+\beta$ is a non-isomorphism. Since each $\End(\Ii) = \kk$, the only non-isomorphism is the zero map and the condition is satisfied.
\end{proof}

In other words, {provided} we can decompose a given persistence module~$\Vv$ as a direct sum of interval modules, {then} the multiset of intervals is an isomorphism invariant of~$\Vv$. But when does such a decomposition exist?

\begin{theorem}[Gabriel, Auslander, Ringel--Tachikawa, Webb]
\label{thm:gabriel+}
Let $\Vv$ be a persistence module over $\Tt \subseteq \Rr$. In each of the following situations, $\Vv$ can be decomposed as a direct sum of interval modules:
\begin{enumerate}
\item
$\Tt$ is a finite set.

\medskip
\item
$\Tt$ is a locally finite subset of $\Rr$ and each $V_t$ is finite-dimensional.
\end{enumerate}
On the other hand, {\rm (3)} there exists a persistence module over $\Zz$ (indeed, over the nonpositive integers) which does not admit an interval decomposition.
\end{theorem}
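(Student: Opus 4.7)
The three parts call for different arguments.

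For part~(1), I would induct on $|\Tt| = n$. The base case $n=1$ is immediate: choose a basis of $V_{t_1}$ and write the module as a direct sum of copies of $\Ii^{\{t_1\}} = \kk$. For the inductive step with $\Tt = \{t_1 < \cdots < t_n\}$, the plan is to peel off the interval summands born at $t_1$. Using the ascending filtration $0 \subseteq \ker(v_{t_1}^{t_2}) \subseteq \cdots \subseteq \ker(v_{t_1}^{t_n}) \subseteq V_{t_1}$, choose successive complements $C_k$ of $\ker(v_{t_1}^{t_k})$ inside $\ker(v_{t_1}^{t_{k+1}})$, and a complement $C_n$ of $\ker(v_{t_1}^{t_n})$ in $V_{t_1}$ (using Zorn when $V_{t_1}$ is infinite-dimensional); each $C_k$, together with its forward images $v_{t_1}^{t_j}(C_k)$ for $j \leq k$, generates a submodule isomorphic to a direct sum of interval modules $\Ii^{\{t_1, \dots, t_k\}}$. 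Splitting these off leaves a module supported on $\{t_2, \dots, t_n\}$, to which induction applies.

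For part~(2), since $\Tt$ locally finite is order-isomorphic to a sub-interval of~$\Zz$, the bounded case reduces to part~(1); for $\Tt \cong \Zz$ or a half-line, I would appeal to Webb's theorem~\cite{Webb_1985}, asserting that every pointwise finite-dimensional representation of~$\Zz$ splits into intervals. Webb's proof exhausts $\Tt$ by finite windows, applies part~(1) to each, and uses pointwise finiteness to patch the resulting decompositions into a global one. I expect this step to be the main obstacle of the whole theorem---Webb's argument is delicate, and the bookkeeping relies essentially on pointwise finiteness, as part~(3) will show.

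For part~(3), take $\Tt = \{0, -1, -2, \dots\}$ and set $V_{-n} = x^n\kk[[x]]$ for $n \geq 0$, with $v_{-n-1}^{-n}$ the natural inclusion $x^{n+1}\kk[[x]] \hookrightarrow x^n\kk[[x]]$. Suppose for contradiction $\Vv = \bigoplus_\ell \Ii^{J_\ell}$. The intersection $\bigcap_n \img(v_{-n}^0)$ equals $\bigcap_n x^n\kk[[x]] = 0$, while its contribution from the decomposition is $\bigoplus_{\ell : J_\ell = (-\infty, 0]} \kk$; hence no $J_\ell$ equals $(-\infty, 0]$, and every interval touching~$0$ has the finite form $[-a_\ell, 0]$. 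Comparing successive codimensions $\dim(x^n\kk[[x]]/x^{n+1}\kk[[x]]) = 1$ then forces exactly one summand $\Ii^{[-n,0]}$ for each $n \geq 0$, giving $V_0 = \bigoplus_{n \geq 0}\kk$, which is countably-dimensional and contradicts $V_0 = \kk[[x]]$ being uncountably-dimensional.
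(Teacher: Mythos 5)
Your parts~(2) and~(3) match the paper's proof: reduce to~$\Zz$ and cite Webb for~(2), and use Webb's example (real sequences, or your formal-power-series version, which is the same thing) with the intersection, codimension, and cardinality arguments for~(3). The divergence is in part~(1): the paper does not prove this directly but cites Gabriel for the finite-dimensional case and Auslander / Ringel--Tachikawa for the extension to arbitrary dimensions, remarking also that the concrete argument of Carlsson and de~Silva works without finite-dimensionality. You instead sketch a direct inductive proof, which is a reasonable thing to attempt.

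The inductive idea --- peel off the summands born at~$t_1$ --- is sound and close in spirit to Carlsson--de~Silva, but the step ``Splitting these off leaves a module supported on $\{t_2,\dots,t_n\}$'' hides a genuine gap. What you have built is the submodule $\Ww\subseteq\Vv$ generated by $V_{t_1}$, namely $W_{t_j}=\img(v_{t_1}^{t_j})$, and decomposed $\Ww$ into intervals. It is \emph{not} automatic that $\Ww$ is a direct summand of~$\Vv$, and naive complements fail: take $V_1=\kk$, $V_2=\kk^2$, $V_3=\kk$ with $v_1^2(x)=(x,0)$ and $v_2^3(x,y)=x+y$. Then $W_2=\kk\times 0$ and $W_3=\kk$, but the obvious complement $0\oplus\kk$ of $W_2$ maps onto $V_3$ under $v_2^3$ and so cannot be extended to a complementary submodule; one must instead take $\{(y,-y)\}\subset V_2$, which $v_2^3$ annihilates. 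The repair is to build the complementary submodule backwards: pick any complement $D_n$ of $W_{t_n}$ in $V_{t_n}$, and inductively pick $D_j$ to be a complement of $W_{t_j}$ lying inside $(v_{t_j}^{t_{j+1}})^{-1}(D_{j+1})$ --- this is possible because $v_{t_j}^{t_{j+1}}$ maps $W_{t_j}$ onto $W_{t_{j+1}}$, so that preimage together with $W_{t_j}$ spans $V_{t_j}$. This lemma, that the submodule generated by $V_{t_1}$ splits off, is the real content of the $A_n$ decomposition theorem and must be stated and proved, not merely asserted.
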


\begin{remark}
Crawley-Boevey~\cite{CrawleyBoevey_2012} has recently shown that a persistence module over~$\Rr$ admits an interval decomposition if each $V_t$ is finite-dimensional. Thus, statement~(2) of the theorem is valid for all $\Tt \subseteq \Rr$.
\end{remark}

\begin{proof}
(1) The decomposition of a diagram
\[
V_1 \to V_2 \to \dots \to V_n
\]
into interval summands, when each $\dim(V_i)$ is finite, is one of the simpler instances of Gabriel's theorem~\cite{Gabriel_1972}; see \cite{Zomorodian_Carlsson_2005} or~\cite{Carlsson_deSilva_2010} for a concrete explanation.
The extension to infinite-dimensional modules follows abstractly from a theorem of Auslander~\cite{Auslander_1974} and, independently, Ringel and Tachikawa~\cite{Ringel_Tachikawa_1975}.
Alternatively, note that the argument given in~\cite{Carlsson_deSilva_2010} does not really require finite-dimensionality.

(2) We may assume that $\Tt = \Zz$ because any locally finite subset of~$\Rr$ is equivalent as an ordered set to a subset of~$\Zz$. Then this follows from Propostions 2 and~3 and Theorem~3 of Webb~\cite{Webb_1985}.

(3) Webb~\cite{Webb_1985} gives the following example, indexed over the nonpositive integers:
\begin{alignat*}{2}
V_{0} &= \{ \text{sequences $(x_1, x_2, x_3, \dots)$ of real numbers} \}
\\
V_{-n} &= \{ \text{sequences with $x_1 = \dots = x_{n} = 0$} \}
	&& \quad\text{for $n \geq 1$}
\end{alignat*}
The maps $v_{-m}^{-n}$ are the inclusions $V_{-m} \subset V_{-n}$ ($m \geq n$).

Suppose $\Vv$ has an interval decomposition. Since each map $v_{-n-1}^{-n}$ is injective, all of the intervals must be of the form $[-n,0]$ or $(-\infty,0]$. Since $\dim(V_{-n}/V_{-n-1}) = 1$, each interval $[-n,0]$ occurs with multiplicity~1. Since $\bigcap V_{-n} = \{0\}$, the interval $(-\infty,0]$ does not occur at all.

This would imply that $\Vv \cong \bigoplus_{n \geq 0} \Ii^{[-n,0]}$. This contradicts the fact that $\dim(V_0)$ is uncountable. Therefore $\Vv$ does not admit an interval decomposition after all.
\end{proof}

\begin{remark}
There are several other examples of persistence modules not decomposing into intervals.
Lesnick~\cite{Lesnick_2012pc} has an example indexed over~$\Zz$ which is countable-dimensional over each index; and Crawley-Boevey~\cite{CrawleyBoevey_2012pc} has an example indexed over~$\Rr$ which is q-tame.
\end{remark}

\medskip
For a persistence module which decomposes into intervals, the way is now clear to define its persistence diagram. Simply record which intervals occur in the decomposition (with multiplicity): see section~\ref{subsec:pd1}. Theorem~\ref{thm:azumaya} tells us that this is an isomorphism invariant.

However, we have seen that arbitrary persistence modules over~$\Rr$ are not guaranteed an interval decomposition. Here are three ways around the problem:

\begin{vlist}
\item
Work in restricted settings to ensure that the structure of~$\Vv$ depends only on finitely many index values $t \in \Rr$. For example, if $X$ is a compact manifold and $f$ is a Morse function, then $\Hgr(\Xx_\sub)$ is determined by the finite sequence
\[
\Hgr(X^{a_1}) \to \Hgr(X^{a_2}) \to \dots \to \Hgr(X^{a_n})
\]
where $a_1, a_2, \dots, a_n$ are the critical values of~$f$. This is the traditional approach. In this setting, the word `tame' is often used to demarcate pairs $(X,f)$ for which $\Hgr(\Xx_\sub)$ is determined by a finite diagram of finite-dimensional vector spaces.

\item
Sample the persistence module $\Vv$ over a finite grid. Consider limits as the grid converges to the whole real line. This is the approach taken in~\cite{Chazal_CS_G_G_O_2008}, where it is shown that the q-tame hypothesis is sufficient to guarantee good limiting behaviour.

\item
Show that the persistence intervals (in the decomposable case) can be inferred from the behaviour of $\Vv$ on short finite index sets. Apply this indirect definition to define the persistence diagram in the non-decomposable case. This is the method of `rectangle measures' developed in this paper. 

\end{vlist}


\subsection{The persistence diagram of a decomposable module}
\label{subsec:pd1}

If a persistence module $\Vv$ indexed over $\Rr$ can be decomposed
\[
\Vv \cong \bigoplus_{\ell \in L} \Ii{\lgroup p_\ell^*, q_\ell^* \rgroup},
\]
then we define the {\bf decorated persistence diagram} to be the multiset
\[
\Dgm(\Vv)
= \Int(\Vv)
= {\{} (p_\ell^*, q_\ell^*) \mid \ell \in L {\}}
\]
and the {\bf undecorated persistence diagram} to be the multiset
\[
\dgm(\Vv)
= \intt(\Vv)
= {\{} (p_\ell, q_\ell) \mid \ell \in L {\}} - \Delta
\]
where $\Delta = \{ (r,r) \mid r \in \Rr \}$ is the diagonal in the plane.

\begin{remark}
In section~\ref{subsec:pd+} we will give a quite different definition of the persistence diagram of~$\Vv$, based on the persistence measure rather than the interval decomposition. When occasionally we must distinguish between the two notions, we use the alternate names $\Int, \intt$ rather than $\Dgm, \dgm$ for the diagrams defined here.
\end{remark}

Theorem~\ref{thm:azumaya} implies that $\Dgm(\Vv)$ and $\dgm(\Vv)$ do not depend on the decomposition of~$\Vv$.

Notice that $\Dgm$ is a multiset of decorated points in $\Upper$, whereas $\dgm$ is a multiset of undecorated points in the interior of $\Upper$. Here `interior' means that we exclude the diagonal but keep the points at infinity.
The information retained by $\dgm$ is precisely the information we care about later, when we discuss bottleneck distances. See section~\ref{sec:isometry}.

Let us see how these definitions play out in an example of the traditional kind.

\begin{example}
\label{ex:morse}
Consider the curve in~$\Rr^2$ shown in Figure~\ref{fig:curve-height}, filtered by the height function. The sublevelset persistent homology decomposes into half-open intervals as follows:
\begin{align*}
\Hgr_0(\Xx_\sub)
&\cong
\Ii{\lgroup a^-, +\infty \rgroup} \oplus \Ii{\lgroup b^-, c^- \rgroup} \oplus \Ii{\lgroup d^-, e^- \rgroup}
\\
\Hgr_1(\Xx_\sub)
&\cong
\Ii{\lgroup f^-, +\infty \rgroup}
\end{align*}
If we let $[x]$ denote the chain corresponding to the critical point with critical value~$x$, then the three summands of $\Hgr_0$ can be taken to be generated by $[a]$, $[b]-[a]$, and $[d]-[a]$ respectively.
For instance, the independent 0-cycle $[b]-[a]$, which is born at time~$b$, becomes a boundary at time~$c$. Thus it gives rise to the half-open interval $[b,c) = \lgroup b^-, c^- \rgroup$.
\begin{figure}
\centerline{
\hfill\hfill
\includegraphics[scale=0.9]{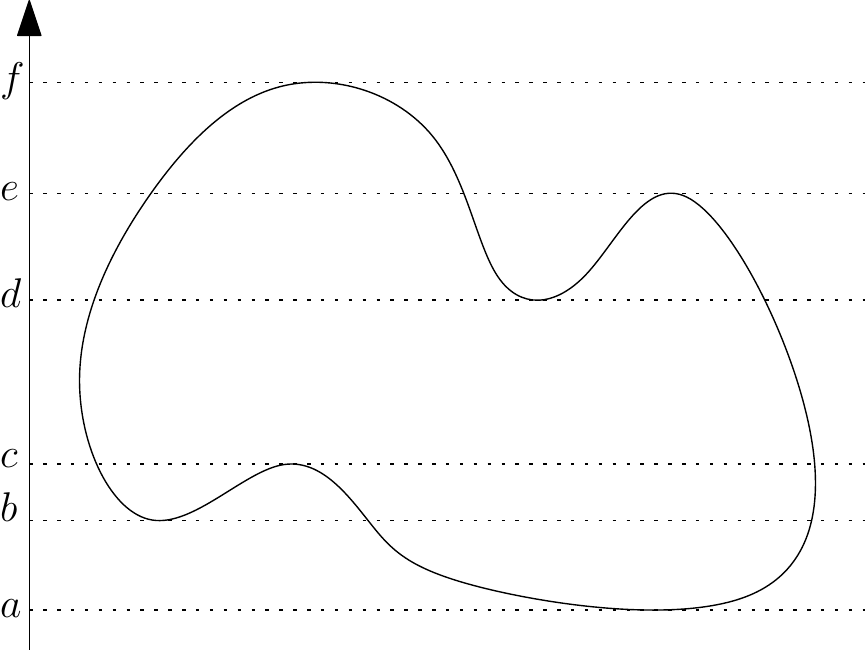}
\hfill
\includegraphics[scale=0.9]{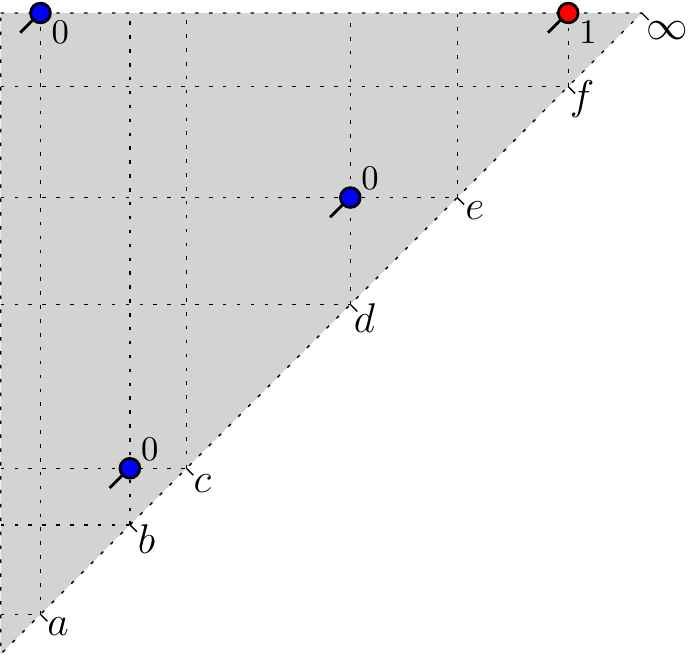}
\hfill\hfill
}
\caption{A traditional example in persistence theory: (left) $X$ is a smoothly embedded curve in the plane, and $f$ is its $y$-coordinate or `height' function; (right) the decorated persistence diagram of $\Hgr(\Xx_\sub)$. There are three intervals in $\Hgr_0$ and one interval in $\Hgr_1$.}
\label{fig:curve-height}
\end{figure}
\end{example}

\begin{remark}
For a Morse function on a compact manifold with critical values $(a_i)$, the intervals are always half-open, of type $[a_i,a_j) = \lgroup a_i^-,a_j^-\rgroup$.
See section~\ref{subsec:snapping}.
%
\end{remark}

\subsection{Quiver calculations}
\label{subsec:quiver}

We now set up the notation and algebraic tools for handling persistence modules over a finite index set.

A persistence module $\Vv$ indexed over a finite subset
\[
\Tt: \quad a_1 < a_2 < \dots < a_n
\]
of the real line can be thought of as a diagram of $n$~vector spaces and $n-1$~linear maps:
\[
\Vv: \quad
V_{a_1}
\to 
V_{a_2}
\to 
\ldots
\to 
V_{a_n}
\]
Such a diagram is a \textbf{representation} of the following \textbf{quiver}:
\[
\bullet \longrightarrow \bullet \longrightarrow \dots \longrightarrow \bullet
\]

We have seen (Theorem~\ref{thm:gabriel+}) that $\Vv$ decomposes as a finite sum of interval modules $\Ii{[a_i, a_j]}$. When $n$ is small, we can represent these interval modules pictorially. The following example illustrates how.

\begin{example}
\label{ex:int-notation1}
Let $a < b < c$. There are six interval modules over $\{a,b,c\}$, namely:
\begin{align*}
\Ii{[a,a]} &= \qon{a}\qem\qoff{b}\qem\qoff{c}&
\Ii{[a,b]} &= \qon{a}\qem\qon{b}\qem\qoff{c}&
\Ii{[a,c]} &= \qon{a}\qem\qon{b}\qem\qon{c}&
\\
\Ii{[b,b]} &= \qoff{a}\qem\qon{b}\qem\qoff{c}&
\Ii{[b,c]} &= \qoff{a}\qem\qon{b}\qem\qon{c}&
\\
\Ii{[c,c]} &= \qoff{a}\qem\qoff{b}\qem\qon{c}
\end{align*}
In the notation we use filled circles~$\bullet$ to indicate where the module has rank~1, and clear circles~$\circ$ to indicate where the module has rank~0. The connecting maps have full rank.
\end{example}

Now let $\Vv$ be a persistence module indexed over~$\Rr$. For any finite set of indices
\[
\Tt: \quad a_1 < a_2 < \dots < a_n
\]
and any interval $[a_i,a_j] \subseteq \Tt$, we define the multiplicity of $[a_i,a_j]$ in~$\Vv_\Tt$ to be the number of copies of $\Ii{[a_i,a_j]}$ to occur in the interval decomposition of $\Vv_\Tt$. This takes values in the set $\{0, 1, 2, \dots, \infty \}$. (We do not distinguish different infinite cardinals.)

It is useful to have notation for these multiplicities. Again, we define by example.

\begin{example}
We write
\[
\langle [b,c] \mid \Vv_{a,b,c} \rangle
\quad \text{or} \quad
\langle\, \qoff{a} \qem \qon{b} \qem \qon{c} \mid \Vv \,\rangle
\]
for the multiplicity of $\qoff{a}\qem\qon{b}\qem\qon{c}$ in the 3-term module
\[
\Vv_{a,b,c} = (V_a \to V_b \to V_c).
\]
When $\Vv$ is clear from the context, we may simply write
\[
\langle\, \qoff{a} \qem \qon{b} \qem \qon{c} \,\rangle.
\]
The abbreviation $\langle [b,c] \rangle$ is not permitted since it is ambiguous. For example, $\langle [b,c] \mid V_{b,c} \rangle$ and $\langle [b,c] \mid V_{a,b,c} \rangle$ are not generally the same. See Proposition~\ref{prop:restriction} and Example~\ref{ex:restriction}.
\end{example}

\begin{example}
The invariants of a single linear map $V_a \stackrel{v}{\to} V_b$ are:
\begin{align*}
\rank(v) &= \langle \qon{a}\qem\qon{b} \mid \Vv \rangle
\\
\nullity(v) &= \langle \qon{a}\qem\qoff{b} \mid \Vv \rangle
\\
\conullity(v) &= \langle \qoff{a}\qem\qon{b} \mid \Vv \rangle
\end{align*}
(The conullity is the dimension of the cokernel.)
\end{example}

\begin{proposition}[direct sums]
\label{prop:m-additivity}
Suppose a persistence module $\Vv$ can be written as a direct sum
\[
\Vv = \bigoplus_{\ell \in L} \Vv^\ell
\]
Then
\[
\langle [a_i, a_j] \mid \Vv_\Tt \rangle
=
\sum_{\ell \in L} \langle [a_i, a_j] \mid \Vv_\Tt^\ell \rangle
\]
for any index set $\Tt = \{a_1, a_2, \dots, a_n\}$ and interval $[a_i,a_j] \subseteq \Tt$.
\end{proposition}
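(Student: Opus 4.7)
The plan is to reduce to the finite-index case and invoke the Krull--Remak--Schmidt--Azumaya theorem. The key observation is that the multiplicity bracket $\langle [a_i,a_j] \mid \Vv_\Tt \rangle$ is defined via any interval decomposition of $\Vv_\Tt$, and Theorem~\ref{thm:azumaya} guarantees that this count does not depend on which decomposition we choose. So the strategy is to build one particular decomposition of $\Vv_\Tt$ out of decompositions of the summands $\Vv^\ell_\Tt$, and to read the multiplicities off of it.

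First I would note that the restriction functor $(-)_\Tt$ commutes with arbitrary direct sums, since it acts indexwise on spaces and maps. Hence $\Vv_\Tt \cong \bigoplus_{\ell \in L} \Vv^\ell_\Tt$ as a $\Tt$-persistence module. Since $\Tt$ is finite, Theorem~\ref{thm:gabriel+}(1) supplies interval decompositions $\Vv^\ell_\Tt \cong \bigoplus_{m \in M_\ell} \Ii^{K_{\ell,m}}_\Tt$ for each $\ell \in L$. Associativity of direct sums then gives a single interval decomposition
\[
\Vv_\Tt \;\cong\; \bigoplus_{\ell \in L}\;\bigoplus_{m \in M_\ell} \Ii^{K_{\ell,m}}_\Tt.
\]
By Theorem~\ref{thm:azumaya}, the multiset $\{ K_{\ell,m} \mid \ell \in L,\, m \in M_\ell \}$ of intervals agrees (under some bijection) with the multiset of intervals in any other decomposition of $\Vv_\Tt$; in particular, it determines all the multiplicities $\langle [a_i,a_j] \mid \Vv_\Tt \rangle$.

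Counting the occurrences of $[a_i,a_j]$ on each side of the assembled decomposition then yields
\[
\langle [a_i, a_j] \mid \Vv_\Tt \rangle \;=\; \sum_{\ell \in L} \#\{\, m \in M_\ell \mid K_{\ell,m} = [a_i, a_j]\,\} \;=\; \sum_{\ell \in L} \langle [a_i, a_j] \mid \Vv^\ell_\Tt \rangle,
\]
which is the claim.

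There is no real obstacle: the argument is entirely bookkeeping on top of Theorems~\ref{thm:gabriel+}(1) and~\ref{thm:azumaya}. The only point requiring any care is that $L$ and the $M_\ell$ may be infinite, so the equalities above are to be interpreted in $\{0,1,2,\dots,\infty\}$ using the convention from the Multisets section that all infinite cardinals are identified; under that convention, both double sums and the bijection supplied by Krull--Remak--Schmidt--Azumaya yield the same element of this extended set.
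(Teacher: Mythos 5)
Your proof is correct and follows essentially the same route as the paper's: decompose each restricted summand $\Vv^\ell_\Tt$ via Gabriel's theorem, assemble these into a single interval decomposition of $\Vv_\Tt$, and count, with Azumaya (Theorem~\ref{thm:azumaya}) guaranteeing the multiplicities are well-defined. Your version is simply more explicit about the intermediate steps (restriction commuting with direct sums, the role of Azumaya, and the handling of infinite cardinals), all of which the paper leaves implicit.
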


\begin{proof}
Each summand $\Vv^\ell_\Tt$ can be decomposed separately into interval modules. Putting these together we get an interval decomposition of $\Vv_\Tt$. The number of summands of a given type in $\Vv_\Tt$ is then equal to the total number of summands of that type in all of the~$\Vv^\ell_\Tt$.
\end{proof}

Often we wish to compare multiplicities of intervals in different finite restrictions of $\Vv$. The principle is very simple:

\begin{proposition}[restriction principle]
\label{prop:restriction}
Let $\Ss, \Tt$ be finite index sets with $\Ss \subset \Tt$. Then
\[
\langle \Ii \mid \Vv_\Ss \rangle
=
\sum_\Jj \langle \Jj \mid \Vv_\Tt \rangle
\]
where the sum is over those intervals $\Jj \subseteq \Tt$ which restrict over $\Ss$ to $\Ii$.
\end{proposition}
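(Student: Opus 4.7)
The plan is to reduce the statement to the uniqueness of the interval decomposition (Theorem~\ref{thm:azumaya}) by observing that restriction from $\Tt$ to $\Ss$ commutes with direct sums and sends interval modules to interval modules (or zero).

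First, since $\Tt$ is a finite index set, Theorem~\ref{thm:gabriel+}(1) lets me write
\[
\Vv_\Tt \cong \bigoplus_{\ell \in L} \Ii^{J_\ell}_\Tt
\]
for some indexed family of intervals $J_\ell \subseteq \Tt$. Now I restrict both sides to $\Ss$. Restriction is clearly compatible with direct sums (it acts componentwise on spaces and maps), so
\[
\Vv_\Ss \cong \bigoplus_{\ell \in L} \bigl(\Ii^{J_\ell}_\Tt\bigr)_\Ss.
\]
The key observation is that $J_\ell \cap \Ss$ is an interval in $\Ss$ (possibly empty), and by the definition of interval modules one has $(\Ii^{J_\ell}_\Tt)_\Ss = \Ii^{J_\ell \cap \Ss}_\Ss$, interpreted as the zero module when $J_\ell \cap \Ss$ is empty. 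Discarding the zero summands then gives an explicit interval decomposition of $\Vv_\Ss$.

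Next I apply Proposition~\ref{prop:m-additivity} to this decomposition: for the interval $\Ii = [a_i, a_j] \subseteq \Ss$,
\[
\langle \Ii \mid \Vv_\Ss \rangle
= \sum_{\ell \in L} \bigl\langle \Ii \,\big|\, \Ii^{J_\ell \cap \Ss}_\Ss \bigr\rangle.
\]
Each summand is $1$ if $J_\ell \cap \Ss$ equals the interval underlying $\Ii$ and $0$ otherwise (this uses the trivial fact that $\Ii^A \cong \Ii^B$ forces $A = B$, noted already inside the proof of Theorem~\ref{thm:azumaya}). Re-grouping the surviving indices according to the value $\Jj := J_\ell$ they take in $\Tt$, and using Proposition~\ref{prop:m-additivity} again to identify $\langle \Jj \mid \Vv_\Tt \rangle$ with the number of $\ell$'s for which $J_\ell = \Jj$, I obtain
\[
\langle \Ii \mid \Vv_\Ss \rangle
= \sum_{\Jj} \langle \Jj \mid \Vv_\Tt \rangle,
\]
the sum ranging over intervals $\Jj \subseteq \Tt$ with $\Jj \cap \Ss$ equal to the interval underlying $\Ii$, as required.

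There is no real obstacle, just a bookkeeping point worth flagging: one must allow the zero module as the restriction of $\Ii^\Jj_\Tt$ when $\Jj$ misses $\Ss$ entirely, and verify that such summands contribute $0$ to the multiplicity on the left. Once that convention is in place the argument is purely formal, resting on (i) the existence of an interval decomposition over the finite set $\Tt$, (ii) additivity of multiplicities under direct sum, and (iii) the triviality that interval modules are determined up to isomorphism by their supporting interval.
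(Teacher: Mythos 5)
Your proof is correct and follows the same route the paper takes: decompose $\Vv_\Tt$ into intervals, note that restriction to $\Ss$ is compatible with direct sums and carries each $\Ii^{J_\ell}_\Tt$ to $\Ii^{J_\ell\cap\Ss}_\Ss$ (or zero), and count. You are merely more explicit than the paper in citing Theorem~\ref{thm:gabriel+}(1) for the existence of the decomposition and Proposition~\ref{prop:m-additivity} for the multiplicity bookkeeping, and in flagging the empty-intersection case; none of this changes the substance.
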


\begin{proof}
Take an arbitrary interval decomposition of $\Vv_\Tt$. This induces an interval decomposition of $\Vv_\Ss$. Summands of $\Vv_\Ss$ of type $\Ii$ arise precisely from those summands of $\Vv_\Tt$ of types $\Jj$ as above.
\end{proof}

\begin{example}
\label{ex:restriction}
Let $a < b < c$. Consider new indices $p$ and $q$, arranged $a < p < b < q < c$.
Then
\[
\langle \qoff{a}\qem\qno\qem\qon{b}\qem\qno\qem\qon{c} \rangle 
=
\langle \qoff{a}\qem\qno\qem\qon{b}\qem\qon{q}\qem\qon{c} \rangle
\mathbin{\phantom{+}}
\phantom{%
\langle \qoff{a}\qem\qon{p}\qem\qon{b}\qem\qno\qem\qon{c} \rangle.}
\]
whereas
\[
\langle \qoff{a}\qem\qno\qem\qon{b}\qem\qno\qem\qon{c} \rangle 
=
\langle \qoff{a}\qem\qoff{p}\qem\qon{b}\qem\qno\qem\qon{c} \rangle 
+
\langle \qoff{a}\qem\qon{p}\qem\qon{b}\qem\qno\qem\qon{c} \rangle.
\]
The extra term occurs when the inserted new index occurs between a clear node and a filled node, because then there are two possible intervals which restrict to the original interval.
\end{example}

We will make frequent use of the restriction principle. Here is a simple illustration, to serve as a template for similar arguments that we will encounter later on.

\begin{example}
\label{ex:monotonerank}
Consider the standard fact that $\rank(V_b \to V_c) \geq \rank(V_a \to V_d)$ when $a \leq b \leq c \leq d$. The proof using quiver notation runs as follows:
\begin{align*}
\rank(V_b \to V_c)
&= \langle \qno\qem\qon{b}\qem\qon{c}\qem\qno \rangle
\\
&= \langle \qon{a}\qem\qon{b}\qem\qon{c}\qem\qon{d} \rangle
	+ \text{three other terms}
\\
&\geq \langle \qon{a}\qem\qon{b}\qem\qon{c}\qem\qon{d} \rangle
\\
&= \langle \qon{a}\qem\qno\qem\qno\qem\qon{d} \rangle
\\
&= \rank(V_a \to V_d)
\end{align*}
The `three other terms' are
\[
\langle \qoff{a}\qem\qon{b}\qem\qon{c}\qem\qon{d} \rangle,
\quad
\langle \qon{a}\qem\qon{b}\qem\qon{c}\qem\qoff{d} \rangle,
\quad
\langle \qoff{a}\qem\qon{b}\qem\qon{c}\qem\qoff{d} \rangle
\]
as indicated by the restriction principle.
\end{example}

\section{Rectangle Measures}
\label{sec:measures}

For a decomposable $\Rr$-persistence module
\[
\Vv \cong \bigoplus_{\ell \in L} \Ii{\lgroup p_\ell^*, q_\ell^* \rgroup},
\]
we have defined the decorated persistence diagram to be the multiset
\[
\Dgm(\Vv) = {\{} (p_\ell^*, q_\ell^*) \mid \ell \in L {\}},
\]
and the undecorated persistence diagram to be the multiset
\[
\dgm(\Vv) = {\{} (p_\ell, q_\ell) \mid \ell \in L {\}}.
\]

If we don't know that $\Vv$ is decomposable, then we have to proceed differently. The rough idea is that if we know how many points of~$\Dgm$ are contained in each rectangle in the upper-half space, then we know $\Dgm$ itself. For persistence modules, counting points in rectangles turns out to be easy.

The language of measure theory is well suited to this argument. We will show that each persistence module defines an integer-valued measure on rectangles. If the module is tame then this measure is finite-valued, and therefore (Theorem~\ref{thm:equivalence}) it is concentrated at a discrete set of points: this is the persistence diagram.

In the decomposable case, we will see that this agrees with the definition above. And when the module is not known to be decomposable, we can proceed regardless.

The persistence measures that we construct are not true measures on subsets of~$\Rr^2$: they are additive in the sense of tiling rather than in the usual sense of disjoint set union. The discrepancy arises when we split a rectangle into two: what happens to the points on the common edge? To which rectangle do they belong? In resolving this, one is naturally led to the notion of decorated points. This fits perfectly with our earlier use of decorations to distinguish open and closed interval ends.

\subsection{The persistence measure}

Let $\Vv$ be a persistence module. The {persistence measure} of~$\Vv$ is the function \[
\mu_\Vv(R)
  = \langle \qoff{a}\qem\qon{b}\qem\qon{c}\qem\qoff{d} \mid \Vv \rangle
\]
defined on rectangles $R = [a,b] \times [c,d]$ in the plane with $a < b \leq c < d$.

For a decomposable persistence module, there is a clear relationship between $\mu_\Vv$  and the interval summands of~$\Vv$.
Let us first consider the case of an interval module.

\begin{proposition}
\label{prop:Rmembership}
Let $\Vv = \Ii^J$ where $J = \lgroup p^*, q^* \rgroup$ is a real interval. Let $R = [a,b] \times [c,d]$ where $a <  b \leq c < d$. Then
\[
\mu_V(R)
=
\left\{
\begin{array}{ll}
1 \quad&\text{\rm if $[b,c] \subseteq J \subseteq (a,d)$}
\\
0 & \text{\rm otherwise}
\end{array}
\right.
\]
\end{proposition}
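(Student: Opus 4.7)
The plan is to compute the restriction $\Vv_\Tt$ for the four-point index set $\Tt = \{a,b,c,d\}$ and read off the multiplicity directly. Since $\Vv = \Ii^J$ is an interval module, the restriction $\Vv_\Tt$ has spaces $V_t = \kk$ for $t \in J \cap \Tt$ and $V_t = 0$ otherwise, with all nonzero connecting maps equal to the identity. In particular, $\Vv_\Tt$ is itself an interval module over $\Tt$, supported on $J \cap \Tt$, so its interval decomposition has a single summand (or is zero, when $J \cap \Tt = \emptyset$).

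By definition, $\mu_\Vv(R) = \langle \qoff{a}\qem\qon{b}\qem\qon{c}\qem\qoff{d} \mid \Vv \rangle$ counts how many summands of type $\Ii{[b,c]}$ appear in this decomposition of $\Vv_\Tt$. Because there is at most one summand overall, the multiplicity is either $0$ or $1$, and it equals $1$ precisely when the unique summand is $\Ii{[b,c]}$ over $\Tt$, i.e., when $J \cap \{a,b,c,d\} = \{b,c\}$.

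It then remains to verify that the set equality $J \cap \{a,b,c,d\} = \{b,c\}$ is equivalent to the containment $[b,c] \subseteq J \subseteq (a,d)$. Here I would use convexity of the interval $J$: the conditions $b \in J$ and $c \in J$ are equivalent to $[b,c] \subseteq J$ since $b \leq c$, and the conditions $a \notin J$ and $d \notin J$, combined with $b, c \in J$, are equivalent to $J \subseteq (a,d)$, because any $x \in J$ with $x \leq a$ would force $a \in J$ (as $a$ lies between $x$ and $b$), and symmetrically on the right.

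No serious obstacle is anticipated; the only step that needs a little care is the translation between the finite-set condition $J \cap \{a,b,c,d\} = \{b,c\}$ and the real-line containment $[b,c] \subseteq J \subseteq (a,d)$, where one must invoke the convexity of $J$ in both directions. Everything else is a routine application of the definitions and of the observation that the restriction of an interval module to a subset of indices is again an interval module.
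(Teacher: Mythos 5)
Your proof is correct and follows essentially the same route as the paper: observe that the restriction of an interval module to $\{a,b,c,d\}$ is itself an interval module or zero, hence $\mu_\Vv(R) \le 1$, and then unwind the condition for the single summand to be $\Ii^{[b,c]}$. The paper simply asserts the final equivalence with $[b,c]\subseteq J\subseteq(a,d)$, whereas you spell out the convexity argument; that is a harmless and correct elaboration.
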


\begin{proof}
It is clear that $\Ii^J$ restricted to $\{a,b,c,d\}$ is an interval or is zero. 
Thus, $\mu_\Vv(R) \leq 1$. Moreover $\mu_\Vv(R) = 1$ precisely when
\[
\Ii^J_{a,b,c,d} = \qoff{a}\qem\qon{b}\qem\qon{c}\qem\qoff{d},
\]
which happens if and only if $b,c \in J$ and $a,d \not\in J$. This is equivalent to the condition $[b,c] \subseteq J \subseteq (a,d)$.
\end{proof}

Proposition~\ref{prop:Rmembership} has a graphical interpretation. Represent the interval $J \subseteq \Rr$ as a decorated point in the extended plane. The following picture indicates exactly which decorated points $\lgroup p^*,q^* \rgroup$ are detected by $\mu_\Vv(R)$:
\begin{center}
\includegraphics[scale=0.75]{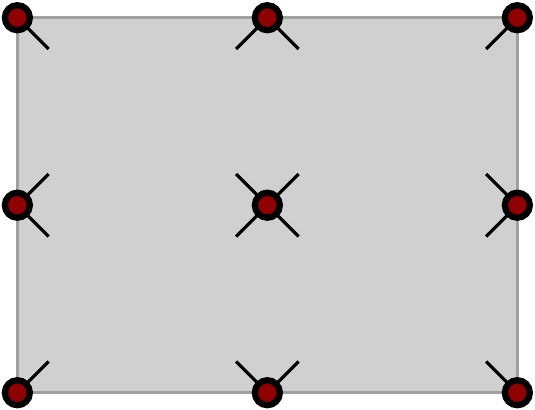}
\end{center}
If $(p,q)$ is in the interior of~$R$ then $\lgroup p^*,q^* \rgroup$ is always detected regardless of the decoration. If $(p,q)$ is on the boundary, then $\lgroup p^*,q^* \rgroup$ is detected if the tick is directed inwards.

We formalise this by defining a membership relation between decorated points and rectangles.

\begin{definition*}
Let $R = [a,b] \times [c,d]$ where $a < b \leq c < d$, and consider a decorated point $(p^*, q^*)$ with $p^* < q^*$.
We write $(p^*, q^*) \in R$ if 

\quad (i) the interval~$J = \lgroup p^*, q^* \rgroup$ satisfies $[b,c] \subseteq J \subseteq (a,d)$;

or equivalently

\quad (ii) the point $(p, q)$ and its decoration tick are contained in the closed rectangle~$R$.
\end{definition*}

We use the notation
\[
R\rintsup = \{ (p^*, q^*) \in R \}
\]
when we wish to explicitly name the set of decorated points contained in~$R$. However, we prefer to write $(p^*, q^*) \in R$ instead of $(p^*, q^*) \in R\rintsup$ when we are simply indicating the relationship between a decorated point and a rectangle.

With this understanding we state the following counting result.

\begin{corollary}
\label{cor:Rcount}
Suppose $\Vv$ is a decomposable persistence module over~$\Rr$:
\[
\Vv = \bigoplus_{\ell \in L} \Ii{\lgroup p_\ell^*, q_\ell^* \rgroup}
\]
Then:
\[
\mu_\Vv(R)
  = \card \left( \Dgm(\Vv)|_R \right)
\tag{*}\label{eq:R-count}
\]
\end{corollary}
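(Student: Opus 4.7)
The proof will be a direct combination of the additivity of multiplicities under direct sums (Proposition~\ref{prop:m-additivity}) and the single-interval computation (Proposition~\ref{prop:Rmembership}). The plan is to unpack the definition of $\mu_\Vv(R)$, distribute over the direct sum, and recognise the resulting count as the cardinality of $\Dgm(\Vv)|_R$.

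First I would write $R = [a,b]\times[c,d]$ and recall that by definition
\[
\mu_\Vv(R) = \langle \qoff{a}\qem\qon{b}\qem\qon{c}\qem\qoff{d} \mid \Vv \rangle.
\]
Applying Proposition~\ref{prop:m-additivity} to the decomposition $\Vv = \bigoplus_{\ell \in L} \Ii{\lgroup p_\ell^*, q_\ell^* \rgroup}$, this multiplicity breaks up as
\[
\mu_\Vv(R) = \sum_{\ell \in L} \langle \qoff{a}\qem\qon{b}\qem\qon{c}\qem\qoff{d} \mid \Ii{\lgroup p_\ell^*, q_\ell^* \rgroup} \rangle.
\]
Next, for each $\ell$, Proposition~\ref{prop:Rmembership} identifies the $\ell$th summand as $1$ when $[b,c]\subseteq \lgroup p_\ell^*,q_\ell^*\rgroup \subseteq (a,d)$ and $0$ otherwise; equivalently, by the definition just given of $(p_\ell^*, q_\ell^*) \in R$, the summand is the indicator of membership of the decorated point $(p_\ell^*, q_\ell^*)$ in~$R$.

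Summing these indicators therefore counts, with multiplicity, exactly those indices $\ell \in L$ for which $(p_\ell^*, q_\ell^*) \in R$. By the definition of $\Dgm(\Vv)$ and of the restriction of a multiset to a subset of the ambient space (from the Multisets section), this count is precisely $\card(\Dgm(\Vv)|_R)$, which gives~(\ref{eq:R-count}).

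There is no substantial obstacle here; the only point to be careful about is bookkeeping when $L$ is infinite. This is harmless because multiplicities take values in $\{0,1,2,\dots\}\cup\{\infty\}$ and we do not distinguish infinite cardinals, so both sides of~(\ref{eq:R-count}) are well-defined elements of this set and the sum of indicators equals the cardinality in the usual convention. No convergence or reordering issue arises, since all terms are non-negative integers (or $\infty$).
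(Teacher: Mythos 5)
Your proof is correct and follows exactly the paper's route: the paper cites Propositions~\ref{prop:Rmembership} and~\ref{prop:m-additivity} and says the corollary "follows immediately," which is precisely the two-step unpacking you carry out. Your extra remark about infinite $L$ being harmless is accurate and consistent with the paper's conventions.
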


\begin{proof}
This follows immediately from Proposition~\ref{prop:Rmembership} and Proposition~\ref{prop:m-additivity} (direct sums).
\end{proof}

We can now articulate our strategy for defining the persistence diagram without assuming that the module~$\Vv$ is decomposable:
\begin{vlist}
\item
construct the persistence measure $\mu_\Vv$;

\item
let $\Dgm(\Vv)$ be a multiset in the half-plane such that \eqref{eq:R-count} holds for all rectangles~$R$.
\end{vlist}
To make this work, we need to know that such a multiset exists and is unique. This is the content of Theorem~\ref{thm:equivalence}, under the hypothesis that $\mu_\Vv$ is finite and additive. The result is a sort of `Riesz' representation theorem for measures on rectangles.

When $\Vv$ is decomposable, Corollary~\ref{cor:Rcount} confirms that our new definition agrees with the old.

\subsection{The persistence measure (continued)}
\label{subsec:additivity}

We call $\mu_\Vv$ a measure because it is additive with respect to splitting a rectangle into two rectangles.
We prove this shortly. First, we give a new proof of an `alternating sum' formula for $\mu_\Vv(R)$ that appears in~\cite{CohenSteiner_E_H_2007}.

\begin{proposition}
\label{prop:rankformula}
Let $\Vv$ be a persistence module, and let $a < b \leq c < d$. If the spaces $V_a$, $V_b$, $V_c$, $V_d$ are finite-dimensional, or less stringently if $\rk_b^c < \infty$, then
\[
\langle \qoff{a}\qem\qon{b}\qem\qon{c}\qem\qoff{d} \mid \Vv \rangle
=
\rk_b^c - \rk_a^c - \rk_b^d + \rk_a^d.
\]
\end{proposition}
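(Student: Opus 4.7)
The plan is to apply the quiver calculus from section~\ref{subsec:quiver} to the four-term restriction $\Vv_{a,b,c,d} = (V_a \to V_b \to V_c \to V_d)$, expand each rank via the restriction principle (Proposition~\ref{prop:restriction}), and observe that the alternating sum telescopes to leave a single summand.

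First I would verify that the right-hand side is well defined. Example~\ref{ex:monotonerank} shows that $\rk_b^c \geq \rk_a^c$, $\rk_b^c \geq \rk_b^d$, and $\rk_b^c \geq \rk_a^d$. Hence the assumption $\rk_b^c < \infty$ guarantees finiteness of all four ranks, so the arithmetic is unambiguous.

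Next I would expand each rank as a sum of interval multiplicities over $\{a,b,c,d\}$. Following the template of Example~\ref{ex:monotonerank}, each rank $\rk_s^t$ equals the sum of multiplicities $\langle \Jj \mid \Vv_{a,b,c,d} \rangle$ taken over those intervals $\Jj \subseteq \{a,b,c,d\}$ that contain both $s$ and $t$. Explicitly,
\begin{align*}
\rk_a^d &= \langle \qon{a}\qem\qon{b}\qem\qon{c}\qem\qon{d} \rangle, \\
\rk_a^c &= \langle \qon{a}\qem\qon{b}\qem\qon{c}\qem\qoff{d} \rangle + \langle \qon{a}\qem\qon{b}\qem\qon{c}\qem\qon{d} \rangle, \\
\rk_b^d &= \langle \qoff{a}\qem\qon{b}\qem\qon{c}\qem\qon{d} \rangle + \langle \qon{a}\qem\qon{b}\qem\qon{c}\qem\qon{d} \rangle, \\
\rk_b^c &= \langle \qoff{a}\qem\qon{b}\qem\qon{c}\qem\qoff{d} \rangle + \langle \qon{a}\qem\qon{b}\qem\qon{c}\qem\qoff{d} \rangle + \langle \qoff{a}\qem\qon{b}\qem\qon{c}\qem\qon{d} \rangle + \langle \qon{a}\qem\qon{b}\qem\qon{c}\qem\qon{d} \rangle.
\end{align*}
Each of these identities follows by writing $\rk_s^t = \langle \qno\cdots\qon{s}\qno\cdots\qon{t}\qno\cdots \rangle$ on the two-term restriction $\Vv_{s,t}$ and then applying Proposition~\ref{prop:restriction} to refine to the four-term restriction $\Vv_{a,b,c,d}$; only intervals whose support covers both $s$ and $t$ contribute.

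Finally I would form the alternating sum $\rk_b^c - \rk_a^c - \rk_b^d + \rk_a^d$. The terms $\langle \qon{a}\qem\qon{b}\qem\qon{c}\qem\qon{d} \rangle$, $\langle \qon{a}\qem\qon{b}\qem\qon{c}\qem\qoff{d} \rangle$, and $\langle \qoff{a}\qem\qon{b}\qem\qon{c}\qem\qon{d} \rangle$ each cancel (their net coefficients are $1-1-1+1=0$, $1-1=0$, and $1-1=0$ respectively), leaving only the single term $\langle \qoff{a}\qem\qon{b}\qem\qon{c}\qem\qoff{d} \rangle$, as required.

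There is essentially no obstacle beyond bookkeeping: the only subtle point is ensuring finiteness before performing the subtraction, which is why the hypothesis $\rk_b^c < \infty$ (rather than finite-dimensionality of individual $V_t$'s) suffices. The argument is a clean illustration of why the quiver notation introduced in section~\ref{subsec:quiver} makes such identities mechanical.
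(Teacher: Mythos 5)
Your proposal is correct and follows essentially the same route as the paper's own proof: decompose the four-term restriction $\Vv_{a,b,c,d}$ into intervals, use the restriction principle to expand each $\rk_s^t$ as a sum of interval multiplicities, justify finiteness via the monotonicity of rank (Example~\ref{ex:monotonerank}), and cancel the alternating sum to isolate $\langle \qoff{a}\qem\qon{b}\qem\qon{c}\qem\qoff{d} \rangle$. The only cosmetic difference is the order in which you list the four expansions.
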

(Here as before $\rk_s^t = \rank(v_s^t : V_s \to V_t)$.)

\begin{proof}
Decompose the 4-term module $\Vv_{a,b,c,d}$ into intervals. The left-hand side counts intervals of type $[b,c]$. By the restriction principle, the four terms on the right-hand side evaluate as follows:
\begin{align*}
\rk_b^c &= 
\makebox[7.5em][c]{%
  $\langle \qoff{a}\qem\qon{b}\qem\qon{c}\qem\qoff{d} \rangle$}
\makebox[2em][c]{+}
\makebox[7.5em][c]{%
  $\langle \qon{a}\qem\qon{b}\qem\qon{c}\qem\qoff{d} \rangle$}
\makebox[2em][c]{+}
\makebox[7.5em][c]{%
  $\langle \qoff{a}\qem\qon{b}\qem\qon{c}\qem\qon{d} \rangle$}
\makebox[2em][c]{+}
\makebox[7.5em][c]{%
  $\langle \qon{a}\qem\qon{b}\qem\qon{c}\qem\qon{d} \rangle$}
\\
\rk_a^c &= 
\makebox[7.5em][c]{}
\makebox[2em][c]{}
\makebox[7.5em][c]{%
  $\langle \qon{a}\qem\qon{b}\qem\qon{c}\qem\qoff{d} \rangle$}
\makebox[2em][c]{}
\makebox[7.5em][c]{}
\makebox[2em][c]{+}
\makebox[7.5em][c]{%
  $\langle \qon{a}\qem\qon{b}\qem\qon{c}\qem\qon{d} \rangle$}
\\
\rk_b^d &= 
\makebox[7.5em][c]{}
\makebox[2em][c]{}
\makebox[7.5em][c]{}
\makebox[2em][c]{}
\makebox[7.5em][c]{%
  $\langle \qoff{a}\qem\qon{b}\qem\qon{c}\qem\qon{d} \rangle$}
\makebox[2em][c]{+}
\makebox[7.5em][c]{%
  $\langle \qon{a}\qem\qon{b}\qem\qon{c}\qem\qon{d} \rangle$}
\\
\rk_a^d &= 
\makebox[7.5em][c]{}
\makebox[2em][c]{}
\makebox[7.5em][c]{}
\makebox[2em][c]{}
\makebox[7.5em][c]{}
\makebox[2em][c]{}
\makebox[7.5em][c]{%
  $\langle \qon{a}\qem\qon{b}\qem\qon{c}\qem\qon{d} \rangle$}
\end{align*}
These expressions are all finite: the hypothesis $\rk_b^c < \infty$ implies that the other three ranks are finite too (Example~\ref{ex:monotonerank}). We can legitimately take the alternating sum, whereupon all terms cancel except for the $\langle \qoff{a}\qem\qon{b}\qem\qon{c}\qem\qoff{d} \rangle$.
\end{proof}

We give three proofs of additivity. The first is completely general, whereas the other two work under restricted settings but are illuminating in their own way.

\begin{proposition}
\label{prop:splitting}
$\mu_\Vv$ is additive under vertical and horizontal splitting, meaning that
\begin{align*}
\mu_\Vv([a,b]\times[c,d])
  &=  \mu_\Vv([a,p]\times[c,d]) + \mu_\Vv([p,b]\times[c,d])
\\
\mu_\Vv([a,b]\times[c,d])
  &= \mu_\Vv([a,b]\times[c,q]) + \mu_\Vv([a,b]\times[q,d])
\end{align*}
whenever $a < p < b \leq c < q < d$.
\end{proposition}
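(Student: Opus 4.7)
The plan is to prove both identities by the restriction principle (Proposition~\ref{prop:restriction}), working in the 5-element index set obtained by inserting the splitting point into the rectangle's four defining indices. Only the horizontal case needs to be treated; the vertical case is symmetric (or dual via reversing the order).

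For the horizontal splitting, consider the refinement $\Ss = \{a, b, c, d\} \subset \Tt = \{a, p, b, c, d\}$. By the restriction principle applied to $\Vv_\Ss$, the pattern $\qoff{a}\qem\qno\qem\qon{b}\qem\qon{c}\qem\qoff{d}$ in $\Vv_\Tt$ accounts for the multiplicity $\mu_\Vv([a,b] \times [c,d])$ as the sum over all intervals $[x,y] \subseteq \Tt$ whose restriction to $\Ss$ equals $[b,c]$. The condition $\{a,b,c,d\} \cap [x,y] = \{b,c\}$ forces $y = c$ (since $d \notin [x,y]$ but $c \in [x,y]$) and $x \in \{p,b\}$ (since $a \notin [x,y]$ but $b \in [x,y]$). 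Hence
\[
\mu_\Vv([a,b]\times[c,d])
=
\langle\, \qoff{a}\qem\qon{p}\qem\qon{b}\qem\qon{c}\qem\qoff{d} \,\rangle
+ \langle\, \qoff{a}\qem\qoff{p}\qem\qon{b}\qem\qon{c}\qem\qoff{d} \,\rangle.
\]

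Next I would identify each term on the right-hand side with one of the two summand measures. Refining $\Vv_{a,p,c,d}$ to $\Vv_{a,p,b,c,d}$, the only interval whose restriction matches $\qoff{a}\qem\qon{p}\qem\qon{c}\qem\qoff{d}$ is $[p,c]$ itself (any $[x,y] \subseteq \Tt$ with $p,c \in [x,y]$ and $a,d \notin [x,y]$ forces $x=p$, $y=c$, and automatically $b \in [p,c]$). Thus
\[
\mu_\Vv([a,p]\times[c,d])
=\langle\, \qoff{a}\qem\qon{p}\qem\qon{b}\qem\qon{c}\qem\qoff{d} \,\rangle.
\]
Similarly, refining $\Vv_{p,b,c,d}$ to $\Vv_{a,p,b,c,d}$, the pattern $\qoff{p}\qem\qon{b}\qem\qon{c}\qem\qoff{d}$ demands $b,c \in [x,y]$ and $p,d \notin [x,y]$, forcing $x=b$, $y=c$, which gives
\[
\mu_\Vv([p,b]\times[c,d])
=\langle\, \qoff{a}\qem\qoff{p}\qem\qon{b}\qem\qon{c}\qem\qoff{d} \,\rangle.
\]
Adding these two identities reproduces the previous display, proving the horizontal splitting. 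The vertical case follows by the same argument applied to the refinement $\{a,b,c,d\} \subset \{a,b,c,q,d\}$.

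The only real work is the combinatorial enumeration of intervals in $\Tt$ whose restriction to the coarser $\Ss$ matches each prescribed pattern; this is entirely mechanical once one writes down the constraints $b,c$-in and $a,d$-out (or the analogous conditions) on the endpoints. No finiteness of $\Vv$ is required, since multiplicities take values in $\{0,1,2,\dots,\infty\}$ and all identities above are sums of at most two such multiplicities. The main potential pitfall, and the only step that demands care, is keeping straight the direction of the restriction principle: we are expressing multiplicities on the smaller index set as sums over the larger one, not vice versa.
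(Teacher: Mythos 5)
Your proof is correct and follows the same approach as the paper's first proof: use the restriction principle to expand $\mu_\Vv([a,b]\times[c,d]) = \langle [b,c] \mid \Vv_{a,b,c,d}\rangle$ over the refined index set $\{a,p,b,c,d\}$ into two terms according to whether $p$ is in the interval, then identify each term with one of the two summand measures via the restriction principle in the other direction. The paper phrases the two identifications more tersely using its dash/placeholder notation $\qno$ rather than naming each refinement $\Ss \subset \Tt$ explicitly, but the combinatorics is identical, and both versions avoid any finiteness hypothesis (unlike the alternating-sum proof) since only sums, never differences, are taken.
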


This additivity property is illustrated by the following figure
\begin{center}
\includegraphics[scale=0.75]{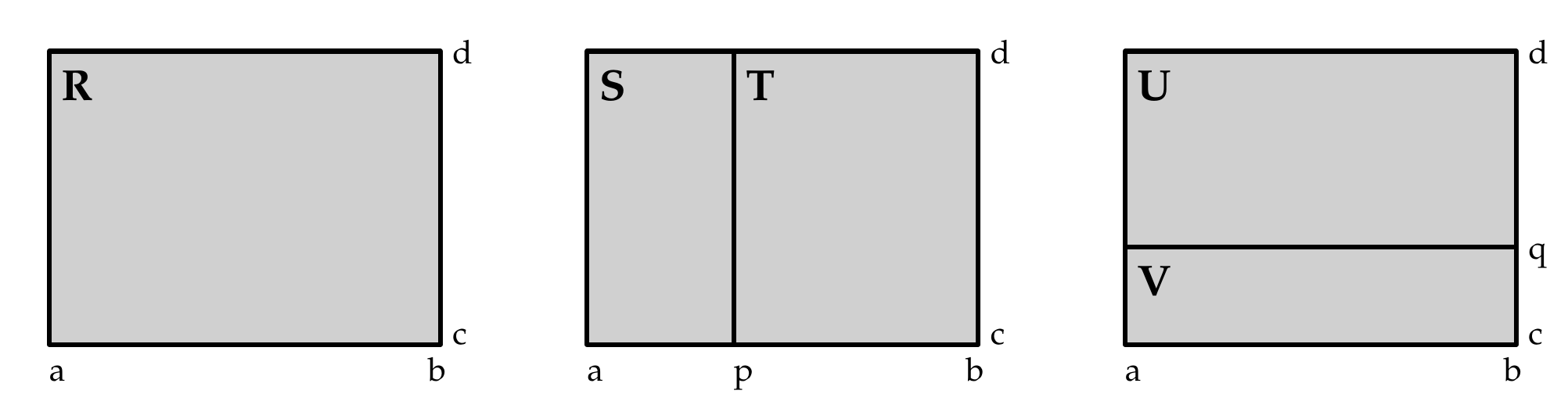}
\end{center}
where the claim is that $\mu_\Vv(R) = \mu_\Vv(S) + \mu_\Vv(T) = \mu_\Vv(U) + \mu_\Vv(V)$.

\begin{proof}[First proof]
Let $a < p < b \leq c < q < d$. Then we calculate
\begin{align*}
\mu_\Vv([a,b] \times [c,d])
&=
\langle\,
  \qoff{a}\qem\qno\qem\qon{b}\qem\qon{c}\qem\qoff{d}
\,\rangle
\\
&=
\langle\,
  \qoff{a}\qem\qon{p}\qem\qon{b}\qem\qon{c}\qem\qoff{d}
\,\rangle
+
\langle\,
  \qoff{a}\qem\qoff{p}\qem\qon{b}\qem\qon{c}\qem\qoff{d}
\,\rangle
\\
&=
\langle\,
  \qoff{a}\qem\qon{p}\qem\qno\qem\qon{c}\qem\qoff{d}
\,\rangle
+
\langle\,
  \qno\qem\qoff{p}\qem\qon{b}\qem\qon{c}\qem\qoff{d}
\,\rangle
\\
&=
\mu_\Vv([a,p]\times[c,d]) + \mu_\Vv([p,b]\times[c,d])
\end{align*}
for additivity with respect to a horizontal split, and
\begin{align*}
\mu_\Vv([a,b] \times [c,d])
&=
\langle\,
  \qoff{a}\qem\qon{b}\qem\qon{c}\qem\qno\qem\qoff{d}
\,\rangle
\\
&=
\langle\,
  \qoff{a}\qem\qon{b}\qem\qon{c}\qem\qoff{q}\qem\qoff{d}
\,\rangle
+
\langle\,
  \qoff{a}\qem\qon{b}\qem\qon{c}\qem\qon{q}\qem\qoff{d}
\,\rangle
\\
&=
\langle\,
  \qoff{a}\qem\qon{b}\qem\qon{c}\qem\qoff{q}\qem\qno
\,\rangle
+
\langle\,
  \qoff{a}\qem\qon{b}\qem\qno\qem\qon{q}\qem\qoff{d}
\,\rangle
\\
&=
\mu_\Vv([a,b] \times [c,q]) + \mu_\Vv([a,b] \times [q,d])
\end{align*}
for additivity with respect to a vertical split.
\end{proof}

\begin{proof}[Second proof, assuming $\rk_b^c < \infty$.]
The alternating sum formula (Proposition~\ref{prop:rankformula}) gives
\[
\rk_b^c - \rk_a^c - \rk_b^d + \rk_a^d
=
(\rk_p^c - \rk_a^c - \rk_p^d + \rk_a^d)
+
(\rk_b^c - \rk_p^c - \rk_b^d + \rk_p^d)
\]
and
\[
\rk_b^c - \rk_a^c - \rk_b^d + \rk_a^d
=
(\rk_b^c - \rk_a^c - \rk_b^q + \rk_a^q)
+
(\rk_b^q - \rk_a^q - \rk_b^d + \rk_a^d)
\]
as required. Note that $\rk_b^c < \infty$ implies that $\rk_p^c, \rk_b^q < \infty$, so the formula is valid for all the rectangles involved.
\end{proof}

This second proof is particularly transparent when drawn geometrically in the plane: the $+$ and $-$ signs at the corners of the rectangles cancel in a pleasant way.

\begin{proof}[Third proof, assuming $\Vv$ is decomposable.]
By Corollary~\ref{cor:Rcount}, the measure of a rectangle is equal to the number of interval summands whose corresponding decorated points lie in the rectangle. Additivity now follows from the elementary observation that a decorated point in~$R$ belongs to exactly one of $S$ and~$T$, and to exactly one of $U$ and~$V$.
\end{proof}

We finish this section with two further descriptions of $\mu_\Vv([a,b]\times[c,d])$.

\begin{proposition}
\label{prop:localize}
We have the following formulae:
\begin{align*}
\langle \qoff{a}\qem\qon{b}\qem\qon{c}\qem\qoff{d} \mid \Vv \rangle
&=
\dim \left[ 
\frac{\img(v_b^c) \cap \ker(v_c^d)}{\img(v_a^c) \cap \ker(v_c^d)}
\right]
\\
&=
\dim \left[ 
\frac{\ker(v_b^d)}{\ker(v_b^c) + \img(v_a^b) \cap \ker(v_b^d)}
\right]
\end{align*}
\end{proposition}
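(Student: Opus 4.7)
The plan is to establish the second equality intrinsically via the first isomorphism theorem, then reduce the first equality to a routine case analysis on an interval decomposition of $\Vv_{a,b,c,d}$.

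For the second equality, I would observe that the composition law $v_c^d \circ v_b^c = v_b^d$ lets us restrict $v_b^c$ to a surjection
\[
\bar v : \ker(v_b^d) \twoheadrightarrow \img(v_b^c) \cap \ker(v_c^d),
\]
whose kernel is $\ker(v_b^c)$ (which is contained in $\ker(v_b^d)$ by the same composition law). Using $v_a^c = v_b^c \circ v_a^b$ and a short diagram chase, the preimage under $\bar v$ of $\img(v_a^c) \cap \ker(v_c^d)$ works out to $\ker(v_b^c) + \img(v_a^b) \cap \ker(v_b^d)$, so the first isomorphism theorem applied to $\bar v$ matches the two quotients on the nose.

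For the first equality, I would invoke Theorem~\ref{thm:gabriel+}(1) to decompose $\Vv_{a,b,c,d}$ as a direct sum of interval summands (there are ten types on four ordered points). By Proposition~\ref{prop:m-additivity}, the left-hand side is exactly the number of $\Ii^{[b,c]}$ summands. For each of the ten interval types one checks, at position $c$, which summands contribute to $\img(v_b^c)$, $\img(v_a^c)$, and $\ker(v_c^d)$; the upshot is that $\Ii^{[a,c]}$ and $\Ii^{[b,c]}$ summands span $\img(v_b^c) \cap \ker(v_c^d)$, whereas only $\Ii^{[a,c]}$ summands span $\img(v_a^c) \cap \ker(v_c^d)$. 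The quotient therefore has dimension equal to the number of $\Ii^{[b,c]}$ summands, as required.

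The main obstacle is the preimage identification for the first isomorphism theorem; the case analysis itself is transparent because each summand space is at most one-dimensional and images, kernels, and intersections all respect the direct sum. When $\rk_b^c < \infty$, the first equality can alternatively be read off from Proposition~\ref{prop:rankformula} after noting that $\dim(\img(v_b^c) \cap \ker(v_c^d)) = \rk_b^c - \rk_b^d$ and analogously for $\img(v_a^c) \cap \ker(v_c^d)$, but this route imposes a finiteness hypothesis absent from the statement.
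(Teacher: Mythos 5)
Your proof is correct. The paper itself does not supply an argument here: it cites section~5.1 of Carlsson--de~Silva, where the two formulae are derived by the ``localisation'' technique, which extracts one expression per choice of distinguished index ($c$ for the first formula, $b$ for the second). You organise the proof differently and more self-containedly: you first prove the two quotients are abstractly isomorphic, via the restriction of $v_b^c$ to a surjection $\bar v : \ker(v_b^d) \twoheadrightarrow \img(v_b^c) \cap \ker(v_c^d)$ with kernel $\ker(v_b^c)$, then apply the correspondence theorem after checking $\bar v^{-1}\bigl(\img(v_a^c)\cap\ker(v_c^d)\bigr) = \ker(v_b^c) + \img(v_a^b)\cap\ker(v_b^d)$. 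I verified the preimage computation and it is correct in both inclusions (the nontrivial direction uses $v_b^d(v_a^b(z)) = v_b^d(x) - v_b^d(k) = 0$, so the $\img(v_a^b)$-component lands in $\ker(v_b^d)$). You then establish the first formula by decomposing $\Vv_{a,b,c,d}$ into the ten interval types via Theorem~\ref{thm:gabriel+}(1); since each interval summand is a submodule, the spaces $\img(v_b^c)$, $\img(v_a^c)$, $\ker(v_c^d)$ and their intersections all split along the decomposition, and the summandwise check correctly isolates $\Ii^{[a,c]}, \Ii^{[b,c]}$ in the numerator and $\Ii^{[a,c]}$ alone in the denominator. The localisation route gives each formula intrinsically without comparing them; your route proves one formula via decomposition and transports it to the other via the first isomorphism theorem. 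Both are sound; yours avoids any appeal beyond material already proved in the paper, and your closing remark about the finiteness required by the alternative rank-nullity argument is apt, since the stated proposition carries no finiteness hypothesis.
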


\begin{proof} This is covered, for instance, in the localisation discussion in section~{5.1} of~\cite{Carlsson_deSilva_2010}. The two formulae are obtained by localising at $c,b$ respectively.
\end{proof}

Proposition~\ref{prop:localize} expresses the measure of a rectangle as the dimension of a vector space constructed functorially from~$\Vv$. (Ostensibly there are two vector spaces, one for each formula, but the map $v_b^c$ induces a natural isomorphism between them.)
The functoriality has its uses, but in other regards this characterisation is quite hard to use. For instance, additivity is not at all obvious in this formulation.

\subsection{Abstract r-measures}
\label{subsec:abstract-R}

We now consider rectangle measures more abstractly. Persistence measures are of course our primary example, but the general formulation allows for many other situations.

For ease of exposition, we initially work in the plane $\Rr^2$ rather than the extended plane $\RR^2$. The picture is completed in section~\ref{subsec:infinity} when we discuss the points at infinity.

\begin{definition*}
Let $\Dd$ be a subset of $\Rr^2$. Define
\[
\rect(\Dd) =
\{ [a,b]\times[c,d] \subset \Dd \mid a < b \;\text{and}\; c < d \}
\]
(the set of closed rectangles contained in~$\Dd$).
A {\bf rectangle measure} or {\bf r-measure} on $\Dd$ is a function
\[
\mu : \rect(\Dd) \to \left\{0, 1, 2, \dots \right\} \cup \{\infty\}
\]
which is additive under vertical and horizontal splitting (as in Proposition~\ref{prop:splitting}).
\end{definition*}

\begin{proposition}
\label{prop:mu-properties}
Let $\mu$ be an r-measure on $\Dd \subseteq \Rr^2$. Then $\mu$ is:

(Finitely additive)
If $R \in \rect(\Dd)$ can be written as a union $R = R_1 \cup \dots \cup R_k$ of rectangles with disjoint interiors, then
$\mu(R) = \mu(R_1) + \dots + \mu(R_k)$.

(Monotone)
If $R \subseteq S$ then $\mu(R) \leq \mu(S)$.
\end{proposition}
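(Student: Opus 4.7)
The plan is to derive both statements from the two splitting axioms by a grid-refinement argument, tackling finite additivity first and deducing monotonicity as a corollary.

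For finite additivity, let $R = [a,b] \times [c,d]$ be decomposed as $R = R_1 \cup \dots \cup R_k$ with pairwise disjoint interiors. Collect the $x$-coordinates of the left and right edges of all the $R_i$ into a sorted list $a = x_0 < x_1 < \dots < x_m = b$, and similarly the $y$-coordinates into $c = y_0 < y_1 < \dots < y_n = d$. This defines a grid partition of $R$ into the small rectangles $G_{ij} = [x_{i-1}, x_i] \times [y_{j-1}, y_j]$. Applying the horizontal splitting axiom $m-1$ times to $R$ along the vertical lines $x = x_i$, and then the vertical splitting axiom to each resulting column along the horizontal lines $y = y_j$, I obtain
\[
\mu(R) = \sum_{i=1}^{m} \sum_{j=1}^{n} \mu(G_{ij}).
\]
The same argument applied to each $R_\ell$ (using the subset of grid lines that meet $R_\ell$) gives $\mu(R_\ell) = \sum_{G_{ij} \subseteq R_\ell} \mu(G_{ij})$. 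Because the $R_\ell$ have disjoint interiors and cover $R$, each grid cell $G_{ij}$ is contained in exactly one $R_\ell$, so summing over $\ell$ regroups the right-hand sides and yields $\sum_\ell \mu(R_\ell) = \mu(R)$.

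For monotonicity, suppose $R = [a,b]\times[c,d] \subseteq S = [a',b']\times[c',d']$ with $a' \leq a$, $b \leq b'$, $c' \leq c$, $d \leq d'$. Extending the four edges of $R$ across $S$ partitions $S$ into at most nine closed rectangles with pairwise disjoint interiors, one of which is $R$ itself; degenerate cases (when one or more inequalities are equalities) simply produce fewer cells. By the finite additivity just established, $\mu(S)$ is the sum of $\mu(R)$ and the non-negative measures of the other cells, so $\mu(R) \leq \mu(S)$.

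The main obstacle is the bookkeeping in the first step: one has to be sure that the iterated splitting is legitimate (each intermediate sub-rectangle still lies in $\rect(\Dd)$, which is automatic since all cells are contained in $R \in \rect(\Dd)$), and that the disjoint-interior hypothesis really does force each grid cell $G_{ij}$ to lie in a unique $R_\ell$. Once the combinatorics of the grid is pinned down, both conclusions follow mechanically.
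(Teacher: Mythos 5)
Your proof is correct and takes essentially the same route as the paper: both establish finite additivity by refining the given decomposition into a common product grid and applying the two splitting axioms iteratively (first in one direction, then the other), and both deduce monotonicity by extending the edges of $R$ to tile $S$ with at most nine rectangles and invoking non-negativity.
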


\begin{proof} (Finitely additive)
Let $R = [a,b] \times [c,d]$.
By induction and the vertical splitting property, it follows that finite additivity holds for decompositions of the form
\[
R = \bigcup_i  R_i
\]
where $R_i = [a_i,a_{i+1}] \times [c,d]$ with $a = a_1 < a_2 < \dots < a_m = b$.

By induction and the horizontal splitting property, it then follows that finite additivity holds for `product' decompositions
\[
R = [a,b] \times [c,d] = \bigcup_{i,j} R_{ij}
\]
where $R_{ij} = [a_i, a_{i+1}] \times [c_j, c_{j+1}]$ with
$a = a_1 < a_2 < \dots < a_m = b$ and $c = c_1 < c_2 < \dots < c_n = d$.

For an arbitrary decomposition $R = R_1 \cup \dots \cup R_k$, the result follows by considering a product decomposition of~$R$ by which each $R_i$ is itself product-decomposed.

(Monotone)
Decompose $S$ into a collection of interior disjoint rectangles $R, R_1, \dots, R_{k-1}$, one of which is $R$. (This can be done with at most 9 rectangles using a product decomposition.) Then
\begin{eqnarray*}
\mu(S)
&=&  \mu(R) + \mu(R_1) + \dots + \mu(R_{k-1})
\\
&\geq& \mu(R)
\end{eqnarray*}
by finite additivity and the fact that $\mu \geq 0$.
\end{proof}

Here is one more plausible-and-true statement about abstract r-measures.

\begin{proposition}[Subadditivity]
\label{prop:subadditive}
Let $\mu$ be an r-measure on $\Dd \subseteq \Rr^2$. If a rectangle $R \in \rect(\Dd)$ is contained in a finite union
\[
R \subseteq R_1 \cup \dots \cup R_k
\]
of rectangles $R_i \in \rect(\Dd)$, then
\[
\mu(R) \leq \mu(R_1) + \dots + \mu(R_k).
\]
\end{proposition}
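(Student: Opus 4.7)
The strategy is to refine everything to a common rectangular grid and reduce subadditivity to finite additivity (Proposition~\ref{prop:mu-properties}). First, I would collect all the $x$-coordinates $a, b, a_1, b_1, \dots, a_k, b_k$ appearing as left or right edges of $R, R_1, \dots, R_k$, and sort them as $x_1 < x_2 < \dots < x_m$; similarly sort the top/bottom edges as $y_1 < \dots < y_n$. These produce atomic cells $C_{ij} = [x_i, x_{i+1}] \times [y_j, y_{j+1}]$. Because the grid lines include the edges of every rectangle in sight, each of $R$ and each $R_\ell$ is exactly a union of atomic cells with pairwise disjoint interiors, and finite additivity gives
\[
\mu(R) = \sum_{C_{ij} \subseteq R} \mu(C_{ij}), \qquad \mu(R_\ell) = \sum_{C_{ij} \subseteq R_\ell} \mu(C_{ij}).
\]

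The key geometric step — and the one I expect to be the main obstacle — is the claim that every atomic cell $C_{ij}$ contained in $R$ is contained in at least one $R_\ell$. The point is that, by choice of grid, any atomic cell $C_{ij}$ either lies inside a given $R_\ell$ (when $a_\ell \leq x_i$, $x_{i+1} \leq b_\ell$, $c_\ell \leq y_j$, $y_{j+1} \leq d_\ell$) or has its interior disjoint from $R_\ell$ (since any overlap of interiors would force $R_\ell$ to overlap $(x_i,x_{i+1})$ or $(y_j,y_{j+1})$ with a grid coordinate strictly inside, contradicting the fact that $a_\ell, b_\ell, c_\ell, d_\ell$ are grid coordinates). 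Thus, if $C_{ij} \subseteq R \subseteq R_1 \cup \dots \cup R_k$, the open interior of $C_{ij}$ is covered by those $R_\ell$ that actually contain $C_{ij}$, so at least one such $R_\ell$ exists.

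Finally, for each atomic cell $C_{ij} \subseteq R$, I would pick some index $\ell(i,j)$ with $C_{ij} \subseteq R_{\ell(i,j)}$, and compute
\[
\mu(R) = \sum_{C_{ij} \subseteq R} \mu(C_{ij}) = \sum_{\ell=1}^{k} \;\sum_{\substack{C_{ij} \subseteq R \\ \ell(i,j) = \ell}} \mu(C_{ij}) \;\leq\; \sum_{\ell=1}^{k} \sum_{C_{ij} \subseteq R_\ell} \mu(C_{ij}) \;=\; \sum_{\ell=1}^{k} \mu(R_\ell),
\]
where the middle inequality uses only that the cells being summed on the left form a subfamily of those composing $R_\ell$, together with $\mu \geq 0$. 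This yields the claimed subadditivity.
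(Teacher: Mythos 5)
Your proof is correct and takes essentially the same approach as the paper: build a common grid from the coordinates of all the rectangles, use finite additivity to write each rectangle's measure as a sum over atomic cells, and observe that every atomic cell of $R$ lies in some $R_\ell$. You have simply spelled out in more detail the step the paper states briefly as ``each tile belonging to $R$ must also belong to one or more of the $R_i$.''
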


\begin{proof} Let
\[
a_1 < a_2 < \dots < a_m
\]
include all the $x$-coordinates of the corners of all the rectangles, and let
\[
c_1 < c_2 < \dots < c_n
\]
include all the $y$-coordinates. Each rectangle is then tiled as a union of pieces
\[
[a_i,a_{i+1}] \times [c_j,c_{j+1}]
\]
with disjoint interiors, and the measure of the rectangle is the sum of the measures of its tiles, by additivity. 
Since each tile belonging to~$R$ must also belong to one or more of the~$R_i$, the inequality follows.
\end{proof}

\subsection{Equivalence of measures and diagrams}
\label{subsec:equivalence}
We wish to establish a correspondence between r-measures and decorated  diagrams. The task of defining a continuous persistence diagram can then be replaced by the simpler task of defining an r-measure. For this to work, the measure has to be finite.

A useful notion is the {\bf r-interior} of a region $\Dd \subseteq \Rr^2$, defined
\[
\drint = 
\left\{
(p^*,q^*) \mid
\text{there exists $R \in \rect(\Dd)$ such that $(p^*,q^*) \in R$}
\right\}.
\]
This is the set of decorated points that are `seen' by the rectangles in~$\Dd$. The decorated diagram will be a multiset in $\drint$. Clearly an r-measure cannot tell us what happens outside~$\drint$.

The {\bf interior} of~$\Dd$ in the classical sense is written $\dint$. It may be defined
\[
\dint =
\left\{
(p,q) \mid
\text{there exists $R \in \rect(\Dd)$ such that $(p,q) \in R\intsup$}
\right\},
\]
supposing that we already agree that the interior of a closed rectangle $R = [a,b]\times[c,d]$ is the open rectangle $R\intsup = (a,b)\times(c,d)$.
%
The undecorated diagram will be a multiset in~$\dint$.

\begin{theorem}[The equivalence theorem]
\label{thm:equivalence}
Let $\Dd \subseteq \Rr^2$.
There is a bijective correspondence between:
\begin{vlist}
\item
Finite r-measures $\mu$ on~$\Dd$. Here `finite' means that $\mu(R) < \infty$ for every $R \in \rect(\Dd)$.

\item
Locally finite multisets $\Aa$ in~$\drint$. Here `locally finite' means that $\card(\Aa|_R) < \infty$ for every $R \in \rect(\Dd)$.
\end{vlist}
The measure $\mu$ corresponding to a multiset~$\Aa$ is related to it by the formula
\begin{equation} \label{eq:sum1}
\mu(R) = \card(\Aa|_R)
\end{equation}
for every $R \in \rect(\Dd)$.
\end{theorem}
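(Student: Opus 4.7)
The theorem asserts a bijection given by $\mu(R) = \card(\Aa|_R)$; I sketch each direction.

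\emph{Multiset to measure.} Given a locally finite multiset $\Aa$ in $\drint$, the function $\mu_\Aa(R) = \card(\Aa|_R)$ is finite by hypothesis. The only work is additivity under splitting $R = [a,b]\times[c,d]$ at a line $x = m$ (the vertical case being symmetric). This reduces to the assertion that each decorated point $(p^*, q^*) \in R$ belongs to exactly one of the two halves. Unpacking the characterization $[b,c] \subseteq \lgroup p^*, q^* \rgroup \subseteq (a,d)$, the only subtle case is $p = m$, where the decoration itself resolves the ambiguity: $(m^-, q^*)$ lies in $[a,m]\times[c,d]$ and $(m^+, q^*)$ lies in $[m,b]\times[c,d]$.

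\emph{Measure to multiset.} Given a finite r-measure $\mu$, I define
\[
\mult(p^*, q^*) = \min\{\mu(R) : R \in \rect(\Dd) \text{ and } (p^*, q^*) \in R\}.
\]
Since $\mu$ is monotone and integer-valued, this minimum is attained on some small rectangle $R_0$, and every rectangle $R$ with $(p^*, q^*) \in R \subseteq R_0$ also achieves it. Set $\Aa$ to be the multiset $\{(p^*, q^*) : \mult(p^*, q^*) > 0\}$. Fix $R \in \rect(\Dd)$. Around any finite collection of positive-multiplicity points $(p^*_i, q^*_i) \in R$ I choose interior-disjoint small rectangles $R_i$ with $\mu(R_i) = \mult(p^*_i, q^*_i)$, each containing $(p^*_i, q^*_i)$ but no other positive-multiplicity point. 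Refining by the product grid generated by the $x$- and $y$-coordinates of all the $R_i$ and of $R$, finite additivity (Proposition~\ref{prop:mu-properties}) gives
\[
\mu(R) = \sum_i \mult(p^*_i, q^*_i) + \sum_j \mu(S_j),
\]
where the $S_j$ are the remaining grid cells, each \emph{empty} in the sense of containing no positive-multiplicity decorated point. The inequality $\sum_i \mult(p^*_i, q^*_i) \leq \mu(R)$ already follows, yielding local finiteness of $\Aa$. The full identity $\mu(R) = \card(\Aa|_R)$ then reduces to showing $\mu(S_j) = 0$ for every empty $S_j$.

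The crux is therefore the following \emph{key lemma}: if $S \in \rect(\Dd)$ contains no decorated point of positive multiplicity, then $\mu(S) = 0$. I would prove this by contradiction using bisection. Assuming $\mu(S) \geq 1$, recursively halve $S$ along alternating axes, retaining at each step a half of positive measure (existence by additivity). This produces nested $S_n = [a_n, b_n] \times [c_n, d_n]$ with $\mu(S_n) \geq 1$, shrinking to a single point $(p, q)$. I build a decoration $p^* \in \{p^+, p^-\}$ from the limiting behavior of the edges: $p^+$ if eventually $p = a_n$, $p^-$ if eventually $p = b_n$, either choice otherwise; build $q^*$ symmetrically. One then verifies that $(p^*, q^*) \in S_n$ for every $n$, so $\mult(p^*, q^*) \geq 1$, contradicting the hypothesis.

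The main obstacle is the decoration choice in the key lemma: the tick-inward membership rule forces a careful case analysis keyed to whether the shrinking edges eventually pin to $p$ or approach it strictly. The integer-valuedness of $\mu$ is essential throughout---it both ensures the defining minimum for $\mult$ is attained, and prevents positive measure from dissipating down the bisection so that each $S_n$ keeps measure $\geq 1$.
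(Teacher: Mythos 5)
Your plan is essentially the paper's (define $\mult$ as a minimum, produce a nested sequence of rectangles with persistent positive measure, extract a decoration at the limit point), just organized through a ``key lemma on empty rectangles'' rather than the paper's induction on $k = \mu(R)$. The trouble is in the key lemma.

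You produce nested $S_n \to (p,q)$ with $\mu(S_n) \geq 1$, fix a decoration $(p^*,q^*)$, verify $(p^*,q^*) \in S_n$ for all $n$, and conclude $\mult(p^*,q^*) \geq 1$. That last inference does not follow. By definition $\mult(p^*,q^*) = \min\{\mu(R) : (p^*,q^*)\in R\}$ is a minimum over \emph{all} rectangles containing the decorated point; knowing $(p^*,q^*) \in S_n$ with $\mu(S_n) \geq 1$ gives only the \emph{upper} bound $\mult(p^*,q^*) \leq \mu(S_n)$. To get the lower bound you need cofinality: every $R$ containing $(p^*,q^*)$ must contain some $S_n$, so that $\mu(R) \geq \mu(S_n) \geq 1$. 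That cofinality genuinely fails in the case you waved at as ``either choice otherwise.'' If $a_n < p < b_n$ for all $n$, then for the decoration $p^+$ a rectangle of the form $R = [p,\, p+\epsilon]\times[q,\,q+\epsilon]$ contains $(p^+,q^+)$ but contains no $S_n$ (every $S_n$ spills to the left of $p$), and $\mu(R)$ could well be $0$; symmetrically for $p^-$. So the decoration you pick may have multiplicity zero even though the $S_n$ all have positive measure.

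The repair is exactly what the paper does. When $(p,q)$ lies in the interior of the $S_n$, further split each $S_n$ at $(p,q)$ into four corner sub-rectangles $S_n^{\pm\pm}$. Each of the four sequences is nested and hence has eventually-constant measure, and the four limits sum to $\mu(S_n) \geq 1$ for large $n$, so some $S_n^{\ast\ast}$ keeps measure $\geq 1$. These corner rectangles \emph{are} cofinal in the set of rectangles containing the corresponding corner decoration $(p^{\ast},q^{\ast})$ (this is the content of Lemma~\ref{lemma:mult-2}), so $\mult(p^{\ast},q^{\ast}) = \lim_n \mu(S_n^{\ast\ast}) \geq 1$. With that fix your Part~2 decomposition $\mu(R) = \sum_i \mult(p_i^*,q_i^*) + \sum_j \mu(S_j)$ gives the theorem, and the uniqueness direction (which you omitted) is then short: a corner rectangle around any $(p^*,q^*)$, shrunk until it contains no other positive-mass point, has measure equal to $\mult(p^*,q^*)$, pinning down the multiplicity.
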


\begin{remark}
We can write equation~\eqref{eq:sum1} equivalently as
\begin{equation}
\label{eq:sum2}
\mu(R) = \sum_{(p^*,q^*) \in R} \mult(p^*,q^*),
\end{equation}
where
\[
\mult: \drint \to \{0, 1, 2, \dots \}
\]
is the multiplicity function for~$\Aa$.
\end{remark}

The theorem leads immediately to the following definitions. Let $\mu$ be a finite r-measure on a region~$\Dd \subset \Rr^2$.

(i)
The {\bf decorated diagram} of~$\mu$ is the unique locally finite multiset $\Dgm(\mu)$ in~$\drint$ such that
\[
\mu(R) = \card(\Dgm(\mu)|_R)
\]
for every $R \in \rect(\Dd)$.

(ii)
The {\bf undecorated diagram} of~$\mu$ is the locally finite multiset in $\dint$
\[
\dgm(\mu) = \left\{ (p,q) \mid (p^*, q^*) \in \Dgm(\mu) \right\}
\cap \dint
\]
obtained by forgetting the decorations on the points and restricting to the interior.

\begin{remark}
Note that $\dgm$ is locally finite in~$\dint$, but not necessarily
%
%
locally finite in~$\Rr^2$---it may have accumulation points on the boundary of~$\Dd$.
\end{remark}

\begin{proof}[Proof of Theorem~\ref{thm:equivalence}]
One direction of the correspondence is easy. If $\Aa$ is a multiset on $\drint$ then the function $\mu(R)$ on rectangles defined by equation~\eqref{eq:sum1} is indeed an r-measure. It is finite if $\Aa$ is locally finite. To verify additivity, suppose that a rectangle $R$ is split vertically or horizontally into two rectangles $R_1, R_2$. Notice that every decorated point $(p^*,q^*) \in R$ belongs to exactly one of $R_1, R_2$. It follows that
\[
\mu(R)
= \card(\Aa|_R)
= \card(\Aa|_{R_1}) + \card(\Aa|_{R_2})
= \mu(R_1) + \mu(R_2),
\]
as required.

The reverse direction takes more work. Given an r-measure $\mu$ we will {(1)}~construct a multiset $\Aa$ in~$\drint$, (2)~show that $\mu$ and $\Aa$ are related by equation~\eqref{eq:sum1}, and (3)~show that $\Aa$ is unique.

In practice we will work with the multiplicity function $\mult$ and equation~\eqref{eq:sum2}, rather than referring to $\Aa$ directly.

{\bf\small Step 1.}
Let $\mu$ be a finite r-measure on $\Dd$. For $(p^*,q^*)$ in $\drint$, define
\begin{equation}
\mult(p^*,q^*) =
\min \left\{ \mu(R) \mid R \in \rect(\Dd),\, (p^*, q^*) \in R \right\}.
\label{def:mult}
\end{equation}
Note that the minimum is attained because the set is nonempty and $\mu$ takes values in the natural numbers.

Here is an alternative characterisation. Rather than minimising over all rectangles, we can take the limit through a decreasing sequence of rectangles:

\begin{lemma}
\label{lemma:mult-2}
Let $(\xi_i)$ and $(\eta_i)$ be non-increasing sequences of positive real numbers which tend to zero as $i \to \infty$. Then
\[
\mult(p^+,q^+) =
\lim_{i \to \infty} \mu([p,p+\xi_i] \times [q,q+\eta_i]),
\]
and similarly
\begin{align*}
\mult(p^+,q^-) &=
\lim_{i \to \infty} \mu([p,p+\xi_i] \times [q-\eta_i,q]),
\\
\mult(p^-,q^+) &=
\lim_{i \to \infty} \mu([p-\xi_i,p] \times [q,q+\eta_i]),
\\
\mult(p^-,q^-) &=
\lim_{i \to \infty} \mu([p-\xi_i,p] \times [q-\eta_i,q]).
\end{align*}
\end{lemma}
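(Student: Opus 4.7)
The plan is to prove the identity for $(p^+, q^+)$ explicitly and note that the other three cases follow by the same argument with signs flipped. The equality will be obtained by sandwiching $\lim_i \mu(R_i)$, where $R_i = [p, p+\xi_i] \times [q, q+\eta_i]$, between $\mult(p^+, q^+)$ and itself from two directions: a trivial lower bound from the definition of $\mult$, and an upper bound using monotonicity together with the fact that any rectangle witnessing the minimum in \eqref{def:mult} eventually contains all $R_i$.

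First I would check that the sequence $R_i$ makes sense. Recall that $(p^+, q^+) \in [a,b]\times[c,d]$ iff $a \leq p < b \leq c \leq q < d$ (by translating the definition of $\in$ for the interval $J = (p,q]$ into constraints on $a,b,c,d$). For $R_i = [p, p+\xi_i] \times [q, q+\eta_i]$ this requires $\xi_i > 0$, $\eta_i > 0$, and $p + \xi_i \leq q$, all of which hold for $i$ large since $\xi_i, \eta_i \to 0$ and $p < q$. Furthermore, since $(p^+, q^+) \in \drint$, there exists some fixed $R^* = [a^*,b^*]\times[c^*,d^*] \in \rect(\Dd)$ containing $(p^+, q^+)$; and since $a^* \leq p$, $b^* > p$, $c^* \leq q$, $d^* > q$, the shrinking rectangles $R_i$ are eventually contained in $R^*$ and hence in $\Dd$, so $R_i \in \rect(\Dd)$ for all sufficiently large $i$.

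Next I would extract the two bounds. For the lower bound, $(p^+, q^+) \in R_i$ combined with the definition \eqref{def:mult} gives $\mu(R_i) \geq \mult(p^+, q^+)$. For the upper bound, let $R \in \rect(\Dd)$ be any rectangle realising the minimum in \eqref{def:mult}, say $R = [a,b]\times[c,d]$ with $a \leq p < b \leq c \leq q < d$. Then for all $i$ large enough, $p+\xi_i \leq b$ and $q+\eta_i \leq d$, so $R_i \subseteq R$; by the monotonicity clause of Proposition~\ref{prop:mu-properties}, $\mu(R_i) \leq \mu(R) = \mult(p^+, q^+)$. Also, $R_{i+1} \subseteq R_i$ gives that $\mu(R_i)$ is non-increasing in $i$, so the limit exists and equals the eventual common value, yielding $\lim_i \mu(R_i) = \mult(p^+, q^+)$.

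I do not expect any real obstacle here; the argument is pure bookkeeping with the membership relation $(p^*,q^*) \in R$ and the monotonicity of $\mu$. The only point requiring a moment of care is the translation of the decoration signs into the inequalities defining containment, since the orientation of the shrinking rectangle in each of the four cases is exactly the one that places the tick of $(p^*, q^*)$ inside $R_i$; once that is set up correctly, the other three cases (with $[p-\xi_i, p]$ or $[q-\eta_i, q]$ replacing the respective factors) are verbatim repetitions of the argument given for $(p^+, q^+)$.
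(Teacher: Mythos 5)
Your proof is correct and takes essentially the same approach as the paper: both arguments use the cofinality of $(R_i)$ among rectangles containing $(p^+,q^+)$ (you invoke it via a minimiser $R$ of \eqref{def:mult}, the paper via an arbitrary such $R$) together with monotonicity of $\mu$ to squeeze $\lim_i\mu(R_i)$ against $\mult(p^+,q^+)$. The translation of the decoration into the inequalities $a \leq p < b \leq c \leq q < d$ is also exactly right.
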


\begin{proof}
The key observation is that the sequence of rectangles $R_i = [p,p+\xi_i] \times [q,q+\eta_i]$ is {cofinal} in the set of rectangles $R$ containing $(p^+,q^+)$. In other words, for any such $R$ we have $R_i \subseteq R$ for all sufficiently large~$i$.

By monotonicity, the sequence of nonnegative integers $\mu(R_i)$ is non-increasing, and hence eventually stabilises to a limit. Then
\[
\mult(p^+, q^+)
\leq \min_i \mu(R_i)
= \lim_{i\to\infty} \mu(R_i)
\leq \mu(R)
\]
for any $R$ containing $(p^+, q^+)$. Taking the minimum over all~$R$, the right-hand side becomes $\mult(p^+,q^+)$ and hence by squeezing
\[
\mult(p^+,q^+) = \lim_{i\to\infty} \mu(R_i).
\]
The other three cases of the lemma are similar.
\end{proof}

We return to the main proof.

{\bf\small Step 2.} Having defined $\mult(p^*,q^*)$, we now show that this is the `correct' definition, meaning that equation~\eqref{eq:sum2} is satisfied. We have seen already that $\mult$ corresponds to an r-measure
\begin{equation}
\nu(R) = \sum_{(p^*,q^*) \in R} \mult(p^*,q^*),
\end{equation}
and it remains to show (for this step) that $\nu = \mu$.
We prove this by induction on $k = \mu(R)$.

{\bf\small Base case.} $\mu(R) = 0$. Then for every $(p^*,q^*) \in R$ we have
\[
0 \leq \mult(p^*,q^*) \leq \mu(R) = 0
\]
so $\nu(R) = 0$. 

{\bf\small Inductive step.} Suppose $\mu(R) = \nu(R)$ for every rectangle~$R$ with $\mu(R) < k$. Consider a rectangle $R_0$ with $\mu(R_0) = k$. We must show that $\nu(R_0) = k$.

Split the rectangle into four equal quadrants $S_1, S_2, S_3, S_4$. Certainly
\begin{align*}
\mu(R_0) &= \mu(S_1) + \mu(S_2) + \mu(S_3) + \mu(S_4)
\\ 
\nu(R_0) &= \nu(S_1) + \nu(S_2) + \nu(S_3) + \nu(S_4)
\end{align*}
by finite additivity (Proposition~\ref{prop:mu-properties}). If every quadrant satisfies $\mu(S_i) < k$, then by induction we deduce that $\mu(R_0) = \nu(R_0)$.
Otherwise, one of the quadrants has $\mu = k$ and the other three quadrants satisfy $\mu = 0$ (and hence $\nu = 0$). Let $R_1$ be the distinguished quadrant, so $\mu(R_1) = k$. It is now enough to show that $\nu(R_1) = k$.

We repeat the argument. Subdivide $R_i$ into four equal quadrants. Either all four quadrants satisfy the inductive hypothesis $\mu < k$, in which case we are done. Otherwise we find a quadrant $R_{i+1}$ with $\mu(R_{i+1}) = k$, and we are reduced to showing that $\nu(R_{i+1}) = k$.

In the worst case---the remaining unresolved case---this iteration never terminates and we obtain a sequence of closed rectangles
\[
R_0 \supset R_1 \supset R_2 \supset \dots
\]
each being a quadrant of the previous one, with $\mu(R_i) = k$.
Since the diameters of the rectangles tend to zero, their intersection $\bigcap_i R_i$ contains a single point $(r,s)$.

We are now in a position to show that $\nu(R_0)=k$, by evaluating the sum explicitly over all decorated points in~$R_0$.

First of all, consider decorated points that eventually leave the sequence $(R_i)$. Specifically, suppose that $(p^*,q^*) \in R_0$ but $(p^*,q^*) \in R_{i-1} - R_i$ for some~$i$. This means that $(p^*,q^*)$ belongs to one of the three quadrants of $R_{i-1}$ for which $\mu=0$. It follows immediately that $\mult(p^*,q^*) = 0$.

Thus, the only contribution to $\nu(R_0)$ comes from decorated points $(p^*,q^*)$ which belong to every rectangle in the sequence $(R_i)$. Clearly these must be decorated versions $(r^*,s^*)$ of the intersection point $(r,s)$. There are 4, 2 or 1 of them depending on how the nested sequence of rectangles converges to its limit. Here we illustrate the three cases:

\smallskip
\centerline{
\includegraphics[scale=0.5]{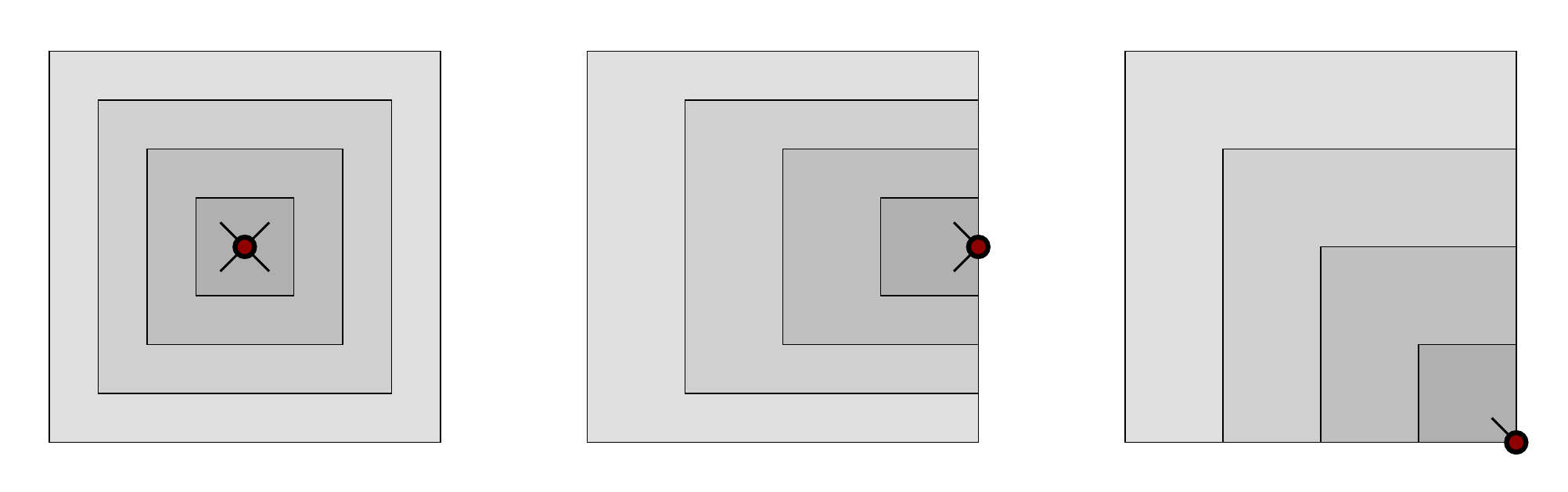}
}

Suppose first that $(r,s)$ lies in the interior of every rectangle $R_i$, so that all four decorated points $(r^+,s^+)$, $(r^+,s^-)$, $(r^-,s^+)$, $(r^-,s^-)$ belong to every $R_i$. Divide each $R_i$ into 4 subrectangles $R_i^{++}$, $R_i^{+-}$, $R_i^{-+}$, $R_i^{--}$, which share a common corner at $(r,s)$ so that each of the four decorated points $(r^*,s^*)$ belongs to one of the subrectangles in the obvious notation. By Lemma~\ref{lemma:mult-2},
\begin{alignat*}{2}
\mult(r^+,s^+) &= \lim_{i\to\infty} \mu(R_i^{++}),
\qquad
&\mult(r^+,s^-) &= \lim_{i\to\infty} \mu(R_i^{+-}),
\\
\mult(r^-,s^+) &= \lim_{i\to\infty} \mu(R_i^{-+}),
&\mult(r^-,s^-) &= \lim_{i\to\infty} \mu(R_i^{--}),
\end{alignat*}
and moreover each of these decreasing integer sequences eventually stabilises at its limiting value.
Thus, for sufficiently large~$i$,
\begin{alignat*}{6}
\nu(R_0)
  &= \mult(r^+,s^+) &&+ \mult(r^+,s^-) &&+ \mult(r^-,s^+) &&+ \mult(r^-,s^-) 
\\
  &= \mu(R_i^{++}) &&+ \mu(R_i^{+-}) &&+ \mu(R_i^{-+}) &&+ \mu(R_i^{--})
  &&= \mu(R_i)
  &&= k
\end{alignat*}
as required.

A similar argument (with fewer terms) can be made in the cases where only 2 or 1 of the decorated points $(r^*,s^*)$ belong to every $R_i$.
For instance, if $(r,s)$ lies on the interior of the right-hand edge of the rectangles $(R_i)$ for all sufficiently large~$i$, we split each rectangle into two parts $R_i^{-+}$ and $R_i^{--}$ and obtain
\[
\nu(R_0)
= \mult(r^-,s^+) + \mult(r^-,s^-) 
= \mu(R_i^{-+}) + \mu(R_i^{--})
= \mu(R_i)
= k
\]
in the same way. In this case $(r^+,s^+)$ and $(r^+,s^-)$ eventually leave (or were never in) the sequence $(R_i)$ and therefore do not contribute to $\nu(R_0)$. We omit the details of the remaining cases, which are equally straightforward.

This completes the inductive step. Thus $\mu(R) = \nu(R)$ for every $R \in \rect(\Dd)$.
%
%

{\small\bf Step 3.}
Suppose $\mult'(p^*, q^*)$ is some other multiplicity function on~$\drint$
whose associated r-measure
\[
\nu'(R) = \sum_{(p^*,q^*) \in R} \mult'(p^*,q^*)
\]
satisfies $\mu = \nu'$. We must show that $\mult = \mult'$.

Consider an arbitrary decorated point $(p^*,q^*) \in \drint$. Let $R$ be a rectangle which contains $(p^*,q^*)$ at its corner. Since
\[
\nu(R) = \nu'(R) = \mu(R) < \infty,
\]
there are only finitely many other decorated points $(r^*,s^*) \in R$ with positive multiplicity in $\mult$ or~$\mult'$. By making~$R$ smaller, we can therefore assume that $(p^*, q^*)$ is the only decorated point in~$R$ with positive multiplicity in either measure.
Then
\[
\mult(p^*,q^*) = \nu(R) = \mu(R) = \nu'(R) = \mult'(p^*,q^*).
\]
Since $(p^*,q^*)$ was arbitrary it follows that $\mult = \mult'$.

This completes the proof of Theorem~\ref{thm:equivalence}.
\end{proof}

\subsection{Non-finite measures}
\label{subsec:non-finite}

If a measure is not everywhere finite, we restrict our attention to the parts of the plane where it is finite. Define the {\bf finite r-interior} of an r-measure~$\mu$ to be the set of decorated points
\[
\frint(\mu) = 
\left\{
(p^*,q^*) \mid
\text{there exists $R \in \rect(\Dd)$ such that $(p^*,q^*) \in R$ and $\mu(R) < \infty$}
\right\}.
\]
The {\bf  finite interior} is
\[
\fint(\mu) =
\left\{
(p,q) \mid
\text{there exists $R \in \rect(\Dd)$ such that $(p,q) \in R\intsup$ and $\mu(R) < \infty$}
\right\}.
\]
This is an open subset of the plane, being a union of open rectangles. It is easy to see that $(p,q) \in \fint(\mu)$ if and only if $(p^*,q^*) \in \frint(\mu)$ for all possible decorations.

We can apply Theorem~\ref{thm:equivalence} to each rectangle~$R$ of finite measure to obtain a decorated diagram in~$R\rintsup$. These diagrams will agree, by uniqueness, on the common intersection of any two such rectangles. Therefore we can combine these local definitions to get a well-defined multiset $\Dgm(\mu)$ in the whole of $\frint(\mu)$.

This multiset has the property that $\mu(R) = \card(\Dgm(\mu) |_R)$ for any rectangle $R \in \rect(\Dd)$ with finite measure.
There is one subtlety to address.

\begin{proposition}
\label{prop:frint-subtle}
Let $R \in \rect(\Dd)$. If $R\rintsup \subseteq \frint(\mu)$ then $\mu(R) < \infty$. 
\end{proposition}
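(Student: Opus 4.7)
The proof strategy is to cover $R$ by finitely many rectangles of finite measure, then invoke subadditivity (Proposition~\ref{prop:subadditive}) to conclude
\[
\mu(R) \;\leq\; \mu(N_1) + \cdots + \mu(N_m) \;<\; \infty.
\]
The input is the hypothesis $R\rintsup \subseteq \frint(\mu)$: for every decorated point $(p^*,q^*) \in R\rintsup$ it furnishes a witnessing rectangle $S_{p^*,q^*} \in \rect(\Dd)$ with $(p^*,q^*) \in S_{p^*,q^*}$ and $\mu(S_{p^*,q^*}) < \infty$.

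The central local claim is that every undecorated point $x = (p,q) \in R$ admits a small closed rectangle $N_x \in \rect(\Dd)$ with $\mu(N_x) < \infty$ whose relative interior in $R$ contains $x$. The argument splits according to where $x$ lies in $R = [a,b]\times[c,d]$. If $x$ is in the interior of $R$, all four decorations $(p^+,q^+)$, $(p^+,q^-)$, $(p^-,q^+)$, $(p^-,q^-)$ lie in $R\rintsup$, and the four witnesses $S^{++}, S^{+-}, S^{-+}, S^{--}$ collectively cover a small square $N_x = [p-\delta,p+\delta]\times[q-\delta,q+\delta]$ quadrant by quadrant: for $\delta$ small enough, $[p,p+\delta]\times[q,q+\delta]\subseteq S^{++}$ by the geometric meaning of the tick, and similarly for the other three quadrants. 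Subadditivity then gives $\mu(N_x) \leq \mu(S^{++}) + \mu(S^{+-}) + \mu(S^{-+}) + \mu(S^{--}) < \infty$. If $x$ lies on an edge of $R$ but not at a corner, only two decorations of $x$ belong to $R\rintsup$, and the two corresponding witnesses cover a small one-sided rectangle $N_x$ abutting the edge. Finally, if $x$ is a corner of $R$, the single witness for the unique decoration pointing into $R$ already contains such an $N_x$.

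The second step is pure compactness. The relative interiors of the $N_x$ form an open cover of the compact set $R$, so a finite subcover $N_{x_1},\ldots,N_{x_m}$ exists, and $R \subseteq N_{x_1} \cup \cdots \cup N_{x_m}$. Proposition~\ref{prop:subadditive} then finishes the argument.

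The main obstacle is only the boundary bookkeeping: checking that at every boundary point $x \in R$ the decorations of $x$ that actually belong to $R\rintsup$ provide enough witnesses to cover a rectangular neighborhood of $x$ in $R$. Once one unpacks the geometric meaning of the decoration tick (a rectangle containing $(p^\sigma, q^\tau)$ has $(p,q)$ in it with the rectangle extending from $(p,q)$ in directions $\sigma$ and $\tau$), this verification is routine, and no new ideas are needed beyond subadditivity and compactness.
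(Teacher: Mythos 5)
Your proof is correct and takes essentially the same route as the paper: for each point of $R$ find a finite-measure rectangle containing a relative neighbourhood of it (using one, two, or four witness rectangles from $\frint(\mu)$ and subadditivity), extract a finite subcover by compactness, and apply subadditivity once more. The only thing to keep an eye on is that your $N_x$ must actually lie in $\rect(\Dd)$, which you can ensure by shrinking it to sit inside $R$ (as the paper does by taking $S\subseteq R$); your argument handles this implicitly and is fine.
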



\begin{proof}
We show that each $(p,q) \in R$ is contained in the relative interior of a rectangle $S \subseteq R$ of finite measure. Then $R$, being compact, is the union of finitely many of these rectangles, and by subadditivity (Proposition~\ref{prop:subadditive}) it must have finite measure.

If $(p,q)$ lies in the interior of~$R$, then each of the four decorated points $(p^*,q^*)$ belongs to $\frint(\mu)$ so we can find four finite-measure rectangles containing them. The union of these rectangles contains a neighbourhood of $(p,q)$, and we can take $S \subseteq R$ to be a rectangle contained in this union with $(p,q)$ in its interior. It has finite measure, by subadditivity.

If $(p,q)$ lies on the interior of an edge, we take two finite-measure rectangles containing a relative neighbourhood of~$(p,q)$; and if $(p,q)$ is a corner point we take just one rectangle.
\end{proof}

\begin{corollary}
\label{cor:f-diagram}
Let $\mu$ be an r-measure on $\Dd \subseteq \Rr^2$.
Then there is a uniquely defined locally finite multiset $\Dgm(\mu)$ in $\frint(\mu)$ such that
\[
\mu(R) = \card(\Dgm(\mu)|_R)
\]
for every $R \in \rect(\Dd)$ with $R\rintsup \subseteq \frint(\mu)$.
\qed
\end{corollary}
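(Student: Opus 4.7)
The plan is to reduce to Theorem~\ref{thm:equivalence} by covering $\frint(\mu)$ with finite-measure rectangles and patching together the resulting local diagrams. The one new ingredient beyond what has already appeared is Proposition~\ref{prop:frint-subtle}, which guarantees that any rectangle $R$ with $R\rintsup \subseteq \frint(\mu)$ has $\mu(R) < \infty$, so that Theorem~\ref{thm:equivalence} applies to it directly.

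First I would construct $\Dgm(\mu)$ by gluing. For each $R_0 \in \rect(\Dd)$ with $\mu(R_0) < \infty$, the restriction of $\mu$ to $\rect(R_0)$ is a finite r-measure on $R_0$, so Theorem~\ref{thm:equivalence} yields a unique locally finite decorated multiset $\Dgm_{R_0} \subseteq R_0\rintsup$ with $\mu(S) = \card(\Dgm_{R_0}|_S)$ for every $S \in \rect(R_0)$. If $R_0$ and $R_1$ are two such rectangles and $(p^*,q^*) \in R_0\rintsup \cap R_1\rintsup$, I pick a small rectangle $S \subseteq R_0 \cap R_1$ containing $(p^*,q^*)$; the uniqueness clause of Theorem~\ref{thm:equivalence} applied inside $S$ forces the multiplicity of $(p^*,q^*)$ in $\Dgm_{R_0}$ and in $\Dgm_{R_1}$ to agree. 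Since every decorated point in $\frint(\mu)$ lies by definition in some such $R_0\rintsup$, these local diagrams patch together into a well-defined multiset $\Dgm(\mu)$ on $\frint(\mu)$.

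Next I would verify the counting identity together with local finiteness. Given $R \in \rect(\Dd)$ with $R\rintsup \subseteq \frint(\mu)$, Proposition~\ref{prop:frint-subtle} gives $\mu(R) < \infty$, so $R$ itself qualifies as a local rectangle of the previous step, and by construction $\Dgm(\mu)|_{R\rintsup} = \Dgm_R$. Theorem~\ref{thm:equivalence} applied to $R$ then gives $\mu(R) = \card(\Dgm_R|_R) = \card(\Dgm(\mu)|_R)$, which in particular is finite; this is exactly the asserted local finiteness.

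For uniqueness, let $\Dgm'$ be another locally finite multiset in $\frint(\mu)$ satisfying the same counting identity on all rectangles $R$ with $R\rintsup \subseteq \frint(\mu)$. For each $R_0 \in \rect(\Dd)$ of finite measure, the restriction of $\Dgm'$ to $R_0\rintsup$ satisfies the hypotheses of Theorem~\ref{thm:equivalence} applied to $\mu|_{\rect(R_0)}$, so by the theorem's uniqueness clause it must equal $\Dgm_{R_0}$ and therefore $\Dgm(\mu)|_{R_0\rintsup}$. Since such rectangles cover $\frint(\mu)$, we conclude $\Dgm' = \Dgm(\mu)$. I expect the only place where care is needed is the compatibility of the local diagrams on overlaps, but that is handled entirely by the uniqueness clause of Theorem~\ref{thm:equivalence}, so no fresh measure-theoretic machinery beyond Proposition~\ref{prop:frint-subtle} is required.
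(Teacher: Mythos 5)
Your proof follows exactly the paper's route: glue together the local diagrams obtained by applying Theorem~\ref{thm:equivalence} to each finite-measure rectangle, with overlap compatibility supplied by that theorem's uniqueness clause, and invoke Proposition~\ref{prop:frint-subtle} to get the counting identity for every $R$ with $R\rintsup \subseteq \frint(\mu)$. The paper simply records the gluing construction in the prose preceding Proposition~\ref{prop:frint-subtle} and then states the corollary with an inline \qed, so your write-up is essentially the same argument spelled out in more detail.
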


Similarly, or as an easy consequence of Proposition~\ref{prop:frint-subtle}, we have:

\begin{proposition}
\label{prop:fint-subtle}
Let $R \in \rect(\Dd)$. If $R \subseteq \fint(\mu)$ then $\mu(R) < \infty$.
\qed
\end{proposition}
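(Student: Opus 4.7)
My plan is to reduce the statement directly to the previously established Proposition~\ref{prop:frint-subtle}, by showing that $R \subseteq \fint(\mu)$ implies $R\rintsup \subseteq \frint(\mu)$. Once this inclusion is in hand, the conclusion $\mu(R) < \infty$ is immediate.

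The key input is the observation already noted in the text, namely that $(p,q) \in \fint(\mu)$ if and only if $(p^*,q^*) \in \frint(\mu)$ for \emph{every} possible decoration $*$. To see why this observation gives what I want, I will take an arbitrary decorated point $(p^*,q^*) \in R\rintsup$. By definition of $R\rintsup$, the underlying point $(p,q)$ lies in the closed rectangle $R$. Since by hypothesis $R \subseteq \fint(\mu)$, we have $(p,q) \in \fint(\mu)$, and then by the observation above $(p^*,q^*) \in \frint(\mu)$ regardless of how the tick happens to be oriented. Thus $R\rintsup \subseteq \frint(\mu)$.

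Having established this inclusion, Proposition~\ref{prop:frint-subtle} applies directly to give $\mu(R) < \infty$, finishing the argument. I do not expect any serious obstacle: the only subtlety is checking that the reduction is legitimate for decorated points lying on the boundary of $R$ (where only some decorations have their ticks inside $R$), but these are precisely the cases handled by the ``if and only if'' characterisation of $\fint$ versus $\frint$. Alternatively, if one preferred a self-contained argument, one could simply re-run the compactness and subadditivity proof of Proposition~\ref{prop:frint-subtle}: cover each point of $R$ by a small finite-measure rectangle using the defining property of $\fint(\mu)$, extract a finite subcover by compactness of $R$, and apply Proposition~\ref{prop:subadditive}.
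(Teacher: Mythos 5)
Your argument is correct and matches exactly what the paper intends: the paper states the result with no proof body, remarking only ``Similarly, or as an easy consequence of Proposition~\ref{prop:frint-subtle},'' and your proof fleshes out precisely the ``easy consequence'' route (via the noted equivalence between $(p,q)\in\fint(\mu)$ and $(p^*,q^*)\in\frint(\mu)$ for all decorations) while also mentioning the ``Similarly'' route (re-running the compactness and subadditivity argument) as an alternative.
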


Now we can define the diagrams of a general r-measure.

\begin{vlist}
\item
The {\bf decorated diagram} of an r-measure~$\mu$ is the pair $(\Dgm(\mu), \frint(\mu))$, where $\Dgm(\mu)$ is the multiset in~$\frint(\mu)$ described in the corollary.

\item
The {\bf undecorated diagram} is the pair $(\dgm(\mu), \fint(\mu))$, where
\[
\dgm(\mu) = \left\{ (p,q) \mid (p^*, q^*) \in \Dgm(\mu) \right\}
\cap \fint(\mu)
\]
is the locally finite%
\footnote{%
As before, $\dgm$ is locally finite in $\fint$, but may have accumulation points on the boundary of~$\fint$.
}
multiset in $\fint(\mu)$ obtained by forgetting the decorations in $\Dgm(\mu)$ and restricting to the finite interior.
\end{vlist}

\begin{remark}
The point of Proposition~\ref{prop:frint-subtle} is that there is no `hidden information' in~$\mu$ beyond what is recorded in the diagram. The measure of any rectangle~$R$ with $R\rintsup \subseteq \frint(\mu)$ is recovered by counting decorated points of $\Dgm(\mu)$, and any other rectangle has infinite measure.
\end{remark}

We can bring these definitions into accord with the previous one (for finite r-measures) by agreeing that $\Dgm(\mu)$ and $\dgm(\mu)$ are abbreviations for $(\Dgm(\mu), \drint)$ and $(\dgm(\mu), \dint)$, when $\Dd$ is clear from the context.

It is sometimes useful to adopt the following {\bf extension convention}. An r-measure defined on a subset $\Dd \subset \Rr^2$ can be interpreted as an r-measure on the whole plane~$\Rr^2$, by agreeing that $\mu(R) = \infty$ for any rectangle that meets $\Rr^2 - \Dd$. The extension has the same diagram as the original r-measure.
%

\subsection{The diagram at infinity}
\label{subsec:infinity}

We now discuss r-measures in the extended plane~$\RR^2$. This requires infinite rectangles and decorated points at infinity.

\begin{vlist}
\item
For $\Dd \subseteq \RR^2$, the set $\rect(\Dd)$ consists of all rectangles $R \subseteq \Dd$ of the form
\[
R = [a,b] \times [c,d]
\]
where now $-\infty \leq a < b \leq +\infty$ and $-\infty \leq c < d \leq +\infty$. 

\item
Decorated points in~$\RR^2$ are pairs $(p^*,q^*)$ where $p^*$ and $q^*$ are decorated real numbers or $-\infty^+$ or $+\infty^-$. The symbols $-\infty^+, +\infty^-$ may be  abbreviated to $-\infty, +\infty$.

\end{vlist}

With these preliminaries, several other concepts are formally unchanged. The r-interior and the interior of~$\Dd$ are
\begin{align*}
\drint
&= 
\left\{ (p^*,q^*) \mid
\text{there exists $R \in \rect(\Dd)$ such that $(p^*,q^*) \in R$} \right\},
\\
\dint
&=
\left\{ (p,q) \mid
\text{there exists $R \in \rect(\Dd)$ such that $(p,q) \in R\intsup$} \right\}.
\end{align*}
One subtlety is that we take $R\intsup$ to be the relative interior of $R$ as a subspace of~$\RR^2$. For instance, if $R = [-\infty,b]\times[c,d]$ where $b,c,d$ are finite, then $R\intsup = [-\infty,b)\times(c,d)$.

The concept of an r-measure goes through as before: it is a function
\[
\mu : \rect(\Dd) \to \{0, 1, 2, \dots, \} \cup \{ \infty \}
\]
which is additive with respect to horizontal and vertical splittings. An r-measure has a finite r-interior and a finite interior:
\begin{align*}
\frint(\mu)
&= 
\left\{
(p^*,q^*) \mid
\text{there exists $R \in \rect(\Dd)$ such that $(p^*,q^*) \in R$ and $\mu(R) < \infty$}
\right\},
\\
\fint(\mu)
&=
\left\{
(p,q) \mid
\text{there exists $R \in \rect(\Dd)$ such that $(p,q) \in R\intsup$ and $\mu(R) < \infty$}
\right\}.
\end{align*}

We sometimes write $\mathcal{S}(\mu) = \RR^2 - \fint(\mu)$, the \textbf{singular support} of~$\mu$.

{\bf Claim.}
Corollary~\ref{cor:f-diagram} is valid for r-measures on $\Dd \subseteq \RR^2$.

\begin{proof}
The statements (and indeed the proofs) of Theorem~\ref{thm:equivalence} and
Corollary~\ref{cor:f-diagram} are invariant under reparametrisations of the plane of the form
\[
x' = f(x), \quad y' = g(y),
\]
where $f,g$ are homeomorphic embeddings. We can view $\RR^2$ as a rectangle in~$\Rr^2$ via a transformation of this type; for instance
\[
x'  = \arctan(x), \quad y' = \arctan(y)
\]
identifies $\RR^2$ with the rectangle $[-\pi/2,\pi/2] \times [-\pi/2, \pi/2]$ in~$\Rr^2$. 
The theorem and its corollary now apply directly.
\end{proof}

The decorated diagram $(\Dgm(\mu), \frint(\mu))$ and the undecorated diagram $(\dgm(\mu), \fint(\mu))$ are defined as before.

Let $\Vv$ be a persistence module.
The persistence measure $\mu_\Vv$, previously defined by
\[
\mu_\Vv([a,b]\times[c,d]) =
\langle \qoff{a}\qem\qon{b}\qem\qon{c}\qem\qoff{d} \mid \Vv \rangle
\]
for $a < b \leq c < d$, extends easily to infinite rectangles. We allow for the possibility that $a = -\infty$ or $d = +\infty$ by setting
\[
V_{-\infty} = 0,
\quad
V_{+\infty} = 0.
\]
In these cases the alternating sum formula of Proposition~\ref{prop:rankformula} becomes:
\newcommand{\muabcd}[2]{
\mu_\Vv \left(
\makebox[3.3em][r]{$#1$}\times\makebox[3.3em][l]{$#2$}
\right)
}
\begin{alignat*}{3}
\muabcd{[-\infty,b]}{[c,+\infty]}
&\;=\;
  \langle\,
  \qon{b}\qem\qon{c}
  \mid \Vv
  \,\rangle
&&\quad= \rk_b^c
\\
\muabcd{[a,b]}{[c,+\infty]}
&\;=\;
  \langle\,
  \qoff{a}\qem\qon{b}\qem\qon{c}
  \mid \Vv
  \,\rangle
&&\quad= \rk_b^c - \rk_a^c
&&\quad\text{(if $\rk_a^c < \infty$)}
\\
\muabcd{[-\infty,b]}{[c,d]}
&\;=\;
  \langle\,
  \qon{b}\qem\qon{c}\qem\qoff{d}
  \mid \Vv
  \,\rangle
&&\quad= \rk_b^c - \rk_b^d
&&\quad\text{(if $\rk_b^d < \infty$)}
\end{alignat*}
The first of these corresponds to the `$k$-triangle lemma' of~\cite{CohenSteiner_E_H_2007}.

So $\mu_\Vv$ can be thought of as an r-measure on the extended half-plane
\[
\Upper = \{ (p,q) \mid -\infty \leq p \leq q \leq +\infty \}
\]
and its diagram $\Dgm(\mu_\Vv)$ is defined on the subset of the r-interior
\[
\UpperRint = \{ (p^*, q^*) \mid -\infty^+ \leq p^* < q^* \leq +\infty^- \}
\]
on which $\mu_\Vv$ is finite.

Notice that there is a 1--1 correspondence between nonempty real intervals and elements of~$\UpperRint$.
The interval corresponding to $(p^*,q^*)$ is written $\lgroup p^*, q^* \rgroup$ as before. Proposition~\ref{prop:Rmembership} and Corollary~\ref{cor:Rcount} extend straightforwardly to this setting. In particular:
\begin{corollary}
\label{cor:Rcount2}
If $\Vv$ is decomposable into interval modules then $\mu_\Vv(R)$ counts the interval summands corresponding to decorated points which lie in~$R$.
\qed
\end{corollary}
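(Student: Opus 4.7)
My plan is to mirror the proof of Corollary~\ref{cor:Rcount} but check that everything goes through with infinite rectangles and decorated points at infinity. The two ingredients are (i) a version of Proposition~\ref{prop:Rmembership} for the extended setting, and (ii) the direct-sum additivity of $\mu$ from Proposition~\ref{prop:m-additivity}.

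First, I would extend Proposition~\ref{prop:Rmembership} to any $R = [a,b]\times[c,d] \in \rect(\Upper)$ and any interval $J = \lgroup p^*,q^*\rgroup$ with $(p^*,q^*) \in \UpperRint$, possibly involving $-\infty$ or $+\infty$. The restriction $\Ii^J_{a,b,c,d}$ is again either zero or a single interval module on a subset of $\{a,b,c,d\}$, since any restriction of an interval module is itself an interval module (using the conventions $V_{-\infty}=V_{+\infty}=0$, so those coordinates correspond to $\qno$ in the quiver picture). Hence $\mu_{\Ii^J}(R) \in \{0,1\}$ and equals $1$ precisely when the restriction is of the form $\qoff{}\qem\qon{b}\qem\qon{c}\qem\qoff{}$ with the outer clear nodes omitted if $a = -\infty$ or $d = +\infty$. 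Translating this quiver condition into a condition on $J$ gives exactly $[b,c] \subseteq J \subseteq (a,d)$, where the open interval $(a,d)$ is interpreted in $\RR$; this is the same criterion as in the finite case, and by the definition of $(p^*,q^*) \in R$ in section~\ref{subsec:infinity} it is equivalent to $(p^*,q^*) \in R$.

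Second, given a decomposition $\Vv = \bigoplus_{\ell \in L} \Ii{\lgroup p_\ell^*, q_\ell^* \rgroup}$, Proposition~\ref{prop:m-additivity} shows that the multiplicity $\langle \qoff{a}\qem\qon{b}\qem\qon{c}\qem\qoff{d} \mid \Vv \rangle$ splits as a sum of the corresponding multiplicities over the summands (and the same holds with the outer clear nodes removed when $a=-\infty$ or $d=+\infty$, since Proposition~\ref{prop:m-additivity} applies to any finite index set, including subsets of $\{a,b,c,d\}$). Combining with step one,
\[
\mu_\Vv(R) \;=\; \sum_{\ell \in L} \mu_{\Ii^{J_\ell}}(R) \;=\; \#\{\,\ell \in L \mid (p_\ell^*, q_\ell^*) \in R\,\},
\]
which is precisely the count of interval summands whose decorated points lie in~$R$.

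The only real obstacle is a pedantic one: checking that the membership relation $(p^*,q^*) \in R$ agrees with the quiver-theoretic condition $[b,c] \subseteq J \subseteq (a,d)$ when some of $a,b,c,d,p,q$ are infinite, with the decorations at infinity ($-\infty^+$ and $+\infty^-$) interpreted correctly. For instance, when $p = -\infty$ the containment $p^* \leq b$ holds vacuously, matching the fact that $V_b \neq 0$ for any $b > -\infty$ when $p = -\infty$; and when $d = +\infty$ the condition $q^* < d$ is likewise automatic. All eight boundary cases are routine, so the extension is straightforward and the corollary follows.
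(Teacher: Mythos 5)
Your proof is correct and follows exactly the route the paper intends: the paper itself just remarks that Proposition~\ref{prop:Rmembership} and Corollary~\ref{cor:Rcount} ``extend straightforwardly to this setting'' and terminates the corollary with \qed, whereas you supply the omitted bookkeeping (the conventions $V_{-\infty}=V_{+\infty}=0$, the interpretation of $(a,d)$ in $\RR$, and the agreement of the membership relation with the quiver condition at the infinite boundary cases). No gaps; the argument is the same in substance, just written out.
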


We finish this section by defining the `measures at infinity' induced by an r-measure $\mu$. The extended plane has 4 lines at infinity
\[
(-\infty, \Rr),\quad
(+\infty, \Rr),\quad
(\Rr,-\infty),\quad
(\Rr,+\infty)
\]
and 4 points at infinity
\[
(-\infty,-\infty),\quad
(+\infty,-\infty),\quad
(-\infty,+\infty),\quad
(+\infty,+\infty)
\]
and, depending on where $\mu$ is finite, there are measures defined on each of these. On the lines at infinity these are `interval measures' (the 1-dimensional analogue of r-measures), and at the points at infinity these are simply numbers.

We write out the three cases of direct relevance to persistence modules. The other cases are similar.

\begin{vlist}
\item
{\bf\small the line $(-\infty, \Rr)$}:
\[
\mu(-\infty, [c,d])
	= \lim_{b \to -\infty} \mu([-\infty,b]\times[c,d])
	= \min_b \, \mu([-\infty,b]\times[c,d]) 
\]
for any interval $[c,d] \subseteq \Rr$.

\item
{\bf\small the line $(\Rr, +\infty)$}:
\[
\mu([a,b], +\infty)
	=  \lim_{c \to +\infty} \mu([a,b]\times[c,+\infty])
	= \min_c \, \mu([a,b]\times[c,+\infty]) 
\]
for any interval $[a,b] \subseteq \Rr$.

\item
{\bf\small the point $(-\infty, +\infty)$}:
\[
\mu(-\infty, +\infty)
	= \lim_{e \to +\infty} \mu([-\infty, -e]\times[e,+\infty])
	= \min_e \, \mu([-\infty, -e]\times[e,+\infty])
\]
\end{vlist}

Monotonicity of~$\mu$ guarantees that each limit exists, {except} when none of the rectangles in the expression belongs to~$\rect(\Dd)$. When that happens we adopt the extension convention (see section~\ref{subsec:non-finite}), and regard these rectangles has having infinite measure and the limit as being~$\infty$.

In order for the limit to be finite, we need at least one rectangle in the limiting expression to have finite measure. By considering the decorated diagram in that rectangle, it becomes clear what the `measure at infinity' measures: it counts the decorated points of $\Dgm(\mu)$ that lie in the segment, or corner, at infinity.

For example, when $\mu(-\infty, [c,d])$ is finite, it counts the decorated points of $\Dgm(\mu)$ of the form $(-\infty^+, q^*)$ where $q^* \in [c,d]$. When it is infinite, it means that some such $(-\infty^+, q^*)$ does not belong to $\frint(\mu)$, either because it does not belong to~$\drint$ or because all rectangles containing it have infinite mass.
%

\subsection{Diagrams of persistence modules.}
\label{subsec:pd+}

We now have two competing definitions of the decorated diagram of a persistence module.
\begin{vlist}
\item
If $\Vv = \bigoplus_{\ell \in L} \Ii{\lgroup p^*_\ell, q^*_\ell \rgroup}$ is decomposable into intervals, then define
\begin{align*}
\Dgm(\Vv)
&= \Int(\Vv)
 = \left\{ (p_\ell^*, q_\ell^*) \mid \ell \in L \right\}.
\end{align*}
This is a multiset in $\UpperRint$.

\item
Let $\mu_\Vv$ be the persistence measure of~$\Vv$. Then 
\[
\Dgm(\Vv) = \Dgm(\mu_\Vv).
\]
This is a multiset in $\frint(\mu)$, the finite r-interior of~$\mu = \mu_\Vv$.

\end{vlist}

The overloading of the term `$\Dgm(\Vv)$' is partially excused by the following fact.

\begin{proposition}
If $\Vv$ is decomposable into intervals, then $\Int(\Vv)$ agrees with $\Dgm(\mu_\Vv)$ where the latter is defined, that is, on $\frint(\mu)$.
\end{proposition}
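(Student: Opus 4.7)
The plan is to reduce the statement directly to the uniqueness assertion of Corollary~\ref{cor:f-diagram}, using Corollary~\ref{cor:Rcount2} as the bridge between the two definitions of $\Dgm(\Vv)$.

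First, I would invoke Corollary~\ref{cor:Rcount2}, which already tells us that for every $R \in \rect(\Upper)$,
\[
\mu_\Vv(R) \;=\; \card\bigl(\Int(\Vv)|_R\bigr).
\]
So the defining counting identity of a diagram already holds for $\Int(\Vv)$ on \emph{every} rectangle, in the extended sense (including infinite rectangles).

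Next I would verify that $\Int(\Vv)$, once restricted to $\frint(\mu_\Vv)$, is locally finite there. Suppose $R \in \rect(\Upper)$ satisfies $R\rintsup \subseteq \frint(\mu_\Vv)$. By Proposition~\ref{prop:frint-subtle}, this forces $\mu_\Vv(R) < \infty$, and by the counting identity above $\card(\Int(\Vv)|_R) < \infty$. Moreover, any decorated point appearing in $\Int(\Vv)|_R$ lies in $R\rintsup$, hence in $\frint(\mu_\Vv)$. Consequently the restricted multiset $\Aa := \Int(\Vv)|_{\frint(\mu_\Vv)}$ is locally finite in $\frint(\mu_\Vv)$, and satisfies
\[
\mu_\Vv(R) \;=\; \card(\Aa|_R)
\]
for every $R \in \rect(\Upper)$ with $R\rintsup \subseteq \frint(\mu_\Vv)$.

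Finally, I would apply the uniqueness clause of Corollary~\ref{cor:f-diagram}: there is only one locally finite multiset on $\frint(\mu_\Vv)$ satisfying this characterising identity, namely $\Dgm(\mu_\Vv)$. Therefore $\Aa = \Dgm(\mu_\Vv)$, which is exactly the claim that $\Int(\Vv)$ and $\Dgm(\mu_\Vv)$ agree on $\frint(\mu_\Vv)$.

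There is essentially no obstacle here; the whole content of the proposition is wrapped up in Corollary~\ref{cor:Rcount2} together with the uniqueness in Corollary~\ref{cor:f-diagram}. The only subtle point worth flagging explicitly is that $\Int(\Vv)$ may well contain decorated points lying outside $\frint(\mu_\Vv)$ (e.g.\ corresponding to intervals so numerous that every surrounding rectangle has infinite measure), and these are simply discarded by the restriction — they are invisible to the measure and hence to $\Dgm(\mu_\Vv)$ by construction.
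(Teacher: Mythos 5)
Your proof is correct and follows essentially the same route as the paper: both invoke Corollary~\ref{cor:Rcount2} to get the counting identity for $\Int(\Vv)$ and then appeal to the uniqueness clause in Corollary~\ref{cor:f-diagram}. You are marginally more explicit than the paper in verifying (via Proposition~\ref{prop:frint-subtle}) that $\Int(\Vv)$ restricted to $\frint(\mu_\Vv)$ is locally finite before the uniqueness can be applied, a hypothesis the paper leaves implicit.
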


\begin{proof}
By Corollary~\ref{cor:Rcount2} we have
\[
\card ( \Int(\Vv) |_R ) = \mu_\Vv(R)
\]
for all rectangles. On the other hand, we have
\[
\card ( \Dgm(\mu_\Vv) |_R ) = \mu_\Vv(R)
\]
for all rectangles with $\mu_\Vv(R) < \infty$. By uniqueness, it follows that $\Int(\Vv)$ and $\Dgm(\mu_\Vv)$ must be the same multiset when restricted to $\frint(\mu_\Vv)$.
\end{proof}

Neither definition strictly outperforms the other, as the following examples show.

\begin{example}
Let
\[
\Vv = \bigoplus_{\ell \in L} \Ii{\lgroup p_\ell^*, q_\ell^* \rgroup}
\]
where the undecorated pairs $(p_\ell, q_\ell)$ form a dense subset of the half-plane $\Upper$. Then $\Int(\Vv)$ is defined; but $\mu_\Vv(R) = \infty$ for every rectangle, so $\frint(\mu_\Vv)$ is the empty set and $\Dgm(\mu_\Vv)$ is nowhere defined.
\end{example}

\begin{example}
\label{ex:webb2}
Adapting the example of Webb~\cite{Webb_1985} to the real line:
\begin{alignat*}{2}
W_{t} &= 0 && \quad\text{for $t > 0$}
\\
W_{0} &= \{ \text{sequences $(x_1, x_2, x_3, \dots)$ of real numbers} \}
\\
W_{t} &= \{ \text{sequences with $x_n = 0$ for all $n \leq |t|$} \}
	&& \quad\text{for $t < 0$}
\end{alignat*}
As before, this is not decomposable into intervals. On the other hand, it is easy to see that
\[
\langle \qoff{a}\qem\qon{b} \mid \Ww \rangle
= \conullity(W_a \to W_b) < \infty
\]
except when $a = -\infty$, and
\[
\langle \qon{c}\qem\qoff{d} \mid \Ww \rangle
= \nullity(W_c \to W_d) < \infty
\]
except when $c \leq 0 < d$. Each of these terms dominates
\[
\langle \qoff{a}\qem\qon{b}\qem\qon{c}\qem\qoff{d} \mid \Ww \rangle,
\]
which is therefore finite for all rectangles which do not contain $(-\infty, 0^+)$. 
It is not difficult to complete the argument that
\[
\frint(\mu) = \UpperRint - \{ (-\infty, 0^+) \}
\]
and to determine the persistence diagram
\[
\Dgm(\mu_\Ww)
= \left\{ (-n^{+}, 0^+) \mid n = 1, 2, 3, \dots \right\}
\]
over $\frint(\mu)$.
\begin{figure}
\centerline{
\includegraphics[height=2.25in]{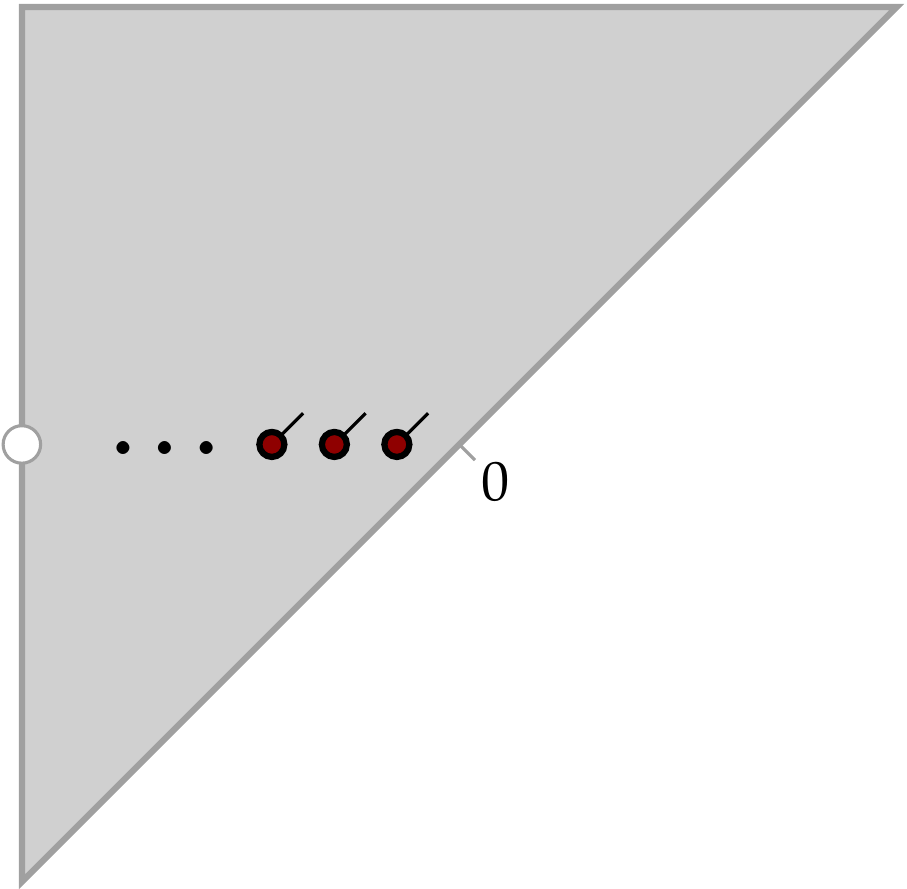}
}
\caption{The persistence diagram $\Dgm(\Ww) = \Dgm(\mu_\Ww)$ for the example of Webb. This is defined everywhere in the extended half-plane except $(-\infty, 0^+)$.}
\label{fig:webb2}
\end{figure}
See Figure~\ref{fig:webb2}.
In other words, although this example is not decomposable into intervals, its persistence measure is well-behaved away from the singular support, which is the point~$(-\infty, 0^+)$.
\end{example}

\subsection{Tameness conditions}
\label{subsec:tameness}

We now describe several different levels of `tameness' for a persistence module, starting with the tamest.

A persistence module $\Vv$ is of {\bf finite type}, if it is a finite direct sum of interval modules.

It is {\bf locally finite}, if it is a direct sum of interval modules such that every $t \in \Rr$ has a neighbourhood which meets only finitely many of the intervals. Since closed bounded intervals are compact, this is equivalent to the stronger assertion that every bounded subset of~$\Rr$ meets only finitely many of the intervals.

\begin{proposition}
\label{prop:locally-finite}
A persistence module $\Vv$ is locally finite if and only if (i)~each $V_t$ is finite-dimensional and (ii)~there is a locally finite set $\Ss \subset \Rr$ such that $v_b^c$ is an isomorphism for every pair $b < c$ with $[b,c] \cap \Ss = \emptyset$.
\end{proposition}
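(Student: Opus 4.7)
For the forward direction, I would let $\Vv = \bigoplus_{\ell \in L} \Ii^{J_\ell}$ be the given locally finite decomposition and take $\Ss \subset \Rr$ to be the set of finite endpoints of the intervals~$J_\ell$. Each~$J_\ell$ contributes at most two points to~$\Ss$, and local finiteness of $\{J_\ell\}$ then forces~$\Ss$ to be locally finite in~$\Rr$. Condition~(i) is immediate, since only finitely many $\ell$ satisfy $t \in J_\ell$ for any fixed~$t$. For~(ii), if $[b,c] \cap \Ss = \emptyset$ then each~$J_\ell$ either contains $[b,c]$ or is disjoint from it (no endpoint of~$J_\ell$ lies in~$[b,c]$), so $v_b^c$ restricts on each summand to either $\mathrm{id}_\kk$ or the zero map between zero spaces, hence is an isomorphism.

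The reverse direction is where the work lies. Assuming (i) and (ii), the plan is to refine~$\Ss$ to a locally finite set $\Ss' = \Ss \cup \{r_I : I \text{ a component of } \Rr \setminus \Ss\}$ by choosing one representative~$r_I$ per component. By~(i) and Theorem~\ref{thm:gabriel+}(2), $\Vv_{\Ss'}$ decomposes as $\bigoplus_\ell \Ii^{K_\ell}_{\Ss'}$, each $K_\ell$ being a set of consecutive points of~$\Ss'$. I would then extend each $K_\ell$ to an interval $J_\ell \subset \Rr$ by the natural rule: an endpoint of~$K_\ell$ lying in~$\Ss$ becomes a closed endpoint of~$J_\ell$ at the same real value, while an endpoint $r_I$ (with $I$ the component $(a,b)$) becomes an open endpoint of~$J_\ell$ at the adjacent point $a$ or~$b$, so that $J_\ell$ covers all of~$I$ but not the adjacent $\Ss$-point; unbounded endpoints in~$\Ss'$ remain unbounded in~$\Rr$.

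Three facts then combine to yield $\Vv \cong \bigoplus_\ell \Ii^{J_\ell}_\Rr$. First, a direct case analysis on endpoints shows $J_\ell \cap \Ss' = K_\ell$. Second, for any $t$ in a component~$I$ of $\Rr \setminus \Ss$, we have $t \in J_\ell$ if and only if $r_I \in K_\ell$. Third, condition~(ii) implies that $v_b^c$ is an isomorphism whenever $b < c$ both lie in a single component of $\Rr \setminus \Ss$, so the structure of each summand at~$r_I$ transports canonically through the isomorphisms $v_t^{r_I}$ to every $t \in I$, letting me assemble the global isomorphism pointwise from the $\Ss'$-decomposition. Local finiteness of $\{J_\ell\}$ then follows: condition~(i) makes the $\Ss'$-decomposition pointwise finite and hence locally finite (using local finiteness of~$\Ss'$), and the extension rule enlarges each $K_\ell$ by at most one gap of~$\Ss'$ on either side, so a bounded $B \subset \Rr$ sits inside some $[a',b']$ with $a', b' \in \Ss' \cup \{-\infty, +\infty\}$ meeting only finitely many~$K_\ell$.

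The main obstacle is the verification underlying the third fact: one must check that the pointwise isomorphisms on each component~$I$ commute with the structure maps of~$\Vv$ and that the final decomposition is independent of the chosen representatives~$r_I$. Both rest essentially on condition~(ii), which makes $\Vv$ locally constant (up to canonical isomorphism) on each connected component of $\Rr \setminus \Ss$.
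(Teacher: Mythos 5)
Your proof is correct and takes essentially the same route as the paper's: decompose $\Vv$ restricted to a locally finite set built by augmenting $\Ss$ with one representative point per gap of $\Rr\setminus\Ss$ (the paper calls this set $\Tt$, indexing it by integers with the even-indexed points containing $\Ss$), invoke Theorem~\ref{thm:gabriel+}(2) to decompose the restriction, then use the constancy of $\Vv$ on gaps guaranteed by condition~(ii) to extend this to a decomposition of $\Vv$ over~$\Rr$. You are merely more explicit about the open/closed extension rule and the final local-finiteness count than the paper, which leaves both to the reader.
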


In less formal language, condition~(ii) asserts that $\Vv$ is constant over each interval of the open set $\Rr - \Ss$.

\begin{proof}
Suppose conditions (i) and (ii) hold.
Select a  locally finite countable set
\[
\Tt:\quad
\dots < t_{-n} < \dots < t_{-1} < t_0 < t_1 < \dots < t_n < \dots
\]
unbounded in both directions, where the even-numbered points include the entire set~$\Ss$. By the result of Webb used in Theorem~\ref{thm:gabriel+}, we can decompose $\Vv_\Tt$ into intervals. Since $\Vv$ is constant over each $(t_{2n}, t_{2n+2})$,
this extends uniquely to a decomposition of~$\Vv$ into interval modules.
Any bounded subset of~$\Rr$ meets only finitely many of the $t_{2n}$ and $(t_{2n}, t_{2n+2})$. Applying condition~(i) we find that it meets only finitely many of the intervals.

The converse direction is immediate: take $\Ss$ to be the set of endpoints of the intervals in the decomposition of~$\Vv$.
\end{proof}

For a finite or locally finite persistent module, it is easy to see that $\frint(\mu) = \Upper$, so both persistence diagrams are defined on all of $\Upper$, including on the diagonal and at infinity.
Diagonal points must be decorated $(p^-,p^+)$, since only these belong to rectangles in $\Upper$.

We introduce four more kinds of tameness. The assumptions here are about finiteness of $\mu_\Vv$ over different types of rectangle (see Figure~\ref{fig:spqr}).
\begin{figure}
\centerline{
\includegraphics[height=2.25in]{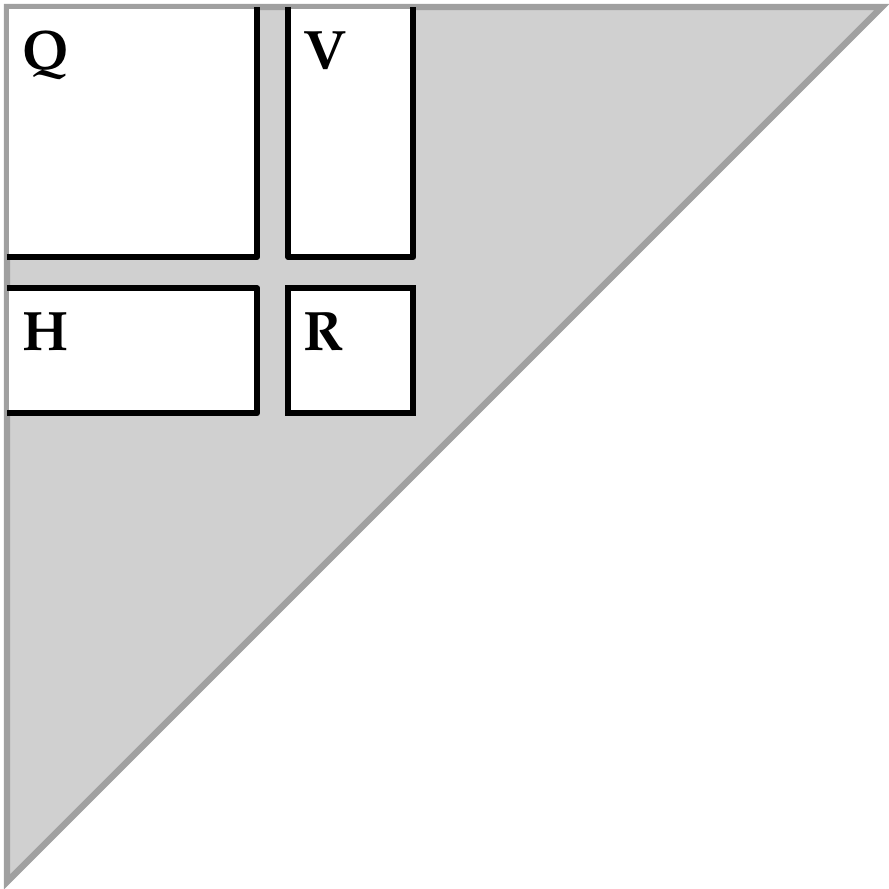}
}
\caption{A quadrant, horizontal strip, vertical strip, and finite rectangle in $\Upper$.}
\label{fig:spqr}
\end{figure}
Each condition guarantees the existence of the persistence diagram over a certain subset of the extended half-plane. The finite part of the plane (except the diagonal) is always included; it is at infinity that the four conditions differ.

\newcommand{\dgmplot}[1]{%
\raisebox{-1.8ex}{\includegraphics[height=5ex]{figures/#1}}
}
(i) We say that $\Vv$ is {\bf q-tame}, if $\mu_\Vv(Q) < \infty$ for every quadrant~$Q$ not touching the diagonal. In other words
\[
\langle \qon{b}\qem\qon{c} \mid \Vv \rangle < \infty
\]
(that is, $\rk_b^c < \infty$) for all $b < c$. 
The persistence diagram $\Dgm(\mu_\Vv)$ is defined over the set:
\[
\{ (p^*, q^*) \mid -\infty \leq p < q \leq +\infty \}
=
\dgmplot{upper-q4}
\]

(ii) We say that $\Vv$ is {\bf h-tame}, if $\mu_\Vv(H) < \infty$ for every horizontally infinite strip~$H$ not touching the diagonal. In other words,
\[
\langle \qon{b}\qem\qon{c}\qem\qoff{d} \mid \Vv \rangle < \infty
\]
for all $b < c < d$.
The persistence diagram $\Dgm(\mu_\Vv)$ is defined over the set:
\[
\{ (p^*, q^*) \mid -\infty \leq p < q < +\infty \}
=
\dgmplot{upper-h4}
\]

(iii) We say that $\Vv$ is {\bf v-tame}, if $\mu_\Vv(V) < \infty$ for every vertically infinite strip~$V$ not touching the diagonal. In other words,
\[
\langle \qoff{a}\qem\qon{b}\qem\qon{c} \mid \Vv \rangle < \infty
\]
for all $a < b < c$.
The persistence diagram $\Dgm(\mu_\Vv)$ is defined over the set:
\[
\{ (p^*, q^*) \mid -\infty < p < q \leq +\infty \}
=
\dgmplot{upper-v4}
\]

(iv) We say that $\Vv$ is {\bf r-tame}, if $\mu_\Vv(R) < \infty$ for every finite rectangle~$R$ not touching the diagonal. In other words,
\[
\langle \qoff{a}\qem\qon{b}\qem\qon{c}\qem\qoff{d}
\mid \Vv \rangle < \infty
\]
for all $a < b < c < d$.
The persistence diagram $\Dgm(\mu_\Vv)$ is defined over the set:
\[
\{ (p^*, q^*) \mid -\infty < p < q < +\infty \}
=
\dgmplot{upper-r4}
\]

Here is the diagram of inclusions between the different classes of tame persistence modules:
\[
\begin{diagram}
\divide\dgARROWLENGTH by3
\node[4]{\text{v-tame}}
  \arrow{se}
\\
\node{\text{finite}}
  \arrow{e}
\node{\text{locally finite}}
  \arrow{e}
\node{\text{q-tame}}
  \arrow{ne}
  \arrow{se}
\node[2]{\text{r-tame}}
\\
\node[4]{\text{h-tame}}
  \arrow{ne}
\end{diagram}
\]
One can show that all the inclusions are strict, and also
\[
\text{q-tame} \subsetneq \text{h-tame} \cap \text{v-tame},
\quad
\text{h-tame} \cup \text{v-tame} \subsetneq \text{r-tame}.
\]
Examples for these last two assertions are suggested by the diagrams in Figure~\ref{fig:r-hv-q}.
\begin{figure}
\centerline{
\hfill
\includegraphics[scale=0.5]{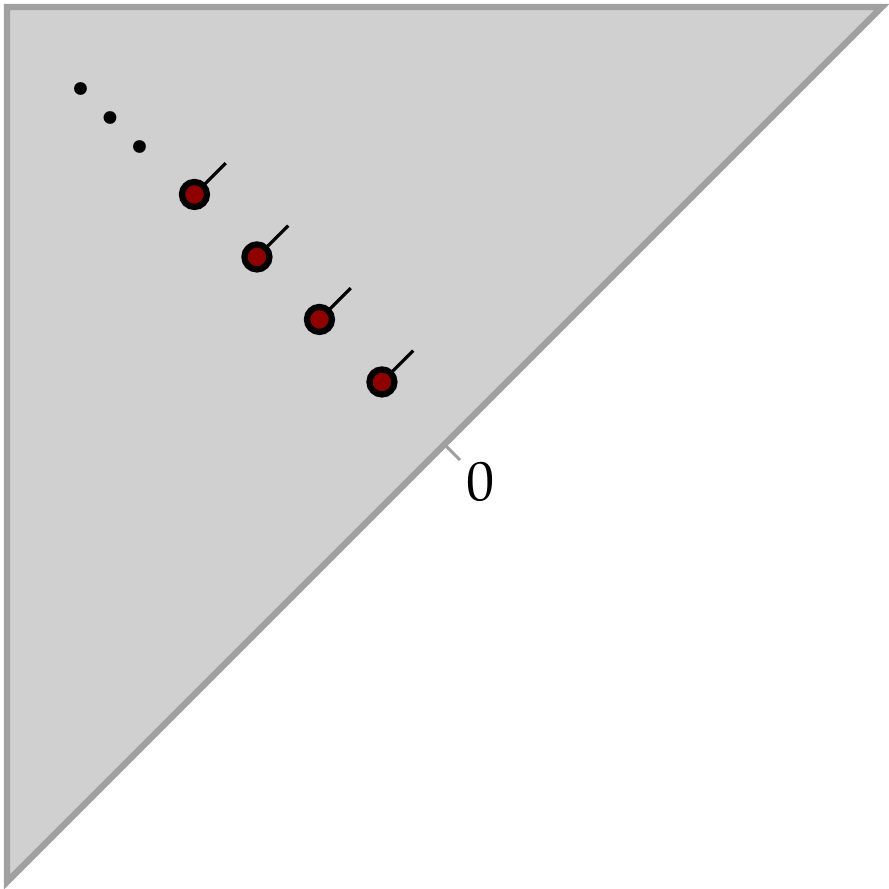}
\hfill
\includegraphics[scale=0.5]{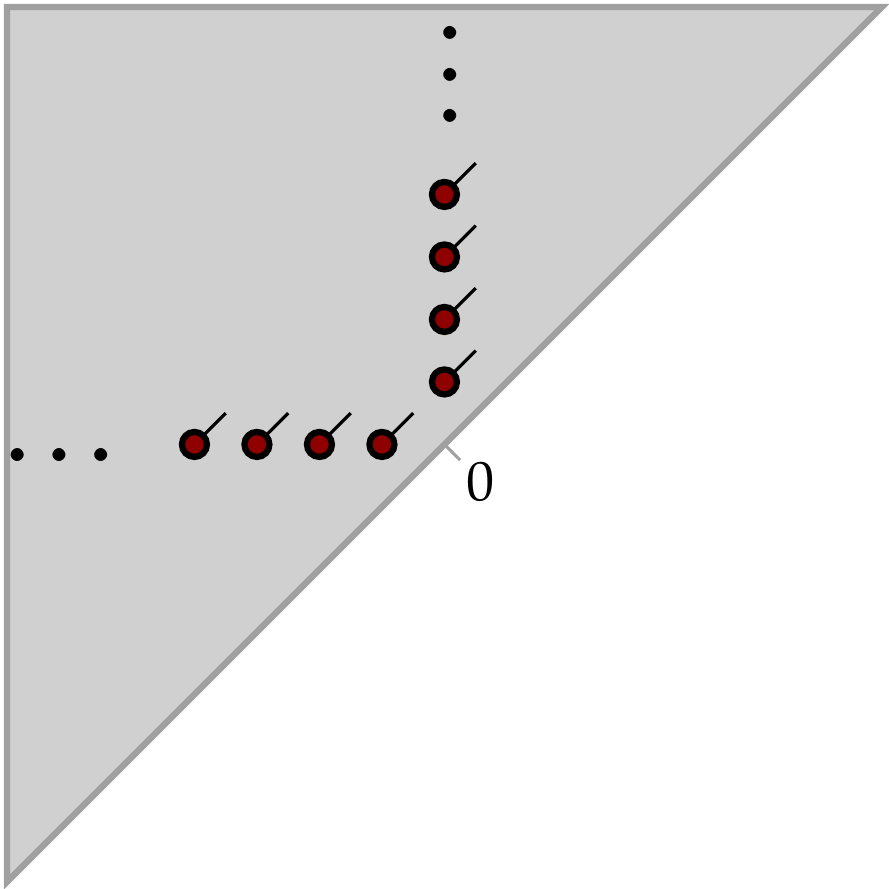}
\hfill
}
\caption{Diagrams of persistence modules which are: (left) h-tame and v-tame but not q-tame; (right) r-tame but not h-tame or v-tame.}
\label{fig:r-hv-q}
\end{figure}

We have not determined whether
\[
\text{h-tame} \oplus \text{v-tame} = \text{r-tame}.
\]
If an r-tame module is decomposable then it can certainly be written as the direct sum of an h-tame module and a v-tame module, by partitioning the intervals appropriately. The general situation seems more subtle.

Later we will show that `q-tame' can be thought of as the completion of `locally finite': a persistence module is q-tame if and only if it can be approximated arbitrarily well by locally finite modules.
See Theorem~\ref{thm:qtame-char}.

Many naturally occurring persistence modules are q-tame. Here is a typical example.

\begin{theorem}
\label{thm:poly-q-tame}
Let $X$ be a finite polyhedron,\footnote{%
By `polyhedron' we mean the realisation of a simplicial complex as a topological space.
}
and let $f: X \to \Rr$ be a continuous function. Then persistent homology
$
\Hgr(\Xx_\sub)
$
of the sublevelset filtration of $(X,f)$ is q-tame.
\end{theorem}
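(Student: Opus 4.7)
The goal is to show that for every $s<t$ the map $v_s^t\colon \Hgr(X^s)\to\Hgr(X^t)$ has finite rank. My plan is to factor this map through the homology of a \emph{finite} simplicial complex sandwiched between $X^s$ and $X^t$, at which point finiteness of the rank is immediate.

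The construction goes as follows. Fix a triangulation $K$ of $X$ (which exists by hypothesis) and set $\delta = \tfrac{1}{2}(t-s)>0$. Since $X$ is compact and $f$ is continuous, $f$ is uniformly continuous, so by taking a sufficiently fine barycentric subdivision $K'$ of $K$ we may assume that $|f(x)-f(y)|<\delta$ whenever $x,y$ lie in a common closed simplex of $K'$. Let $L\subseteq K'$ be the subcomplex consisting of all simplices $\sigma$ whose closure meets $X^s$, together with all their faces. Two inclusions then hold:
\begin{itemize}
\item $X^s\subseteq |L|$: every point of $X^s$ lies in some closed simplex $\sigma$ of $K'$, and that $\sigma$ then belongs to $L$ by definition.
\item $|L|\subseteq X^t$: for any $\sigma\in L$ pick $x_\sigma\in\sigma\cap X^s$; for any $y\in\sigma$ we have $f(y)<f(x_\sigma)+\delta\le s+\delta<t$.
\end{itemize}
Consequently the inclusion $i_s^t\colon X^s\hookrightarrow X^t$ factors as $X^s\hookrightarrow |L|\hookrightarrow X^t$, and so on homology
\[
v_s^t \;=\; \Hgr(i_s^t) \;=\; \Hgr(|L|\hookrightarrow X^t)\circ\Hgr(X^s\hookrightarrow|L|).
\]
Since $L$ is a finite simplicial complex, $\Hgr(|L|)$ is finite-dimensional, and hence $\rank(v_s^t)\le\dim\Hgr(|L|)<\infty$. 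As $s<t$ were arbitrary, $\Vv=\Hgr(\Xx_\sub)$ is q-tame.

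The only nontrivial step is the geometric/combinatorial construction of the sandwich complex $L$; this relies on two ingredients that are both standard in the finite-polyhedron setting, namely uniform continuity of $f$ on the compact space $X$ and the ability to subdivide a finite triangulation to arbitrarily small mesh. I do not foresee any real obstacle, but one has to be mindful that $X^s$ itself need not be a subcomplex (nor even have finite-dimensional homology in general), which is precisely why the factorization argument—rather than a direct computation of $\Hgr(X^s)$—is the right approach.
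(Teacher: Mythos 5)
Your proof is correct and takes essentially the same approach as the paper: in both cases the key step is to build a finite subcomplex $L$ (the paper calls it $Y$) sandwiched between $X^s$ and $X^t$ by taking the simplices of a sufficiently fine subdivision that meet $X^s$, and then factor $v_s^t$ through the finite-dimensional $\Hgr(|L|)$. The only cosmetic difference is that you spell out the uniform-continuity/mesh argument explicitly, whereas the paper states more tersely that one subdivides "until no simplex meets both $f^{-1}(b)$ and $f^{-1}(c)$."
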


\begin{proof}
For any $b < c$ we must show that
\[
\Hgr(X^b) \to \Hgr(X^c)
\]
has finite rank. Begin with any triangulation of~$X$, and subdivide it repeatedly until no simplex meets both $f^{-1}(b)$ and $f^{-1}(c)$. If we define $Y$ to be the union of the closed simplices which meet $X^b$, then we have
\[
X^b \subseteq Y \subseteq X^c
\]
and hence the factorisation
\[
\Hgr(X^b) \to \Hgr(Y) \to \Hgr(X^c).
\]
Since $Y$ is a finite polyhedron, $\Hgr(Y)$ is finite dimensional, so $\Hgr(X^b) \to \Hgr(X^c)$ has finite rank.
\end{proof}

We emphasise that
individual vector spaces $\Hgr(X^b)$ in the persistence module may be infinite-dimensional. For instance let $X$ be a closed disk in the plane, and let $f$ be a non-negative function whose zero set is a Hawaiian earring; then $\Hgr_1(X^0)$ is uncountable-dimensional.
%
%
%
The theorem nonetheless applies.

With a little more work we can show:

\begin{theorem}
\label{thm:poly-hv-tame}
Let $X$ be a locally compact polyhedron, and let $f: X \to \Rr$ be a proper continuous function. Then persistent homology
$
\Hgr(\Xx_\sub)
$
of the sublevelset filtration of $(X,f)$ is h-tame and v-tame (and r-tame).
\end{theorem}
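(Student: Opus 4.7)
The plan is to reduce h-tameness and v-tameness to Theorem~\ref{thm:poly-q-tame} applied to a finite subcomplex of $X$ that covers the compact ``transition region'' $f^{-1}([b,d])$ (respectively $f^{-1}([a,c])$); r-tameness then follows immediately from monotonicity, since any finite rectangle not touching the diagonal is contained in a horizontally infinite strip not touching the diagonal.

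For h-tameness, fix $b<c<d$. By Proposition~\ref{prop:localize} (applied with $V_{-\infty}=0$), the quantity $\mu_\Vv([-\infty,b]\times[c,d]) = \langle \qon{b}\qem\qon{c}\qem\qoff{d} \mid \Vv \rangle$ equals $\dim[\img(v_b^c)\cap\ker(v_c^d)]$, which via $v_b^c$ is identified with a quotient of $\ker(v_b^d)$. Fix a locally finite triangulation of $X$. By properness of $f$, the compact set $f^{-1}([b,d])$ meets only finitely many simplices, so we may enclose it in a finite subcomplex $N\subseteq X$. Set $K:=N\cap X^d$; since $K\supseteq f^{-1}((b,d]) = X^d\setminus X^b$, we have $X^d = X^b\cup K$ with $X^b\cap K = X^b\cap N =: K_b$. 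Applying Mayer--Vietoris to this closed polyhedral cover gives
\[
\ker\bigl[\Hgr(X^b)\to\Hgr(X^d)\bigr] = i_*\bigl(\ker[\Hgr(K_b)\to\Hgr(K)]\bigr),
\]
where $i\colon K_b\hookrightarrow X^b$. Composing with $v_b^c$ and using $K_b\subseteq K_c := N\cap X^c\subseteq K$, the map $v_b^c\circ i_*$ factors as $j_*\circ m_*$, where $m\colon K_b\hookrightarrow K_c$ is an inclusion of sublevel sets of $f|_N$ and $j\colon K_c\hookrightarrow X^c$. Since $N$ is a finite polyhedron, Theorem~\ref{thm:poly-q-tame} yields $\rank(m_*)<\infty$, and hence $\dim v_b^c(\ker v_b^d)\leq\rank(m_*)<\infty$, proving h-tameness.

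For v-tameness, fix $a<b<c$ and a finite subcomplex $N\supseteq f^{-1}([a,c])$; set $K_t := N\cap X^t$. By Proposition~\ref{prop:localize}, $\mu_\Vv([a,b]\times[c,+\infty]) = \dim[\img(v_b^c)/\img(v_a^c)]$. The plan is to represent a class of this quotient by a cycle $z$ in $X^b$, barycentrically subdivide so that each singular simplex of $z$ lies either in $K_b$ or in $X^a$ (possible because $X^b\setminus N\subseteq X^a$), and split $z = z_{K_b} + z_{X^a}$ accordingly; the common boundary $\partial z_{K_b} = -\partial z_{X^a}$ lies in $K_a$, so $z_{K_b}$ determines a class in $\Hgr(K_b,K_a)$. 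Via the excision isomorphism $\Hgr(X^c,X^a)\cong\Hgr(K_c,K_a)$, the original class in $\img(v_b^c)/\img(v_a^c)$ is identified with the image of $[z_{K_b}]$ under $\Hgr(K_b,K_a)\to\Hgr(K_c,K_a)$; this image is bounded by finite-rank quantities on the finite polyhedron $N$---namely the ranks of $\Hgr(K_b)\to\Hgr(K_c)$ in degrees $*$ and $*{-}1$, each finite by Theorem~\ref{thm:poly-q-tame}.

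The main obstacle is justifying Mayer--Vietoris and excision for the closed sets $X^a$, $X^b$, $K$ cut out by the continuous function $f$, since these need not be subcomplexes of the triangulation. This is handled using the fact that sublevel sets of continuous functions on polyhedra form ANR pairs, so the singular-homology versions of these tools apply; alternatively one chooses $N\supseteq f^{-1}([a-\varepsilon,c+\varepsilon])$ for small $\varepsilon>0$, thickens to open neighborhoods, and passes to the limit. A secondary subtlety in v-tameness is organising the image of $\Hgr(K_b,K_a)\to\Hgr(K_c,K_a)$ into the contribution from absolute homology classes and the contribution from the Mayer--Vietoris connecting map, so that each can be bounded separately by a q-tame invariant of $N$.
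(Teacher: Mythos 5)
Your strategy is genuinely different from the paper's. The paper never computes homology \emph{of} (or relative to) a sublevelset: it picks a fine triangulation, sandwiches $X^b \subseteq Y \subseteq X^c \subseteq X^d \subseteq Z$ (resp.\ $Y \subseteq X^a \subseteq X^b \subseteq Z \subseteq X^c$) between \emph{subcomplexes} $Y,Z$, notes that $Z\setminus Y$ consists of finitely many simplices (properness $\Rightarrow$ compact transition region), so $\Hgr(Z,Y)$ is finite-dimensional, and then a two-line quiver calculation bounds the persistence quantity by $\dim\ker$ or $\dim\coker$ of $\Hgr(Y)\to\Hgr(Z)$, which by the long exact sequence sit inside $\Hgr(Z,Y)$. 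You instead try to relate the persistence of $X$ to that of a finite subcomplex $N$ via Mayer--Vietoris and excision, then invoke Theorem~\ref{thm:poly-q-tame} on~$N$.

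Two issues. First, the stated justification for using Mayer--Vietoris with the closed sets $X^a,X^b,K$ is wrong: sublevelsets of a continuous function on a polyhedron are \emph{not} ANRs in general. The Hawaiian-earring sublevelset $X^0$ that the paper itself describes right after Theorem~\ref{thm:poly-q-tame} is the standard example of a compact non-ANR. The underlying excisiveness claim can be salvaged---if $N$ is chosen so that $\mathrm{int}(N)$ contains the compact transition set (e.g.\ take all closed simplices meeting $f^{-1}([b,d])$, after suitable subdivision), then $\mathrm{int}_{X^d}(X^b)\cup\mathrm{int}_{X^d}(K)=X^d$ and the triad is excisive---but that argument has to be made; ``ANR pairs'' and ``pass to the limit'' do not do it. With that fix, your h-tameness argument goes through: $\dim v_b^c(\ker v_b^d)\le\rank[\Hgr(K_b)\to\Hgr(K_c)]<\infty$.

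The second issue is more serious and concerns v-tameness. Even granting excision, the plan is to bound $\dim[\img v_b^c/\img v_a^c]$ by $\rank[\Hgr(K_b,K_a)\to\Hgr(K_c,K_a)]$ and control the latter by q-tameness of the \emph{finite} polyhedron $N$. But q-tameness of $N$ only controls ranks between \emph{absolute} homology groups of sublevelsets; the relative-to-$K_a$ ranks are not controlled. For example, choose $f$ on the disk $N$ so that $K_a$ is a Hawaiian earring while $K_b=K_c=N$ are contractible: then in degree $2$ the commuting square with the boundary maps $\partial_b,\partial_c$ onto $\Hgr_1(K_a)$ forces $\rank[\Hgr_2(K_b,K_a)\to\Hgr_2(K_c,K_a)]$ to be infinite, even though $\langle \qoff{a}\qem\qon{b}\qem\qon{c}\mid\Hgr_2\rangle = 0$. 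You flag this yourself as a ``secondary subtlety'' that needs the contributions of absolute classes and of the connecting map to be bounded ``separately by a q-tame invariant of $N$,'' but there is no such bound: the connecting-map contribution factors through $\ker[\Hgr_{*-1}(K_a)\to\Hgr_{*-1}(K_\cdot)]$, which q-tameness does not see. What makes the paper's bound work is not q-tameness of a smaller space but the literal finite-dimensionality of $\Hgr(Z,Y)$ for nested subcomplexes with $Z\setminus Y$ finite; that is an absolute statement, not a rank statement, and it is exactly the ingredient your reduction to $N$ discards. So the v-tameness half of your proof has a genuine gap that does not look repairable along these lines.
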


The theorem essentially asserts that the only possible bad behaviour of $\Hgr(\Xx_\sub)$ occurs at $(-\infty, +\infty)$. It is easy to construct examples which are definitely not q-tame. The simplest example is $X = \Zz$ (the integers), with $f(n) = n$. The 0-homology of any sublevelset is infinite dimensional, and all inclusions have infinite rank.

\begin{proof}
{\bf h-tameness.}
Let $b < c < d$. We must show that
\[
\langle \qon{b}\qem\qon{c}\qem\qoff{d}
\mid \Hgr(\Xx_\sub) \rangle
< \infty.
\tag{h-*}
\]
Begin with a triangulation of~$X$. Only finitely many simplices  meet the compact set $f^{-1}(b)$, so again after a finite number of subdivisions no simplex meets both $f^{-1}(b)$ and $f^{-1}(c)$.

Now let $Y$ be the union of the closed simplices which meet $X^b$, and let $Z$ be the union of the closed simplices which meet $X^d$. This gives a diagram of inclusions
\[
X^b \subseteq Y \subseteq X^c \subseteq X^d \subseteq Z.
\]
Note that the polyhedron~$Z$ differs from its subpolyhedron~$Y$ by the addition of only finitely many simplices, since each such simplex must meet the compact set $f^{-1}[b,d]$. Thus the relative homology $\Hgr(Z,Y)$ is finite-dimensional.

We now work with the induced homology diagram
\[
\Hgr(X^b) \to \Hgr(Y) \to \Hgr(X^c) \to \Hgr(X^d) \to \Hgr(Z).
\]
In the obvious notation,
\begin{align*}
\langle \qon{b}\qem\qno\qem\qon{c}\qem\qoff{d}\qem\qno \rangle
&=
\langle \qon{b}\qem\qon{y}\qem\qon{c}\qem\qoff{d}\qem\qoff{z} \rangle
\\
&\leq
\langle \qno\qem\qon{y}\qem\qno\qem\qno\qem\qoff{z} \rangle
\\
&= \dim[\ker(\Hgr(Y) \to \Hgr(Z))].
\end{align*}
By the homology long exact sequence for the pair $(Z,Y)$, we have
\[
\ker(\Hgr(Y) \to \Hgr(Z)) = \img(\Hgr(Z,Y) \to \Hgr(Y))
\]
which is finite-dimensional. This confirms~(h-*).

{\bf v-tameness.} Let $a < b < c$. We must show that
\[
\langle \qoff{a}\qem\qon{b}\qem\qon{c}
\mid \Hgr(\Xx_\sub) \rangle < \infty
\tag{v-*}
\]
Using a similar argument to the above, we construct a diagram of inclusions
\[
Y \subseteq X^a \subseteq X^b \subseteq Z \subseteq X^c
\]
where $Y, Z$ are polyhedra with $\Hgr(Z,Y)$ finite-dimensional. 
Working with the homology diagram
\[
\Hgr(Y) \to \Hgr(X^a) \to \Hgr(X^b) \to \Hgr(Z) \to \Hgr(X^c),
\]
we estimate
\begin{align*}
\langle \qno\qem\qoff{a}\qem\qon{b}\qem\qno\qem\qon{c} \rangle
&=
\langle \qoff{y}\qem\qoff{a}\qem\qon{b}\qem\qon{z}\qem\qon{c} \rangle
\\
&\leq
\langle \qoff{y}\qem\qno\qem\qno\qem\qon{z}\qem\qno \rangle
\\
&=
\dim[\coker(\Hgr(Y) \to \Hgr(Z))]
\end{align*}
By the homology long exact sequence of the pair $(Z,Y)$, we have
\[
\coker(\Hgr(Y) \to \Hgr(Z)) \cong \img(\Hgr(Z) \to \Hgr(Z,Y))
\]
which is finite-dimensional. This confirms~(v-*).
\end{proof}

\bigskip
\begin{remark}
We find that q-tame modules occur rather widely. For instance, it is shown in~\cite{Chazal_dS_O_2012} that the Vietoris--Rips and \v{C}ech complexes of a pre-compact metric space have q-tame persistent homology.
At the same time, such complexes can be very badly behaved when viewed non-persistently. In fact, there is a compact metric space whose Vietoris--Rips complex has uncountable dimension at uncountably many parameter values (indeed, over an entire interval). The construction is not at all pathological in appearance; see~\cite{Chazal_dS_O_2012}. The first examples of this type were shown to us by J.-M.~Droz; see~\cite{Droz_2012} for another.
These theorems and examples support our contention that q-tame persistence modules are the natural class to work with.
\end{remark}

\subsection{Finite approximations}
\label{subsec:snapping}

Away from the finite r-interior, the r-measure gives a decidedly limited view of the structure of a persistence module. For example:

\begin{vlist}
\item
It is not possible to distinguish between the many nonisomorphic persistence modules $\Vv$ for which $\mu_\Vv$ is infinite on every rectangle.

\item
If the persistence diagram of~$\Vv$ contains a sequence of points $( p^*_n, q^*_n )$ with $p_n, q_n$ converging to $r$ from below and above, respectively, then there is no way to determine the multiplicity of $( r^-, r^+ )$ from the measure alone.
\end{vlist}

On the other hand, from $\mu_\Vv$ we do recover all information obtainable by restricting $\Vv$ to finite subsets $\Tt \subset \Rr$. We may call this the `finitely observable' part of~$\Vv$. Specifically, for any finite index set
\[
\Tt : \quad a_1 < a_2 < \dots < a_n
\]
we have
\begin{align*}
\langle [a_i, a_j] \mid \Vv_\Tt \rangle
&=
\langle
 \;
 \Qoff{a_{i-1}}\qem\Qon{a_{i}}
 \qem\Qon{a_{j}}\qem\,\,\,\Qoff{a_{j+1}}
\mid
  \Vv
\rangle
=
\mu_\Vv([a_{i-1}, a_i] \times [a_j, a_{j+1}])
\tag{$\dagger$}\label{eq:snap}
\end{align*}
(where $a_0, a_{n+1}$ are interpreted as $-\infty, +\infty$ respectively).

This can be interpreted as a sort of `snapping principle' (see~\cite{Chazal_CS_G_G_O_2009,Chazal_CS_G_G_O_2008} for the origin of this term). The left-hand side of~\eqref{eq:snap} gives the multiplicity of the interval $[a_i, a_{j+1})$ in the traditional definition of the barcode for $\Vv_\Tt$. For us, this is $\lgroup a_i^-, a_{j+1}^- \rgroup$. The right-hand side of~\eqref{eq:snap} counts the decorated points of $\Dgm(\mu_\Vv)$ in the rectangle $[a_{i-1}, a_i]\times[a_j,a_{j+1}]$.
\begin{figure}
\includegraphics[height=2.25in]{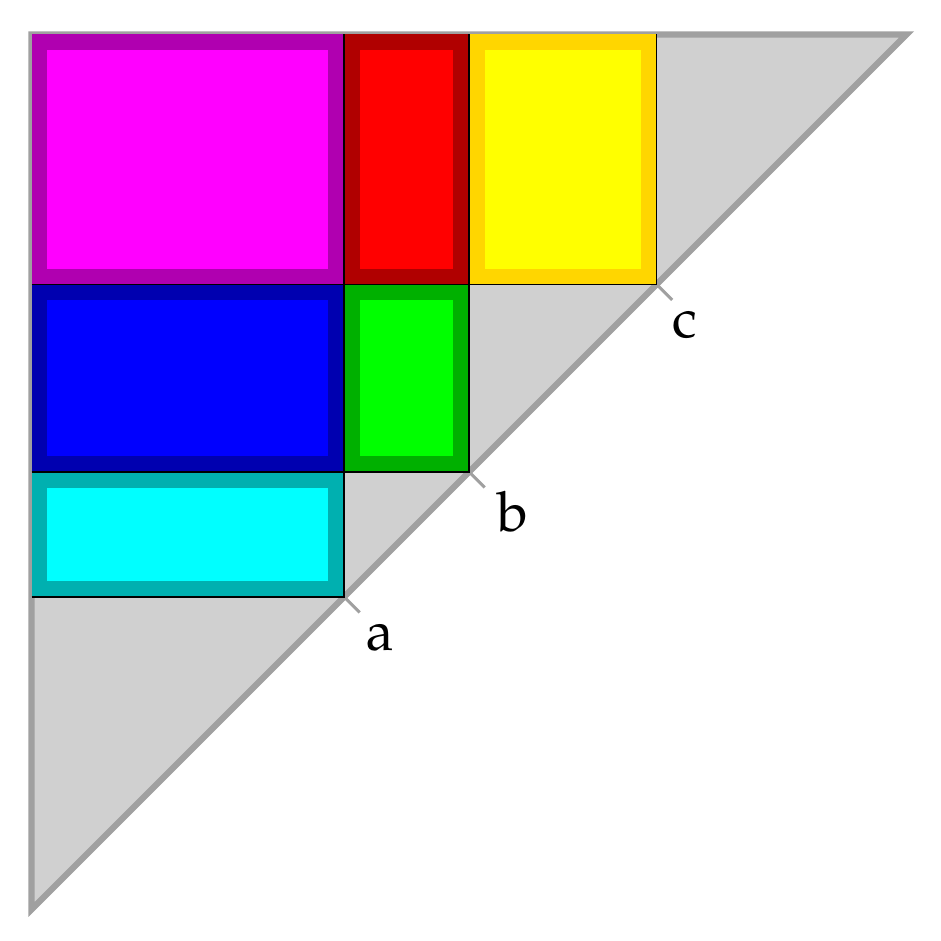}
\quad
\raisebox{1.1in}
	{$\stackrel{\text{snap}}{\longrightarrow}$}
\quad
\includegraphics[height=2.25in]{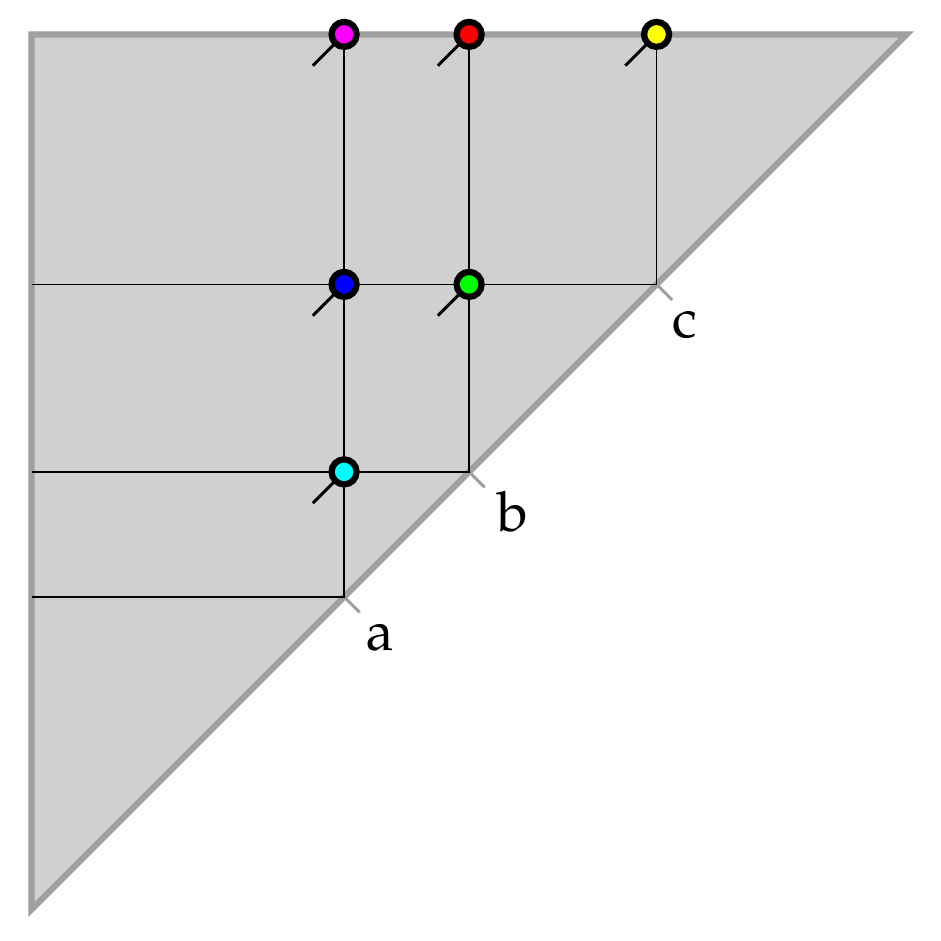}
\caption{A persistence module $\Vv$, discretised at $a, b, c$. 
The persistence diagram $\Dgm(\Vv_{a,b,c})$ is localised at six grid vertices. The multiplicity of each vertex is equal to the number of decorated points of $\Dgm(\Vv)$ in the rectangle immediately below and to the left of it. Decorated points of $\Dgm(\Vv)$ in the remaining triangular regions do not show up in $\Dgm(\Vv_{a,b,c})$.
}
\label{fig:snap}
\end{figure}
See Figure~\ref{fig:snap}.

There are some well known situations in which the entire structure of~$\Vv$ is finitely observable. For example, suppose
\begin{vlist}
\item
$X$ is a compact manifold and $f$ is a Morse function; or

\item
$X$ is a compact polyhedron and $f$ is piecewise linear.
\end{vlist}
In each case there is a finite set of critical points
\[
\Tt : \quad
a_1 < a_2 < \dots < a_n
\]
such that the inclusion of sublevelsets $X^s \subset X^t$ is a homotopy equivalence provided that $(s,t]$ does not meet any of the critical points. It follows that the sublevelset persistent homology $\Vv = \Hgr(\Xx_\sub)$ is constant on each of the intervals
\[
(-\infty, a_1),
\quad
[a_1, a_2),
\quad
[a_2, a_3),
\quad
\dots,
\quad
[a_{n-1}, a_n),
\quad
[a_n, +\infty).
\]
The structure of $\Vv$ is therefore determined by $\Vv_\Tt$, it is of finite type, and $\Dgm(\Vv)$ is localised at the points
\begin{alignat*}{1}
(a_i^-, a_j^-)
&\quad
\text{for $1\leq i < j \leq n$},
\\
(a_i^-, +\infty)
&\quad
\text{for $1\leq i \leq n$}.
\end{alignat*}

\section{Interleaving}
\label{sec:interleaving}

As with any category, two persistence modules $\Uu, \Vv$ are said to be isomorphic if there are maps
\[
\Phi \in \Hom(\Uu, \Vv),
\quad
\Psi \in \Hom(\Vv, \Uu),
\]
such that
\[
\Psi \Phi = 1_\Uu,
\quad
\Phi \Psi = 1_\Vv.
\]
This relation is too strong for situations where the data leading to the construction of a persistence module is obtained with some uncertainty or noise. The natural response is to consider a weaker relation, {\bf $\delta$-interleaving}, where $\delta \geq 0$ quantifies the  uncertainty.

In this section, we define the interleaving relation and study its elementary properties. We prove the nontrivial result (from~\cite{Chazal_CS_G_G_O_2009}) that if two persistence modules are $\delta$-interleaved, then they are connected in the space of persistence modules by a path of length~$\delta$. This `interpolation lemma' is a crucial step in the proof of the stability theorem in section~\ref{sec:isometry}.

In this section, all persistence modules are indexed by~$\Rr$ unless explicitly stated otherwise.

\subsection{Shifted homomorphisms}

The first step is to consider homomorphisms which shift the value of the persistent index. Let $\Uu, \Vv$ be persistence modules over~$\Rr$, and let $\delta$ be any real number. A {\bf homomorphism of degree~$\delta$} is a collection $\Phi$ of linear maps
\[
\phi_t : U_t \to V_{t+\delta}
\]
for all $t \in \Rr$, 
such that the diagram
\[
\begin{diagram}
\node{U_s}
  \arrow{e,t}{u_s^{t}}
  \arrow{s,l}{\phi_s}
\node{U_{t}}
  \arrow{s,r}{\phi_{t}}
\\
\node{V_{s+\delta}}
  \arrow{e,t}{v_{s+\delta}^{t+\delta}}
\node{V_{t+\delta}}
\end{diagram}
\]
commutes whenever $s \leq t$.

We write
\begin{align*}
\Hom^\delta(\Uu, \Vv) &= \{ \text{homomorphisms $\Uu \to \Vv$ of degree~$\delta$} \},
\\
\End^\delta(\Vv) &= \{ \text{homomorphisms $\Vv \to \Vv$ of degree~$\delta$} \}.
\end{align*}
Composition gives a map
\[
\Hom^{\delta_2}(\Vv, \Ww) \times \Hom^{\delta_1}(\Uu, \Vv)
\to
\Hom^{\delta_1+\delta_2}(\Uu, \Ww).
\]

For $\delta \geq 0$, the most important degree-$\delta$ endomorphism is the shift map
\[
1_\Vv^\delta \in \End^\delta(\Vv),
\]
which is the collection of maps $( v_t^{t+\delta} )$ from the persistence structure on~$\Vv$.
If $\Phi$ is a homomorphism $\Uu \to \Vv$ of any degree, then by definition
\[
\Phi 1_\Uu^\delta = 1_\Vv^\delta \Phi
\]
for all $\delta \geq 0$.

\subsection{Interleaving}
\label{subsec:interleaving}

Let $\delta \geq 0$. Two persistence modules $\Uu, \Vv$ are said to be {\bf $\delta$-interleaved} if there are maps
\[
\Phi \in \Hom^\delta(\Uu, \Vv),
\quad
\Psi \in \Hom^\delta(\Vv, \Uu)
\]
such that
\[
\Psi \Phi = 1_\Uu^{2\delta}, 
\quad
\Phi \Psi = 1_\Vv^{2\delta}. 
\]
More expansively (with many more indices), this means that there are maps
\[
\phi_t: U_t \to V_{t+\delta}
\quad \text{and} \quad
\psi_t: V_t \to U_{t+\delta}
\]
defined for all~$t$, such that the following diagrams
\begin{equation}
\label{eq:int_expansive}
\begin{array}{cc}
\begin{diagram}
\node{U_s}
  \arrow{e,t}{u_s^t}
  \arrow{s,l}{\phi_s}
\node{U_t}
  \arrow{s,r}{\phi_t}
\\
\node{V_{s+\delta}}
  \arrow{e,t}{v_{s+\delta}^{t+\delta}}
\node{V_{t+\delta}}
\end{diagram}
\quad&\quad
\begin{diagram}
\node{U_{s-\delta}}
  \arrow[2]{e,t}{u_{s-\delta}^{s+\delta}}
  \arrow{se,b}{\phi_{s-\delta}}
\node[2]{U_{s+\delta}}
\\
\node[2]{V_{s}}
  \arrow{ne,b}{\psi_{s}}
\end{diagram}
\\
\\
\begin{diagram}
\node{V_s}
  \arrow{e,t}{v_s^t}
  \arrow{s,l}{\psi_s}
\node{V_t}
  \arrow{s,r}{\psi_t}
\\
\node{U_{s+\delta}}
  \arrow{e,t}{u_{s+\delta}^{t+\delta}}
\node{U_{t+\delta}}
\end{diagram}
\quad&\quad
\begin{diagram}
\node{V_{s-\delta}}
  \arrow[2]{e,t}{v_{s-\delta}^{s+\delta}}
  \arrow{se,b}{\psi_{s-\delta}}
\node[2]{V_{s+\delta}}
\\
\node[2]{U_{s}}
  \arrow{ne,b}{\phi_{s}}
\end{diagram}
\end{array}
\end{equation}
commute for all eligible parameter values; that is, for all $s < t$.
Where possible, we will be concise rather than expansive.

Here is the classic example of a pair of interleaved persistence modules.

\begin{example}
Let $X$ be a topological space and let $f, g: X \to \Rr$. Suppose $\| f - g \|_\infty < \delta$. Then the persistence modules $\Hgr(\Xx^f_\sub)$, $\Hgr(\Xx^g_\sub)$ are $\delta$-interleaved. Indeed, there are inclusions
\begin{align*}
(X,f)^t  &\subseteq (X,g)^{t+\delta}
\\
(X,g)^t  &\subseteq (X,f)^{t+\delta}
\end{align*}
for all~$t$, which induce maps
\begin{align*}
\Phi &: \Hgr(\Xx^f_\sub) \to \Hgr(\Xx^g_\sub)
\\
\Psi &: \Hgr(\Xx^g_\sub) \to \Hgr(\Xx^f_\sub)
\end{align*}
of degree~$\delta$. Since all the maps are induced functorially from inclusion maps, the interleaving relations are automatically satisfied.
\end{example}

This is the situation for which the stability theorem of Cohen-Steiner, Edelsbrunner and Harer~\cite{CohenSteiner_E_H_2007} was originally stated: if two functions $f,g$ are close then the diagrams for their sublevelset persistent homology are close. Subsequently, stability was formulated as a theorem about the diagrams of interleaved persistence modules \cite{Chazal_CS_G_G_O_2009,Chazal_CS_G_G_O_2008}. In the present paper, we come to view stability as a theorem about r-measures.

\subsection{Interleaving (continued)}
\label{subsec:interleaving2}

An interleaving between two persistence modules can itself be thought of  as a persistence module over a certain partially ordered set (poset). We develop this idea now.

Consider the standard partial order on the plane:
\[
(p_1, q_1) \leq (p_2, q_2)
\quad\Leftrightarrow\quad
p_1 \leq p_2
\;\text{and}\;
q_1 \leq q_2.
\]
For any real number~$x$, define the shifted diagonal
\[
\Delta_x = \{ (p,q) \mid q-p = 2x \} \subset \Rr^2.
\]
As a poset, this is isomorphic to the real line. Specifically, we identify $t \in \Rr$ with the point $(t-x, t+x) \in \Delta_x$.
Through this, we get a canonical identification between persistence modules over~$\Rr$ and persistence modules over~$\Delta_x$.

\begin{proposition}
\label{prop:poset-interleaving}
Let $x,y$ be real numbers. Persistence modules $\Uu, \Vv$ are $|y-x|$-interleaved if and only if there is a persistence module $\Ww$ over $\Delta_x \cup \Delta_y$ such that $\Ww|_{\Delta_x} = \Uu$ and $\Ww|_{\Delta_y} = \Vv$.
\end{proposition}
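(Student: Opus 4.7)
Without loss of generality assume $x \leq y$ and set $\delta = y - x = |y - x| \geq 0$. Under the canonical identifications $\Rr \cong \Delta_x$ via $s \mapsto (s - x, s + x)$ and $\Rr \cong \Delta_y$ via $s \mapsto (s - y, s + y)$, I would first unpack the partial order inherited from $\Rr^2$: by comparing the two coordinate inequalities one finds that $(s-x, s+x) \leq (t-y, t+y)$ if and only if $t \geq s + \delta$, and symmetrically $(s-y, s+y) \leq (t-x, t+x)$ if and only if $t \geq s + \delta$. Thus the cross-comparisons between the two diagonals correspond exactly to shifts by at least~$\delta$, which already suggests the precise correspondence with a $\delta$-interleaving.

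For the forward direction, suppose $\Uu, \Vv$ are $\delta$-interleaved via $\Phi, \Psi$. I would define $\Ww$ by $W_{(s-x, s+x)} = U_s$ and $W_{(s-y, s+y)} = V_s$, with the internal structure on $\Delta_x$ inherited from $\Uu$ and on $\Delta_y$ from $\Vv$. For $t \geq s + \delta$, declare the cross-map $W_{(s-x,s+x)} \to W_{(t-y,t+y)}$ to be $v_{s+\delta}^{t} \circ \phi_s$; this equals $\phi_{t-\delta} \circ u_s^{t-\delta}$ because $\Phi$ is a degree-$\delta$ homomorphism, so it does not matter at what height one "crosses." Analogously in the other direction using $\Psi$. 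Compositions staying on one diagonal are automatic. Compositions of the form $\Delta_x \to \Delta_y \to \Delta_x$ yield a map $U_s \to U_t$ for $t \geq s + 2\delta$ which must equal $u_s^{t}$; this reduces to the identity $\Psi \Phi = 1_\Uu^{2\delta}$ post- and pre-composed with internal shifts. The mirror case uses $\Phi \Psi = 1_\Vv^{2\delta}$.

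For the converse, given $\Ww$ on $\Delta_x \cup \Delta_y$, I would define $\phi_s : U_s \to V_{s+\delta}$ to be the minimal cross-map
\[
W_{(s-x,\, s+x)} \longrightarrow W_{(s+\delta-y,\, s+\delta+y)}
\]
supplied by $\Ww$, and $\psi_s$ analogously in the other direction; note that $(s+\delta-y, s+\delta+y)$ is the least point of $\Delta_y$ lying above $(s-x, s+x)$. The commutative squares that make $\Phi, \Psi$ into degree-$\delta$ homomorphisms are instances of functoriality of $\Ww$, and the interleaving identities $\Psi \Phi = 1_\Uu^{2\delta}$ and $\Phi \Psi = 1_\Vv^{2\delta}$ are precisely functoriality applied to the composite paths $\Delta_x \to \Delta_y \to \Delta_x$ and $\Delta_y \to \Delta_x \to \Delta_y$.

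The main obstacle is purely combinatorial bookkeeping: between two comparable points of $\Delta_x \cup \Delta_y$ there can be several directed paths, some staying on one diagonal and others crossing, and one must check that every such path produces the same composite. Once the cross-maps are defined as compositions of the interleaving maps with internal shifts, each such equality reduces to a single application of an interleaving axiom together with the functoriality of $\Uu$ and $\Vv$, so no new ingredients are required beyond the elementary order analysis carried out at the start.
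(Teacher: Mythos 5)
Your proposal is correct and follows essentially the same route as the paper's own proof: both arguments hinge on the observation that every cross-map of $\Ww$ factors as an internal shift composed with $\Phi$ or $\Psi$, and that the composition law for $\Ww$ is then equivalent to the interleaving identities together with functoriality of $\Uu$ and $\Vv$. The paper phrases this as an equivalence of ``extra information'' between $\Ww$ and the pair $(\Phi,\Psi)$, whereas you split it into explicit ``if'' and ``only if'' directions, but the substance is identical.
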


\begin{proof}
We may assume $x < y$.

We claim that (i) the extra information carried by $(y-x)$-interleaving maps $\Phi, \Psi$ is equivalent to (ii) the extra information carried by $\Ww$. Let us describe both, more carefully:

\medskip
\quad
(i) In addition to $\Uu, \Vv$ we have a system of maps $\Phi = (\phi_t)$, where
\[
\phi_t : U_t \to V_{t+y-x},
\]
and a system of maps $\Psi = (\psi_t)$, where
\[
\psi_t : V_t \to U_{t+y-x}.
\]
These are constrained by the relations (for all $\eta \geq 0$).
\[
\tag{$\ddagger$}
\label{eq:int-relations}
\Phi 1_\Uu^\eta = 1_\Vv^\eta \Phi,
\quad
\Psi 1_\Vv^\eta = 1_\Uu^\eta \Psi,
\quad
\Psi\Phi = 1_\Uu^{2y-2x},
\quad
\Phi\Psi = 1_\Vv^{2y-2x}.
\]
There are no other constraints.

\medskip
\quad
(ii) In addition to $\Uu, \Vv$ the persistence module~$\Ww$ carries maps between the two components $\Delta_x, \Delta_y$. These maps are constrained by the composition law
\[
w_R^T = w_S^T \circ w_R^S
\]
for all $R,S,T \in \Delta_x \cup \Delta_y$ with $R \leq S \leq T$.

First, observe that we recover the maps $\phi_t, \psi_t$ as vertical maps from $\Delta_x$ to~$\Delta_y$, and horizontal maps from $\Delta_y$ to~$\Delta_x$, respectively (see Figure~\ref{fig:PhiPsi}):
\begin{alignat*}{3}
U_t &= W_{t-x,t+x} &&\to W_{t-x,t+2y-x} &&= V_{t+y-x}
\\
V_t &= W_{t-y,t+y} &&\to W_{t+y-2x,t+y} &&= U_{t+y-x}
\end{alignat*}
\begin{figure}
\hfill
\includegraphics[scale=0.7]{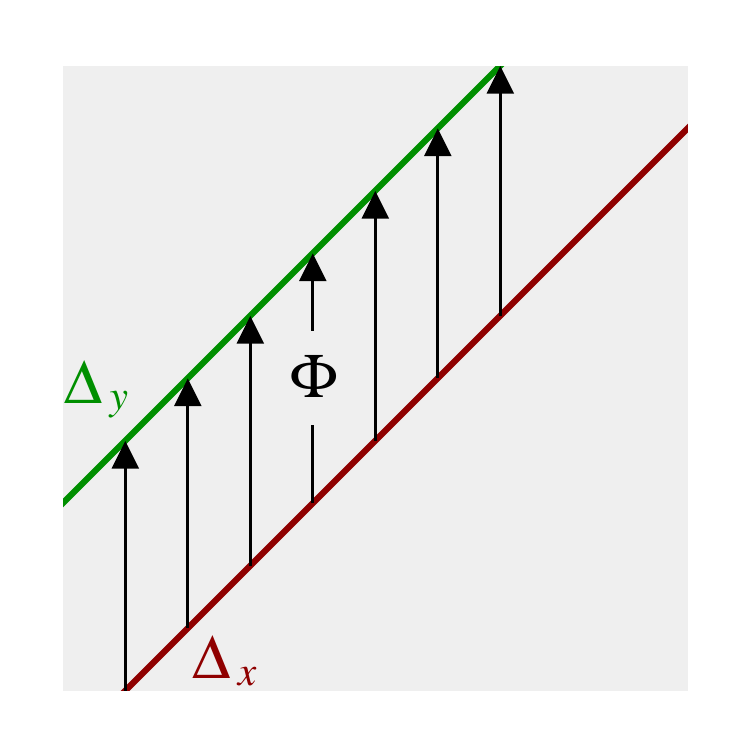}
\hfill
\includegraphics[scale=0.7]{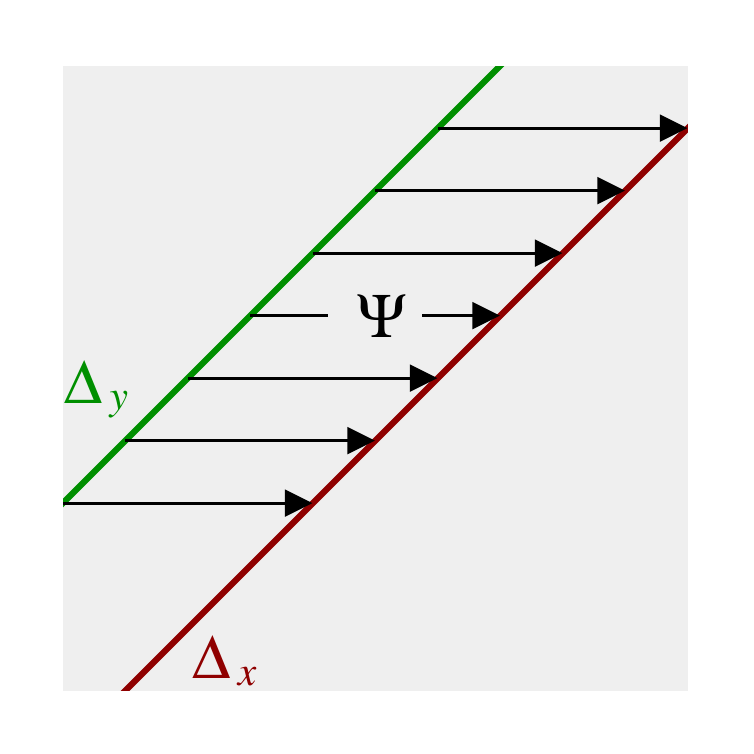}
\hfill{}
\caption{The maps $\Phi, \Psi$ recovered from the module~$\Ww$ over $\Delta_x \cup \Delta_y$.}
\label{fig:PhiPsi}
\end{figure}
Next, observe that the composition law implies all of the relations~\eqref{eq:int-relations}.

Finally, there is no additional information in~$\Ww$, beyond the interleaving maps and relations. Indeed, all remaining maps $w_S^T$, where $S \leq T$, can all be factored in the form:
\begin{alignat*}{2}
w_S^T &= v_{s+{y-x}}^{t} \circ \phi_s
	&&\quad\text{if $S \in \Delta_x$ and $T \in \Delta_y$,}
\\
w_S^T &= u_{s+{y-x}}^{t} \circ \psi_s
	&&\quad\text{if $S \in \Delta_y$ and $T \in \Delta_x$.}
\end{alignat*}
Thus each map in $\Ww$ is an instance of one of
\begin{alignat*}{2}
&
1_\Uu^\eta
&&\quad\text{from $\Delta_x$ to $\Delta_x$,}
\\
&
1_\Vv^\eta
&&\quad\text{from $\Delta_y$ to $\Delta_y$,}
\\
&
1_\Vv^\eta\Phi
&&\quad\text{from $\Delta_x$ to $\Delta_y$,}
\\
&
1_\Uu^\eta\Psi
&&\quad\text{from $\Delta_y$ to $\Delta_x$.}
\end{alignat*}
It is a simple matter to verify that the composition law is satisfied for each composable pair of maps. For instance
\[
(1_\Vv^\eta\Phi) (1_\Uu^\zeta \Psi)
=
1_\Vv^\eta\Phi 1_\Uu^\zeta \Psi
=
1_\Vv^\eta  1_\Vv^\zeta \Phi \Psi
=
1_\Vv^{\eta+\zeta} 1_\Vv^{2y-2x}
=
1_\Vv^{\eta+\zeta+2y-2x}.
\]
This can be done using only the known relations, so there are no further constraints on the~$w_S^T$.
\end{proof}

\begin{remark}
This characterisation makes it clear (or, depends on the fact) that all composable combinations of the maps $u, v, \phi, \psi$ from a given domain to a given codomain must be equal: indeed, they must agree with the appropriate map $w_S^T$ of $\Ww$.
\end{remark}

\subsection{The interpolation lemma}
\label{subsec:interpolation}

In this section we give a proof of the following theorem, which first appeared in ~\cite{Chazal_CS_G_G_O_2009}. The result strikes us as somewhat surprising.

\begin{lemma}[Interpolation lemma]
\label{lem:interpolation}
Suppose $\Uu$, $\Vv$ are a $\delta$-interleaved pair of persistence modules. Then there exists a 1-parameter family of persistence modules
$
\left( \Uu_x \mid {x \in [0,\delta]} \right)
$
such that $\Uu_{0}, \Uu_{\delta}$ are equal to $\Uu, \Vv$ respectively, and $\Uu_x, \Uu_y$ are $|y-x|$-interleaved for all $x, y \in [0,\delta]$.
\end{lemma}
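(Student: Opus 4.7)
The plan is to use the poset-theoretic reformulation of interleaving provided by Proposition \ref{prop:poset-interleaving}. A $\delta$-interleaving of $\Uu$ and $\Vv$ is the same data as a persistence module $\Ww$ over the sub-poset $\Delta_0 \cup \Delta_\delta$ of $\Rr^2$ with $\Ww|_{\Delta_0} = \Uu$ and $\Ww|_{\Delta_\delta} = \Vv$. If one can extend $\Ww$ to a persistence module $\widetilde{\Ww}$ over the entire strip
\[
\Sigma \;=\; \bigcup_{x \in [0,\delta]} \Delta_x \;=\; \{(p,q) \in \Rr^2 : 0 \le q - p \le 2\delta\},
\]
viewed with the coordinatewise partial order, then the modules $\Uu_x := \widetilde{\Ww}|_{\Delta_x}$ form exactly the desired 1-parameter family: one has $\Uu_0 = \Uu$ and $\Uu_\delta = \Vv$ by construction, and for any $x, y \in [0,\delta]$ the further restriction $\widetilde{\Ww}|_{\Delta_x \cup \Delta_y}$ is a module over the two-diagonal poset required by Proposition \ref{prop:poset-interleaving}, whence $\Uu_x$ and $\Uu_y$ are automatically $|y-x|$-interleaved.

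\textbf{Constructing the extension.} The real work is the construction of $\widetilde{\Ww}$. A clean, coordinate-free choice is the left Kan extension of $\Ww$ along the inclusion $\Delta_0 \cup \Delta_\delta \hookrightarrow \Sigma$: for each $(p,q) \in \Sigma$ set
\[
\widetilde{W}_{(p,q)} \;:=\; \operatorname*{colim}_{\substack{(p',q') \in \Delta_0 \cup \Delta_\delta \\ (p',q') \le (p,q)}} W_{(p',q')},
\]
with the transition maps $\widetilde{w}_{(p_1,q_1)}^{(p_2,q_2)}$ induced by the universal property of the colimit. Functoriality, and hence the composition law for $\widetilde{\Ww}$, is then automatic. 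The substantive check is that this extension restricts back to $\Uu$ on $\Delta_0$ and to $\Vv$ on $\Delta_\delta$. For $(p,p) \in \Delta_0$ the indexing category contains $U_p$ as a terminal object along the $\Delta_0$-chain, while every $V_s$ appearing in the diagram (those with $s \le p-\delta$) maps into $U_p$ via $\psi_{s}$ composed with a $\Uu$-shift; the colimit therefore collapses to $U_p$. A dual argument, using $\phi$-shifts, shows that $\widetilde{W}_{(p,p+2\delta)} = V_{p+\delta}$.

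\textbf{Main obstacle.} The principal technical point is the boundary-restriction verification sketched above: one must check carefully that the mixed colimit of $U$-terms and $V$-terms really does collapse to the expected space on the boundary diagonals. This is exactly the step at which the interleaving identities $\Psi\Phi = 1_\Uu^{2\delta}$ and $\Phi\Psi = 1_\Vv^{2\delta}$, together with the commutation of $\Phi$ and $\Psi$ with the shift maps of $\Uu$ and $\Vv$, are used in an essential way. An alternative, more hands-on route is to give an explicit formula for $\widetilde{W}_{(p,q)}$ as an image or subquotient of $U_\bullet \oplus V_\bullet$ built directly from $\phi, \psi$ and shifts; this avoids categorical language but relocates exactly the same compatibility checks. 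Once the extension is in place, the three stated properties of the family $(\Uu_x \mid x \in [0,\delta])$ follow immediately from Proposition \ref{prop:poset-interleaving}, and the lemma is proved.
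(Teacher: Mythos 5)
Your proposal is correct and genuinely differs from the paper's construction in an interesting way. You take the same first step — reformulating the problem via Proposition~\ref{prop:poset-interleaving} as an extension problem for a module $\Ww$ over $\Delta_0\cup\Delta_\delta$ — and then extend by the pointwise left Kan extension (colimit) along the poset inclusion into the strip. Since the inclusion of a sub-poset is fully faithful, the comma category over any point of $\Delta_0\cup\Delta_\delta$ has that point as terminal object, so the restriction to the two boundary diagonals recovers $\Uu$ and $\Vv$ automatically; the interleaving relations $\Psi\Phi=1_\Uu^{2\delta}$, $\Phi\Psi=1_\Vv^{2\delta}$ are not actually needed at this step (contrary to what you suggest), because they have already been consumed in Proposition~\ref{prop:poset-interleaving} to produce the well-defined functor $\Ww$. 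The paper instead constructs the extension concretely as $\img(\Omega)$, where $\Omega=\left[\begin{smallmatrix}1_\Uu&\Psi\\\Phi&1_\Vv\end{smallmatrix}\right]\colon\aaa\oplus\bbb\to\ccc\oplus\ddd$ is a map of persistence modules over the strip built from $\phi,\psi$ and the shift maps. In fact your Kan extension is not a fourth construction: at an interior point $(p,q)$ the colimit is cofinally computed over the square with sources $U_{q-3},V_{p-3}$ and sinks $U_{p-1},V_{q-1}$, which identifies it (up to a sign in one column that does not affect a cokernel) with the module $\coker(\Omega')$ discussed in section~\ref{subsec:interpolation2}. So you have rediscovered, in coordinate-free language, one of the paper's two alternatives to the image construction. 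What the Kan-extension viewpoint buys is conceptual economy — functoriality and the boundary identifications come for free from abstract nonsense, and one avoids the explicit surjective/injective matrix factorisations of Steps~1--3. What the paper's $\img(\Omega)$ construction buys is that it is, as section~\ref{subsec:interpolation2} shows, structurally the smallest of the three choices (it sits as a natural quotient of $\coker(\Omega')$ and a natural submodule of $\ker(\Omega'')$), and it does not produce the spurious ``ghost'' summands near the diagonal that $\coker(\Omega')$ can introduce.
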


We can make a sharper statement about what is proved. Given a specific pair of interleaving maps
\begin{align*}
\Phi &\in \Hom^\delta(\Uu,\Vv)
\\
\Psi &\in \Hom^\delta(\Vv,\Uu)
\end{align*}
the construction explicitly provides, for each $x < y$, a pair of interleaving maps
\begin{align*}
\Phi_x^y &\in \Hom^{y-x}(\Uu_x,\Uu_y)
\\
\Psi_y^x &\in \Hom^{y-x}(\Uu_y,\Uu_x)
\end{align*}
such that $\Phi_{0}^{\delta}, \Psi_{\delta}^{0}$ are equal to $\Phi, \Psi$ respectively, and moreover
\begin{align*}
\Phi_x^z &= \Phi_y^z  \Phi_x^y
\\
\Psi_z^x &= \Psi_y^x \Psi_z^y
\end{align*}
for all $x < y < z$.

In view of Proposition~\ref{prop:poset-interleaving}, this sharp form of the interpolation lemma can be restated as follows. (We have also replaced the interval $[0,\delta]$ with the more general $[x_0, x_1]$.)

\begin{theorem}
\label{thm:interpolation2}
Any persistence module $\Ww$ over $\Delta_{0} \cup \Delta_{\delta}$ extends to a persistence module $\overline\Ww$ over the diagonal strip
\[
\Delta_{[0,\delta]}
=
\left\{ (p,q) \mid 0 \leq q-p \leq 2\delta \right\}
\subset
\Rr^2.
\]
\end{theorem}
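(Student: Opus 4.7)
The plan is to construct $\overline\Ww$ explicitly from the interleaving data of $\Ww$. By Proposition~\ref{prop:poset-interleaving}, $\Ww|_{\Delta_0 \cup \Delta_\delta}$ is equivalent to persistence modules $\Uu, \Vv$ over $\Rr$ together with a $\delta$-interleaving $\Phi = (\phi_t), \Psi = (\psi_t)$ satisfying $\Psi\Phi = 1_\Uu^{2\delta}$ and $\Phi\Psi = 1_\Vv^{2\delta}$. For each $(p,q) \in \Delta_{[0,\delta]}$, I set
\[
\overline{W}_{p,q} := \img\!\left( U_p \oplus V_{q-\delta} \xrightarrow{f_{p,q}} U_q \oplus V_{p+\delta} \right),
\qquad
f_{p,q} = \begin{pmatrix} u_p^q & \psi_{q-\delta} \\ \phi_p & v_{q-\delta}^{p+\delta} \end{pmatrix}.
\]
The constraint $0 \leq q-p \leq 2\delta$ ensures $p \leq q$ and $q-\delta \leq p+\delta$, so every entry of $f_{p,q}$ is a legitimate map. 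Intuitively, $U_p$ and $V_{q-\delta}$ are the maximal elements of $\Delta_0 \cup \Delta_\delta$ lying below $(p,q)$, and $U_q, V_{p+\delta}$ are the minimal elements above.

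For $(p_1,q_1) \leq (p_2,q_2)$ in the strip, the transition $\overline{w}_{(p_1,q_1)}^{(p_2,q_2)}$ is defined as the restriction to $\overline{W}_{p_1,q_1}$ of the block-diagonal target shift $g = \bigl(\begin{smallmatrix} u_{q_1}^{q_2} & 0 \\ 0 & v_{p_1+\delta}^{p_2+\delta} \end{smallmatrix}\bigr)$. To see that $g$ carries $\overline{W}_{p_1,q_1}$ into $\overline{W}_{p_2,q_2}$, I verify $g \circ f_{p_1,q_1} = f_{p_2,q_2} \circ h$, where $h = \bigl(\begin{smallmatrix} u_{p_1}^{p_2} & 0 \\ 0 & v_{q_1-\delta}^{q_2-\delta} \end{smallmatrix}\bigr)$ is the analogous source shift; this reduces entry-by-entry to the interleaving commutations $v\phi = \phi u$ and $u\psi = \psi v$ from~\eqref{eq:int_expansive}. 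Composition of transitions is immediate from composition of diagonal matrices.

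The delicate step is showing $\overline\Ww$ agrees with $\Ww$ on $\Delta_0 \cup \Delta_\delta$, and here the relations $\Psi\Phi = 1_\Uu^{2\delta}$ and $\Phi\Psi = 1_\Vv^{2\delta}$ become essential. For $(p,p) \in \Delta_0$, the second identity gives $\phi_p \psi_{p-\delta} = v_{p-\delta}^{p+\delta}$, which causes the $V_{p-\delta}$-contribution to the image of $f_{p,p}$ to collapse into the $U_p$-contribution, leaving $\overline{W}_{p,p} = \{(u, \phi_p(u)) : u \in U_p\} \cong U_p$ via the first projection. Symmetrically, $\psi_{p+\delta}\phi_p = u_p^{p+2\delta}$ identifies $\overline{W}_{p,p+2\delta}$ with the graph of $\psi_{p+\delta}$, giving $\overline{W}_{p,p+2\delta} \cong V_{p+\delta}$ via the second projection. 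A final block-matrix calculation, using the same two identities, checks that under these canonical isomorphisms the $g$-transitions reproduce every map of $\Ww$, including the cross-boundary maps between $\Delta_0$ and $\Delta_\delta$. The main obstacle is thus computational rather than conceptual: the architecture of $f_{p,q}$ is forced by the extremal boundary elements near $(p,q)$, but one must verify that the two $2\delta$-shift interleaving relations make the redundant contributions cancel precisely on $\Delta_0 \cup \Delta_\delta$.
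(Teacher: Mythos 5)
Your proof is correct and follows essentially the same approach as the paper: the paper packages the construction as a single module homomorphism $\Omega = \bigl(\begin{smallmatrix} 1_\Uu & \Psi \\ \Phi & 1_\Vv\end{smallmatrix}\bigr)$ between two auxiliary persistence modules over the strip (after rescaling to $\Delta_{[-1,1]}$) and sets $\overline\Ww = \img(\Omega)$, and your $\overline{W}_{p,q} = \img(f_{p,q})$ with transitions given by restricting the diagonal map $g$ is precisely this image submodule written out pointwise. Your rank-one factorisations of $f_{p,p}$ and $f_{p,p+2\delta}$ via the two interleaving identities, together with the deferred block-matrix check that the cross maps reproduce $\Phi,\Psi$, correspond exactly to the paper's Steps 1--3.
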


\begin{remark}
The extension is by no means unique.
\end{remark}

Let us clarify how Theorem~\ref{thm:interpolation2} implies Lemma~\ref{lem:interpolation}. If $\Uu$, $\Vv$  are $\delta$-interleaved, then there exists a persistence module $\Ww$ over $\Delta_{0} \cup \Delta_{\delta}$ such that $\Ww|_{\Delta_{0}} = \Uu$ and $\Ww|_{\Delta_{\delta}} = \Vv$. By Theorem~\ref{thm:interpolation2}, this extends to $\overline\Ww$ over the strip $\Delta_{[0,\delta]}$. If we define a 1-parameter family $\Uu_x = \overline\Ww|_{\Delta_x}$, then $\Uu_x, \Uu_y$ are $|x-y|$-interleaved for all $x,y \in [0,\delta]$.

The equivalence between Theorem~\ref{thm:interpolation2} and the sharp form of Lemma~\ref{lem:interpolation} is left to the reader.

\begin{proof}[Proof of Theorem~\ref{thm:interpolation2}]
In order to express the proof more symmetrically, it is convenient to replace the interval $[0,\delta]$ by the interval $[-1,1]$. This can be done by a rescaling and translation of the plane.
Accordingly, suppose we are given a persistence module $\Ww$ over $\Delta_{-1} \cup \Delta_{1}$.

Our strategy is to construct two persistence modules over the strip $\Delta_{[-1, 1]}$ and a module map between them. The image of this map is also a persistence module over the strip, and will be the required extension.

By Proposition~\ref{prop:poset-interleaving}, $\Ww$ provides $\Uu = \Ww|_{\Delta_{-1}}$ and $\Vv = \Ww|_{\Delta_{1}}$, which we can view as persistence modules over~$\Rr$,
\begin{align*}
U_t &= W_{(t+1,t-1)},
\\
V_t &= W_{(t-1, t+1)},
\end{align*}
as well as interleaving maps $\Phi \in \Hom^2(\Uu, \Vv)$ and $\Psi \in \Hom^2(\Vv, \Uu)$ of degree~2.

From $\Uu, \Vv$ we construct four persistence modules over $\Rr^2$:
\begin{alignat*}{3}
\aaa &= \Uu{[p-1]}
&
\quad \text{defined by} \quad A_{(p,q)} &= U_{p-1}
&
\quad \text{and} \quad a_{(p,q)}^{(r,s)} &= u_{p-1}^{r-1}
\\
\bbb &= \Vv{[q-1]}
&
\quad \text{defined by} \quad B_{(p,q)} &= V_{q-1}
&
\quad \text{and} \quad b_{(p,q)}^{(r,s)} &= v_{q-1}^{s-1}
\\
\ccc &= \Uu{[q+1]}
&
\quad \text{defined by} \quad C_{(p,q)} &= U_{q+1}
&
\quad \text{and} \quad c_{(p,q)}^{(r,s)} &= u_{q+1}^{s+1}
\\
\ddd &= \Vv{[p+1]}
&
\quad \text{defined by} \quad
D_{(p,q)} &= V_{p+1}
&
\quad \text{and} \quad d_{(p,q)}^{(r,s)} &= v_{p+1}^{r+1}
\end{alignat*}

Next, we construct four module maps.
\begin{alignat*}{2}
1_\Uu &: \aaa \to \ccc
&\quad \text{defined at $(p,q)$ to be} \quad
u_{p-1}^{q+1} &: U_{p-1} \to U_{q+1}
\\
\Phi &: \aaa \to \ddd
&\quad \text{defined at $(p,q)$ to be} \quad
\phi_{p-1} &: U_{p-1} \to V_{p+1}
\\
\Psi &: \bbb \to \ccc
&\quad \text{defined at $(p,q)$ to be} \quad
\psi_{q-1} &: V_{q-1} \to U_{q+1}
\\
1_\Vv &: \bbb \to \ddd
&\quad \text{defined at $(p,q)$ to be} \quad
v_{q-1}^{p+1} &: V_{q-1} \to V_{p+1}
\end{alignat*}
The maps $\Phi, \Psi$ are defined over the whole plane, whereas the map $1_\Uu$ is defined only where $p-1\leq q+1$, and the map $1_\Vv$ is defined only where $q-1 \leq p+1$. 
It follows that all four maps are defined in the region where
\[
-2 \leq q-p \leq 2,
\]
which is precisely the strip~$\Delta_{[-1,1]}$. Henceforth, we restrict to that strip.

It is easy to confirm that these are module maps. The required commutation relations involve composable combinations of the maps $u, v, \phi, \psi$, which always agree by the remark following Proposition~\ref{prop:poset-interleaving}.

Define $\Omega \in \Hom(\aaa\oplus\bbb, \ccc\oplus\ddd)$ by the 2-by-2 matrix
\[
\left[ \begin{array}{cc} 1_\Uu & \Psi \\ \Phi & 1_\Vv \end{array} \right]
\]
of module maps. Our claim is that $\overline\Ww = \img(\Omega)$ is the required extension. We may equivalently say that $\overline\Ww = \coimg(\Omega)$ is the required extension. (The difference is whether we think of $\overline\Ww$ as a submodule of $\ccc\oplus\ddd$ or as quotient of $\aaa\oplus\bbb$.)

{\small\bf Step 1:} $\overline\Ww|_{\Delta_{-1}}$ is isomorphic to~$\Uu$.

\begin{proof}
On $\Delta_{-1} = \{ (t+1,t-1) \}$, we have
\begin{align*}
(\aaa \oplus \bbb)_t &= U_t \oplus V_{t-2},
\\
(\ccc \oplus \ddd)_t &= U_t \oplus V_{t+2},
\end{align*}
and the homomorphism $\Omega|_{\Delta_{-1}}$ is given by
\[
\omega_t
=
\left[ \begin{array}{cc} u_t^t & \psi_{t-2} \\ \phi_{t} & v_{t-2}^{t+2} \end{array} \right].
\]
The key point is that this factorises (in a block-matrix sense it `has rank 1'). The factorisation can be written in matrix form
\[
\omega_t
=
\left[ \begin{array}{c} u_t^t \\ \phi_t \end{array} \right]	
\left[ \begin{array}{cc} u_t^t & \psi_{t-2} \end{array} \right]
\]
or as a diagram of $\Rr$-module maps 
\[
\begin{diagram}
\node{\Uu \oplus \Vv[t-2]}
   \arrow{e,t}{\Omega_1}
\node{\Uu}
   \arrow{e,t}{\Omega_2}
\node{\Uu \oplus \Vv[t+2]}
\end{diagram}
\]
where
\[
\Omega_1(\mathbf{u} \oplus \mathbf{v}) = \mathbf{u} + \Psi(\mathbf{v})
\quad\text{and}\quad
\Omega_2(\mathbf{u}) = \mathbf{u} \oplus \Phi(\mathbf{u}).
\]

Since $\Omega_1$ is surjective and $\Omega_2$ is injective (at all indices~$t$), it follows that
\[
\coimg(\Omega)
\cong
\Uu
\cong
\img(\Omega)
\]
where the isomorphisms are given by $\Omega_1, \Omega_2$ respectively.
\end{proof}

{\small\bf Step 2.} $\overline\Ww|_{\Delta_{1}}$ is isomorphic to~$\Vv$.

\begin{proof}
On $\Delta_{1} = \{ (t-1,t+1) \}$, we have
\begin{align*}
(\aaa \oplus \bbb)_t &= U_{t-2} \oplus V_{t},
\\
(\ccc \oplus \ddd)_t &= U_{t+2} \oplus V_{t},
\end{align*}
and
\[
\omega_t
=
\left[ \begin{array}{cc}
		u_{t-2}^{t+2} & \psi_{t} \\
		\phi_{t-2} & v_{t}^{t}
	\end{array} \right].
\]
Again $\Omega$ factorises. The factorisation can be written in matrix form
\[
\omega_t
=
\left[ \begin{array}{c} \psi_t \\ v_t^t \end{array} \right]	
\left[ \begin{array}{cc} \phi_{t-2} & v_t^t \end{array} \right]
\]
or as a diagram of $\Rr$-module maps 
\[
\begin{diagram}
\node{\Uu[t-2] \oplus \Vv}
   \arrow{e,t}{\Omega_3}
\node{\Vv}
   \arrow{e,t}{\Omega_4}
\node{\Uu[t+2] \oplus \Vv}
\end{diagram}
\]
where
\[
\Omega_3(\mathbf{u} \oplus \mathbf{v}) = \Phi(\mathbf{u}) + \mathbf{v}
\quad\text{and}\quad
\Omega_4(\mathbf{v}) = \Psi(\mathbf{v}) \oplus \mathbf{v}.
\]
Since $\Omega_3$ is surjective and $\Omega_4$ is injective, we have
\[
\coimg(\Omega)
\cong
\Vv
\cong
\img(\Omega)
\]
where the isomorphisms are given by $\Omega_3, \Omega_4$ respectively.
\end{proof}

\medskip
{\small\bf Step 3.} Under the isomorphisms above, the cross maps of $\overline\Ww$ are precisely $\Phi$ and~$\Psi$.

\begin{proof}
The cross maps for $\overline\Ww$ are induced by the cross maps for $\aaa\oplus\bbb$ and also by the cross maps for $\ccc\oplus\ddd$. (The result must be the same, because $\Omega$ is a module homomorphism.)

The following diagram shows the vertical maps for $\aaa\oplus\bbb$ (on the left) and $\ccc\oplus\ddd$ (on the right), as well as the factorisations of Steps 1 and~2.
\[
\begin{diagram}
\node{\Uu \oplus \Vv[t-2]}
   \arrow{e,t}{\Omega_1}
   \arrow{s,l}{1_\Uu \,\oplus\, 1_\Vv^4}
\node{\Uu}
   \arrow{e,t}{\Omega_2}
   \arrow{s,l}{\Phi}
\node{\Uu \oplus \Vv[t+2]}
   \arrow{s,r}{1_\Uu^4\,\oplus\,1_\Vv}
\\
\node{\Uu \oplus \Vv[t+2]}
   \arrow{e,t}{\Omega_3}
\node{\Vv[t+2]}
   \arrow{e,t}{\Omega_4}
\node{\Uu[t+4] \oplus \Vv[t+2]}
\end{diagram}
\]
The diagram, with $\Phi$ in the middle, is easy verified to commute using the explicit forms of the maps $\Omega_i$. It follows that $\Phi$ is the induced vertical cross map for~$\overline\Ww$.

A similar argument using the diagram
\[
\begin{diagram}
\node{\Uu[t-2] \oplus \Vv}
   \arrow{e,t}{\Omega_3}
   \arrow{s,l}{1_\Uu^4 \,\oplus\, 1_\Vv}
\node{\Vv}
   \arrow{e,t}{\Omega_4}
   \arrow{s,l}{\Psi}
\node{\Uu[t+2] \oplus \Vv}
   \arrow{s,r}{1_\Uu\,\oplus\,1_\Vv^4}
\\
\node{\Uu[t+2] \oplus \Vv}
   \arrow{e,t}{\Omega_1}
\node{\Uu[t+2]}
   \arrow{e,t}{\Omega_2}
\node{\Uu[t+2] \oplus \Vv[t+4]}
\end{diagram}
\]
shows that $\Psi$ is the induced horizontal cross map.
\end{proof}

\medskip
This completes the proof of Theorem~\ref{thm:interpolation2}.
\end{proof}

We note that Step~3 isn't necessary to deduce the interpolation lemma (\ref{lem:interpolation}), because it is sufficient to demonstrate that $\Uu, \Vv$ belong to some 1-parameter family of persistence modules. It is important for the interpolation lemma that the $|x-y|$-interleavings exist between each pair $\Uu_x, \Uu_y$, but it doesn't matter exactly what they are or how they relate to each other.

We can interpret the stronger Theorem~\ref{thm:interpolation2} as a functor extension theorem. If we regard the posets $\Delta_{x_0} \cup \Delta_{x_1}$ and $\Delta_{[x_0,x_1]}$ as categories, then persistence modules over these posets are the same as functors to the category of vector spaces. (See the remark near the beginning of section~\ref{subsec:11}.)
The theorem asserts the existence of an extension $\overline\Ww$
\[
\begin{diagram}
\node{\Delta_{[x_0,x_1]}}
   \arrow{se,t,..}{\overline\Ww}
\\
\node{\Delta_{x_0} \cup \Delta_{x_1}}
   \arrow{n}
   \arrow{e,t}{\Ww}
\node{\mathrm{Vect}}
\end{diagram}
\]
for any functor $\Ww$.

\subsection{The interpolation lemma (continued)}
\label{subsec:interpolation2}

In this optional section, we study the interpolation process in greater depth.
Given two persistence modules $\Uu, \Vv$ and a $\delta$-interleaving between them, there are at least three natural ways to construct an interpolating family. We describe the three methods and find some relationships between them.

As in the proof of Theorem~\ref{thm:interpolation2}, we may suppose that $\delta = 2$ and that $\Uu, \Vv$ and their interleaving are represented as a module over $\Delta_{-1} \cup \Delta_{1}$ in the plane. To extend this module over the strip
\[
\Delta_{[-1,1]}
=
\left\{
(p,q) \in \Rr^2 \mid -2 \leq p-q \leq 2
\right\}
\]
we consider the sequence
\[
\tag{\S}\label{eqn:Omega}
\begin{diagram}
\node{\begin{array}{c} \Uu[q-3] \\\oplus\\ \Vv[p-3] \end{array}}
	\arrow{e,t}{\Omega'}
\node{\begin{array}{c} \Uu[p-1] \\\oplus\\ \Vv[q-1] \end{array}}
	\arrow{e,t}{\Omega}
\node{\begin{array}{c} \Uu[q+1] \\\oplus\\ \Vv[p+1] \end{array}}
	\arrow{e,t}{\Omega''}
\node{\begin{array}{c} \Uu[p+3] \\\oplus\\ \Vv[q+3] \end{array}}
\end{diagram}
\]
defined over $\Delta_{[-1,1]}$ with maps
\[
\Omega' = 
\left[ \begin{array}{rr} 1_\Uu & -\Psi \\ -\Phi & 1_\Vv \end{array} \right]
,
\quad
\Omega =
\left[ \begin{array}{cc} 1_\Uu & \Psi \\ \Phi & 1_\Vv \end{array} \right]
,
\quad
\Omega'' =
\left[ \begin{array}{rr} 1_\Uu & -\Psi \\ -\Phi & 1_\Vv \end{array} \right]
\]
taken with the appropriate shift values for $1_\Uu$ and $1_\Vv$.

Notice that $\Omega$, $\Omega'$ and $\Omega''$ are essentially the same map. Certainly $\Omega', \Omega''$ are formally identical, up to a translation $\tau$ of the strip. In fact, each of the modules in the sequence is related to the next by an isomorphism $\sigma$ which changes the sign of the $\Vv$-term and transforms indices by $(p,q) \mapsto (q+2,p+2)$. We have $\tau = \sigma^2$, and conjugacies $\Omega = \sigma \Omega' \sigma^{-1}$ and $\Omega'' = \sigma \Omega \sigma^{-1}$.

\begin{proposition}
Each of the three modules
\[
\coker(\Omega'),
\quad
\coimg(\Omega) = \img(\Omega),
\quad
\ker(\Omega'')
\]
over $\Delta_{[-1,1]}$ defines an interpolating family between $\Uu,\Vv$.
\end{proposition}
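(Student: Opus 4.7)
The plan is to use Theorem~\ref{thm:interpolation2} for $\img(\Omega)$ as a template, and show that $\coker(\Omega')$ and $\ker(\Omega'')$ are also interpolating families by comparing them to $\img(\Omega)$ on the boundary diagonals $\Delta_{\pm 1}$.

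First I would check that \eqref{eqn:Omega} is a complex, i.e.\ $\Omega \circ \Omega' = 0$ and $\Omega'' \circ \Omega = 0$. By direct matrix multiplication, the diagonal entries of $\Omega \circ \Omega'$ have the form $1\cdot 1 - \Psi\Phi$ and $1\cdot 1 - \Phi\Psi$: both vanish because the two composed shift maps equal the shift of length~$2\delta$ produced by the interleaving relations $\Psi\Phi = 1_\Uu^{2\delta}$ and $\Phi\Psi = 1_\Vv^{2\delta}$. The off-diagonal entries $1\cdot(-\Psi) + \Psi\cdot 1$ and $-\Phi\cdot 1 + 1\cdot\Phi$ vanish by naturality of $\Phi$ and $\Psi$. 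The identity $\Omega'' \circ \Omega = 0$ then follows for free by conjugating with $\sigma$. From these vanishings we obtain inclusions $\img(\Omega') \subseteq \ker(\Omega)$ and $\img(\Omega) \subseteq \ker(\Omega'')$, and hence a canonical chain of module homomorphisms over $\Delta_{[-1,1]}$:
\[
\coker(\Omega') \twoheadrightarrow \coimg(\Omega) \xrightarrow{\sim} \img(\Omega) \hookrightarrow \ker(\Omega'').
\]

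By Proposition~\ref{prop:poset-interleaving}, any persistence module over $\Delta_{[-1,1]}$ defines an interpolating family as soon as its restrictions to $\Delta_{-1}$ and $\Delta_{+1}$ are isomorphic to $\Uu$ and $\Vv$ respectively. For $\img(\Omega)$ this is already Theorem~\ref{thm:interpolation2}. To handle the other two modules I would mimic Steps~1 and~2 of that proof. On $\Delta_{-1}$, the map $\Omega'$ admits a rank-one factorization $\Omega' = \Omega'_1 \circ \Omega'_2$ fully parallel to the one for $\Omega$: the map $\Omega'_2(\mathbf{u} \oplus \mathbf{v}) = -\Phi(\mathbf{u}) + \mathbf{v}$ is surjective onto a shifted copy of $\Vv$ and $\Omega'_1(\mathbf{v}) = (-\Psi(\mathbf{v}), \mathbf{v})$ is injective, so the projection $(\mathbf{u}, \mathbf{v}) \mapsto \mathbf{u} + \Psi(\mathbf{v})$ identifies $\coker(\Omega')|_{\Delta_{-1}}$ with $\Uu$. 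An analogous factorization of $\Omega''$ identifies $\ker(\Omega'')|_{\Delta_{-1}}$ with the same copy of $\Uu$. Symmetric computations---or transport by $\sigma$, which swaps $\Delta_{-1}$ and $\Delta_{+1}$---handle $\Delta_{+1}$ and produce $\Vv$ in both cases.

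The main obstacle will be the bookkeeping: the four modules of~\eqref{eqn:Omega} carry shifts depending on $(p,q)$, so one must keep careful track of which shifted copies of $\Uu$ and $\Vv$ appear at each stage and verify that the factorizations are consistent with the interleaving relations. Conceptually the computations for $\Omega'$ and $\Omega''$ on $\Delta_{\pm 1}$ are a sign-flipped variant of those already carried out for $\Omega$ in the proof of Theorem~\ref{thm:interpolation2}. What is genuinely new here is that the natural surjection $\coker(\Omega') \twoheadrightarrow \img(\Omega)$ and injection $\img(\Omega) \hookrightarrow \ker(\Omega'')$ become isomorphisms on the two boundary diagonals, even though they are generally not isomorphisms in the interior of the strip, so the three resulting interpolations are distinct in general.
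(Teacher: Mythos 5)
Your proposal is correct and takes essentially the same route as the paper: restricting $\Omega'$ and $\Omega''$ to $\Delta_{\pm 1}$, exhibiting the same rank-one factorizations $[-\Psi;\,1_\Vv]\,[-\Phi,\,1_\Vv]$, and reading off that $\coker(\Omega')$ and $\ker(\Omega'')$ give $\Uu$ on $\Delta_{-1}$ (and symmetrically $\Vv$ on $\Delta_{+1}$). The only departure is that you fold the verification that $\Omega\Omega'=\Omega''\Omega=0$ into the proof---the paper records this observation separately, after the proposition, since it is not needed to establish the isomorphisms on the boundary diagonals.
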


\begin{proof}
We already know this for $\coimg(\Omega) = \img(\Omega)$ from the proof of Theorem~\ref{thm:interpolation2}.
Now we outline the proof that $\coker(\Omega')$ and $\ker(\Omega'')$ restrict on $\Delta_{-1}$ to modules isomorphic to~$\Uu$.

The diagonal $\Delta_{-1}$ is defined by $(p,q) = (t+1,t-1)$ and so the sequence~\eqref{eqn:Omega} restricts to:
\[
\begin{diagram}
\node{\begin{array}{c} \Uu[t-4] \\\oplus\\ \Vv[t-2] \end{array}}
	\arrow{e,t}{\Omega'}
\node{\begin{array}{c} \Uu \\\oplus\\ \Vv[t-2] \end{array}}
	\arrow{e,t}{\Omega}
\node{\begin{array}{c} \Uu \\\oplus\\ \Vv[t+2] \end{array}}
	\arrow{e,t}{\Omega''}
\node{\begin{array}{c} \Uu[t+4] \\\oplus\\ \Vv[t+2] \end{array}}
\end{diagram}
\]
We have a factorisation
\[
\Omega'
\; \text{or} \;
\Omega''
=
\left[ \begin{array}{rr} 1^4_\Uu & -\Psi \\ -\Phi & 1_\Vv \end{array} \right]
=
\left[ \begin{array}{r} -\Psi \\ 1_\Vv \end{array} \right]
\left[ \begin{array}{rr} -\Phi & 1_\Vv \end{array} \right]
= 
\Omega'_1 \Omega'_2
\; \text{or} \;
\Omega''_1 \Omega''_2
\]
which reveals that $\img(\Omega') = \img(\Omega'_1)$ is a complementary submodule to $\Uu \oplus 0$ in $\Uu \oplus \Vv[t-2]$, and that $\ker(\Omega'') = \ker(\Omega''_2)$ is a complementary submodule to $0 \oplus \Vv[t+2]$ in $\Uu \oplus \Vv[t+2]$.
It follows that $\coker(\Omega')$ and $\ker(\Omega'')$ are each isomorphic to~$\Uu$.

By a symmetric argument, the restriction of each module to~$\Delta_1$  is isomorphic to~$\Vv$. This completes the proof that $\coker(\Omega')$ and $\ker(\Omega'')$ interpolate between $\Uu$ and~$\Vv$.
\end{proof}

Which of the three constructions should we prefer? Notice that $\Omega \Omega' = 0$ and $\Omega'' \Omega = 0$, meaning that~\eqref{eqn:Omega} is a chain complex. It follows that there is a natural projection and a natural inclusion
\[
\coker(\Omega') \twoheadrightarrow \coimg(\Omega)
=
\img(\Omega) \hookrightarrow \ker(\Omega'')
\]
by which we see that $\coimg(\Omega) = \img(\Omega)$ is structurally the simplest of the three.
%

The surplus information in the other two interpolations may be measured as the kernel of the projection and the cokernel of the inclusion. These are precisely the homology at the second and third terms of~\eqref{eqn:Omega}. It follows from the conjugacies described above that the two homology modules are isomorphic upon translating the strip by~2 and interchanging $p$ and~$q$ (i.e.\ reversing the interpolation parameter).

We finish this section by using the `vineyard' technique of~\cite{CohenSteiner_E_M_2006} to visualise the 1-parameter family of persistence modules produced by each of the three constructions. We obtained the vineyards by sketching the supports of the eight module summands in~\eqref{eqn:Omega} and using the sketches to partition the interpolation parameter range $[-1,1]$ into suitable intervals for case splitting. It is perhaps easier done than described, so we invite the reader to conduct their own calculations and confirm that our vineyards are correct. As further corroboration, one verifies that the homology modules are isomorphic in the sense described above.

In Figure~\ref{fig:interpol2}, we consider the canonical 2-interleaving between interval modules $\Ii^{[0,4)}$ and $\Ii^{[1,6)}$. The thick black lines show how the points of the persistence diagram travel in the plane as we proceed along the interpolating family, for each of the three constructions. Each point travels with speed~1 and traverses a path of length~2 (in the $\ellinf$-metric). The cokernel interpolation has an extra `ghost' summand which emerges from the diagonal at $(3,3)$ at the beginning of the interpolation, and is reabsorbed by the diagonal at $(2,2)$ at the end.
\begin{figure}
{}
\hfill
\includegraphics[scale=0.4]{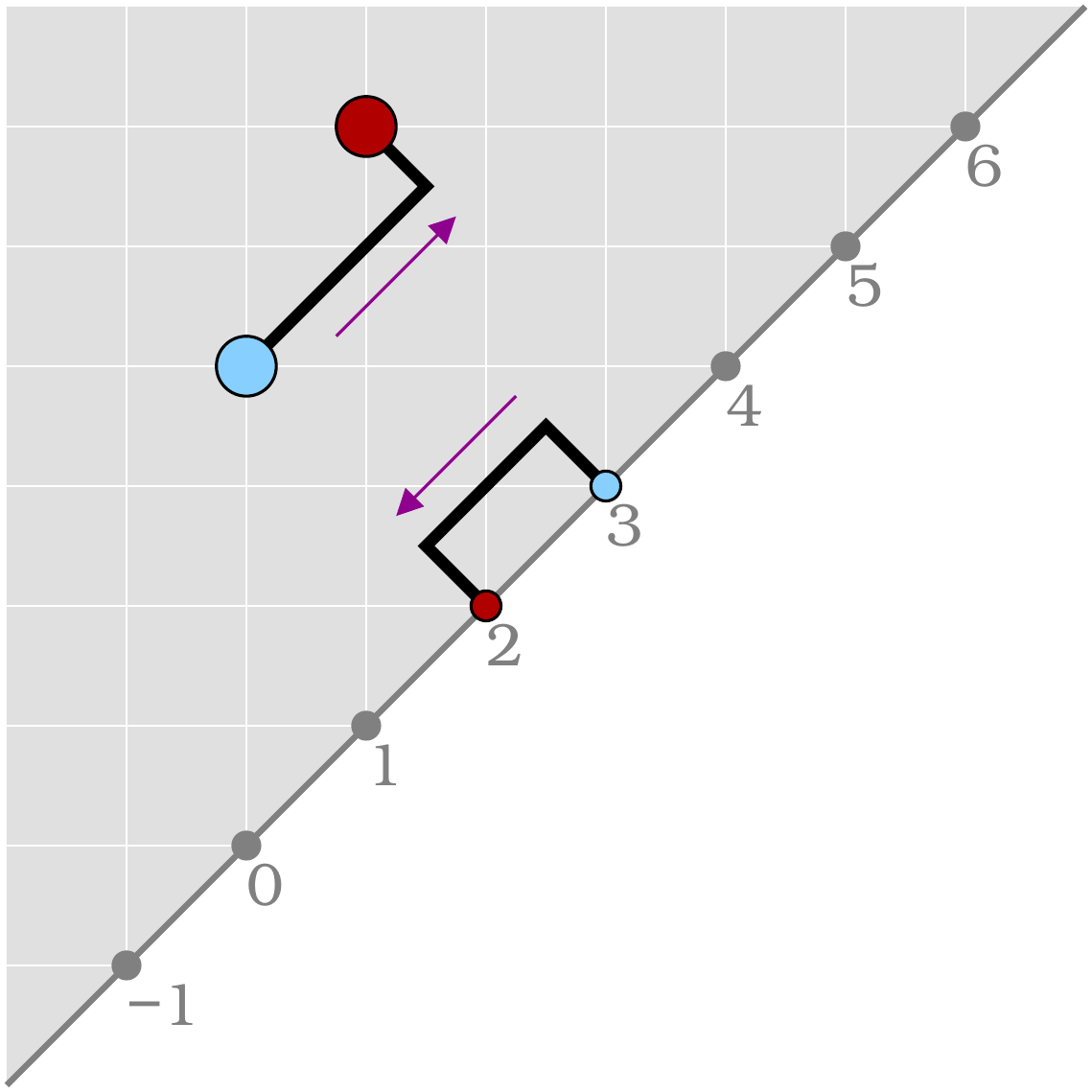}
\hfill
\includegraphics[scale=0.4]{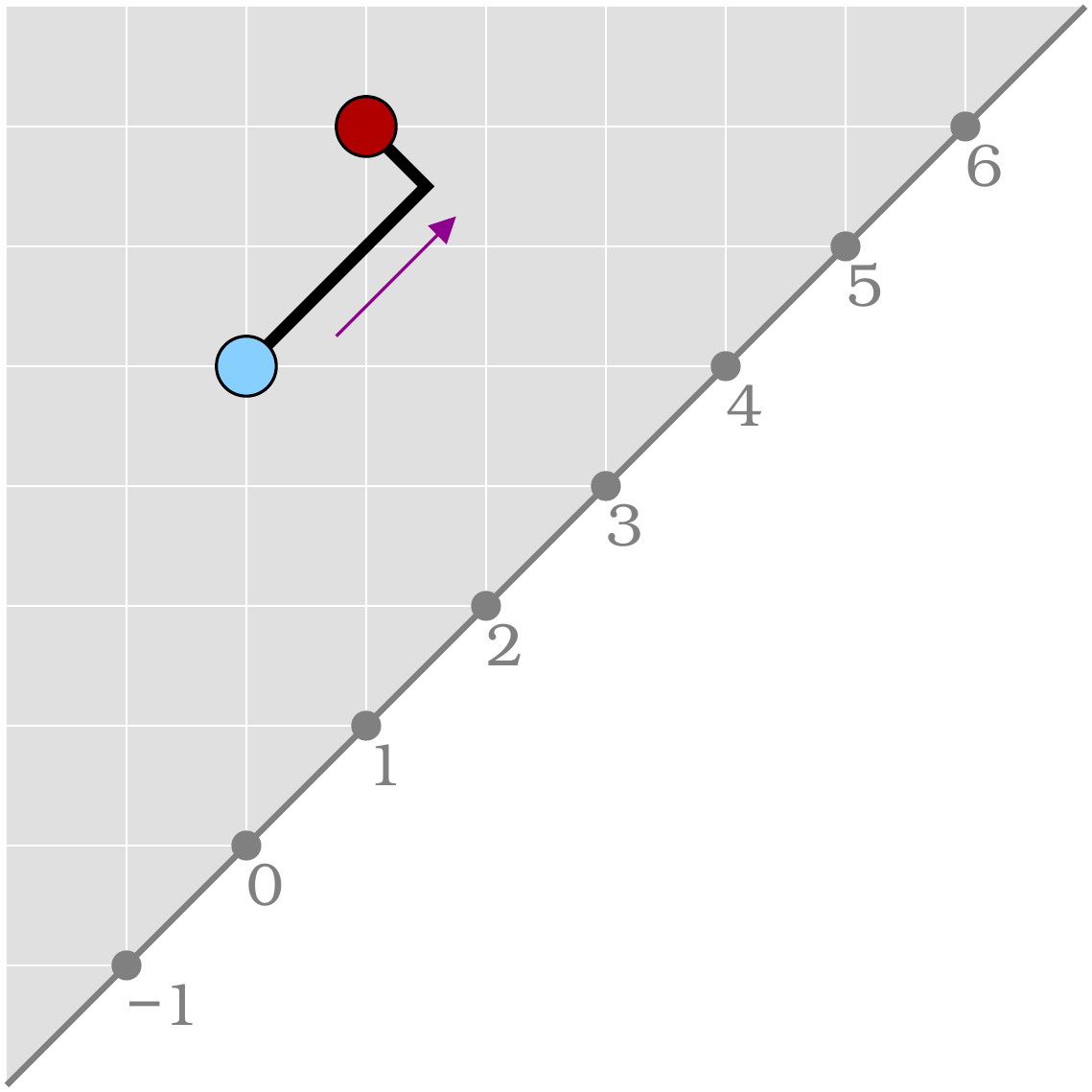}
\hfill
\includegraphics[scale=0.4]{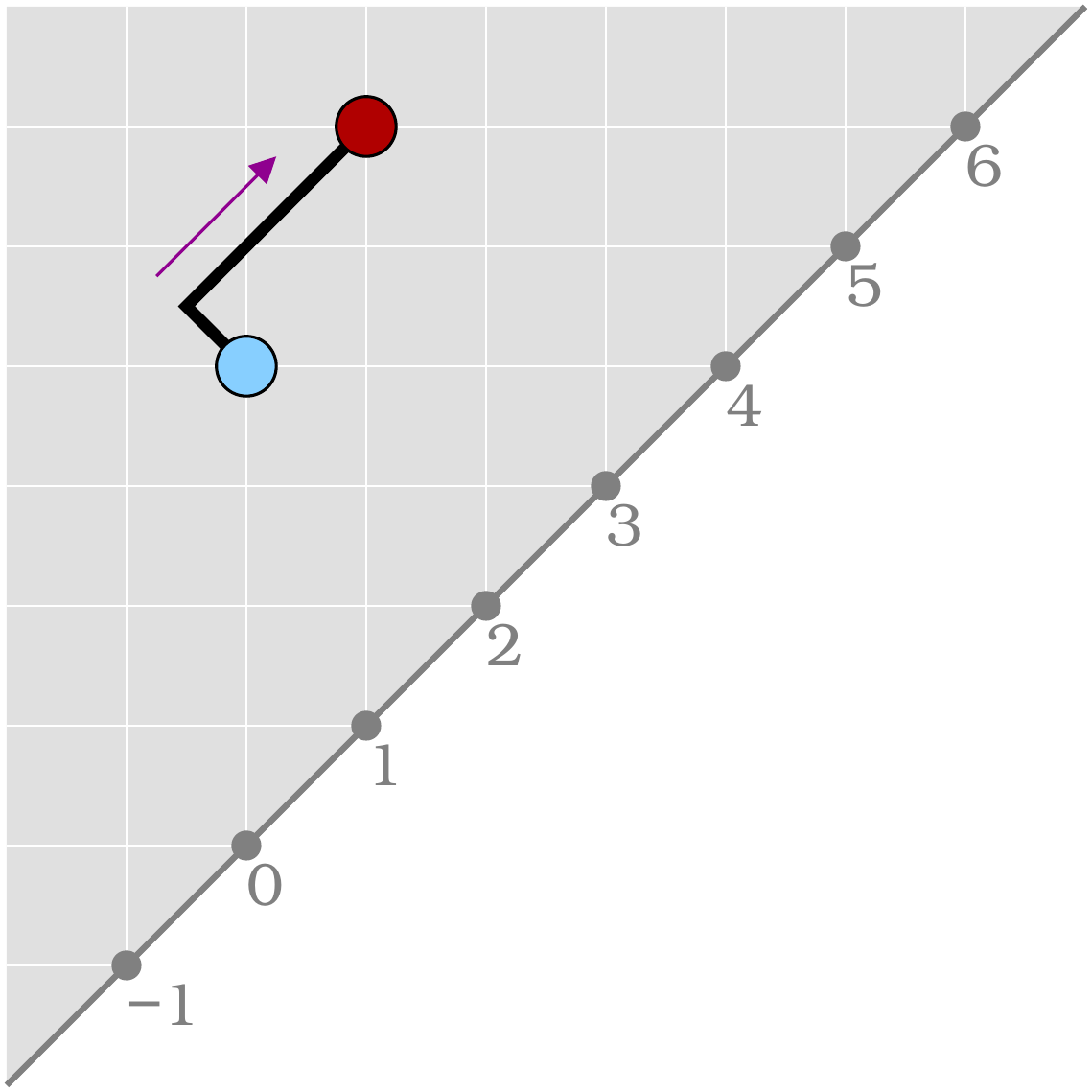}
\hfill
{}
\caption{Vineyards: cokernel (left), image (middle), and kernel (right) interpolations for the 2-interleaving between $\Ii^{[0,4)}$ and $\Ii^{[1,6)}$.}
\label{fig:interpol2}
\end{figure}

Figure~\ref{fig:interpol3} shows how the interpolation changes when we use a different interleaving parameter. For instance, the modules $\Ii^{[0,4)}$ and $\Ii^{[1,6)}$ are also 3-interleaved, since $3 \geq 2$. From the canonical 3-interleaving we obtain the vineyards shown in the figure. Now each point-trajectory has length~3. This time, both the cokernel and kernel interpolations produce `ghosts' at the diagonal.
\begin{figure}
{}
\hfill
\includegraphics[scale=0.4]{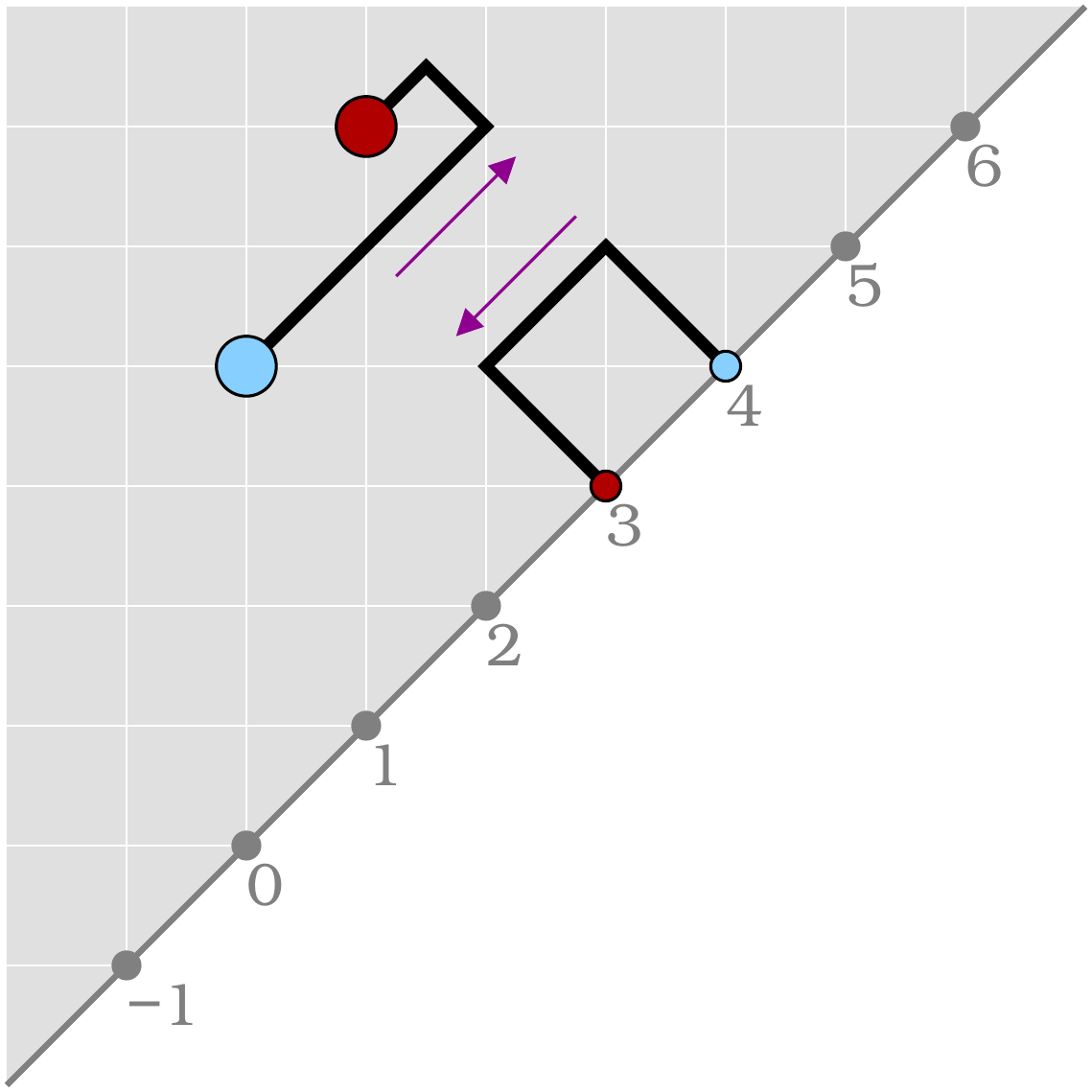}
\hfill
\includegraphics[scale=0.4]{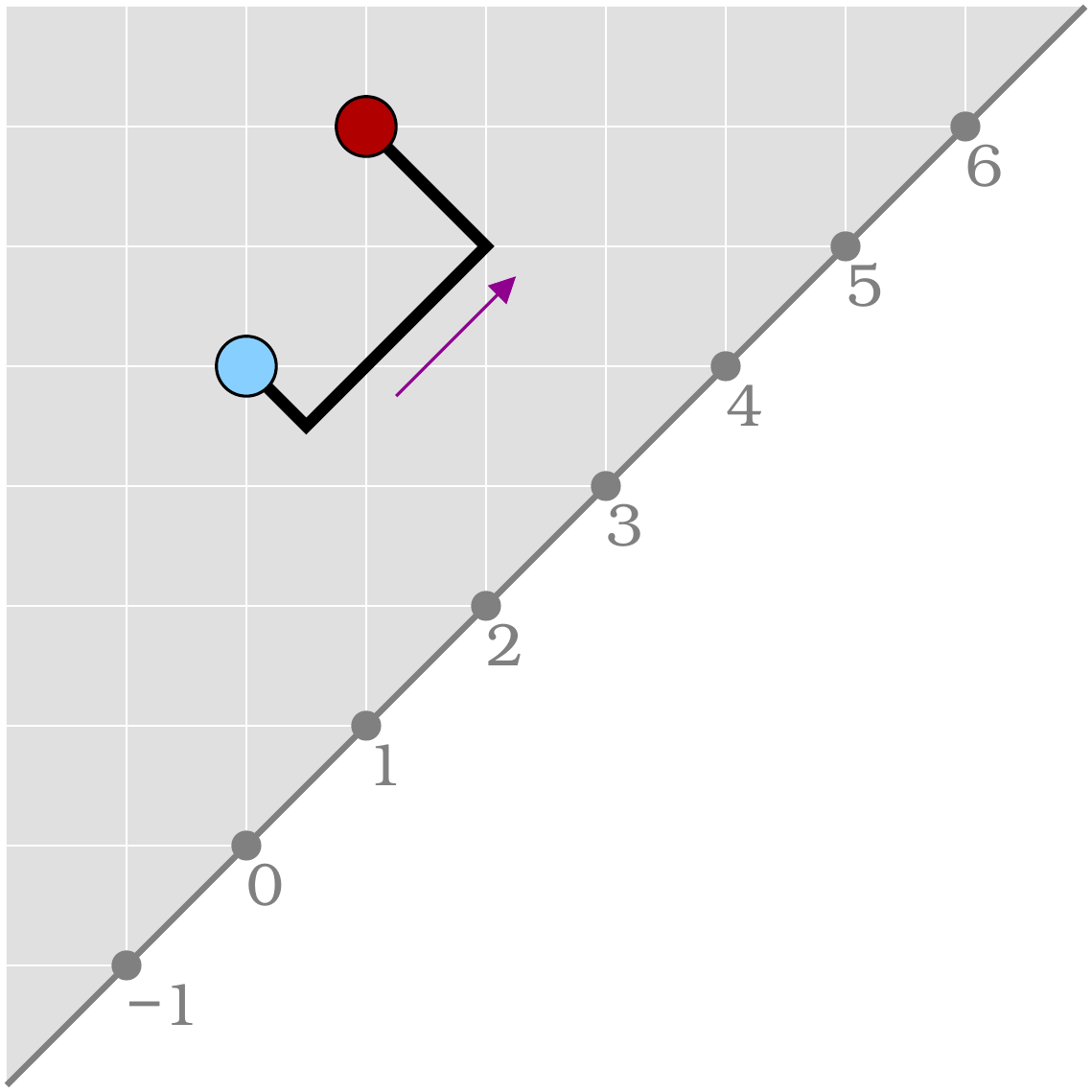}
\hfill
\includegraphics[scale=0.4]{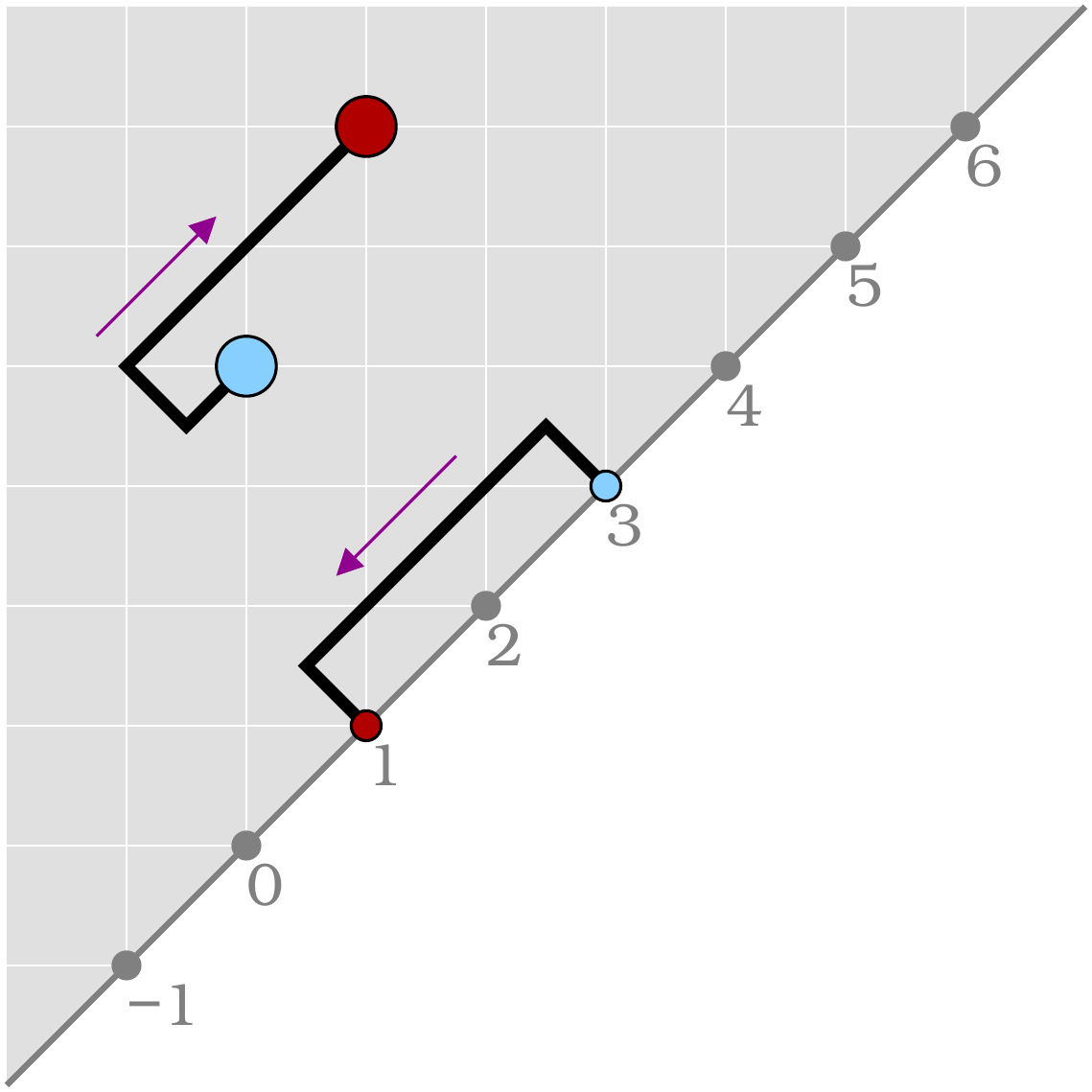}
\hfill
{}
\caption{Vineyards: cokernel (left), image (middle), and kernel (right) interpolations for the 3-interleaving between $\Ii^{[0,4)}$ and $\Ii^{[1,6)}$.}
\label{fig:interpol3}
\end{figure}

\section{The Isometry Theorem}
\label{sec:isometry}

In this section we discuss the metric relationship between persistence modules and their persistence diagrams.
As in section~\ref{sec:interleaving}, all persistence modules are indexed by~$\Rr$ unless explicitly stated otherwise.

The principal result is the famous stability theorem of Cohen--Steiner, Edelsbrunner and Harer~\cite{CohenSteiner_E_H_2007}, in the generality established by~\cite{Chazal_CS_G_G_O_2008}. The main difference is that we emphasise persistence measures, deriving the standard theorem from a more general statement about measures. The structure of the proof remains the same as in~\cite{CohenSteiner_E_H_2007}.

The secondary result is the converse inequality, which together with the stability theorem implies that the space of q-tame persistence modules is isometric with the space of locally finite persistence diagrams. This isometry theorem appeared originally in the work of Lesnick~\cite{Lesnick_2011} for modules which satisfy $\dim(V_t) < \infty$ for all~$t$, and independently in the work of Bubenik and Scott~\cite{Bubenik_Scott_2012} for modules of finite type.

\subsection{The interleaving distance}

In this section we define the interleaving distance between persistence modules. This was introduced in~\cite{Chazal_CS_G_G_O_2008}.

The first observation is that if $\Uu$ and $\Vv$ are $\delta$-interleaved, then they are $(\delta + \epsilon)$-interleaved for every $\epsilon > 0$. Indeed, the maps
\begin{align*}
\Phi'
&= \Phi 1_\Uu^\epsilon = 1_\Vv^\epsilon \Phi
\\
\Psi'
&= \Psi 1_\Vv^\epsilon = 1_\Uu^\epsilon \Psi
\end{align*}
provide the required interleaving.

The challenge, then, if two persistence modules are interleaved, is to make the interleaving parameter as small as possible. The minimum is not necessarily attained, so we introduce some additional terminology.

We say that two persistence modules $\Uu, \Vv$ are \textbf{$\delta^+$-interleaved} if they are $(\delta+\epsilon)$-interleaved for all $\epsilon > 0$. As we shall see, this does not imply that $\Uu, \Vv$ are $\delta$-interleaved.

\begin{example}
Two persistence modules are 0-interleaved if and only if they are isomorphic.
\end{example}

\begin{example}
A persistence module $\Vv$ is \textbf{ephemeral} if $v_s^t = 0$ for all $s < t$. The spaces $V_t$ can be completely arbitrary.
Let $\Uu$ and $\Vv$ be two non-isomorphic ephemeral modules. Then $\Uu, \Vv$ are $0^+$-interleaved but not $0$-interleaved. Indeed, $1_\Uu^{2\epsilon} = 0$ and $1_\Vv^{2\epsilon} = 0$ for all $\epsilon > 0$, so the zero maps
\begin{align*}
\Phi = 0 &\in \Hom^\epsilon(\Uu, \Vv)
\\
\Psi = 0 &\in \Hom^\epsilon(\Vv, \Uu)
\end{align*}
constitute an $\epsilon$-interleaving.
\end{example}

The \textbf{interleaving distance} between two persistence modules is defined as follows:
\begin{align*}
\inter(\Uu, \Vv)
  &= \inf \{ \delta \mid \text{$\Uu, \Vv$ are $\delta$-interleaved} \}
\\
  &= \min \{ \delta \mid \text{$\Uu, \Vv$ are $\delta^+$-interleaved} \}
\end{align*}
If there is no $\delta$-interleaving between $\Uu, \Vv$ for any value of~$\delta$, then $\inter(\Uu, \Vv) = \infty$.

\begin{proposition}
The interleaving distance satisfies the triangle inequality:
\[
\inter(\Uu, \Ww) \leq \inter(\Uu, \Vv) + \inter(\Vv, \Ww)
\]
for any three persistence modules $\Uu, \Vv, \Ww$.
\end{proposition}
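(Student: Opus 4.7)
The plan is to prove the compositional statement first: if $\Uu, \Vv$ are $\delta_1$-interleaved and $\Vv, \Ww$ are $\delta_2$-interleaved, then $\Uu, \Ww$ are $(\delta_1+\delta_2)$-interleaved. The triangle inequality then follows by a standard $\epsilon/2$ argument on infima.

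For the compositional step, I would take interleaving maps $\Phi_1 \in \Hom^{\delta_1}(\Uu,\Vv)$, $\Psi_1 \in \Hom^{\delta_1}(\Vv,\Uu)$ and $\Phi_2 \in \Hom^{\delta_2}(\Vv,\Ww)$, $\Psi_2 \in \Hom^{\delta_2}(\Ww,\Vv)$ realising these interleavings, and propose $\Phi_2 \Phi_1 \in \Hom^{\delta_1+\delta_2}(\Uu,\Ww)$ and $\Psi_1 \Psi_2 \in \Hom^{\delta_1+\delta_2}(\Ww,\Uu)$ as candidate interleaving maps (they lie in the correct Hom-spaces by the composition rule for shifted homomorphisms noted in section~\ref{subsec:interleaving}). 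Verifying the two required identities reduces to the computation
\[
(\Psi_1 \Psi_2)(\Phi_2 \Phi_1) = \Psi_1 (\Psi_2 \Phi_2) \Phi_1 = \Psi_1\, 1_\Vv^{2\delta_2}\, \Phi_1 = 1_\Uu^{2\delta_2}\,(\Psi_1 \Phi_1) = 1_\Uu^{2\delta_2}\, 1_\Uu^{2\delta_1} = 1_\Uu^{2(\delta_1+\delta_2)},
\]
using the interleaving relations together with the fact that any module homomorphism commutes with shift maps (i.e.\ $\Psi_1 \, 1_\Vv^\eta = 1_\Uu^\eta\, \Psi_1$). The symmetric computation gives $(\Phi_2 \Phi_1)(\Psi_1 \Psi_2) = 1_\Ww^{2(\delta_1+\delta_2)}$, which completes the compositional step.

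To conclude, suppose $\inter(\Uu,\Vv)$ and $\inter(\Vv,\Ww)$ are both finite (the other case is trivial) and fix $\epsilon > 0$. Choose $\delta_1 < \inter(\Uu,\Vv) + \epsilon/2$ and $\delta_2 < \inter(\Vv,\Ww) + \epsilon/2$ for which the corresponding interleavings exist. The compositional step supplies a $(\delta_1+\delta_2)$-interleaving of $\Uu$ and $\Ww$, so $\inter(\Uu,\Ww) \leq \delta_1 + \delta_2 < \inter(\Uu,\Vv) + \inter(\Vv,\Ww) + \epsilon$; letting $\epsilon \to 0$ yields the claim. There is no real obstacle here: the only mild subtlety is that $\inter$ is defined as an infimum which may not be attained, forcing the $\epsilon/2$ slack; one could alternatively phrase the same argument directly in terms of $\delta^+$-interleavings, noting that if $\Uu,\Vv$ are $a^+$-interleaved and $\Vv,\Ww$ are $b^+$-interleaved then $\Uu,\Ww$ are $(a+b)^+$-interleaved.
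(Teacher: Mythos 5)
Your proof is correct and follows the same route as the paper: compose the interleaving maps as $\Phi_2\Phi_1$ and $\Psi_1\Psi_2$, verify the interleaving identities using associativity and the commutation of module homomorphisms with shift maps, and then pass to the infimum. The only cosmetic difference is that the paper moves $1_\Vv^{2\delta_2}$ past $\Phi_1$ rather than $\Psi_1$ in the verification, and compresses the $\epsilon/2$ argument into the single phrase ``take the infimum over $\delta_1, \delta_2$.''
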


\begin{proof}
Given a $\delta_1$-interleaving between $\Uu, \Vv$ and a $\delta_2$-interleaving between $\Vv, \Ww$ one can construct a $\delta = (\delta_1 + \delta_2)$-interleaving between $\Uu, \Ww$ by composing the interleaving maps:
\begin{alignat*}{2}
\Uu
\stackrel{\Phi_1}{\longrightarrow} &\Vv
\stackrel{\Phi_2}{\longrightarrow} &&\Ww
\\
\Uu
\stackrel{\Psi_1}{\longleftarrow} &\Vv
\stackrel{\Psi_2}{\longleftarrow} &&\Ww
\end{alignat*}
One easily verifies that $\Phi = \Phi_2\Phi_1$ and $\Psi = \Psi_1\Psi_2$ are interleaving maps. Explicitly:
\begin{alignat*}{5}
\Psi \Phi
	&= \Psi_1 \Psi_2 \Phi_2 \Phi_1
	&&= \Psi_1 1_\Vv^{2\delta_2} \Phi_1
	&&= \Psi_1 \Phi_1 1_\Uu^{2\delta_2}
	&&= 1_\Uu^{2\delta_1} 1_\Uu^{2\delta_2}
	&&= 1_\Uu^{2\delta}
\\
\Phi \Psi
	&= \Phi_2 \Phi_1 \Psi_1 \Psi_2
	&&= \Phi_2 1_\Vv^{2\delta_1} \Psi_2
	&&= \Phi_2 \Psi_2 1_\Ww^{2\delta_1}
	&&= 1_\Ww^{2\delta_2} 1_\Ww^{2\delta_2}
	&&= 1_\Ww^{2\delta}
\end{alignat*}
Now take the infimum over $\delta_1, \delta_2$.
\end{proof}

The proposition tells us that $\inter$ is a pseudometric. It is not a true metric because $\inter(\Uu,\Vv) = 0$ does not imply $\Uu \cong \Vv$, as we saw above. In fact, two q-tame persistence modules have interleaving distance~0 if and only if their undecorated persistence diagrams are the same. This is a consequence of the isometry theorem.

Here is the simplest instance. The direct proof is left to the reader (or see Proposition~\ref{prop:pqrs}).

\begin{example}
The four interval modules
\[
\Ii{[p,q]},\; \Ii{[p,q)},\; \Ii{(p,q]},\; \Ii{(p,q)}
\]
are $0^+$-interleaved but not isomorphic.
\end{example}

The following property of interleaving distance will be useful later.

\begin{proposition}
\label{prop:inter-sum}
Let $\Uu_1, \Uu_2, \Vv_1, \Vv_2$ be persistence modules. Then
\[
\inter(\Uu_1 \oplus \Uu_2, \Vv_1 \oplus \Vv_2)
\leq \max\left( \inter(\Uu_1, \Vv_1), \inter(\Uu_2, \Vv_2) \right)
\]
More generally, let $\left( \Uu_\ell \mid \ell \in L \right)$ and $\left( \Vv_\ell \mid \ell \in L \right)$ be families of persistence modules indexed by the same set~$L$, and let
\[
\Uu = \bigoplus_{\ell \in L} \Uu_\ell,
\quad
\Vv = \bigoplus_{\ell \in L} \Vv_\ell.
\]
Then
\[
\inter(\Uu, \Vv) \leq \sup \left( \inter(\Uu_\ell, \Vv_\ell) \mid \ell \in L \right).
\]
\end{proposition}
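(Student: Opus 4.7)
The plan is to assemble an interleaving between $\Uu$ and $\Vv$ summand-by-summand, using the fact that homomorphisms and direct sums of persistence modules are defined pointwise at each index. Set $\delta = \sup_{\ell \in L} \inter(\Uu_\ell, \Vv_\ell)$. If $\delta = \infty$ the inequality is vacuous, so suppose $\delta < \infty$ and fix $\epsilon > 0$. By definition of $\inter$, each pair $\Uu_\ell, \Vv_\ell$ is $\delta^+$-interleaved and hence $(\delta+\epsilon)$-interleaved, so I can select homomorphisms
\[
\Phi_\ell \in \Hom^{\delta+\epsilon}(\Uu_\ell, \Vv_\ell), \qquad
\Psi_\ell \in \Hom^{\delta+\epsilon}(\Vv_\ell, \Uu_\ell)
\]
satisfying $\Psi_\ell \Phi_\ell = 1_{\Uu_\ell}^{2(\delta+\epsilon)}$ and $\Phi_\ell \Psi_\ell = 1_{\Vv_\ell}^{2(\delta+\epsilon)}$.

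Next, I would define $\Phi \in \Hom^{\delta+\epsilon}(\Uu, \Vv)$ and $\Psi \in \Hom^{\delta+\epsilon}(\Vv, \Uu)$ as the ``block-diagonal'' sums of the $\Phi_\ell$ and $\Psi_\ell$. Concretely, at index $t$ set $\phi_t = \bigoplus_\ell (\phi_\ell)_t : \bigoplus_\ell (U_\ell)_t \to \bigoplus_\ell (V_\ell)_{t+\delta+\epsilon}$, and similarly for $\psi_t$. The naturality squares required by the definition of a degree-$(\delta+\epsilon)$ homomorphism decompose into the corresponding squares for each $\ell$, which commute by hypothesis; this confirms that $\Phi$ and $\Psi$ are honest shifted homomorphisms.

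Then I would verify the interleaving identities by direct computation, using the fact that composition, identity shifts, and direct sums commute coordinate-wise:
\[
\Psi\Phi = \bigoplus_{\ell \in L} \Psi_\ell \Phi_\ell = \bigoplus_{\ell \in L} 1_{\Uu_\ell}^{2(\delta+\epsilon)} = 1_\Uu^{2(\delta+\epsilon)},
\]
and symmetrically $\Phi\Psi = 1_\Vv^{2(\delta+\epsilon)}$. Hence $\Uu$ and $\Vv$ are $(\delta+\epsilon)$-interleaved. Since $\epsilon > 0$ was arbitrary, they are $\delta^+$-interleaved, and therefore $\inter(\Uu, \Vv) \leq \delta$ by the second characterisation of the interleaving distance. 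The two-summand assertion is the special case $L = \{1,2\}$, with $\sup$ replaced by $\max$.

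I do not foresee a genuine obstacle here; the only step that could conceivably cause trouble is confirming that an arbitrary direct sum of shifted homomorphisms is itself a well-defined shifted homomorphism. This is immediate because the category of persistence modules admits arbitrary direct sums (inherited pointwise from $\cat{Vect}_\kk$), so the construction of $\Phi$ and $\Psi$ is legitimate regardless of the cardinality of $L$.
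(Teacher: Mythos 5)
Your proof is correct and takes essentially the same approach as the paper: form the block-diagonal direct sum of interleaving maps for each summand. The paper compresses the final step by asserting directly that any upper bound on the $\inter(\Uu_\ell,\Vv_\ell)$ is an upper bound for $\inter(\Uu,\Vv)$, whereas you spell out the $\epsilon$-argument that handles the case when the supremum is not attained --- a small but welcome gain in explicitness.
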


\begin{proof}
Given $\delta$-interleavings $\Phi_\ell, \Psi_\ell$ for each pair $\Uu_\ell, \Vv_\ell$, the direct sum maps $\Phi = \bigoplus \Phi_\ell$, $\Psi = \bigoplus \Psi_\ell$ constitute a $\delta$-interleaving of $\Uu, \Vv$. Thus any upper bound on the $\inter(\Uu_\ell, \Vv_\ell)$ is an upper bound for $\inter(\Uu, \Vv)$. In particular, this is true for the least upper bound, or $\sup$.
\end{proof}

\subsection{The bottleneck distance}

Now we define the metric on the other side of the isometry theorem, namely the bottleneck distance between undecorated persistence diagrams.
For a q-tame persistence module~$\Vv$, every rectangle not touching the diagonal has finite $\mu_\Vv$-measure. This implies that the undecorated diagram
\[
\dgm(\Vv) = \dgm(\mu_\Vv)
\]
is a multiset in the extended open half-plane
\[
\UpperInt =
\{ (p,q) \mid -\infty \leq p < q \leq +\infty \}.
\]
In order to define the bottleneck distance, we need to specify the distance between any pair of points in~$\UpperInt$, as well as the distance between any point and the diagonal  (the boundary of the half-plane). These distance functions are not arbitrary; they are defined as they are because of the interleaving properties of interval modules.

\medskip
\quad
{\bf\small (point to point)}:
The first idea is that two undecorated diagrams are close if there is a bijection between them which doesn't move any point too far. We use the $\ell^\infty$-metric in the plane:
\[
\ellinf((p,q), (r,s)) = \max\left( |p-r|, |q-s| \right)
\]
Points at infinity are compared in the expected way:
\begin{align*}
\ellinf((-\infty, q), (-\infty, s)) &= |q-s|,
\\
\ellinf((p,+\infty), (r,+\infty)) &= |p-r|,
\end{align*}
and
\[
\ellinf((-\infty, +\infty), (-\infty, +\infty)) = 0.
\]
Distances between points in different strata (e.g.\ between $(p,q)$ and $(-\infty,s)$) are infinite.

The next result gives a relationship between the $\ell^\infty$-metric and the interleaving of interval modules.

\begin{proposition}
\label{prop:pqrs}
Let $\lgroup p^*,q^* \rgroup$ and $\lgroup r^*,s^* \rgroup$ be intervals (possibly infinite), and let
\[
\Uu = \Ii{\lgroup p^*,q^* \rgroup}
\quad\text{and}\quad
\Vv = \Ii{\lgroup r^*,s^* \rgroup}
\]
be the corresponding interval modules.
Then
\[
\inter(\Uu, \Vv) \leq \ellinf((p,q), (r,s)).
\]
\end{proposition}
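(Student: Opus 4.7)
The plan is to construct an explicit $\eta$-interleaving for every $\eta > \delta := \ell^\infty((p,q),(r,s))$, which by definition of the interleaving distance as an infimum will give $\inter(\Uu,\Vv) \leq \delta$. Write $J = \lgroup p^*, q^* \rgroup$ and $K = \lgroup r^*, s^* \rgroup$. First note that if $\delta < \infty$ then the endpoints must live in the same stratum, i.e.\ $p = -\infty$ iff $r = -\infty$, and similarly for $q,s$; the infinite cases are vacuously handled, so I will write the argument as if all four numbers are finite.

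Fix $\epsilon > 0$ and set $\eta = \delta + \epsilon$, so that $|p-r| < \eta$ and $|q-s| < \eta$ (both strict). Define $\phi_t : U_t \to V_{t+\eta}$ to be the identity scalar $1 \in \kk$ when $t \in J$ and $t+\eta \in K$, and zero otherwise. Define $\psi_t : V_t \to U_{t+\eta}$ symmetrically. All maps between nonzero spaces are thus scalars, so commutativity and the interleaving relations reduce to boolean identities on interval membership.

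The core observation, used repeatedly, is: if $t \in J$ (so $t \geq p$ weakly, with the appropriate decoration), then $t+\eta > p+\eta-\eta \geq p > r - \eta$, hence $t+\eta > r$ \emph{strictly}, so the lower endpoint condition of $K$ is satisfied regardless of how $r^*$ is decorated. Dually, if $t+2\eta \in J$ then $t+2\eta \leq q < s+\eta$, so $t+\eta < s$ strictly, giving the upper endpoint condition of $K$. The strictness is exactly what the slack $\epsilon$ buys us, and it removes all decoration headaches.

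Using these, I will verify the two things I need. For the interleaving relation $\psi_{t+\eta}\phi_t = u_t^{t+2\eta}$: both sides are scalars, so it suffices to check they are simultaneously nonzero. If $u_t^{t+2\eta}=1$ then $t, t+2\eta \in J$, and the two observations above give $t+\eta \in K$, so $\phi_t = \psi_{t+\eta} = 1$; conversely nonvanishing of the composite forces $t\in J$ and $t+2\eta\in J$. The symmetric identity $\phi_{t+\eta}\psi_t = v_t^{t+2\eta}$ is identical after swapping roles. For the commuting squares, say $\phi_t \circ u_s^t = v_{s+\eta}^{t+\eta} \circ \phi_s$ with $s\leq t$: the left side is $1$ iff $s,t\in J$ and $t+\eta\in K$; the right side is $1$ iff $s \in J$, $s+\eta \in K$, $t+\eta \in K$. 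The equivalence of these two conditions is a one-line check using again that $|p-r|<\eta$ and $|q-s|<\eta$ together with $s\leq t$. The other three squares in \eqref{eq:int_expansive} are analogous. Hence $\Uu,\Vv$ are $\eta$-interleaved for every $\eta>\delta$, so $\inter(\Uu,\Vv)\leq \delta$, as claimed. The only real subtlety is keeping track of decorations, and that is precisely what the strict inequality $\eta > \delta$ absorbs.
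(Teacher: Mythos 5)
Your proof is correct and follows essentially the same route as the paper: construct $\phi_t, \psi_t$ to be the identity on $\kk$ whenever both domain and codomain are nonzero (and zero otherwise), then reduce the interleaving and commutation relations to boolean checks on interval membership, using the slack $\eta > \ellinf$ to get strict inequalities that absorb the decorations. The paper phrases the verification in terms of ruling out "forbidden configurations" while you verify the boolean equivalence directly, and the paper works with decorated inequalities while you work with strict undecorated ones (a variant the paper itself notes is available in its closing remark); these are presentation differences, not a different argument. (Minor quibbles: the inequality chain "$t+\eta > p+\eta-\eta \geq p > r-\eta$" is garbled, though the intended conclusion $t+\eta > r$ is right; and \eqref{eq:int_expansive} contains two squares and two triangles, so "the other three squares" should be "the remaining square and the second triangle.")
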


The proof is postponed to the end of the section. We remark that equality holds provided that the intervals overlap sufficiently (the closure of each interval must meet the midpoint of the other), so the proposition is tight in that sense.

\medskip
\quad
{\bf\small (point to diagonal)}:
The next idea is that points which are close to the diagonal are allowed to be swallowed up by the diagonal. Again we use the $\ell^\infty$-metric:
\[
\ellinf((p,q), \Delta) = \textstyle\half(q-p)
\]

Again this is related to the behaviour of interval modules.

\begin{proposition}
\label{prop:pqD}
Let $\lgroup p^*,q^* \rgroup$ be an interval, let
\[
\Uu = \Ii{\lgroup p^*,q^* \rgroup},
\]
be the corresponding interval module, and let $0$ denote the zero persistence module.
Then
\[
\inter(\Uu, 0) = \textstyle\half(q-p).
\]
(This is infinite if the interval is infinite.)
\end{proposition}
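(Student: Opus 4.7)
The plan is to prove both inequalities by reducing the interleaving condition to a single constraint on the shift endomorphism of~$\Uu$. Since $\Hom^\delta(\Uu,0)$ and $\Hom^\delta(0,\Uu)$ each contain only the zero map, the only candidate interleaving is $\Phi = 0$, $\Psi = 0$. The relation $\Phi\Psi = 1_0^{2\delta}$ is automatic, while $\Psi\Phi = 1_\Uu^{2\delta}$ reduces to the single requirement $1_\Uu^{2\delta} = 0$. Thus $\Uu$ and $0$ are $\delta$-interleaved if and only if the shift map $u_t^{t+2\delta} : U_t \to U_{t+2\delta}$ vanishes for every $t \in \Rr$.

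For the upper bound, fix $\epsilon > 0$ and set $\delta = \tfrac{1}{2}(q-p) + \epsilon$. Then $2\delta > q - p$, and I would observe that no real number $t$ can satisfy $t \in \lgroup p^*, q^* \rgroup$ and $t + 2\delta \in \lgroup p^*, q^* \rgroup$ simultaneously, since the distance between two points of the interval is at most $q - p < 2\delta$. Hence $u_t^{t+2\delta} = 0$ for every $t$ (either the domain or the codomain is zero), so $1_\Uu^{2\delta} = 0$ and the zero maps give a valid $\delta$-interleaving. Letting $\epsilon \downarrow 0$ yields $\inter(\Uu, 0) \leq \tfrac{1}{2}(q-p)$.

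For the lower bound, suppose $\Uu$ and $0$ are $\delta$-interleaved; then $1_\Uu^{2\delta} = 0$ by the reduction above. Equivalently, $\lgroup p^*, q^* \rgroup \cap \lgroup p^*-2\delta, q^*-2\delta \rgroup = \emptyset$. If the interval is infinite, this is impossible for any finite $\delta$, giving $\inter(\Uu,0) = \infty = \tfrac{1}{2}(q-p)$. If the interval is finite, emptiness of the overlap forces $2\delta \geq q - p$, so $\delta \geq \tfrac{1}{2}(q-p)$, and taking the infimum over admissible $\delta$ gives $\inter(\Uu,0) \geq \tfrac{1}{2}(q-p)$.

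The only subtlety is the decoration case analysis: for the closed interval $[p,q]$ one needs $2\delta > q-p$ strictly, while for the three remaining decoration patterns $2\delta = q-p$ already makes the overlap empty. This affects whether the infimum is attained but not its value, so it does not complicate the argument for $\inter(\Uu,0) = \tfrac{1}{2}(q-p)$. No step should pose a genuine obstacle; the content of the proposition is really the observation that interleavings with the zero module encode precisely the question of when the internal shift map of~$\Uu$ kills the module, and measuring this on an interval module is a direct calculation.
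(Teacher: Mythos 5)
Your proof is correct and follows essentially the same route as the paper: reduce the interleaving condition to $1_\Uu^{2\delta} = 0$ (since the maps to and from the zero module must vanish) and check directly when the internal shift map of the interval module vanishes. The extra care about decorations is sound but, as you note, does not change the value of the infimum.
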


\begin{proof}
Let $\delta \geq 0$. When is there a $\delta$-interleaving? The interleaving maps must be zero (no other maps exist to or from the module~0), so the only condition that needs checking is $\Psi\Phi = 1_\Uu^{2\delta}$, which is really $0 = 1_\Uu^{2\delta}$. This holds when $\delta > \half(q-p)$ and fails when $\delta < \half(q-p)$.
\end{proof}

\medskip
We now use these two concepts to define the bottleneck distance between two multisets $\Aa, \Bb$ in the extended half-plane.

It is easier to work with sets rather than multisets. One way to do this is to attach labels to distinguish multiple instances of each repeated point. For instance, $\alpha$ with multiplicity~$k$ becomes $\alpha_1, \dots, \alpha_k$. Henceforth we will do this implicitly, without comment.

A {\bf partial matching} between $\Aa$ and~$\Bb$ is a collection of pairs
\[
\Mm \subset \Aa \times \Bb
\]
such that:

\begin{vlist}
\item
for every $\alpha \in \Aa$ there is at most one $\beta \in \Bb$ such that $(\alpha, \beta) \in \Mm$;

\item
for every $\beta \in \Bb$ there is at most one $\alpha \in \Aa$ such that $(\alpha, \beta) \in \Mm$.
\end{vlist}

We say that a partial matching $\Mm$ is a {\bf $\delta$-matching} if all of the following are true:
\begin{vlist}
\item
if $(\alpha, \beta) \in \Mm$ then $\ellinf(\alpha,\beta) \leq \delta$;
\item
if $\alpha \in \Aa$ is unmatched then $\ellinf(\alpha, \Delta) \leq \delta$;
\item
if $\beta \in \Bb$ is unmatched then $\ellinf(\beta, \Delta) \leq \delta$.
\end{vlist}

The {\bf bottleneck distance} between two multisets $\Aa, \Bb$ in the extended half-plane is
\[
\bottle(\Aa, \Bb)
=
\inf \left( \delta \mid \text{there exists a $\delta$-matching between $\Aa$ and $\Bb$} \right).
\]
In section~\ref{subsec:bottle-compact}, we will show that `$\inf$' can be replaced by `$\min$' if $\Aa, \Bb$ are locally finite.

\begin{remark}
In order for $\bottle(\Aa,\Bb) < \infty$, it is necessary that the cardinalities of $\Aa, \Bb$ agree over each of the three strata at infinity:
\begin{align*}
\card(\Aa|_{\{-\infty\}\times\Rr})
	&= \card(\Bb|_{\{-\infty\}\times\Rr})
\\
\card(\Aa|_{\Rr \times \{+\infty\}})
	&= \card(\Bb|_{\Rr \times \{+\infty\}})
\\
\card(\Aa|_{\{-\infty\}\times\{+\infty\}})
	&= \card(\Bb|_{\{-\infty\}\times\{+\infty\}})
\end{align*}
Indeed, these points have infinite distance from the diagonal and from points in the other strata, and therefore they must be bijectively matched within each stratum.
\end{remark}

\begin{proposition}
\label{prop:bottle-triangle}
The bottleneck distance satisfies the triangle inequality:
\[
\bottle(\Aa, \Cc)
\leq 
\bottle(\Aa, \Bb) + \bottle(\Bb, \Cc) 
\]
for any three multisets $\Aa, \Bb, \Cc$.
\end{proposition}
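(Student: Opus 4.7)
The plan is the standard composition-of-matchings argument: given a $\delta_1$-matching $\Mm_1$ between $\Aa$ and $\Bb$ and a $\delta_2$-matching $\Mm_2$ between $\Bb$ and $\Cc$, I will produce a $(\delta_1+\delta_2)$-matching $\Mm$ between $\Aa$ and $\Cc$ by ``composing'' $\Mm_1$ and $\Mm_2$, and then take infima.

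First I define $\Mm \subseteq \Aa \times \Cc$ to consist of those pairs $(\alpha,\gamma)$ for which there exists $\beta \in \Bb$ with $(\alpha,\beta) \in \Mm_1$ and $(\beta,\gamma) \in \Mm_2$. Since $\Mm_1$ and $\Mm_2$ are partial matchings, such a $\beta$ is unique when it exists, and each $\alpha \in \Aa$ (resp.\ $\gamma \in \Cc$) appears in at most one pair of~$\Mm$; so $\Mm$ is a partial matching.

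Next I verify the three conditions for $\Mm$ to be a $(\delta_1+\delta_2)$-matching, using the triangle inequality for the $\ellinf$-metric on the extended half-plane (which holds stratum-by-stratum; pairs in different strata are at infinite distance, but such pairs cannot arise since $\Mm_1$ and $\Mm_2$ already respect the strata). If $(\alpha,\gamma)\in \Mm$ with witness $\beta$, then
\[
\ellinf(\alpha,\gamma) \;\leq\; \ellinf(\alpha,\beta) + \ellinf(\beta,\gamma) \;\leq\; \delta_1+\delta_2.
\]
If $\alpha \in \Aa$ is unmatched in~$\Mm$, then either $\alpha$ is unmatched in~$\Mm_1$, in which case $\ellinf(\alpha,\Delta)\leq \delta_1$; or $(\alpha,\beta)\in\Mm_1$ for some $\beta$ that is unmatched in~$\Mm_2$, and then
\[
\ellinf(\alpha,\Delta) \;\leq\; \ellinf(\alpha,\beta)+\ellinf(\beta,\Delta) \;\leq\; \delta_1+\delta_2.
\]
In both cases $\ellinf(\alpha,\Delta)\leq \delta_1+\delta_2$. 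The argument for an unmatched $\gamma \in \Cc$ is symmetric.

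Finally, for every $\delta_1 > \bottle(\Aa,\Bb)$ and $\delta_2 > \bottle(\Bb,\Cc)$ such matchings $\Mm_1, \Mm_2$ exist by definition of the bottleneck distance, hence $\bottle(\Aa,\Cc) \leq \delta_1+\delta_2$. Taking the infimum over such $\delta_1,\delta_2$ yields the desired inequality. The only potential obstacle is the bookkeeping around unmatched points and the stratification at infinity, but both are handled cleanly by the case split above; no compactness or selection argument is needed, since we work with $\inf$ rather than $\min$.
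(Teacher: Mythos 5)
Your proof is correct and takes essentially the same composition-of-matchings route as the paper: define $\Mm$ as the relational composite of $\Mm_1$ and $\Mm_2$, bound matched pairs via the $\ellinf$ triangle inequality, handle unmatched points by the two-case split, and pass to the infimum. The only small additions beyond the paper's text are your explicit checks that $\Mm$ is a partial matching and that strata are respected, which the paper leaves implicit (it treats multisets as labelled sets and notes in a following remark that the composition depends on how labels are assigned).
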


\begin{proof}
Suppose $\Mm_1$ is a  $\delta_1$-matching between $\Aa, \Bb$, and $\Mm_2$ is a $\delta_2$-matching between $\Bb, \Cc$. Let $\delta = \delta_1 + \delta_2$. We must show that there is a $\delta$-matching between $\Aa, \Cc$.

Define the composition of $\Mm_1, \Mm_2$ to be
\[
\Mm = \left( 
(\alpha,\gamma)
\mid
\text{there exists $\beta \in \Bb$ such that $(\alpha,\beta) \in \Mm_1$ and $(\beta,\gamma) \in \Mm_2$}
\right).
\]
We verify that $\Mm$ is the required $\delta$-matching:
\begin{vlist}
\item
If $(\alpha,\gamma) \in \Mm$ then
\[
\ellinf(\alpha,\gamma) \leq \ellinf(\alpha,\beta) + \ellinf(\beta,\gamma) \leq \delta_1 + \delta_2 = \delta
\]
where $\beta \in \Bb$ is the point linking $\alpha$ to~$\gamma$.

\item
If $\alpha$ is unmatched in $\Mm$ then there are two possibilities. Either $\alpha$ is unmatched in $\Mm_1$, in which case
\[
\ellinf(\alpha,\Delta) \leq \delta_1 \leq \delta.
\]
Or $\alpha$ is matched in $\Mm_1$, let's say $(\alpha,\beta) \in \Mm_1$. Then $\beta$~must be unmatched in $\Mm_2$, so
\[
\ellinf(\alpha,\Delta) \leq \ellinf(\alpha,\beta) + \ellinf(\beta,\Delta) \leq \delta_1 + \delta_2 = \delta.
\]

\item
If $\gamma$ is unmatched in $\Mm$, then a similar argument shows that
\[
\ellinf(\gamma, \Delta) \leq \delta.
\]
\end{vlist}

This completes the proof.
\end{proof}

\begin{remark}
Because $\Aa, \Bb, \Cc$ are in truth multisets rather than sets, the composition operation between matchings is not uniquely defined, but depends on how the matchings are realised when labels are added.
Figure~\ref{fig:M1M2M} illustrates what can happen when $\Bb$ has points of multiplicity greater than~1.
\begin{figure}
\centerline{
\hfill
\includegraphics[height=18ex]{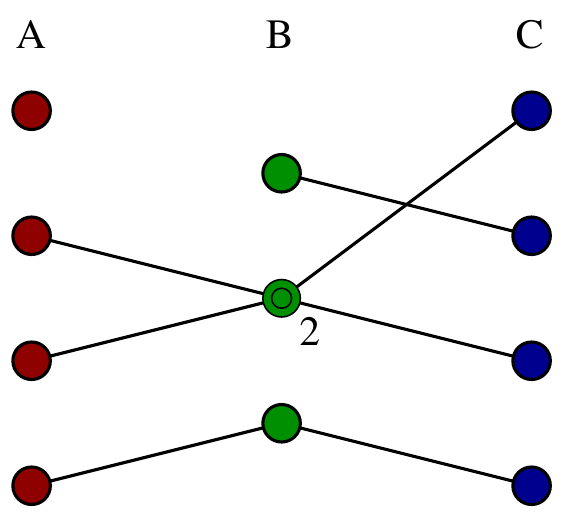}
\hfill
\raisebox{7ex}{gives}
\hfill
\includegraphics[height=18ex]{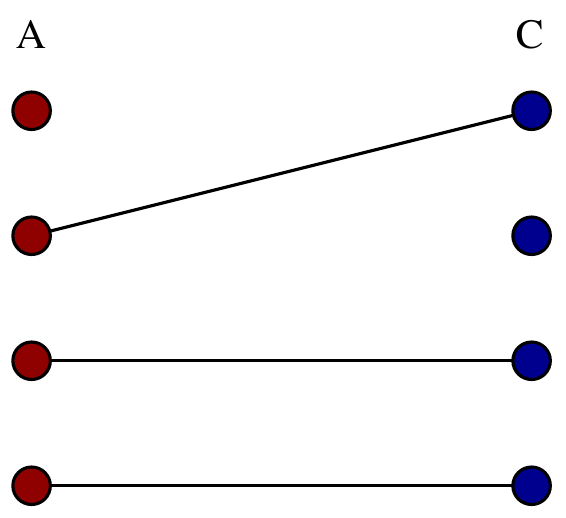}
\hfill
\raisebox{7ex}{or}
\hfill
\includegraphics[height=18ex]{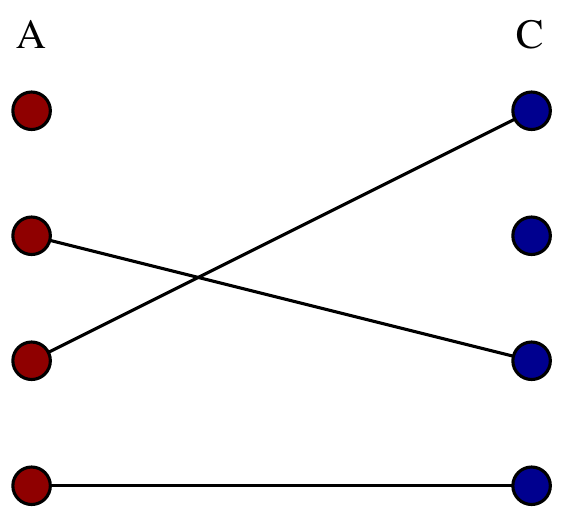}
\hfill
}
\caption{The partial matchings between $\Aa,\Bb$ and $\Bb, \Cc$ (left) compose
to give a partial matching between $\Aa, \Cc$ in two different ways (middle, right). The non-uniqueness arises from the point of multiplicity~2 in~$\Bb$.
}
\label{fig:M1M2M}
\end{figure}
Since we are concerned only with the existence of the composite matching, this ambiguity does not trouble us.
\end{remark}

Here is the first substantial-looking result comparing the interleaving and bottleneck  distances.

\begin{theorem}
\label{thm:pre-converse}
Let $\Uu, \Vv$ be decomposable persistence modules (i.e.\ direct sums of interval modules). Then
\[
\inter(\Uu, \Vv) \leq \bottle(\dgm(\Uu), \dgm(\Vv)).
\]
\end{theorem}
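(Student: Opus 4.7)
The plan is to convert any $\delta$-matching between the undecorated diagrams into a $\delta$-interleaving of the modules by pairing interval summands in $\Uu$ with interval summands in $\Vv$ according to the matching, and then invoking Proposition~\ref{prop:inter-sum}. Since both $\inter$ and $\bottle$ are defined as infima, it is enough to carry this out for an arbitrary $\delta > \bottle(\dgm(\Uu), \dgm(\Vv))$ and let $\delta$ decrease.

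Fix interval decompositions $\Uu = \bigoplus_{\ell \in L} \Ii^{J_\ell}$ and $\Vv = \bigoplus_{m \in M} \Ii^{K_m}$, and a $\delta$-matching $\Mm$ between $\dgm(\Uu)$ and $\dgm(\Vv)$. Because the undecorated diagram forgets decorations and drops the single-point intervals (whose undecorated points lie on $\Delta$), I would first lift $\Mm$ to a partial bijection $\sigma : L_\mathrm{m} \to M_\mathrm{m}$ between the (labelled) index sets of non-single-point summands. This induces partitions $L = L_\mathrm{m} \sqcup L_\mathrm{u} \sqcup L_\mathrm{d}$ and $M = M_\mathrm{m} \sqcup M_\mathrm{u} \sqcup M_\mathrm{d}$ into matched, unmatched, and diagonal (single-point) parts. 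Proposition~\ref{prop:pqrs} then gives $\inter(\Ii^{J_\ell}, \Ii^{K_{\sigma(\ell)}}) \leq \delta$ for each $\ell \in L_\mathrm{m}$; Proposition~\ref{prop:pqD} gives $\inter(\Ii^{J_\ell}, 0) = \half(q_\ell - p_\ell) \leq \delta$ for every $\ell \in L_\mathrm{u} \cup L_\mathrm{d}$ (using $\ellinf((p_\ell, q_\ell), \Delta) \leq \delta$ in the unmatched case and $p_\ell = q_\ell$ in the diagonal case), with symmetric statements for $M$. Regrouping the summands so that $\Uu$ and $\Vv$ are presented as parallel direct sums over five blocks — matched versus matched, unmatched versus zero, zero versus unmatched, diagonal versus zero, zero versus diagonal — Proposition~\ref{prop:inter-sum} assembles the per-summand $\delta$-interleavings into a global $\delta$-interleaving of $\Uu$ with $\Vv$.

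The main obstacle is not any single algebraic step but the bookkeeping: verifying that a $\delta$-matching of multisets lifts honestly to a partial bijection of labelled interval summands (so that infinite multiplicities and repeated points are handled correctly), and correctly absorbing the single-point intervals, which are absent from $\dgm$ but present in the decomposition. The latter is precisely why the bottleneck cost to the diagonal is defined as $\half(q-p)$, matching the interleaving cost computed by Proposition~\ref{prop:pqD}.
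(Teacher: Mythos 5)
Your proof takes essentially the same route as the paper: lift the $\delta$-matching to a partial pairing of interval summands, bound each pair's interleaving distance by $\delta$ via Propositions~\ref{prop:pqrs} and~\ref{prop:pqD}, and assemble with Proposition~\ref{prop:inter-sum}. The only difference is that you keep the single-point intervals as a separate block, while the paper silently absorbs them into the ``unmatched interval versus zero'' case (where Proposition~\ref{prop:pqD} gives interleaving distance~$0 \leq \delta$); neither is more than a notational choice, and your explicit attention to the lift from an unlabelled multiset matching to a labelled partial bijection of summands is sound but not a gap in the paper's argument either.
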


(We remind the reader that tameness is not required to define $\Dgm$ and $\dgm$ in this case: see section~\ref{subsec:pd1}.)

\begin{proof}
We show that whenever there exists a $\delta$-matching between $\dgm(\Uu)$ and $\dgm(\Vv)$, we have $\inter(\Uu, \Vv) \leq \delta$. The result follows by taking the infimum over all such~$\delta$.

Let $\Mm$ be a $\delta$-matching between the two diagrams. Since the points in each diagram correspond to the interval summands of the module, we can construct from~$\Mm$ a partial matching between the interval summands of $\Uu$ and $\Vv$. 

Re-write $\Uu$ and $\Vv$ in the form
\[
\Uu = \bigoplus_{\ell \in L} \Uu_\ell,
\quad
\Vv = \bigoplus_{\ell \in L} \Vv_\ell
\]
so that each pair $(\Uu_\ell, \Vv_\ell)$ is one of the following:
\begin{vlist}
\item
a pair of matched intervals;
\item
$\Uu_\ell$ is an unmatched interval, $\Vv_\ell = 0$;
\item
$\Vv_\ell$ is an unmatched interval, $\Uu_\ell = 0$.
\end{vlist}

In each case, by Propositions \ref{prop:pqrs} and~\ref{prop:pqD}, we have $\inter(\Uu_\ell, \Vv_\ell) \leq \delta$. It follows from Proposition~\ref{prop:inter-sum} that $\inter(\Uu, \Vv) \leq \delta$.
\end{proof}

We complete this section with the postponed proof.

\begin{proof}[Proof of Proposition~\ref{prop:pqrs}]
We treat the case where $p, q, r, s$ are all finite. We must show that if
\[
\delta > \max\left( |p-r|, |q-s| \right)
\]
then $\Uu, \Vv$ are $\delta$-interleaved. We define systems of linear maps
\begin{align*}
\Phi &= (\phi_t : U_t \to V_{t+\delta})
\\
\Psi &= (\psi_t : V_t \to U_{t+\delta})
\end{align*}
and then show that the interleaving relations
\[
\Phi 1_\Uu^\eta = 1_\Vv^\eta \Phi,
\quad
\Psi 1_\Vv^\eta = 1_\Uu^\eta \Psi,
\quad
\Psi\Phi = 1_\Uu^{2\delta},
\quad
\Phi\Psi = 1_\Vv^{2\delta}
\]
hold.

The definition of the maps $\phi_t, \psi_t$ is straightforward. Each vector space in $\Uu, \Vv$ is equal to zero or to the field $\kk$. If the domain and codomain equal~$\kk$, then the map is defined to be the identity $1 = 1_\kk$. Otherwise, the map is necessarily~0.

The first step is to show that the systems of maps $\Phi = (\phi_t)$, $\Psi = (\psi_t)$ are module homomorphisms. For $\Phi$ this entails verifying that the diagram
\[
\begin{diagram}
\dgARROWLENGTH=2em
\node{U_t}
   \arrow[2]{e}
   \arrow{se}
\node[2]{U_{t+\eta}}
   \arrow{se}
\\
\node[2]{V_{t+\delta}}
   \arrow[2]{e}
\node[2]{V_{t+\eta+\delta}}
\end{diagram}
\]
commutes for all $t$ and for all $\eta > 0$. 
Because of the special form of the vector spaces and maps, it is enough to show that the situation is not one of the following:
\[
\begin{diagram}
\dgARROWLENGTH=1.5em
\node{\bullet}
   \arrow[2]{e}
   \arrow{se}
\node[2]{\bullet}
   \arrow{se}
\\
\node[2]{\circ}
   \arrow[2]{e}
\node[2]{\bullet}
\end{diagram}
\qquad\text{or}\qquad
\begin{diagram}
\dgARROWLENGTH=1.5em
\node{\bullet}
   \arrow[2]{e}
   \arrow{se}
\node[2]{\circ}
   \arrow{se}
\\
\node[2]{\bullet}
   \arrow[2]{e}
\node[2]{\bullet}
\end{diagram}
\]
Here a filled circle~$\bullet$ indicates that the vector space is~$\kk$, and an open circle~$\circ$ denotes that it is zero.
For the first situation to occur, one must have
\[
p^* < t
\quad\text{and}\quad
t + \delta< r^*
\]
but $\delta > r-p$ so this is impossible. For the second situation to occur, one must have 
\[
q^* < t + \eta
\quad\text{and}\quad
t + \eta + \delta < s^*
\]
but $\delta > s - q$ so this too is impossible.
It follows that $\Phi$ is a module homomorphism. By symmetry, so is~$\Psi$.

The second step is to show that $\Psi\Phi = 1_\Uu^{2\delta}$ and $\Phi\Psi = 1_\Vv^{2\delta}$. For the first of these, we must verify that the diagram
\[
\begin{diagram}
\dgARROWLENGTH=1.5em
\node{U_t}
   \arrow[2]{e}
   \arrow{se}
\node[2]{U_{t+2\delta}}
\\
\node[2]{V_{t+\delta}}
   \arrow{ne}
\end{diagram}
\]
commutes for all~$t$.
This time the only forbidden configuration is:
\[
\begin{diagram}
\dgARROWLENGTH=1.5em
\node{\bullet}
   \arrow[2]{e}
   \arrow{se}
\node[2]{\bullet}
\\
\node[2]{\circ}
   \arrow{ne}
\end{diagram}
\]
If this occurs then the top row implies:
\[
p^* < t
\quad\text{and}\quad
t + 2\delta < q^*
\]
Since $\delta > r-p$ and $\delta > q-s$ we infer that
\[
r^* < (p+\delta)^* < t + \delta < (q-\delta)^* < s^*
\]
which implies that the circle on the bottom row is filled after all.
It follows that $\Psi\Phi = 1_\Uu^{2\delta}$. By symmetry, $\Phi\Psi = 1_\Vv^{2\delta}$.

This completes the proof when $p, q, r, s$ are finite. The infinite cases are similar.
\end{proof}

\begin{remark}
In the proof we have used and manipulated inequalities between decorated and undecorated real numbers. The definitions are the natural ones:
\[
p^- < p < p^+ < q^- < q < q^+
\]
whenever $p < q$ are undecorated real numbers. The proof can be written without this; for instance, the inequality $p^* < t$ can be replaced by the marginally weaker assertion $p \leq t$, and so on.
\end{remark}

\subsection{The bottleneck distance (continued)}
\label{subsec:bottle-compact}
If $\Aa, \Bb$ are locally finite, it turns out that the `inf' is attained in the definition
\[
\bottle(\Aa, \Bb)
=
\inf \left( \delta \mid \text{there exists a $\delta$-matching between $\Aa$ and $\Bb$} \right),
\]
and can be replaced by `min'. This will allow us to make a tighter statement of the stability theorem~(\ref{thm:isometry}$''$) for q-tame modules. See Theorem~\ref{thm:stability1}.

\begin{theorem}
\label{thm:compact}
Let $\Aa, \Bb$ be locally finite multisets in the extended open half-plane $\UpperInt$.
Suppose for every $\eta > \delta$ there exists an $\eta$-matching between $\Aa, \Bb$. Then there exists a $\delta$-matching between $\Aa, \Bb$.
\end{theorem}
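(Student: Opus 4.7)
The plan is a sequential-compactness/diagonal argument: I will extract a limit $\delta$-matching from any sequence of $\eta_n$-matchings with $\eta_n \downarrow \delta$. Because $\UpperInt$ is a countable union of closed rectangles in $\rect(\UpperInt)$, local finiteness makes $\Aa$ and $\Bb$ countable, so enumerate $\Aa = (\alpha_i)_{i \geq 1}$ and $\Bb = (\beta_j)_{j \geq 1}$ (with repetitions labelled). Fix $\eta_1 > \eta_2 > \cdots$ decreasing to $\delta$ and $\eta_n$-matchings $\Mm_n$. For each $\alpha \in \Aa$ record its \emph{state} $s_n(\alpha) \in \Bb \sqcup \{\star\}$, which is either its partner $\Mm_n(\alpha)$ or the symbol $\star$ meaning ``unmatched''.

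The key finiteness observation is this: if $d := \ellinf(\alpha,\Delta) > \delta$, then for all sufficiently large $n$ (say $\eta_n < d$) the $\eta_n$-matching condition forbids $\alpha$ from being unmatched, and the partner $\Mm_n(\alpha)$ must lie in the closed $\ellinf$-ball of radius $\eta_n$ around $\alpha$. Since $\eta_n < d$, this ball is a rectangle in $\rect(\UpperInt)$ (it sits strictly above $\Delta$, or inside one of the infinite strata if $\alpha$ is at infinity), and so its intersection with $\Bb$ is finite by local finiteness of $\Bb$. Hence $s_n(\alpha)$ takes only finitely many possible values for $n$ large. A symmetric statement holds for $\beta \in \Bb$ with $\ellinf(\beta,\Delta) > \delta$, by local finiteness of $\Aa$.

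Apply Cantor's diagonal extraction: pass to a subsequence on which $s_n(\alpha_1)$ is eventually constant, then a further subsequence to stabilise $s_n(\alpha_2)$, etc. After diagonalising over both $(\alpha_i)$ and $(\beta_j)$, we obtain a single subsequence (still called $(\Mm_n)$) along which every $\alpha \in \Aa$ and every $\beta \in \Bb$ with $\ellinf(\cdot,\Delta) > \delta$ has a stable state. Points $\alpha$ with $\ellinf(\alpha,\Delta) \leq \delta$ whose state does not stabilise can, by a final pass, be forced to the state $\star$. Define the limit matching $\Mm$ to contain $(\alpha,\beta)$ precisely when eventually $\Mm_n(\alpha) = \beta$.

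Verification is routine. Injectivity of $\Mm$ follows from injectivity of each $\Mm_n$: if $\alpha$ and $\alpha'$ both stabilise to partner $\beta$, then for all large $n$ both are matched to $\beta$ in $\Mm_n$, forcing $\alpha = \alpha'$. For a matched pair, $\ellinf(\alpha,\Mm(\alpha)) \leq \eta_n$ for all large $n$, and letting $n \to \infty$ gives $\ellinf(\alpha,\Mm(\alpha)) \leq \delta$. Finally, any $\alpha \in \Aa$ unmatched in $\Mm$ must have $\ellinf(\alpha,\Delta) \leq \delta$: otherwise the finiteness observation would have forced $\alpha$ to be matched for all large $n$, contradicting $\Mm(\alpha) = \star$; the symmetric statement for unmatched $\beta \in \Bb$ follows from the parallel diagonalisation on $\Bb$. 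Hence $\Mm$ is the required $\delta$-matching.

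The main obstacle is bookkeeping rather than deep mathematics: one must simultaneously control matched and unmatched elements on both sides, and ensure the diagonal extraction handles the countably many constraints without losing any. The crucial analytic input is the ``finitely-many-possible-partners'' lemma, which converts local finiteness into the compactness needed to pass to the limit; the rest is standard.
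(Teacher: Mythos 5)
Your proof is correct and takes essentially the same route as the paper: a diagonal/compactness argument that extracts a limit matching from the sequence $(\Mm_n)$, with local finiteness supplying the needed pre-compactness. The one cosmetic difference is that the paper tracks the $\{0,1\}$-valued indicator functions $\chi_n$ on pairs $(\alpha,\beta)\in\Aa\times\Bb$ rather than the $\Bb\sqcup\{\star\}$-valued partner-states on points; since binary sequences always have constant subsequences, the paper's extraction never stalls, whereas your extraction relies on the finitely-many-possible-partners lemma already at the subsequence stage (and for $\alpha$ with $\ellinf(\alpha,\Delta)\leq\delta$ it may have no constant subsequence at all, so you should simply skip such points in the diagonalisation rather than invoking a ``final pass'' afterwards---the required conclusion, that states stabilise for all points farther than $\delta$ from the diagonal, is unaffected).
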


The assertion is obvious if $\Aa, \Bb$ are finite. The general case is proved using a compactness argument, as follows.

\begin{proof}
As usual we treat $\Aa, \Bb$ as sets rather than multisets.

For every integer $n \geq 1$,
let $\Mm_n$ be a $(\delta+\frac{1}{n})$-matching between $\Aa, \Bb$.
%
The plan is to construct a $\delta$-matching $\Mm$ from the sequence $(\Mm_n)$. 
In practice, we work with the indicator functions
\begin{align*}
\chi &: \Aa \times \Bb \to \{0, 1\}
\\
\chi_n &: \Aa \times \Bb \to \{0, 1\}
\end{align*}
of the partial matchings $\Mm, \Mm_n$.

The first step is to construct $\chi$ as a limit of the sequence $(\chi_n)$. Take a fixed enumeration
\[
\left( (\alpha_\ell, \beta_\ell) \mid \ell \geq 1 \right)
\]
of the countable set $\Aa \times \Bb$. We will inductively construct a descending sequence
\[
\Nn = \Nn_0 \supseteq \Nn_1 \supseteq \dots \supseteq \Nn_\ell \supseteq \dots
\]
of infinite subsets of the natural numbers, with the property that $\chi_n(\alpha_\ell, \beta_\ell)$ takes the same value for all $n \in \Nn_\ell$. Having done so, we define $\chi(\alpha_\ell, \beta_\ell)$ to be this common value.

The construction of $\Nn_\ell$ is straightforward: once $\Nn_{\ell-1}$ is defined, at least one of the two sets
\[
\{ n \in \Nn_{\ell-1} \mid \chi_n(\alpha_\ell, \beta_\ell) = 0 \}
\quad \text{and} \quad
\{ n \in \Nn_{\ell-1} \mid \chi_n(\alpha_\ell, \beta_\ell) = 1 \}
\]
has infinite cardinality, and that will be our $\Nn_\ell$. (If both, then either will do.) Repeat.

\begin{lemma*}
If $\Ff$ is any finite subset of $\Aa \times \Bb$, then there exists $\ell \geq 1$ such that
\[
\chi(\alpha, \beta) = \chi_n(\alpha, \beta)
\]
for all $(\alpha, \beta) \in \Ff$ and for all $n \in \Nn_\ell$.
\end{lemma*}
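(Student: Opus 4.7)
The plan is direct: use the fact that $\Ff$ is finite together with the monotonicity of the chain $(\Nn_\ell)$ to reduce the claim to the defining property of each individual $\Nn_\ell$.

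First, I would enumerate the elements of $\Ff$ via the fixed enumeration $\left( (\alpha_\ell, \beta_\ell) \mid \ell \geq 1 \right)$ of $\Aa \times \Bb$. Since $\Ff$ is finite, it is contained in $\{(\alpha_1,\beta_1), \ldots, (\alpha_N, \beta_N)\}$ for some natural number $N$; equivalently, there is a finite set of indices $I \subset \Nn$ such that $\Ff \subseteq \{(\alpha_i, \beta_i) \mid i \in I\}$. Set $\ell = \max(I)$ (or $\ell = N$, which works equally well).

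Next, I invoke the descending property $\Nn_0 \supseteq \Nn_1 \supseteq \dots \supseteq \Nn_\ell \supseteq \dots$ established in the construction. Since $i \leq \ell$ for every $i \in I$, we have $\Nn_\ell \subseteq \Nn_i$ for every such $i$. By the defining property of $\Nn_i$, the value $\chi_n(\alpha_i, \beta_i)$ is constant for $n \in \Nn_i$, and this common value is precisely $\chi(\alpha_i,\beta_i)$. In particular, for every $n \in \Nn_\ell$ and every $i \in I$, $\chi_n(\alpha_i,\beta_i) = \chi(\alpha_i,\beta_i)$. Since every element of $\Ff$ is of the form $(\alpha_i,\beta_i)$ for some $i \in I$, this gives the required conclusion.

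There is no real obstacle here; the lemma is a pigeonhole-style bookkeeping statement, and the whole content lies in choosing $\ell$ large enough to dominate the finitely many indices of elements in $\Ff$. The only point worth emphasising, to avoid circularity in the surrounding argument, is that the definition $\chi(\alpha_i,\beta_i) := \lim_{n \in \Nn_i} \chi_n(\alpha_i,\beta_i)$ coincides with the value of $\chi_n(\alpha_i,\beta_i)$ on the tail $\Nn_i$; this is immediate from the construction, since the $\{0,1\}$-valued sequence is not merely eventually constant but constant on $\Nn_i$ by design.
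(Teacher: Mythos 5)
Your proof is correct and is essentially the same as the paper's one-line argument: pick $\ell$ large enough that $(\alpha_1,\beta_1),\dots,(\alpha_\ell,\beta_\ell)$ cover $\Ff$, then use the nesting $\Nn_\ell \subseteq \Nn_i$ together with the defining property of each $\Nn_i$. You've simply spelled out the bookkeeping that the paper leaves implicit.
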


\begin{proof}
Indeed, select $\ell$ such that $(\alpha_1, \beta_1),\, \dots, (\alpha_\ell, \beta_\ell)$ include all of~$\Ff$.
\end{proof}

The second step is to verify that $\chi$ is the indicator function of a $\delta$-matching. There are several items to check.

\quad
$\bullet$
For $\alpha \in \Aa$ there is at most one $\beta \in \Bb$ such that $\chi(\alpha, \beta)=1$. 

\begin{quote}
{\it Proof.}\;
Suppose $\chi(\alpha, \beta) = \chi(\alpha, \beta') = 1$ for distinct elements $\beta, \beta' \in \Bb$. By the lemma, there exists~$n$ such that $\chi_n(\alpha, \beta) = \chi_n(\alpha, \beta') = 1$, which contradicts the fact that $\Mm_n$ is a partial matching.
\hfill
\qed
\end{quote}

\quad
$\bullet$
For $\alpha \in \Aa$ with $\ellinf(\alpha,\Delta) > \delta$, there is at least one $\beta \in \Bb$ such that $\chi(\alpha, \beta) = 1$.

\begin{quote}
{\it Proof.}\;
Select $N$ such that  $\ellinf(\alpha,\Delta) > \delta + \frac{1}{N}$.
Then the set
\[
\Ff_\alpha
=
\left\{ \beta \in \Bb \mid \ellinf(\alpha, \beta) \leq \delta + \textstyle\frac{1}{N} \right\}
\]
is finite, since $\Bb$ is locally finite and these points lie in a square bounded away from the diagonal.
By the lemma, there exists~$\ell$ such that
\[
\chi(\alpha, \beta) = \chi_n(\alpha, \beta)
\]
for all $\beta \in \Ff_\alpha$ and for all $n \in \Nn_\ell$. 
On the other hand, if $n \geq N$, then $\Mm_n$ matches $\alpha$ with some $\beta \in \Ff_\alpha$.
Combining these observations,
\[
\chi(\alpha, \beta) = \chi_n(\alpha, \beta) = 1
\]
for sufficiently large $n \in \Nn_\ell$ and for some $\beta \in \Ff_\alpha$.
\hfill
\qed
\end{quote}

By symmetry we have:

\quad
$\bullet$
For $\beta \in \Bb$ there is at most one $\alpha \in \Aa$ such that $\chi(\alpha, \beta)=1$. 

\quad
$\bullet$
For $\beta \in \Bb$ with $\ellinf(\beta,\Delta) > \delta$, there is at least one $\alpha \in \Aa$ such that $\chi(\alpha, \beta) = 1$.

Finally:

\quad
$\bullet$
If $\chi(\alpha, \beta) = 1$ then $\ellinf(\alpha, \beta) \leq \delta$.

\begin{quote}
{\it Proof.}\;
By the lemma, there are infinitely many $n$ for which $\chi_n(\alpha, \beta)=1$. Then
\[
\ellinf(\alpha, \beta) \leq \delta + \textstyle\frac{1}{n}
\]
for these~$n$. Since $n$ may be arbitrarily large, the result follows.
\hfill
\qed
\end{quote}

These five bullet points confirm that $\Mm$, defined by its indicator function $\chi$, is a $\delta$-matching between $\Aa, \Bb$.
\end{proof}

\begin{remark}
Although we have chosen to give a direct argument, we point out that Theorem~\ref{thm:compact} is an instance of the compactness theorem in first-order logic. The set of constraints that must be satisfied by an $\eta$-matching can be formulated as a theory $T_\eta$ on a collection of binary-valued variables $x_{\alpha\beta}$. An $\eta$-matching is precisely a model for that theory. The theory $T_\delta$ is seen to be logically equivalent to the union of the theories $( T_\eta \mid \eta > \delta)$. If each $T_\eta$ has a model, then any finite subtheory of this union has a model, therefore by compactness $T_\delta$ has a model.
The details are left as an exercise for the interested reader.
\end{remark}

\subsection{The isometry theorem}
\label{subsec:isometry}

Having defined the interleaving distance and the bottleneck distance, we can now state the main theorem.

\begin{theorem}
\label{thm:isometry}
Let $\Uu, \Vv$ be q-tame persistence modules.
Then
\[
\inter(\Uu,\Vv) = \bottle(\dgm(\Uu), \dgm(\Vv))
\]
(Recall that $\dgm$ denotes the undecorated persistence diagram.)
\end{theorem}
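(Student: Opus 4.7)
The proof will establish the two inequalities separately. The easier direction is the converse inequality $\inter(\Uu,\Vv) \leq \bottle(\dgm(\Uu),\dgm(\Vv))$, so I would start with the harder stability inequality $\bottle(\dgm(\Uu),\dgm(\Vv)) \leq \inter(\Uu,\Vv)$.

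For stability, my plan is to reformulate the statement in terms of the persistence measures $\mu_\Uu, \mu_\Vv$ and prove a more abstract measure-theoretic stability theorem, from which the diagram statement will follow by the equivalence theorem (Theorem~\ref{thm:equivalence}). The key technical step is a \textbf{box lemma}: if $\Uu, \Vv$ are $\delta$-interleaved and $R = [a,b]\times[c,d]$ is a rectangle with $b+\delta \leq c-\delta$ (so it stays above the diagonal after thickening), then
\[
\mu_\Vv(R) \leq \mu_\Uu(R^{-\delta})
\qquad\text{and}\qquad
\mu_\Uu(R) \leq \mu_\Vv(R^{-\delta}),
\]
where $R^{-\delta} = [a-\delta,b+\delta]\times[c-\delta,d+\delta]$. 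This follows from the functorial description in Proposition~\ref{prop:localize}, since the interleaving maps $\Phi, \Psi$ send the relevant kernels/images into one another compatibly with the $2\delta$-shifts. Once the box lemma is in hand, I invoke the interpolation lemma (Lemma~\ref{lem:interpolation}) to obtain a continuous 1-parameter family $\Uu_x$ connecting $\Uu$ to $\Vv$. Following the original Cohen-Steiner--Edelsbrunner--Harer strategy, I track points of the diagrams along this family: the box lemma forces each point of $\Dgm(\mu_{\Uu_x})$ to move continuously at speed at most~1, so composing these local trajectories produces a $\delta$-matching. Because the diagrams of q-tame modules are locally finite in $\UpperInt$, Theorem~\ref{thm:compact} upgrades the infimum to a minimum, yielding a genuine $\delta$-matching for $\delta = \inter(\Uu,\Vv)$.

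For the converse inequality, I reduce to the decomposable case already handled by Theorem~\ref{thm:pre-converse}. The plan is to approximate any q-tame module by locally finite ones. Concretely, given a sequence of finite subsets $\Tt_n \subset \Rr$ whose mesh tends to zero, the restrictions $\Vv_{\Tt_n}$ are decomposable by Theorem~\ref{thm:gabriel+}, and extending them to locally finite $\Rr$-modules (constant between consecutive points of $\Tt_n$) produces approximants $\Vv^{(n)}$ whose persistence diagrams are the snapped versions of $\Dgm(\Vv)$ described in section~\ref{subsec:snapping}. One verifies that (i) $\inter(\Vv, \Vv^{(n)}) \to 0$ by an explicit interleaving construction using the identity shifts across grid intervals, and (ii) $\bottle(\dgm(\Vv^{(n)}), \dgm(\Vv)) \to 0$ by the snapping correspondence (each decorated point of $\Dgm(\Vv)$ in a rectangle of the grid maps to the grid vertex at its upper-left corner, moving by at most the mesh). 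Combining these with the decomposable case applied to $\Uu^{(n)}, \Vv^{(n)}$ and the triangle inequalities for both $\inter$ and $\bottle$ gives the desired inequality in the limit.

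The main obstacle will be the box lemma and the construction of the matching from the 1-parameter family. The functorial estimate is clean for finite-rank modules but requires care for q-tame modules where individual spaces may be infinite-dimensional. Tracking points continuously along the vineyard is intuitively clear but needs the box lemma applied to vanishingly small rectangles, together with a compactness argument (Theorem~\ref{thm:compact}) to extract an actual matching rather than just matchings for every $\delta' > \delta$. The approximation argument in the converse direction is comparatively routine, depending chiefly on the snapping principle and the robustness of both distances under taking limits.
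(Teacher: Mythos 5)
Your overall architecture matches the paper's: split into the stability inequality and the converse inequality, prove the former via box lemma $+$ interpolation $+$ a Cohen-Steiner--Edelsbrunner--Harer continuity argument $+$ the compactness upgrade (Theorem~\ref{thm:compact}), and prove the latter by reducing to the decomposable case of Theorem~\ref{thm:pre-converse}. The differences are in two places, both genuine.

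\textbf{Converse direction.} You propose grid discretisation/snapping to manufacture decomposable approximants $\Vv^{(n)}$, whereas the paper uses $\epsilon$-smoothing: $\Vv^\epsilon_t = \img(v_{t-\epsilon}^{t+\epsilon})$. The paper's choice buys three clean facts at once: $\Vv^\epsilon$ is $\epsilon$-interleaved with $\Vv$ via the obvious factorisation (Proposition~\ref{prop:smooth-inter}); $\Vv^\epsilon$ is not merely decomposable but genuinely \emph{locally finite} (Proposition~\ref{prop:smooth-lf}, using $\dim V^\epsilon_t = \rk_{t-\epsilon}^{t+\epsilon} < \infty$); and its diagram is literally the translate of $\Dgm(\Vv)$ by $T_\epsilon$ above the line $\Delta_\epsilon$ (Proposition~\ref{prop:smooth-diagram}), giving an explicit $\epsilon$-matching. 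Your snapping construction, by contrast, produces modules $\Vv^{(n)}$ that are decomposable but generally \emph{not} locally finite when the $V_{t_i}$ are infinite-dimensional (as they may be for q-tame modules): infinitely many one-grid-cell intervals pile up at each grid vertex near the diagonal. That is not fatal --- Theorem~\ref{thm:pre-converse} only needs decomposability --- but it means you must argue separately that these high-multiplicity diagram points all lie within mesh distance of $\Delta$ and can therefore be left unmatched. You would also want to fix the direction of the snap: with the constant-on-$[t_i, t_{i+1})$ extension, the snapping formula $(\dagger)$ sends a point of the rectangle $[t_{i-1},t_i]\times[t_j,t_{j+1}]$ to $(t_i^-, t_{j+1}^-)$, the \emph{upper-right} corner. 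With those adjustments, the argument goes through, but the smoothing route avoids them.

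\textbf{Box lemma.} You propose deriving the box inequalities from the localisation formula of Proposition~\ref{prop:localize}, by showing that the interleaving maps carry the relevant image/kernel quotients into each other. This is workable but delicate --- the paper itself warns, immediately after Proposition~\ref{prop:localize}, that ``in other regards this characterisation is quite hard to use.'' The paper instead forms the 8-term quiver module
$V_A \to U_a \to U_b \to V_B \to V_C \to U_c \to U_d \to V_D$
out of the interleaving maps and applies the restriction principle: $\mu_\Vv(R^\delta)$ and $\mu_\Uu(R)$ are both multiplicities of interval types in this one module, and the larger one dominates term by term. This is cleaner and, importantly, manifestly valid without any finiteness hypotheses, since the restriction principle holds for arbitrary quiver representations. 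Your version would also work for q-tame modules (the quotient spaces involved are finite-dimensional by q-tameness), but requires more bookkeeping to verify the induced maps are well-defined and injective; the quiver calculus sidesteps all of that.
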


The result naturally falls into two parts: the `stability theorem'~\cite{CohenSteiner_E_H_2007,Chazal_CS_G_G_O_2009}
\[
\inter(\Uu,\Vv) \geq \bottle(\dgm(\Uu), \dgm(\Vv)),
\tag{\ref{thm:isometry}$'$}
\]
and the `converse stability theorem'~\cite{Lesnick_2011}
\[
\inter(\Uu,\Vv) \leq \bottle(\dgm(\Uu), \dgm(\Vv)).
\tag{\ref{thm:isometry}$''$}
\]

The proof of the converse stability theorem~(\ref{thm:isometry}$''$) occupies section~\ref{subsec:converse}. We have already seen the result for decomposable modules, in 
Theorem~\ref{thm:pre-converse}, so it is a matter of extending the result to q-tame modules that are not known to be decomposable.

The proof of the stability theorem~(\ref{thm:isometry}$'$) is given in sections \ref{subsec:stability} and~\ref{subsec:mstability}.

\subsection{The converse stability theorem}
\label{subsec:converse}

In this section we deduce the converse stability inequality~(\ref{thm:isometry}$''$) for q-tame modules from Theorem~\ref{thm:pre-converse}. A similar argument is given by Lesnick~\cite{Lesnick_2011}.

The key idea is that persistence modules can be approximated by better-behaved persistence modules, using a procedure we call `smoothing'.

\begin{definition}
Let $\Vv$ be a persistence module, and let $\epsilon > 0$. The {\bf $\epsilon$-smoothing} of~$\Vv$ is the persistence module $\Vv^\epsilon$ defined to be the image of the following map:
\[
1_\Vv^{2\epsilon} : \Vv[t-\epsilon] \to \Vv[t+\epsilon]
\]
In other words, $V^\epsilon_t$ is the image of the map
\[
v_{t-\epsilon}^{t+\epsilon} : V_{t-\epsilon} \to V_{t+\epsilon},
\]
and, for $s < t$, the shift map $(v^\epsilon)_s^t$ is the restriction of $v_{s+\epsilon}^{t+\epsilon}$.
\end{definition}

Thus we have a factorisation of $1_\Vv^{2\epsilon}$
\begin{equation}
\begin{diagram}
\node{\Vv[t-\epsilon]}
   \arrow{e}
\node{\Vv^\epsilon}
   \arrow{e}
\node{\Vv[t+\epsilon]}
\end{diagram}
\label{eq:smooth-inter}
\end{equation}
where the first map is surjective and the second map is injective (at all~$t$).
At a given index $t$ this is the sequence:
\[
\begin{diagram}
\node{V_{t-\epsilon}}
   \arrow{e,t}{v_{t-\epsilon}^{t+\epsilon}}
\node{V^\epsilon_{t}}
   \arrow{e,t}{1}
\node{V_{t+\epsilon}}
\end{diagram}
\]

\begin{proposition}
\label{prop:smooth-inter}
Let $\Vv$ be a persistence module. Then $\inter(\Vv, \Vv^\epsilon) \leq \epsilon$.
\end{proposition}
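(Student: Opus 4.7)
The plan is to construct explicit $\epsilon$-interleaving maps $\Phi \in \Hom^\epsilon(\Vv, \Vv^\epsilon)$ and $\Psi \in \Hom^\epsilon(\Vv^\epsilon, \Vv)$ directly from the factorisation in equation~(\ref{eq:smooth-inter}). Note that $V^\epsilon_{t+\epsilon} = \img(v_t^{t+2\epsilon})$, so the shift $v_t^{t+2\epsilon} : V_t \to V_{t+2\epsilon}$ factors canonically as a surjection $V_t \twoheadrightarrow V^\epsilon_{t+\epsilon}$ followed by the inclusion $V^\epsilon_{t+\epsilon} \hookrightarrow V_{t+2\epsilon}$. I define $\phi_t : V_t \to V^\epsilon_{t+\epsilon}$ to be the surjective factor, and $\psi_t : V^\epsilon_t \to V_{t+\epsilon}$ to be the inclusion of $V^\epsilon_t = \img(v_{t-\epsilon}^{t+\epsilon}) \subseteq V_{t+\epsilon}$.

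The first step is to verify that $\Phi = (\phi_t)$ and $\Psi = (\psi_t)$ are genuine degree-$\epsilon$ homomorphisms. For $\Phi$, both routes around the square $V_s \to V_t \to V^\epsilon_{t+\epsilon}$ and $V_s \to V^\epsilon_{s+\epsilon} \to V^\epsilon_{t+\epsilon}$ evaluate to $v_s^{t+2\epsilon}$ (regarded as landing in $V^\epsilon_{t+\epsilon}$), using the composition law in $\Vv$ and the definition of $(v^\epsilon)_{s+\epsilon}^{t+\epsilon}$ as the restriction of $v_{s+2\epsilon}^{t+2\epsilon}$. For $\Psi$, both routes are simply $v_{s+\epsilon}^{t+\epsilon}$ restricted to $V^\epsilon_s$. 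These checks are immediate.

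The second step is to verify the two interleaving relations. At index $t$, the composition $\psi_{t+\epsilon}\phi_t : V_t \to V^\epsilon_{t+\epsilon} \to V_{t+2\epsilon}$ is precisely the surjection-then-inclusion factorisation of $v_t^{t+2\epsilon}$, which equals $v_t^{t+2\epsilon} = (1_\Vv^{2\epsilon})_t$. Conversely, $\phi_{t+\epsilon}\psi_t : V^\epsilon_t \to V_{t+\epsilon} \to V^\epsilon_{t+2\epsilon}$ sends a vector $v \in V^\epsilon_t \subseteq V_{t+\epsilon}$ to $v_{t+\epsilon}^{t+3\epsilon}(v)$ viewed in $V^\epsilon_{t+2\epsilon}$, which is exactly $(v^\epsilon)_t^{t+2\epsilon}(v) = (1_{\Vv^\epsilon}^{2\epsilon})_t(v)$. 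Hence $\Psi\Phi = 1_\Vv^{2\epsilon}$ and $\Phi\Psi = 1_{\Vv^\epsilon}^{2\epsilon}$, giving an $\epsilon$-interleaving and the desired bound $\inter(\Vv, \Vv^\epsilon) \leq \epsilon$.

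There is no real obstacle; the proposition is essentially a restatement of the factorisation (\ref{eq:smooth-inter}) in the language of interleavings, with the only mild care needed being the correct bookkeeping of the index shifts.
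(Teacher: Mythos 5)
Your proposal is correct and takes exactly the same approach as the paper, which simply notes that the maps from the factorisation~\eqref{eq:smooth-inter} constitute an $\epsilon$-interleaving. You have supplied the bookkeeping that the paper leaves to the reader, and it all checks out.
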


\begin{proof}
Indeed, the maps in~\eqref{eq:smooth-inter} give an $\epsilon$-interleaving. 
\end{proof}

Smoothing changes the persistence diagram in a predictable way. Here is the atomic example (which the reader can easily verify):

\begin{example}
\label{ex:smooth-interval}
Let $\Vv = \Ii{\lgroup p^*, q^* \rgroup}$. Then:
\[
\Vv^\epsilon =
\begin{cases}
\Ii{\lgroup (p+\epsilon)^*, (q-\epsilon)^* \rgroup}
	& \text{if $(p+\epsilon)^* < (q-\epsilon)^*$}
\\
0 & \text{otherwise}
\end{cases}
\]
In other words, $\epsilon$-smoothing shrinks the interval by $\epsilon$ at both ends.
\end{example}

\begin{proposition}
\label{prop:smooth-diagram}
The persistence diagram of~$\Vv^\epsilon$ is obtained from the persistence diagram of~$\Vv$ by applying the translation $T_\epsilon: (p,q) \mapsto (p+\epsilon, q-\epsilon)$ to the part of the extended half-plane that lies above the line $\Delta_\epsilon = \{ (t-\epsilon, t+\epsilon) \mid t \in \Rr \}$.
\end{proposition}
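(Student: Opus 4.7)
The plan is to reduce the statement to an identity between persistence measures and then conclude by the uniqueness clause of the equivalence theorem (Theorem~\ref{thm:equivalence}, Corollary~\ref{cor:f-diagram}). Concretely, I would prove that for every rectangle $R = [a,b]\times[c,d] \in \rect(\Upper)$,
\[
\mu_{\Vv^\epsilon}(R) \;=\; \mu_\Vv\bigl(T_\epsilon^{-1}R\bigr),
\qquad
T_\epsilon^{-1}R = [a-\epsilon, b-\epsilon]\times[c+\epsilon, d+\epsilon],
\]
where the translated rectangle is automatically in $\rect(\Upper)$: the condition $b-\epsilon \leq c+\epsilon$ follows from $b \leq c$. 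Note that this identity makes sense and will be proved without any tameness hypothesis.

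The heart of the argument is the measure identity, for which I would use the first localisation formula of Proposition~\ref{prop:localize}. By construction $V^\epsilon_s = \img(v_{s-\epsilon}^{s+\epsilon}) \subseteq V_{s+\epsilon}$, and $(v^\epsilon)_s^t$ is the restriction of $v_{s+\epsilon}^{t+\epsilon}$. A one-line chase using the composition law produces
\[
\img\bigl((v^\epsilon)_s^t\bigr) \;=\; \img\bigl(v_{s-\epsilon}^{t+\epsilon}\bigr),
\qquad
\ker\bigl((v^\epsilon)_c^d\bigr) \;=\; \img\bigl(v_{c-\epsilon}^{c+\epsilon}\bigr) \cap \ker\bigl(v_{c+\epsilon}^{d+\epsilon}\bigr),
\]
and the inclusion $\img(v_{s-\epsilon}^{c+\epsilon}) \subseteq \img(v_{c-\epsilon}^{c+\epsilon})$ (valid for $s \leq c$) collapses the nested intersection $\img((v^\epsilon)_s^c)\cap\ker((v^\epsilon)_c^d)$ down to $\img(v_{s-\epsilon}^{c+\epsilon}) \cap \ker(v_{c+\epsilon}^{d+\epsilon})$. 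Feeding this into Proposition~\ref{prop:localize} applied to $\Vv^\epsilon$ on $R$ reproduces \emph{verbatim} the same formula applied to $\Vv$ on $T_\epsilon^{-1}R$, establishing the measure identity.

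Granted the identity, let $\Aa$ be the multiset obtained by applying $T_\epsilon$ to the decorated points of $\Dgm(\Vv)$ whose images $((p+\epsilon)^*, (q-\epsilon)^*)$ are legitimate decorated points of $\UpperRint$, i.e.\ to the part of $\Dgm(\Vv)$ lying above $\Delta_\epsilon$. Then $\Aa$ is locally finite on $\frint(\mu_{\Vv^\epsilon})$ and satisfies $\card(\Aa|_R) = \mu_\Vv(T_\epsilon^{-1}R) = \mu_{\Vv^\epsilon}(R)$ on every finite-measure rectangle, so the uniqueness part of Theorem~\ref{thm:equivalence} forces $\Aa = \Dgm(\Vv^\epsilon)$. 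Points of $\Dgm(\Vv)$ strictly below $\Delta_\epsilon$ drop out because their $T_\epsilon$-images lie below the diagonal and are invisible to any rectangle in $\Upper$, which matches the atomic Example~\ref{ex:smooth-interval} in which short intervals are annihilated by smoothing.

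The main technical nuisance I anticipate is the bookkeeping of decorations near $\Delta_\epsilon$: a decorated point with $q-p = 2\epsilon$ maps to the diagonal, and only the decoration $(p^-, q^+)$ survives as a legitimate diagonal point $((p+\epsilon)^-, (p+\epsilon)^+)$, the other three decorations producing degenerate pairs that must be discarded. One has to verify (via Proposition~\ref{prop:Rmembership} and a cross-check against Example~\ref{ex:smooth-interval}) that the inherited decoration is the one actually recorded in $\Dgm(\Vv^\epsilon)$. Away from this boundary line $T_\epsilon$ is a homeomorphism onto its image and the correspondence is entirely formal.
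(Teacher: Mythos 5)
Your proposal is correct, and the overall structure (reduce the statement to the measure identity $\mu_{\Vv^\epsilon}(R) = \mu_\Vv(T_\epsilon^{-1}R)$, then invoke the uniqueness clause of Theorem~\ref{thm:equivalence}) coincides with the general case of the paper's proof. What differs is how the measure identity is established. The paper proves it by the quiver calculus: it places the factorisation
\[
V_{b-\epsilon} \twoheadrightarrow V^\epsilon_b,\qquad V^\epsilon_c \hookrightarrow V_{c+\epsilon}
\]
into an eight-term diagram, uses surjectivity and injectivity to kill the "forbidden configurations" $\langle\qoff{\phantom{a}}\qem\qon{\phantom{a}}\rangle$ and $\langle\qon{\phantom{a}}\qem\qoff{\phantom{a}}\rangle$ across those maps, and then applies the restriction principle to show that both sides of the identity reduce to the multiplicity of the same interval type in the eight-term quiver. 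You instead feed the identities
\[
\img\bigl((v^\epsilon)_s^t\bigr)=\img\bigl(v_{s-\epsilon}^{t+\epsilon}\bigr),
\qquad
\ker\bigl((v^\epsilon)_c^d\bigr)=\img\bigl(v_{c-\epsilon}^{c+\epsilon}\bigr)\cap\ker\bigl(v_{c+\epsilon}^{d+\epsilon}\bigr)
\]
into the localisation formula of Proposition~\ref{prop:localize}, and the nested intersection collapses so that both sides of the measure identity become literally the same quotient dimension. Your computation checks out (including the collapse via $\img(v_{s-\epsilon}^{c+\epsilon})\subseteq\img(v_{c-\epsilon}^{c+\epsilon})$ for $s\le c$), and it is arguably shorter. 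The trade-offs: the paper's route stays within the quiver notation it develops and has promoted throughout (indeed, the authors explicitly remark after Proposition~\ref{prop:localize} that the localisation characterisation "is quite hard to use"), whereas yours leans on a black-boxed linear-algebra lemma whose proof is delegated to an external reference. You should also note explicitly that Proposition~\ref{prop:localize} extends to infinite rectangles under the convention $V_{\pm\infty}=0$; the paper handles this implicitly. Your discussion of decorations on the line $\Delta_\epsilon$ is consistent with Example~\ref{ex:smooth-interval} and is the right thing to flag.
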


In the case where $\Dgm$ is not everywhere defined, the proposition is understood to include the assertion that the finite r-interior of the persistence measure, and hence the region where $\Dgm$ is defined, is shifted by $T_\epsilon$.

Information on $\Dgm(\Vv)$ that lies below the line $\Delta_\epsilon$ is lost in $\Dgm(\Vv^\epsilon)$.

\begin{proof}
We consider three different cases. Case~(ii) is subsumed by case~(iii), but the proof is easier to digest.

{\small (i) \bf  $\Vv$ is decomposable.} The image of a direct sum of maps is the direct sum of the images of the maps; therefore $\epsilon$-smoothing commutes with direct sums:
\[
\Big[ \bigoplus_{\ell \in L} \Vv_\ell \Big]^\epsilon
=
\bigoplus_{\ell \in L} \Vv_\ell^\epsilon
\]
By Example~\ref{ex:smooth-interval}, the proposition is true for interval modules. It is therefore true for direct sums of interval modules.

{\small (ii) \bf  $\Vv$ is q-tame.} It is enough to show that the rank function of~$\Vv^\epsilon$ is equal to the rank function of $\Vv$ shifted by $T_\epsilon$, since this determines the persistence measure and hence the persistence diagram.
Specifically, for all $b < c$ we require:
\[
\rank[ V^\epsilon_b \to V^\epsilon_c]
=
\rank[ V_{b-\epsilon} \to V_{c+\epsilon} ]
\]
In fact, these maps are related by the sequence
\[
\begin{diagram}
\node{V_{b-\epsilon}}
   \arrow{e}
\node{V^\epsilon_b}
   \arrow{e}
\node{V^\epsilon_c}
   \arrow{e}
\node{V_{c+\epsilon}}
\end{diagram}
\]
where the first map is surjective and the third map is injective. Since the rank of a linear map is unchanged by pre-composing with a surjective map, or post-composing with an injective map, it follows that the rank of the middle map is equal to the rank of the composite. This is what we wished to prove.

{\small (iii) \bf general case.} We show that the persistence measure of $\Vv^\epsilon$ is equal to the persistence measure of $\Vv$ shifted by~$T_\epsilon$. Writing
\[
A = a-\epsilon, \quad
B = b-\epsilon, \quad
C = c+\epsilon, \quad
D = d+\epsilon
\]
this means showing that
\[
\langle
\Qoff{A}\qem\Qon{B}\qem\Qon{C}\qem\Qoff{D} \mid \Vv
\rangle
=
\langle
\Qoff{a}\qem\Qon{b}\qem\Qon{c}\qem\Qoff{d} \mid \Vv^\epsilon
\rangle
\]
for all $a < b \leq c < d$.

The proof is based on the following commutative diagram
\[
\dgARROWLENGTH=1.5em
\begin{diagram}
\node[2]{V^\epsilon_a}
   \arrow[2]{e}
\node[2]{V^\epsilon_b}
   \arrow{e}
\node{V^\epsilon_c}
   \arrow[2]{e}
   \arrow{se}
\node[2]{V^\epsilon_d}
   \arrow{se}
\\
\node{V_A}
   \arrow[2]{e}
   \arrow{ne}
\node[2]{V_B}
   \arrow[3]{e}
   \arrow{ne}
\node[3]{V_C}
   \arrow[2]{e}
\node[2]{V_D}
\end{diagram}
\]
in which the maps~$\nearrow$ are surjective and the maps~$\searrow$ are injective. We will construct various quiver diagrams from this. Surjectivity means that
\[
\langle \Qoff{A}\qem\qon{a} \rangle = 0
\quad\text{and}\quad
\langle \Qoff{B}\qem\Qon{b} \rangle = 0,
\]
and injectivity means that
\[
\langle \Qon{c}\qem\Qoff{C} \rangle = 0
\quad\text{and}\quad
\langle \Qon{d}\qem\Qoff{D} \rangle = 0.
\]
Moreover, by the restriction principle, interval types containing any of these `forbidden' configurations occur with multiplicity zero.

Then
\begin{align*}
\langle \Qoff{A}\qem\Qno\qem\Qon{b}\qem\Qon{c}\qem\Qno\qem\Qoff{D} \rangle
&=
\langle \Qoff{A}\qem\Qon{B}\qem\Qon{b}\qem\Qon{c}\qem\Qon{C}\qem\Qoff{D} \rangle
\\
&\qquad\text{+ three other terms}
\\
&=
\langle \Qoff{A}\qem\Qon{B}\qem\Qon{b}\qem\Qon{c}\qem\Qon{C}\qem\Qoff{D} \rangle
\\
&= \langle \Qoff{A}\qem\Qon{B}\qem\Qno\qem\Qno\qem\Qon{C}\qem\Qoff{D} \rangle
\\
&= \langle \Qoff{A}\qem\Qon{B}\qem\Qon{C}\qem\Qoff{D} \mid \Vv \rangle
\end{align*}
and at the same time
\begin{align*}
\langle \Qoff{A}\qem\Qno\qem\Qon{b}\qem\Qon{c}\qem\Qno\qem\Qoff{D} \rangle
&=
\langle \Qoff{A}\qem\Qoff{a}\qem\Qon{b}\qem\Qon{c}\qem\Qoff{d}\qem\Qoff{D} \rangle
\\
&\qquad\text{+ three other terms}
\\
&=
\langle \Qoff{A}\qem\Qoff{a}\qem\Qon{b}\qem\Qon{c}\qem\Qoff{d}\qem\Qoff{D} \rangle
\\
&=
\langle \Qno\qem\Qoff{a}\qem\Qon{b}\qem\Qon{c}\qem\Qoff{d}\qem\Qno \rangle
\\
&=
\langle \Qoff{a}\qem\Qon{b}\qem\Qon{c}\qem\Qoff{d} \mid \Vv^\epsilon \rangle
\end{align*}
so we get the required equality. The six `other terms' are all zero because they contain  forbidden configurations.
\end{proof}

\begin{corollary}
\label{cor:smooth-bottle}
Let $\Vv$ be a q-tame persistence module. Then $\bottle(\dgm(\Vv), \dgm(\Vv^\epsilon)) \leq \epsilon$.
\end{corollary}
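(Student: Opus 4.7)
The plan is to exhibit an explicit $\epsilon$-matching between $\dgm(\Vv)$ and $\dgm(\Vv^\epsilon)$, relying on the precise description of $\dgm(\Vv^\epsilon)$ given by Proposition~\ref{prop:smooth-diagram}. That proposition identifies $\dgm(\Vv^\epsilon)$ as the image under the translation $T_\epsilon: (p,q) \mapsto (p+\epsilon, q-\epsilon)$ of the restriction of $\dgm(\Vv)$ to the region $\{q-p > 2\epsilon\}$, with points of $\dgm(\Vv)$ on or below $\Delta_\epsilon$ simply being discarded.

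Accordingly, I would define a partial matching $\Mm$ that pairs each $(p,q) \in \dgm(\Vv)$ with $q-p > 2\epsilon$ to its translate $(p+\epsilon, q-\epsilon) \in \dgm(\Vv^\epsilon)$, and leaves the remaining points of $\dgm(\Vv)$ unmatched. By construction every point of $\dgm(\Vv^\epsilon)$ is matched. The three bottleneck conditions are then immediate: a matched pair has $\ellinf$-distance exactly $\max(\epsilon,\epsilon)=\epsilon$, and every unmatched $(p,q) \in \dgm(\Vv)$ satisfies $q-p \leq 2\epsilon$, hence $\ellinf((p,q),\Delta)=\tfrac{1}{2}(q-p)\leq \epsilon$. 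This $\Mm$ is thus an $\epsilon$-matching, and the corollary follows directly.

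The only subtlety worth a word is behaviour at infinity: for a point $(-\infty,q)$ we have $T_\epsilon(-\infty,q)=(-\infty,q-\epsilon)$, which remains in the stratum $\{-\infty\}\times\Rr$ and is at $\ellinf$-distance $\epsilon$ from its preimage; analogous remarks apply to $(p,+\infty)$ and to $(-\infty,+\infty)$, the latter being fixed by $T_\epsilon$. The q-tameness of $\Vv$ ensures that $\dgm(\Vv)$ is a locally finite multiset in $\UpperInt$, and the same then holds for $\dgm(\Vv^\epsilon)$ by Proposition~\ref{prop:smooth-diagram}, so the bottleneck distance is meaningful. I do not anticipate a genuine obstacle here: all of the substantive work has already been done in proving Proposition~\ref{prop:smooth-diagram}, and the corollary is simply the matching-theoretic repackaging of that geometric statement.
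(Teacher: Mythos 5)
Your proof is correct and is essentially the paper's argument: the paper likewise exhibits the matching $(p,q)\in\dgm(\Vv^\epsilon) \leftrightarrow (p-\epsilon,q+\epsilon)\in\dgm(\Vv)$, which is the same pairing you describe from the $\dgm(\Vv)$ side, and observes that the unmatched points lie on or below $\Delta_\epsilon$ and hence within $\epsilon$ of the diagonal. Your extra check of the strata at infinity is a harmless elaboration of the same idea.
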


\begin{proof}
Indeed, an $\epsilon$-matching is defined as follows:
\[
(p,q) \in \dgm(\Vv^\epsilon)
\quad\leftrightarrow\quad
(p-\epsilon, q+\epsilon) \in \dgm(\Vv)
\]
This is bijective except for the unmatched points of $\dgm(\Vv)$, which lie on or below the line $\Delta_\epsilon$, and therefore have distance at most $\epsilon$ from the diagonal.
\end{proof}

Our claim that smoothing makes a persistence module `better-behaved' amounts to the following fact.

\begin{proposition}
\label{prop:smooth-lf}
Let $\Vv$ be a q-tame persistence module over~$\Rr$. Then $\Vv^\epsilon$ is locally finite. In particular, $\Vv^\epsilon$ is decomposable into intervals.
\end{proposition}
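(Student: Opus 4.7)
I would apply Proposition~\ref{prop:locally-finite}. Condition~(i) is immediate: $\dim V^\epsilon_t = \dim\img(v_{t-\epsilon}^{t+\epsilon}) = \rk_{t-\epsilon}^{t+\epsilon}$, which is finite by q-tameness of~$\Vv$. The work is in producing a locally finite $\Ss \subset \Rr$ such that $(v^\epsilon)_b^c$ is an isomorphism whenever $[b,c] \cap \Ss = \emptyset$. My plan is to read $\Ss$ directly off the persistence diagram of~$\Vv^\epsilon$.

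The first step is the rank-shift identity $\rank((v^\epsilon)_s^t) = \rk_{s-\epsilon}^{t+\epsilon}$: both sides compute $\dim\img(v_{s-\epsilon}^{t+\epsilon})$, using the factorisation of $v_{s-\epsilon}^{t+\epsilon}$ through $V^\epsilon_s \to V^\epsilon_t$. Feeding this into the alternating-sum formula of Proposition~\ref{prop:rankformula} (and its infinite-rectangle analogues from section~\ref{subsec:infinity}) gives
\[
\mu_{\Vv^\epsilon}([a,b]\times[c,d]) \;=\; \mu_\Vv\bigl([a-\epsilon,\,b-\epsilon]\times[c+\epsilon,\,d+\epsilon]\bigr)
\]
for every $[a,b]\times[c,d] \in \rect(\Upper)$. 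Because $b\leq c$ forces $b-\epsilon < c+\epsilon$ strictly, the shifted rectangle on the right is \emph{always} bounded away from the diagonal, and so q-tameness of~$\Vv$ makes $\mu_{\Vv^\epsilon}(R)$ finite on every $R \in \rect(\Upper)$, including the rectangles that touch~$\Delta$. Theorem~\ref{thm:equivalence} (in the extended-plane form of section~\ref{subsec:infinity}) therefore delivers a locally finite decorated diagram $\Dgm(\mu_{\Vv^\epsilon})$ on all of~$\UpperRint$.

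Next I would define $\Ss \subset \Rr$ to be the set of undecorated real numbers occurring as a $p$- or $q$-coordinate of some decorated point of $\Dgm(\mu_{\Vv^\epsilon})$, and check it is locally finite. By the shift identity, a decorated point of~$\Vv^\epsilon$ with a coordinate in a compact $[\alpha,\beta]$ corresponds to a decorated point of~$\Vv$ with either $p' \in [\alpha-\epsilon,\beta-\epsilon]$ or $q' \in [\alpha+\epsilon,\beta+\epsilon]$ and necessarily with $q' - p' \geq 2\epsilon$. After subdividing $[\alpha,\beta]$ into finitely many pieces of length strictly less than $2\epsilon$ if required, both regions are covered by finitely many rectangles in $\rect(\Upper)$ lying strictly off the diagonal of~$\Vv$'s plane; each such rectangle has finite $\mu_\Vv$-measure by q-tameness, so only finitely many decorated points contribute, and $\Ss \cap [\alpha,\beta]$ is finite.

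To finish I would invoke the kernel/cokernel identities, which are instances of the infinite-rectangle formulae:
\[
\dim\ker\bigl((v^\epsilon)_b^c\bigr) = \mu_{\Vv^\epsilon}\bigl([-\infty,b]\times[b,c]\bigr),
\qquad
\dim\coker\bigl((v^\epsilon)_b^c\bigr) = \mu_{\Vv^\epsilon}\bigl([b,c]\times[c,+\infty]\bigr).
\]
A decorated point in the first rectangle has $q \in [b,c]$, and a decorated point in the second has $p \in [b,c]$. So if $[b,c] \cap \Ss = \emptyset$, both rectangles contain no decorated points, both measures vanish, and $(v^\epsilon)_b^c$ is an isomorphism. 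Condition~(ii) of Proposition~\ref{prop:locally-finite} is verified and $\Vv^\epsilon$ is locally finite; decomposability into intervals is then part of the very definition of locally finite. The main delicate point I expect in the write-up is the bookkeeping that turns ``$p$- or $q$-coordinate of a decorated point lies in $[b,c]$'' into the correct membership statement for decorated points in the two measure-rectangles—especially at the boundary values $q=b$, $q=c$, $p=b$, $p=c$, where the decoration conventions must be tracked carefully so that the implication ``empty $\Ss \cap [b,c]$ implies vanishing measure'' really holds.
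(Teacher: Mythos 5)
Your proof is correct and uses the same skeleton as the paper's (verify the two conditions of Proposition~\ref{prop:locally-finite}, with condition~(i) being immediate), but the execution of condition~(ii) follows a genuinely different route. The paper constructs the singular set $\Ss$ by projecting the points of $\Dgm(\Vv)$ above $\Delta_\epsilon$ horizontally and vertically onto $\Delta_\epsilon$, and then establishes the constancy of~$\Vv^\epsilon$ away from $\Ss$ by a direct quiver calculation: it writes down the surjective/injective factorisation $V_{t-\epsilon}\to V^\epsilon_t\to V_{t+\epsilon}$, records the resulting forbidden configurations, and reduces $\langle\qon{b}\qem\qoff{c}\mid\Vv^\epsilon\rangle$ and $\langle\qoff{b}\qem\qon{c}\mid\Vv^\epsilon\rangle$ to a $\mu_\Vv$-measure. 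You instead work with $\Dgm(\mu_{\Vv^\epsilon})$ directly: you invoke the shift identity $\mu_{\Vv^\epsilon}(R)=\mu_\Vv(T_\epsilon^{-1}R)$ (which is the content of Proposition~\ref{prop:smooth-diagram}, so nothing circular), observe that $T_\epsilon^{-1}$ pushes every rectangle of $\Upper$ strictly off the diagonal so that $\mu_{\Vv^\epsilon}$ is finite \emph{everywhere} (a nice strengthening the paper does not make explicit), read $\Ss$ off the resulting everywhere-defined diagram, and finish with the kernel/cokernel identities $\dim\ker((v^\epsilon)_b^c)=\mu_{\Vv^\epsilon}([-\infty,b]\times[b,c])$ and $\dim\coker((v^\epsilon)_b^c)=\mu_{\Vv^\epsilon}([b,c]\times[c,+\infty])$. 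Under the shift the two constructions produce the same set $\Ss$, so the routes genuinely converge. What your approach buys is that it avoids the forbidden-configuration bookkeeping entirely, trading it for the decoration-versus-rectangle membership check you flag at the end; that check does go through (a decorated point lies in $[-\infty,b]\times[b,c]$ only if its undecorated $q$-coordinate lies in $[b,c]$, by the geometric tick-in-the-closed-rectangle characterisation, and symmetrically for the other rectangle), so the argument closes. What the paper's approach buys is that it never needs to discuss $\Dgm(\mu_{\Vv^\epsilon})$ or its finiteness at all, and the quiver calculation is perhaps more in the spirit of the paper's ``quiver calculus'' theme; but your version is arguably cleaner once the measure framework is available.
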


\begin{proof}
We make use of the characterisation in Proposition~\ref{prop:locally-finite}.
Note first that
\[
\dim(V^\epsilon_t) = \rank[V_{t-\epsilon} \to V_{t+\epsilon}] < \infty
\]
so condition~(i) is satisfied.

For condition~(ii), we must show that there is a locally finite set $\Ss \subset \Rr$ of `singular values' with the property that $\Vv^\epsilon$ is constant over each interval of the open set $\Rr - \Ss$.
%
%

The set $\Ss$ is easily described: consider all points $(p^*, q^*) \in \Dgm(\Vv)$ such that $p^* + 2\epsilon < q^*$. Define $\Ss$ to be the union of the sets $\{ p + \epsilon, q - \epsilon \}$ over all such points of $\Dgm(\Vv)$.

In other words, we take all points of $\Dgm(\Vv)$ which lie on\footnote{
Points on the line must be decorated $((t - \epsilon)^-, (t + \epsilon)^+)$.
}
or above the line
\[
\Delta_\epsilon = \{ (t-\epsilon, t+\epsilon) \mid t \in \Rr \}
\]
and then project each point both vertically and horizontally onto~$\Delta_\epsilon$, abandoning the decoration. The resulting subset of $\Delta_\epsilon$ defines~$\Ss \subset \Rr$, using the identification $(t-\epsilon, t+\epsilon) \leftrightarrow t$.
See Figure~\ref{fig:rays} (left).

We show that $\Ss$ is locally finite. Indeed, for any $t$ and for any $\eta < \epsilon$,
\begin{align*}
\card(S \cap (t-\eta, t+\eta))
&\leq 2 \mu_\Vv([-\infty,t+\eta-\epsilon]\times[t-\eta+\epsilon,+\infty])
\\
&= 2 \rank[V_{t+\eta-\epsilon} \to V_{t-\eta+\epsilon}]
\end{align*}
which is finite.

We show that $\Vv$ is constant over each component of $\Rr - \Ss$.

{\bf\small Claim.}
The map $V^\epsilon_{b} \to V^\epsilon_{c}$ is an isomorphism if and only if $\Dgm(\Vv)$ does not meet the union of the rectangles
\[
[-\infty, b-\epsilon] \times [b+\epsilon, c+\epsilon]
\quad \text{and} \quad
[b-\epsilon, c-\epsilon] \times [c+\epsilon, +\infty].
\]
See Figure~\ref{fig:rays} (right).

\begin{figure}
\centerline{
\hfill
\includegraphics[scale=0.6]{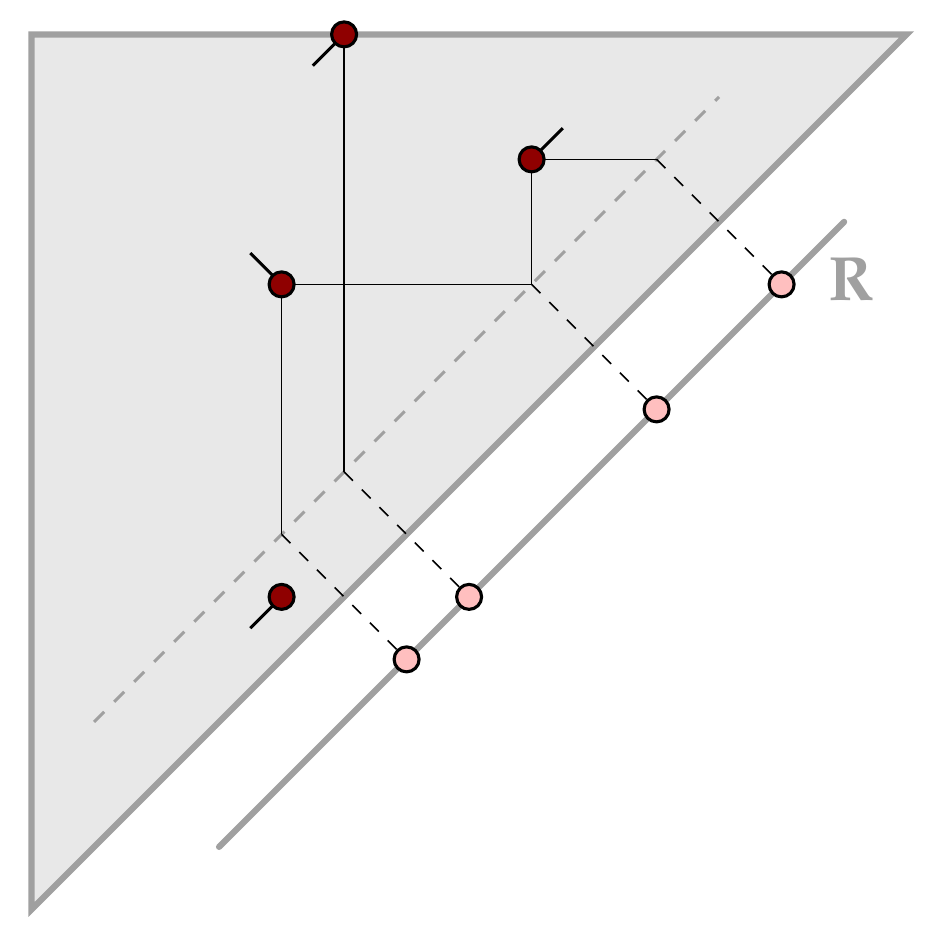}
\hfill
\includegraphics[scale=0.6]{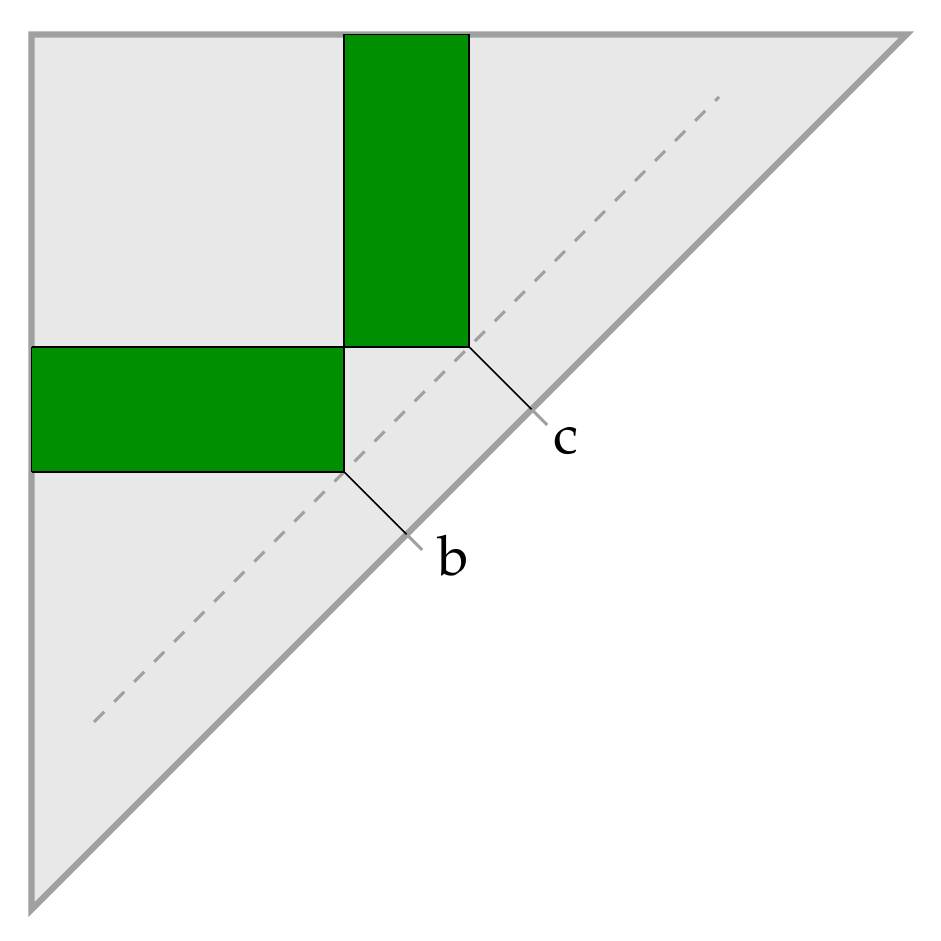}
\hfill
}
\caption{
Constructing the set $\Ss$ from the persistence diagram (left).
The exclusion zone for $V^\epsilon_b \to V^\epsilon_c$ to be an isomorphism (right) .
}
\label{fig:rays}
\end{figure}

We split the claim into two statements.

{\bf\small Part 1.} The map $V^\epsilon_{b} \to V^\epsilon_{c}$ is injective if and only if
\[
\mu_\Vv([-\infty, b-\epsilon] \times [b+\epsilon, c+\epsilon]) = 0.
\]

{\bf\small Part 2.} The map $V^\epsilon_{b} \to V^\epsilon_{c}$ is surjective if and only if
\[
\mu_\Vv([b-\epsilon, c-\epsilon] \times [c+\epsilon, +\infty]) = 0.
\]

We approach this in the usual way. Consider the following diagram of vector spaces:
\[
\begin{diagram}
\node[2]{V^\epsilon_b}
   \arrow{e}
   \arrow{se}
\node{V^\epsilon_c}
   \arrow{se}
\\
\node{V_{b-\epsilon}}
   \arrow{ne}
   \arrow{e}
\node{V_{c-\epsilon}}
   \arrow{ne}
\node{V_{b+\epsilon}}
   \arrow{e}
\node{V_{c+\epsilon}}
\end{diagram}
\]
where the maps~$\nearrow$ are surjective and the maps~$\searrow$ are injective.
Using the abbreviations
\[
b_- = b-\epsilon,\quad
b_+ = b+\epsilon,\quad
c_- = c-\epsilon,\quad
c_+ = c+\epsilon
\]
we can write down the `forbidden' configurations
\[
\Qoff{b_-}\qem\Qon{b}
\quad\;
\Qon{b}\qem\Qoff{b_+}
\quad\;
\Qoff{c_-}\qem\Qon{c}
\quad\;
\Qon{c}\qem\Qoff{c_+}
\]
which occur with multiplicity zero.

For Part~1, note that $V^\epsilon_{b} \to V^\epsilon_{c}$ is injective if and only if
\[
\langle \Qon{b}\qem\Qoff{c} \mid \Vv^\epsilon \rangle = 0.
\]
Taking the forbidden configurations into account, we calculate:
\begin{alignat*}{2}
\langle \Qon{b}\qem\Qoff{c} \mid \Vv^\epsilon \rangle
&= 
\langle \Qno\qem\Qon{b}\qem\Qoff{c}\qem\Qno\rangle
\\
&= 
\langle \Qno\qem\Qon{b}\qem\Qoff{c}\qem\Qoff{c_+}\rangle
\\
&= 
\langle \Qno\qem\Qon{b}\qem\Qno\qem\Qoff{c_+}\rangle
\qquad
&&
\text{since $\langle \Qon{c}\qem\Qoff{c_+}\rangle$ is zero}
\\
&= 
\langle \Qon{b_-}\qem\Qon{b}\qem\Qon{b_+}\qem\Qoff{c_+}\rangle
\qquad
&&
\text{since $\langle \Qoff{b_-}\qem\Qon{b}\rangle$ and $\langle \Qon{b}\qem\Qoff{b_+}\rangle$ are zero}
\\
&= 
\langle \Qon{b_-}\qem\Qno\qem\Qon{b_+}\qem\Qoff{c_+}\rangle
\end{alignat*}
which is exactly $\mu_\Vv([-\infty, b-\epsilon] \times [b+\epsilon, c+\epsilon])$.

For Part~2, note that $V^\epsilon_{b} \to V^\epsilon_{c}$ is surjective if and only if
\[
\langle \Qoff{b}\qem\Qon{c} \mid \Vv^\epsilon \rangle = 0.
\]
We calculate:
\begin{alignat*}{2}
\langle \Qoff{b}\qem\Qon{c} \mid \Vv^\epsilon \rangle
&= 
\langle \Qno\qem\Qoff{b}\qem\Qon{c}\qem\Qno\rangle
\\
&= 
\langle \Qoff{b_-}\qem\Qoff{b}\qem\Qon{c}\qem\Qno\rangle
\\
&= 
\langle \Qoff{b_-}\qem\Qno\qem\Qon{c}\qem\Qno\rangle
\qquad
&&
\text{since $\langle \Qoff{b_-}\qem\Qon{b}\rangle$ is zero}
\\
&= 
\langle \Qoff{b_-}\qem\Qon{c_-}\qem\Qon{c}\qem\Qon{c_+}\rangle
\qquad
&&
\text{since $\langle \Qoff{c_-}\qem\Qon{c}\rangle$ and $\langle \Qon{c}\qem\Qoff{c_+}\rangle$ are zero}
\\
&= 
\langle \Qoff{b_-}\qem\Qon{c_-}\qem\Qno\qem\Qon{c_+}\rangle
\end{alignat*}
which is exactly $\mu_\Vv([b-\epsilon, c-\epsilon] \times [c+\epsilon, +\infty])$.

This completes the proof of the claim.

The claim implies that $\Vv$ is constant on each connected component of $\Rr - \Ss$. Indeed, if $\Ss \cap [b,c] = \emptyset$ then the exclusion zone does not contain any points of $\Dgm(\Vv)$, and therefore $(v^\epsilon)_b^c$ is an isomorphism.

Thus $\Vv^\epsilon$ satisfies both conditions of Proposition~\ref{prop:locally-finite} and is therefore locally finite.
\end{proof}

We are now ready to prove the converse stability theorem for q-tame persistence modules, using the triangle inequalities for $\inter, \bottle$ and our results on $\epsilon$-smoothing.

\begin{proof}[Proof of (\ref{thm:isometry}$''$)]
Let $\Uu, \Vv$ be q-tame persistence modules. For any $\epsilon > 0$, the $\epsilon$-smoothings $\Uu^\epsilon, \Vv^\epsilon$ are decomposable, so the converse stability theorem applies to them.
Then:
\begin{alignat*}{2}
\inter(\Uu, \Vv)
& \leq \inter(\Uu^\epsilon, \Vv^\epsilon) + 2\epsilon
&&\text{by Proposition~\ref{prop:smooth-inter}}
\\
& \leq \bottle(\dgm(\Uu^\epsilon), \dgm(\Vv^\epsilon)) + 2\epsilon
\qquad
&&\text{by Theorem~\ref{thm:pre-converse}}
\\
& \leq \bottle(\dgm(\Uu), \dgm(\Vv)) + 4\epsilon
&&\text{by Corollary~\ref{cor:smooth-bottle}}
\end{alignat*}
Since this is true for all $\epsilon > 0$, we deduce that
\[
\inter(\Uu,\Vv) \leq \bottle(\dgm(\Uu),\dgm(\Vv)).
\]
The converse stability theorem for q-tame modules is proved.
\end{proof}

We finish this section with a characterisation of q-tame modules.

\begin{theorem}
\label{thm:qtame-char}
A persistence module $\Vv$ is q-tame if and only if it can be approximated, in the interleaving distance, by locally finite modules.
\end{theorem}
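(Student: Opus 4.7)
My plan is to prove the two directions separately, using the $\epsilon$-smoothing machinery for the forward direction and a simple factorisation argument for the converse.

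For the ``only if'' direction, suppose $\Vv$ is q-tame. Then for each $\epsilon > 0$, Proposition~\ref{prop:smooth-lf} tells us that the smoothed module $\Vv^\epsilon$ is locally finite, and Proposition~\ref{prop:smooth-inter} tells us that $\inter(\Vv, \Vv^\epsilon) \leq \epsilon$. Thus the family $(\Vv^\epsilon \mid \epsilon > 0)$ is a sequence of locally finite modules converging to $\Vv$ in the interleaving distance. This direction requires essentially no work beyond invoking results already proved.

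For the ``if'' direction, suppose $\Vv$ admits arbitrarily close locally finite approximations in $\inter$. Let $s < t$; I would fix any $\epsilon$ with $0 < \epsilon < (t-s)/2$, and choose a locally finite module $\Ww$ with $\inter(\Vv, \Ww) < \epsilon$. By definition of the interleaving distance there exist maps $\Phi \in \Hom^\epsilon(\Vv, \Ww)$ and $\Psi \in \Hom^\epsilon(\Ww, \Vv)$ with $\Psi\Phi = 1_\Vv^{2\epsilon}$. In particular the composition
\[
V_s \xrightarrow{\phi_s} W_{s+\epsilon} \xrightarrow{w_{s+\epsilon}^{t-\epsilon}} W_{t-\epsilon} \xrightarrow{\psi_{t-\epsilon}} V_t
\]
equals $v_s^{t}$ (since $s+2\epsilon \leq t$, we can insert the intermediate interleaving relation; more cleanly, $v_s^t = v_{s+2\epsilon}^t \circ \psi_{s+\epsilon}\phi_s$ factors through $W_{t-\epsilon}$ via the obvious shift). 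Since $\Ww$ is locally finite, Proposition~\ref{prop:locally-finite} guarantees that $W_{s+\epsilon}$ (or equivalently $W_{t-\epsilon}$) is finite-dimensional, so the middle map has finite rank, hence so does $v_s^t$. This holds for every pair $s < t$, so $\Vv$ is q-tame.

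The only step requiring mild care is the factorisation of $v_s^t$ through a finite-dimensional $W$-space: one must pick the intermediate indices in the interleaving so that both the $\Phi$-map lands at an index $\leq t-\epsilon$ and the $\Psi$-map starts at an index $\geq s+\epsilon$. This is always possible because $2\epsilon < t - s$, and the interleaving relation $\Psi\Phi = 1_\Vv^{2\epsilon}$ together with the composition law on $\Vv$ produces the desired factorisation. No obstacle beyond this bookkeeping is anticipated; the theorem is essentially a restatement of Propositions~\ref{prop:smooth-inter} and~\ref{prop:smooth-lf} combined with the elementary factorisation observation.
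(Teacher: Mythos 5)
Your proof is correct and follows essentially the same approach as the paper: the forward direction invokes Propositions~\ref{prop:smooth-lf} and~\ref{prop:smooth-inter} exactly as the paper does, and the converse direction is the same factorisation of $v_s^t$ through a finite-dimensional space of a nearby locally finite module $\Ww$. The only difference is cosmetic (the paper writes $b < c$ where you write $s < t$, and it presents the rank bound as a short chain of inequalities rather than spelling out the interleaving bookkeeping).
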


\begin{proof}
If $\Vv$ is q-tame then it is approximated by the modules $\Vv^\epsilon$, which are locally finite by Proposition~\ref{prop:smooth-lf}.
Conversely, suppose $\Vv$ is approximated by locally finite modules. Suppose $b < c$ is given. Let $\Ww$ be a locally finite module which is $\epsilon$-interleaved with~$\Vv$, for some $\epsilon < (c-b)/2$. Then
\begin{align*}
\rk_b^c
&= \rank[V_b \to V_c]
\\
&= \rank[V_b \to W_{b+\epsilon} \to W_{c-\epsilon} \to V_c]
\\
&\leq \dim(W_{b+\epsilon})
\\
&< \infty.
\end{align*}
It follows that $\Vv$ is q-tame.
\end{proof}

It is easy to see that there are q-tame modules which are not locally finite, such as:
\[
\bigoplus_{n = 1}^{\infty} \Ii [ 0, \textstyle\frac{1}{n} ]
\]
More interestingly, the closely related module,
\[
\prod_{n = 1}^{\infty} \Ii [ 0, \textstyle\frac{1}{n} ]
\]
constructed by cartesian product rather than direct sum, is q-tame but does not admit an interval decomposition. Indeed, the module is uncountable dimensional at~0 and countable dimensional everywhere else, so any interval decomposition must include uncountably many copies of $\Ii[0,0]$. However, every nonzero element of $V_0$ persists to some $V_t$, so there cannot be any copies of $\Ii[0,0]$. This example is due to Crawley-Boevey~\cite{CrawleyBoevey_2012pc}.

\subsection{The stability theorem}
\label{subsec:stability}

The inequality~(\ref{thm:isometry}$''$) can be expressed in the following form:

\begin{theorem}
\label{thm:stability1}
Let $\Uu, \Vv$ be q-tame persistence modules which are $\delta^+$-interleaved. Then there exists a $\delta$-matching between the multisets $\dgm(\Uu)$, $\dgm(\Vv)$.
\end{theorem}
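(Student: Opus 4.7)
My plan is to combine the interpolation lemma with a measure-theoretic box lemma and a compactness argument, following the strategy of~\cite{CohenSteiner_E_H_2007} but recast at the level of r-measures.

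By Theorem~\ref{thm:compact} and the local finiteness of diagrams of q-tame modules in $\UpperInt$, it suffices to produce an $\eta$-matching for every $\eta > \delta$. Fix such an $\eta$; the $\delta^+$-interleaving hypothesis gives an honest $\eta$-interleaving, and the interpolation lemma (Lemma~\ref{lem:interpolation}) then provides a 1-parameter family $(\Uu_x \mid x \in [0,\eta])$ with $\Uu_0 = \Uu$, $\Uu_\eta = \Vv$, and $\Uu_x, \Uu_y$ being $|y-x|$-interleaved for all $x,y \in [0,\eta]$.

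The main analytic ingredient is a \emph{box lemma}: if $\Aa, \Bb$ are $\epsilon$-interleaved and $R = [a,b] \times [c,d]$ is a rectangle with $b + 2\epsilon \leq c$, then $\mu_\Aa(R) \leq \mu_\Bb(R^\epsilon)$, where $R^\epsilon = [a-\epsilon, b+\epsilon] \times [c-\epsilon, d+\epsilon]$. I would prove this via the quiver calculus of section~\ref{subsec:quiver}: the interleaving together with the alternating-sum formula (Proposition~\ref{prop:rankformula}) reduces the inequality to a combinatorial count of interval summands in a suitable multi-term quiver built from $\Aa$, $\Bb$, and the interleaving squares, with the commutativity of those squares forcing certain configurations to have multiplicity zero.

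Granted the box lemma applied along the interpolation, one builds the $\eta$-matching by tracking diagram points. Inside any compact rectangle $R$ bounded away from the diagonal by more than~$\eta$, the integer-valued functions $x \mapsto \mu_{\Uu_x}(R)$ can jump only when a diagram point crosses the boundary of~$R$, and the box lemma forces such crossings to respect the $|y-x|$-metric; a Hall-type marriage argument applied to the locally finite diagrams then assembles these local correspondences into a global $\eta$-matching, with leftover points necessarily lying within~$\eta$ of the diagonal.

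The main obstacle will be making the tracking step rigorous without assuming that the interpolants $\Uu_x$ are decomposable into intervals. The natural remedy, which I expect the authors take, is to formulate and prove an abstract stability theorem for 1-parameter families of r-measures satisfying the box inequality, and only at the end use the equivalence theorem (Theorem~\ref{thm:equivalence}) to transfer the conclusion to matchings of the corresponding diagrams.
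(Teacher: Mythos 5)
Your proposal matches the paper's proof essentially step for step: reduce to $\eta$-matchings for $\eta>\delta$ via Theorem~\ref{thm:compact}, apply the interpolation lemma to obtain a one-parameter family, invoke the box lemma along the family, and close by proving an abstract stability theorem for one-parameter families of r-measures satisfying the box inequality (the paper's Theorem~\ref{thm:m-stability}, proved by the Cohen-Steiner--Edelsbrunner--Harer continuity method plus a compactness argument for infinite diagrams). One small misalignment: the paper's proof of the box lemma is a pure restriction-principle calculation on an eight-term quiver in which the unwanted terms are simply discarded as non-negative; it does not need the alternating-sum formula of Proposition~\ref{prop:rankformula}, nor is it a matter of commutativity forcing configurations to vanish --- that ``forbidden configuration'' device is used elsewhere (e.g.\ in Proposition~\ref{prop:smooth-lf}), not here.
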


It is easier to prove the following. (Notice the missing $^+$.)

\begin{theorem}
\label{thm:stability2}
Let $\Uu, \Vv$ be q-tame persistence modules which are $\delta$-interleaved. Then there exists a $\delta$-matching between the multisets $\dgm(\Uu)$, $\dgm(\Vv)$.
\end{theorem}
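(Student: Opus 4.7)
The plan is to follow the structure of the Cohen--Steiner--Edelsbrunner--Harer stability proof, but recast at the level of rectangle measures. The argument has two main ingredients: a \emph{box lemma} that upgrades the interleaving of modules to a quantitative comparison of their persistence measures, and an abstract matching theorem that converts this comparison into a $\delta$-matching of diagrams.

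First I would establish the box lemma: if $\Uu, \Vv$ are $\delta$-interleaved via maps $\Phi, \Psi$, then for every rectangle $R = [a,b] \times [c,d] \in \rect(\Upper)$ whose $\delta$-shrinking $R^{-\delta} = [a+\delta, b-\delta] \times [c+\delta, d-\delta]$ is still a valid rectangle not touching the diagonal, one has
\[
\mu_\Uu(R^{-\delta}) \leq \mu_\Vv(R),
\]
and symmetrically with $\Uu, \Vv$ swapped. Writing both sides via the alternating-sum formula of Proposition~\ref{prop:rankformula} reduces this to the single rank inequality $\rank(u_{s-\delta}^{t+\delta}) \leq \rank(v_s^t)$, which in turn follows from the factorisation $u_{s-\delta}^{t+\delta} = \psi_t \circ v_s^t \circ \phi_{s-\delta}$. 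That factorisation is a direct consequence of the interleaving squares~\eqref{eq:int_expansive}.

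Second I would prove an abstract stability result at the level of r-measures: if $\mu, \nu$ are r-measures on $\Upper$ with locally finite diagrams satisfying $\mu(R^{-\delta}) \leq \nu(R)$ and $\nu(R^{-\delta}) \leq \mu(R)$ for every rectangle $R$ with valid shrinking, then $\bottle(\dgm(\mu), \dgm(\nu)) \leq \delta$. The construction is of Hall's marriage type: restrict both diagrams to a finite-measure region $B$ bounded away from the diagonal; build a bipartite graph on the finitely many points of $\dgm(\mu)|_B$ and $\dgm(\nu)|_B$, with edges joining pairs at $\ellinf$-distance at most $\delta$ together with an ``escape-valve'' edge from each point to the diagonal; verify Hall's condition by counting the points of $\dgm(\mu)$ inside subrectangles, which is exactly what the box inequality controls. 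Exhausting $\UpperInt$ by an increasing family of such regions $B$ produces a sequence of partial $\delta$-matchings, and invoking the compactness of matchings (Theorem~\ref{thm:compact}) splices them into a global $\delta$-matching. Applying this to $\mu = \mu_\Uu$ and $\nu = \mu_\Vv$, with the first step supplying the hypotheses and q-tameness ensuring local finiteness of $\dgm(\Uu), \dgm(\Vv)$ in $\UpperInt$, yields the theorem.

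The main obstacle is the abstract matching step. The verification of Hall's condition is where the measure framework earns its keep: the box inequality is essentially Hall's condition for the bipartite graph, so the combinatorial content of the proof is dramatically compressed. The real work lies in handling the points of the diagrams near the boundary of the region $B$ or near the diagonal without losing any of them in the limit, and the compactness statement on bottleneck matchings established in Section~\ref{subsec:bottle-compact} is what makes the passage to infinity clean.
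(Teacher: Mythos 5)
Your first ingredient (the box lemma) is correct as a statement, and it is also Lemma~\ref{lem:box} in the paper, though your sketch of its proof via the alternating-sum formula is shaky: the formula has mixed signs, so a termwise rank comparison such as $\rank(u_{b}^{c}) \leq \rank(v_{b+\delta}^{c-\delta})$ does not yield the inequality between the two four-term alternating sums. The paper proves the box lemma instead by fitting $\Uu_{a,b,c,d}$ and $\Vv_{A,B,C,D}$ as restrictions of a common 8-term quiver module and invoking the restriction principle, which sidesteps the sign problem entirely.

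The fatal gap is your second ingredient. You assert that the two-sided box inequality $\mu(R^{-\delta}) \leq \nu(R)$, $\nu(R^{-\delta}) \leq \mu(R)$ by itself forces $\bottle(\dgm(\mu), \dgm(\nu)) \leq \delta$, to be proved by a Hall's-marriage argument. This is false, and the paper exhibits an explicit counterexample (see the discussion after Theorem~\ref{thm:m-stability} and Figure~\ref{fig:badbottle}): there are two finite diagrams, five points each, whose persistence measures satisfy the box inequalities with parameter $1$ yet whose bottleneck distance is $3$, and by scaling the gap can be made arbitrarily large. The reason your Hall verification cannot work is that the box inequality bounds the number of points in a \emph{single} rectangle by the count in its thickening, whereas Hall's condition must be checked for arbitrary finite subsets $S$ of ``important'' points; when $S$ is a union of several rectangles the thickenings overlap and the box inequality no longer controls the neighbourhood size. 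No amount of cleverness in the Hall step can repair this, because the conclusion itself is false at that level of generality.

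What the paper uses to close this gap is the interpolation lemma~(\ref{lem:interpolation}): $\delta$-interleaved $\Uu, \Vv$ embed into a one-parameter family $(\Uu_x)_{x \in [0,\delta]}$ with $\Uu_x, \Uu_y$ pairwise $|x-y|$-interleaved, hence a one-parameter family of r-measures $(\mu_x)$ satisfying box inequalities at \emph{every} pair of scales. The abstract stability theorem~(\ref{thm:m-stability}) then proceeds by the original CEH continuity method: locally (for small $|y-x|$) the box inequality does pin down the diagram of $\mu_y$ near each isolated point of $\dgm(\mu_x)$, and the Heine--Borel argument chains these local $|y-x|$-matchings along $[0,\delta]$, with the compactness theorem~(\ref{thm:compact}) handling the passage to infinite diagrams. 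The continuous interpolation is exactly what rules out the ``cascade of long swaps'' that the counterexample exploits. If you want a matching-theoretic route instead, you would need a stronger input than the box inequality at the two endpoints alone; without the interpolating family your argument would prove something the paper shows to be false.
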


Theorem~\ref{thm:compact} allows us to deduce Theorem~\ref{thm:stability1} from Theorem~\ref{thm:stability2}:
if $\Uu, \Vv$ are $\delta^+$-interleaved then there is an $\eta$-matching between their diagrams for every $\eta > \delta$, hence there is a $\delta$-matching.

The proof of Theorem~\ref{thm:stability2} depends on two main ingredients:

\begin{vlist}
\item
The interpolation lemma~(\ref{lem:interpolation}), which embeds $\Uu, \Vv$ within a 1-parameter family.

\item
The box lemma~(\ref{lem:box}), which relates the persistence measures of $\Uu, \Vv$ locally.

\end{vlist}

Once these ingredients are in place, the theorem can be proved using the continuity method of~\cite{CohenSteiner_E_H_2007}.
Our persistence diagrams may have infinite cardinality, so we will need an additional compactness argument to finish off the proof. 

Let $R = [a,b] \times [c,d]$ be a rectangle in $\RR^2$. The {\bf $\delta$-thickening} of~$R$ is the rectangle
\[
R^\delta = [a-\delta, b+\delta] \times [c-\delta, d+\delta].
\]
For convenience we will write
\[
A = a-\delta, \quad
B = b+\delta, \quad
C = c-\delta, \quad
D = d+\delta
\]
in this situation.
For infinite rectangles, note that $-\infty - \delta = -\infty$ and $+\infty + \delta = +\infty$.

We can also thicken an individual point: if $\alpha = (p,q)$ then
\[
\alpha^\delta = [p-\delta,p+\delta]\times[q-\delta,q+\delta]
\]
for $\delta > 0$.

\begin{lemma}[Box lemma~\cite{CohenSteiner_E_H_2007}]
\label{lem:box}
Let $\Uu, \Vv$ be a $\delta$-interleaved pair of persistence modules. Let $R$ be a rectangle whose $\delta$-thickening~$R^\delta$ lies above the diagonal. Then $\mu_\Uu(R) \leq \mu_\Vv(R^\delta)$ and $\mu_\Vv(R) \leq \mu_\Uu(R^\delta)$.
\end{lemma}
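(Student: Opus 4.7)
The approach is a pure application of quiver calculus. By Proposition~\ref{prop:poset-interleaving}, a $\delta$-interleaving of $\Uu$ and $\Vv$ is the same thing as a single persistence module $\Ww$ over the poset $\Delta_0 \cup \Delta_\delta \subset \Rr^2$, with $\Ww|_{\Delta_0} = \Uu$ and $\Ww|_{\Delta_\delta} = \Vv$. Under the standard identifications $t \in \Delta_0 \leftrightarrow (t,t)$ and $t \in \Delta_\delta \leftrightarrow (t-\delta, t+\delta)$, the eight indices relevant to the two measures sit at planar positions
\begin{align*}
V_A &: (a-2\delta, a),\; & U_a &: (a,a),\; & U_b &: (b,b),\; & V_B &: (b, b+2\delta), \\
V_C &: (c-2\delta, c),\; & U_c &: (c,c),\; & U_d &: (d,d),\; & V_D &: (d, d+2\delta).
\end{align*}
A direct check shows that the hypothesis $B \leq C$---precisely that $R^\delta$ lies above the diagonal---is exactly what is required for these eight positions to form a chain in the product order on $\Rr^2$. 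Restricting $\Ww$ to this chain produces an 8-term persistence module
\[
V_A \to U_a \to U_b \to V_B \to V_C \to U_c \to U_d \to V_D
\]
whose further restriction to $\{U_a, U_b, U_c, U_d\}$ is $\Uu_{a,b,c,d}$ and to $\{V_A, V_B, V_C, V_D\}$ is $\Vv_{A,B,C,D}$.

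Next I apply the restriction principle (Proposition~\ref{prop:restriction}) to each measure in turn. For $\mu_\Uu(R) = \langle [b,c] \mid \Uu_{a,b,c,d} \rangle$, any interval of the 8-set that restricts to $[b,c]$ on $\{a,b,c,d\}$ must contain both $b$ and $c$ while excluding both $a$ and $d$; since $a,b$ and $c,d$ are adjacent pairs in the 8-chain, the only such interval is $\{b, B, C, c\}$, giving $\mu_\Uu(R) = \langle \{b,B,C,c\} \mid \Ww \rangle$. For $\mu_\Vv(R^\delta) = \langle [B,C] \mid \Vv_{A,B,C,D} \rangle$, an 8-set interval restricting to $[B,C]$ on $\{A,B,C,D\}$ must contain $B, C$ and exclude $A, D$; the possible left endpoints are $a, b, B$ and the possible right endpoints are $C, c, d$, giving nine nonnegative contributions, one of which is precisely $\langle \{b,B,C,c\} \mid \Ww \rangle$. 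Therefore
\[
\mu_\Uu(R) \;=\; \langle \{b,B,C,c\} \mid \Ww \rangle \;\leq\; \mu_\Vv(R^\delta),
\]
and the second inequality $\mu_\Vv(R) \leq \mu_\Uu(R^\delta)$ is obtained by interchanging the roles of $\Uu$ and $\Vv$.

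The main obstacle is largely bookkeeping rather than depth. Appealing to Proposition~\ref{prop:poset-interleaving} is what makes this clean: it lets us avoid the otherwise tedious hand-verification that every composable word in the maps $u$, $v$, $\phi$, $\psi$ produces the same map (which would be the traditional way to construct $\Ww$). Once $\Ww$ is in hand, the rest is a restriction-principle enumeration. Degenerate or boundary cases---$B=C$, which collapses two adjacent chain positions into a single space $V_B = V_C$, or $a=-\infty$ (resp.\ $d=+\infty$), which makes $V_A$ and $U_a$ (resp.\ $U_d$ and $V_D$) zero---only remove terms from the nine candidates on the larger side of the inequality while leaving the single summand $\{b,B,C,c\}$ on the smaller side intact, so the conclusion is unaffected.
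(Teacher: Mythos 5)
Your proof is correct and takes essentially the same approach as the paper's: both build the 8-term chain $V_A \to U_a \to U_b \to V_B \to V_C \to U_c \to U_d \to V_D$ from the interleaving and apply the restriction principle, exploiting the adjacency of $a,b$ and of $c,d$ in the chain to isolate the single summand $\langle\{b,B,C,c\}\mid\Ww\rangle = \mu_\Uu(R)$ as one of the nine nonnegative terms making up $\mu_\Vv(R^\delta)$. Your explicit appeal to Proposition~\ref{prop:poset-interleaving} to justify the existence of the 8-term module is a clean formalisation of what the paper leaves implicit under the phrase ``thanks to the interleaving.''
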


If we use the extension convention (section~\ref{subsec:non-finite}) we can state the lemma without the requirement that $R^\delta$ lies above the diagonal, because the convention gives $\mu(R^\delta) = \infty$ if it doesn't.

\begin{proof}
Write $R = [a,b] \times [c,d]$ and $R^\delta = [A,B] \times [C,D]$ as above.
Thanks to the interleaving, the finite modules
\[
\Uu_{a,b,c,d} \;:\;
U_a \to U_b \to U_c \to U_d
\]
and
\[
\Vv_{A,B,C,D} \;:\;
V_{A} \to V_{B} \to V_{C} \to V_{D}
\]
are restrictions of the following 8-term module
\[
\Ww \;:\;
V_A \stackrel{\Psi}{\longrightarrow} U_a
\longrightarrow
U_b \stackrel{\Phi}{\longrightarrow} V_B
\longrightarrow
V_C \stackrel{\Psi}{\longrightarrow} U_c
\longrightarrow
U_d \stackrel{\Phi}{\longrightarrow} V_D
\]
where $\Phi, \Psi$ are the interleaving maps.

Using the restriction principle, we calculate:
\begin{align*}
\mu_\Vv([A, B] \times [C, D])
&=
\langle\, 
  \Qoff{A}\qem\qno\qem\qno
  \qem\Qon{B}\qem\Qon{C}\qem
  \qno\qem\qno\qem\Qoff{D}
\mid  
  \Vv
\,\rangle
\\
&=
\langle\, 
  \Qoff{A}\qem\qno\qem\qno
  \qem\Qon{B}\qem\Qon{C}\qem
  \qno\qem\qno\qem\Qoff{D}
\mid  
  \Ww
\,\rangle
\\
&=
\langle\,
  \Qoff{A}\qem\qoff{a}\qem\qon{b}
  \qem\Qon{B}\qem\Qon{C}\qem
  \qon{c}\qem\qoff{d}\qem\Qoff{D}
\mid
  \Ww
\,\rangle
\\
&\qquad + \text{eight other terms}
\\
&\geq
\langle\,
  \Qoff{A}\qem\qoff{a}\qem\qon{b}
  \qem\Qon{B}\qem\Qon{C}\qem
  \qon{c}\qem\qoff{d}\qem\Qoff{D}
\mid
  \Ww
\,\rangle
\\
&=
\langle\, 
  \Qno\qem\qoff{a}\qem\qon{b}
  \qem\Qno\qem\Qno\qem
  \qon{c}\qem\qoff{d}\qem\Qno
\mid
  \Ww
\,\rangle
\\
&=
\langle\, 
  \Qno\qem\qoff{a}\qem\qon{b}
  \qem\Qno\qem\Qno\qem
  \qon{c}\qem\qoff{d}\qem\Qno
\mid
  \Uu
\,\rangle
\\
&=
\mu_\Uu([a,b] \times [c,d])
\end{align*}
This proves $\mu_\Uu(R) \leq \mu_\Vv(R^\delta)$. The inequality $\mu_\Vv(R) \leq \mu_\Uu(R^\delta)$ follows by symmetry.
\end{proof}

Recall the measures at infinity defined in section~\ref{subsec:infinity}. By considering the appropriate limits, we immediately have:

\begin{proposition}[Box inequalities at infinity]
\label{prop:inf-box}
Let $\mu, \nu$ be r-measures on $\RR^2$ which satisfy a one-sided box inequality with parameter~$\delta$
\[
\mu(R) \leq \nu(R^\delta)
\]
for all rectangles $R \in \rect(\RR^2)$.
Then
\begin{alignat*}{2}
\mu([a,b], -\infty) &\leq \nu([A,B], -\infty),
&
\qquad
\mu(-\infty, [c,d]) &\leq \nu(-\infty, [C,D]),
\\
\mu([a,b], +\infty) &\leq \nu([A,B], +\infty),
&
\qquad
\mu(+\infty, [c,d]) &\leq \nu(+\infty, [C,D]),
\end{alignat*}
for all $a < b$ and $c < d$; and
\begin{alignat*}{2}
\mu(-\infty,-\infty) &\leq \nu(-\infty,-\infty),
&
\qquad
\mu(+\infty,-\infty) &\leq \nu(+\infty,-\infty),
\\
\mu(-\infty,+\infty) &\leq \nu(-\infty,+\infty),
&
\qquad
\mu(+\infty,+\infty) &\leq \nu(+\infty,+\infty).
\end{alignat*}
Here $A = a-\delta$, $B = b+\delta$, $C = c-\delta$, $D = d+\delta$.
\qed
\end{proposition}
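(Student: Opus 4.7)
The plan is to deduce each inequality by applying the hypothesis to a carefully chosen finite (or one-sided infinite) rectangle and then passing to the limit, using the definitions of the measures at infinity from section~\ref{subsec:infinity}. There is really no combinatorial content beyond noting that $\delta$-thickening is compatible with sending a coordinate to $\pm\infty$.

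Consider first the representative edge case $\mu(-\infty,[c,d]) \leq \nu(-\infty,[C,D])$. For each finite $b$, take $R = [-\infty,b]\times[c,d]$; then $R^\delta = [-\infty,b+\delta]\times[C,D]$, and the hypothesis gives
\[
\mu([-\infty,b]\times[c,d]) \;\leq\; \nu([-\infty,b+\delta]\times[C,D]).
\]
Letting $b \to -\infty$, the left-hand side tends to $\mu(-\infty,[c,d])$ by definition, while the substitution $b' = b + \delta$ (also tending to $-\infty$) shows the right-hand side tends to $\nu(-\infty,[C,D])$. The three remaining line-at-infinity inequalities are completely symmetric: one identifies which coordinate is being pushed to $\pm\infty$ and applies the box inequality to the analogous one-sided rectangle.

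For the four corner inequalities, I would use doubly infinite rectangles. For instance, to prove $\mu(-\infty,+\infty) \leq \nu(-\infty,+\infty)$, apply the hypothesis to $R = [-\infty,-e]\times[e,+\infty]$; its thickening is $R^\delta = [-\infty,-e+\delta]\times[e-\delta,+\infty] = [-\infty,-(e-\delta)]\times[(e-\delta),+\infty]$, so taking $e \to +\infty$ (whence $e-\delta \to +\infty$ as well) yields the claim. The remaining three corners work identically.

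I do not expect a main obstacle here: the only technical point is to observe that whenever a thickened rectangle falls outside $\rect(\Dd)$, the extension convention of section~\ref{subsec:non-finite} sets its $\nu$-measure to $\infty$ and the inequality becomes vacuous; and that monotonicity of $\mu$ (Proposition~\ref{prop:mu-properties}) ensures the sequences on both sides are monotone, so that $\min$ and $\lim$ in the definitions of the measures at infinity coincide and commute with the one-sided inequalities.
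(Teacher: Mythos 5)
Your proof is correct and follows exactly the route the paper itself indicates: the paper offers no formal argument for Proposition~\ref{prop:inf-box}, simply stating ``By considering the appropriate limits, we immediately have'' before the proposition and ending it with~\qed. Your write-up correctly supplies those limits, noting that $\delta$-thickening fixes $\pm\infty$, that monotonicity makes the limits exist as minima, and that the extension convention handles rectangles outside $\rect(\Dd)$.
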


Consequently, if $\Uu, \Vv$ are $\delta$-interleaved persistence modules then $\mu_\Uu, \mu_\Vv$ satisfy (two-sided) box inequalities on $(-\infty,\Rr)$ and $(\Rr, +\infty)$ as well as the equality $\mu_\Uu(-\infty,+\infty) = \mu_\Vv(-\infty, +\infty)$.

\subsection{The measure stability theorem}
\label{subsec:mstability}

We now embed Theorem~\ref{thm:stability2} as a special case of a stability theorem for the diagrams of abstract r-measures. The more general statement is no more difficult%
\footnote{In fact it's a little easier to prove, because the compactness argument for diagrams with infinitely many points can be conducted more cleanly in this generality.}
to prove, and seems to be the natural home for the result.

Let $\Dd$ be an open subset of $\RR^2$. For $\alpha \in \Dd$, define the {\bf exit distance} of~$\alpha$ to be
\[
\exit(\alpha,\Dd)
= \ellinf(\alpha, \RR^2 - \Dd)
= \min\left( \ellinf(\alpha,x) \mid x \in \RR^2 - \Dd \right).
\]
For instance, for the extended half-plane we have $\exit(\alpha,\Upper) = \ellinf(\alpha, \Delta)$.

Let $\Aa, \Bb$ be multisets in~$\Dd$. A {\bf $\delta$-matching} between $\Aa, \Bb$ is a partial matching $\Mm \subset \Aa \times \Bb$ such that 
\begin{alignat*}{2}
\ellinf(\alpha, \beta) &\leq \delta
	&&\qquad\text{if $\alpha,\beta$ are matched,}
\\
\exit(\alpha, \Dd) &\leq \delta
	&&\qquad\text{if $\alpha \in \Aa$ is unmatched,}
\\
\exit(\beta, \Dd) &\leq \delta
	&&\qquad\text{if $\beta \in \Bb$ is unmatched.}
\end{alignat*}
If $\Dd$ is not clear from the context, we refer to~$\Mm$ as a `$\delta$-matching between $(\Aa, \Dd)$ and $(\Bb, \Dd)$'.

With the same proof as Proposition~\ref{prop:bottle-triangle}, we have:

\begin{proposition}[triangle inequality]
If $\Aa, \Bb, \Cc$ are multisets in~$\Dd$ and there exists a $\delta_1$-matching between $(\Aa, \Dd), (\Bb,\Dd)$ and a $\delta_2$-matching between $(\Bb,\Dd), (\Cc,\Dd)$, then there exists a $(\delta_1+\delta_2)$-matching between $(\Aa,\Dd), (\Cc,\Dd)$.
\qed
\end{proposition}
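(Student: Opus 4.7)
The plan is to mimic the proof of Proposition~\ref{prop:bottle-triangle} verbatim, with the only change being that the role of the distance-to-diagonal $\ellinf(\cdot,\Delta)$ is played by the exit distance $\exit(\cdot,\Dd)$. Given a $\delta_1$-matching $\Mm_1 \subset \Aa \times \Bb$ and a $\delta_2$-matching $\Mm_2 \subset \Bb \times \Cc$, I will define the composite
\[
\Mm = \{ (\alpha,\gamma) \mid \text{there exists } \beta \in \Bb \text{ with } (\alpha,\beta) \in \Mm_1 \text{ and } (\beta,\gamma) \in \Mm_2 \}
\]
and verify it is a $(\delta_1+\delta_2)$-matching between $(\Aa,\Dd)$ and $(\Cc,\Dd)$. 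That $\Mm$ is a partial matching is immediate from the partial-matching property of $\Mm_1$ and $\Mm_2$ (each $\alpha,\gamma$ can appear in at most one linking triple because it appears in at most one pair of each $\Mm_i$).

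For the three quantitative conditions, the first is the triangle inequality for $\ellinf$: if $(\alpha,\gamma)\in\Mm$ via $\beta$, then $\ellinf(\alpha,\gamma)\le \ellinf(\alpha,\beta)+\ellinf(\beta,\gamma)\le \delta_1+\delta_2$. For the second, suppose $\alpha\in\Aa$ is unmatched in $\Mm$. Either $\alpha$ is unmatched in $\Mm_1$, so $\exit(\alpha,\Dd)\le \delta_1$; or $(\alpha,\beta)\in \Mm_1$ for some $\beta$ which is then necessarily unmatched in $\Mm_2$, giving $\exit(\beta,\Dd)\le \delta_2$. In the latter case, pick a witness $x \in \RR^2 - \Dd$ with $\ellinf(\beta,x)\le \delta_2$; then $\ellinf(\alpha,x)\le \ellinf(\alpha,\beta)+\ellinf(\beta,x)\le \delta_1+\delta_2$, so $\exit(\alpha,\Dd)\le \delta_1+\delta_2$. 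The argument for unmatched $\gamma\in\Cc$ is symmetric.

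There is no genuine obstacle here; the one small wrinkle to be careful about is exactly the one flagged after Proposition~\ref{prop:bottle-triangle}, namely that the composition of partial matchings on multisets is not uniquely defined when $\Bb$ has points of multiplicity greater than one. As in that earlier proof, this is harmless because we only need existence of \emph{some} composite, obtained by fixing a labelling of multiset elements and linking $\alpha \leftrightarrow \gamma$ precisely when they share a common label in $\Bb$. Once that convention is fixed, the three bullets above go through unchanged, and the proposition is established.
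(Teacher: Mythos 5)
Your proof is correct and is exactly the argument the paper intends: the proposition appears in the paper with $\qed$ and the remark "With the same proof as Proposition~\ref{prop:bottle-triangle}", and your proposal carries out that proof verbatim, replacing $\ellinf(\cdot,\Delta)$ by $\exit(\cdot,\Dd)$ and using the inequality $\exit(\alpha,\Dd)\le\ellinf(\alpha,\beta)+\exit(\beta,\Dd)$ to handle the unmatched case. Your note about the labelling convention for multiset composition is the right thing to flag and is handled correctly.
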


Now for the main theorem.

\begin{theorem}[stability for finite measures]
\label{thm:m-stability}
Suppose $( \mu_x \mid x \in [0, \delta] )$ is a 1-parameter family of finite r-measures on an open set~$\Dd \subseteq \RR^2$. Suppose for all $x,y \in [0,\delta]$ the box inequality
\[
\mu_x(R) \leq \mu_y(R^{|y-x|})
\]
holds for all rectangles $R$ whose $|y-x|$-thickening $R^{|y-x|}$ belongs to~$\rect(\Dd)$. Then there exists a $\delta$-matching between the undecorated diagrams $(\dgm(\mu_{0}), \Dd)$ and $(\dgm(\mu_{\delta}), \Dd)$.
\end{theorem}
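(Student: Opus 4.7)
The plan is to adapt the continuity method of \cite{CohenSteiner_E_H_2007} to the present measure-theoretic setting. Rather than building the $\delta$-matching in one stroke, I will partition $[0,\delta]$ into many small sub-intervals, construct a $(t_{i+1} - t_i)$-matching between $\dgm(\mu_{t_i})$ and $\dgm(\mu_{t_{i+1}})$ across each sub-interval, and compose these matchings via the triangle inequality for $\delta$-matchings to obtain the global matching.

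The key technical ingredient is an \emph{easy bijection lemma}: given finite r-measures $\mu, \nu$ on $\Dd$ satisfying the mutual box inequality with parameter $\epsilon$, and assuming the points of $\dgm(\mu)$ and $\dgm(\nu)$ lying at distance $> \epsilon$ from $\RR^2 - \Dd$ are pairwise at $\ell^\infty$-distance $> 2\epsilon$, one builds a canonical $\epsilon$-matching as follows. For each interior $\alpha \in \dgm(\mu)$ and small $\eta > 0$, the box inequality applied to $R = \alpha^\eta$ gives $\nu(\alpha^{\eta+\epsilon}) \geq \mu(\alpha^\eta)$, which is the multiplicity of $\alpha$. The separation hypothesis ensures the squares $\alpha^\epsilon$ are disjoint and account for no extraneous mass, so points of $\dgm(\nu)$ in $\alpha^\epsilon$ can be paired with $\alpha$ (and symmetrically with roles reversed). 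Any point not matched in this way lies within $\epsilon$ of $\RR^2 - \Dd$, by the box inequality applied to a witnessing small square around it, so the result is a genuine $\epsilon$-matching.

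To deploy this locally along the family, I use the fact that each $\dgm(\mu_t)$ is locally finite in $\Dd$: on any compact $K \subset \Dd$ at positive distance from $\RR^2 - \Dd$, the restriction $\dgm(\mu_t)|_K$ is finite with strictly positive minimum pairwise separation. For $s$ sufficiently close to $t$, the easy bijection lemma thus applies between $\mu_s$ and $\mu_t$ on the interior portion of $\Dd$, giving an $|s-t|$-matching there. Compactness of $[0,\delta]$ then yields a finite subdivision $0 = t_0 < t_1 < \cdots < t_N = \delta$ on which consecutive matchings exist, and their composition is the sought-after $\delta$-matching.

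The main obstacle will be handling accumulation of diagram points near the boundary of $\Dd$, which could a priori force the separation parameter and hence the step sizes $t_{i+1} - t_i$ to collapse. My remedy is a buffer argument: for each small $\gamma > 0$, restrict the procedure to points at distance $> \delta + \gamma$ from $\RR^2 - \Dd$ (where both diagrams are finite), and observe that every point excluded by this restriction is automatically within $\delta$ of $\RR^2 - \Dd$ and thus legitimately unmatched. This produces a $(\delta + \gamma)$-matching for each $\gamma > 0$; then the compactness argument of Theorem~\ref{thm:compact} (whose proof adapts verbatim with $\exit(\cdot, \Dd)$ in place of distance to the diagonal) extracts a $\delta$-matching in the limit $\gamma \to 0$.
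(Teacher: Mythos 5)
You are following the same continuity-method blueprint as the paper (a local matching lemma, chained across $[0,\delta]$ via the triangle inequality, then a diagonal-limit argument for infinite diagrams), but two steps need repair.

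Your ``easy bijection lemma'' asks for pairwise separation of \emph{both} $\dgm(\mu)$ and $\dgm(\nu)$; you cannot enforce the second in the chaining, since a multiplicity-$\geq 2$ point of $\dgm(\mu_t)$ can split into distinct points of $\dgm(\mu_s)$ at arbitrarily small mutual distance for any $s\neq t$, so the $\nu$-side hypothesis can fail for \emph{every} nearby $s$. Fortunately it is not needed. Once $|s-t|$ is less than half the minimum separation and exit distance in $\dgm(\mu_t)$, the sandwich $n_i = \mu_t(\alpha_i^\eta) \leq \mu_s(\alpha_i^{|s-t|+\eta}) \leq \mu_t(\alpha_i^{2|s-t|+\eta}) = n_i$ forces exactly $n_i$ points of $\dgm(\mu_s)$, with multiplicity, into $\alpha_i^{|s-t|}$, which you match en bloc to the $n_i$ copies of $\alpha_i$ regardless of their mutual spacing; the remaining points of $\dgm(\mu_s)$ lie within $|s-t|$ of $\RR^2 - \Dd$ by the Hausdorff estimate. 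Drop the ``symmetrically with roles reversed'' clause and the separation hypothesis on $\nu$.

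The buffer $\{\alpha : \exit(\alpha,\Dd) > \delta + \gamma\}$ does not yield a $(\delta+\gamma)$-matching relative to $\Dd$. Part~2 on that region gives a $\delta$-matching relative to \emph{it}, so an unmatched interior point $\alpha$ has $\exit(\alpha, \text{buffer}) \leq \delta$; chaining through a witness at distance $\leq \delta+\gamma$ from $\RR^2 - \Dd$ gives only $\exit(\alpha,\Dd) \leq 2\delta + \gamma$. The observation that excluded points lie within $\delta$ of $\RR^2-\Dd$ is also off: they lie within $\delta+\gamma$. The deeper problem is that these buffers exhaust only $\Dd^{-\delta}$ as $\gamma\to 0$, not $\Dd$ itself. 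The margin must shrink to zero: take $\{\exit(\cdot,\Dd) > \gamma\}$, or (as the paper does) an exhaustion by relatively compact open subsets $\Dd_n \nearrow \Dd$, and then run the diagonal limit from Theorem~\ref{thm:compact}: for a fixed $\alpha$ with $\exit(\alpha,\Dd) > \delta$ the compact square $\alpha^\delta$ is eventually inside the exhausting sets, so $\alpha$ is eventually matched in the approximate matchings and hence matched in the limit.
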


In view of the Interpolation Lemma~(\ref{lem:interpolation}), this implies Theorem~\ref{thm:stability2} (take $\mu_x = \mu(\Uu_x)$ and $\Dd = \UpperInt$)
and therefore the stability theorem~(\ref{thm:isometry}$''$) for q-tame modules.

\begin{example}
The existence of a 1-parameter family interpolating between $\mu_0$ and~$\mu_\delta$ may seem unnecessarily strong. It is natural to hope that two measures $\mu, \nu$ which satisfy the (two-sided) box inequality with parameter~$\delta$ will have diagrams $\dgm(\mu), \dgm(\nu)$ which are $\delta$-matched. This is simply not true, and in fact there is no universal bound on the bottleneck distance between the two diagrams. See Figure~\ref{fig:badbottle}.
\begin{figure}
\includegraphics[scale=0.6]{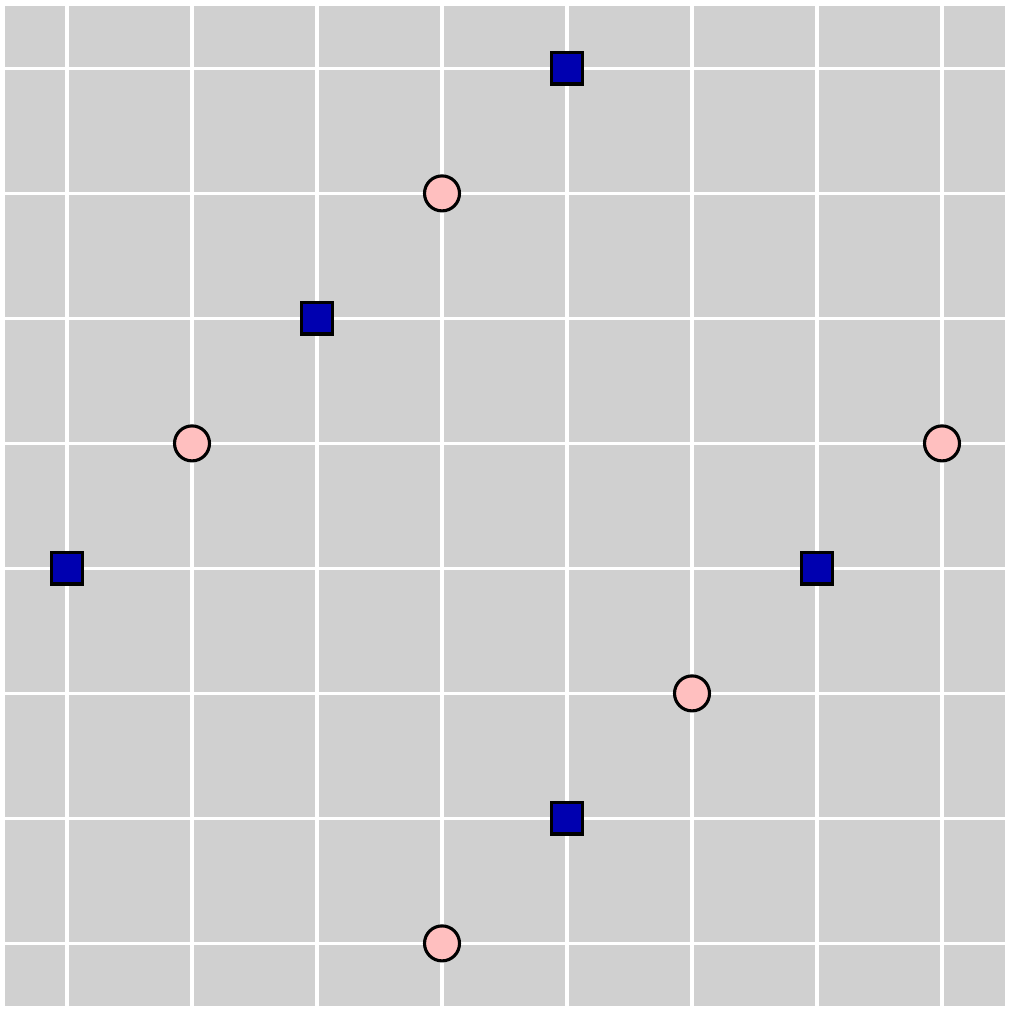}
\caption{The two diagrams (5 dark blue squares; 5 light pink circles) have box distance~1 and bottleneck distance~3. Generalising this example, one can exhibit a pair of diagrams with $4k+1$ points each, which have box distance~1 and bottleneck distance $2k+1$.
}
\label{fig:badbottle}
\end{figure}
\end{example}

Our goal for the rest of this section is to prove Theorem~\ref{thm:m-stability}. Parts 1 and~2 closely follow the method of Cohen-Steiner, Edelsbrunner and Harer~\cite{CohenSteiner_E_H_2007}.

Afterwards, in section~\ref{subsec:mstability2}, we generalise the theorem to r-measures that are not finite.

{\bf\small Initial remark.} Because the metric $\ellinf$ separates $\RR^2$ into nine strata (the standard plane, the four lines at infinity, and the four points at infinity), we seek separate $\delta$-matchings for each stratum that meets~$\Dd$. We begin with the points in the standard plane.

{\bf\small Temporary hypothesis.} Suppose initially that $\Dd \subseteq \Rr^2$.

{\bf\small Part 1.} {\it The Hausdorff distance between $(\dgm(\mu_x), \Dd)$, and $(\dgm(\mu_y), \Dd)$ is at most $|y-x|$.}

Write $\Aa = \dgm(\mu_x)$, $\Bb = \dgm(\mu_y)$, and $\eta = |y-x|$. The assertion is understood to mean:

\begin{itemize}
\item
If $\alpha \in \Aa$ and $\exit(\alpha, \Dd) > \eta$, then there exists $\beta \in \Bb$ with $\ellinf(\alpha, \beta) \leq \eta$.

\item
If $\beta \in \Bb$ and $\exit(\beta, \Dd) > \eta$, then there exists $\alpha \in \Aa$ with $\ellinf(\alpha, \beta) \leq \eta$.
\end{itemize}

\begin{proof}
By symmetry, it is enough to prove the first statement.

Given such an~$\alpha$, let $\epsilon > 0$ be small enough that $\eta + \epsilon < \exit(\alpha,\Dd)$. Then the box inequality gives
\[
1 \leq \mu_x(\alpha^\epsilon) \leq \mu_y(\alpha^{\eta+\epsilon})
\]
so there is at least one point of~$\Bb$ in the square $\alpha^{\eta+\epsilon}$. This is true for all sufficiently small $\epsilon > 0$, and moreover $\Bb$ is locally finite. Therefore there is at least one point of~$\Bb$ in $\alpha^\eta$.
\end{proof}

Henceforth, we will write $\Aa_x = \dgm(\mu_x)$ for all~$x$.

{\bf\small Part 2.} {\it The theorem is true if $\Aa_x$ has finite cardinality for all~$x$.}

\begin{proof}
(i) The triangle inequality for matchings includes the implication
\[
\left.
\begin{array}{l}
	\text{$\Aa_0, \Aa_x$ are $x$-matched}
	\\
	\text{$\Aa_x, \Aa_y$ are $(y-x)$-matched}
\end{array}
\right\}
\Rightarrow
\text{$\Aa_0, \Aa_y$ are $y$-matched}
\]
whenever $0 < x < y$.

(ii) We claim that for every $x \in [0,\delta]$ there exists $\rho(x) > 0$ such that $\Aa_x, \Aa_y$ are $|y-x|$-matched whenever $y \in [0,\delta]$ with $|y-x| < \rho(x)$.

Suppose $\alpha_1, \dots, \alpha_k$ is an enumeration of the distinct points of~$\Aa_x$, with respective multiplicities $n_1, \dots, n_k$. Let $\rho(x)$ be chosen to satisfy the following finite set of constraints:
\[
0< \rho(x) \leq 
\begin{cases}
{\textstyle \half} \exit(\alpha_i, \Dd)
	&\text{all~$i$}
\\
{\textstyle \half} \ellinf(\alpha_i, \alpha_j)
	&\text{all $i,j$ distinct}
\end{cases}
\]
We must show that if $|y-x| < \rho(x)$ then $\Aa_x, \Aa_y$ are $|y-x|$-matched.

Write $\eta = |y-x|$ and let
\[
(\Rr^2 - \Dd)^{\eta} =
\left\{ \alpha \in \Dd \mid \exit(\alpha, \Dd) \leq \eta \right\}.
\]
It follows from Part~1 that $\Aa_y$ is contained entirely in the closed set
\[
(\Rr^2 - \Dd)^{\eta} \cup
\alpha_1^\eta \cup \dots \cup \alpha_k^\eta
\]
and it follows from the definition of $\rho(x)$ that the terms in the union are disjoint.
It is easy to count the points of~$\Aa_y$ in each square $\alpha_i^\eta$.
Let $\epsilon > 0$ be small enough that $2\eta + \epsilon < 2\rho(x)$. Then the box inequality gives
\[
n_i
	= \mu_x(\alpha_i^\epsilon)
	\leq \mu_y(\alpha_i^{\eta+\epsilon})
	\leq \mu_x(\alpha_i^{2\eta+\epsilon})
	= n_i.
\]
Thus $\mu_y(\alpha_i^{\eta+\epsilon}) = n_i$ for all small $\epsilon > 0$. We conclude that the square $\alpha_i^\eta$ contains precisely $n_i$ points of~$\Aa_y$.

This completes the proof of~(ii), because we can match the $n_i$ copies of~$\alpha_i$ with the $n_i$~points of $\Aa_y$ in the square $\alpha_i^\eta$, for each~$i$, to define an $\eta$-matching between $(\Aa_x,\Dd)$, $(\Aa_y,\Dd)$. All points of $\Aa_x$ are matched, and the only unmatched points of $\Aa_y$ lie in $\Rr^2-\Dd$ and do not need to be matched.

\medskip
Items (i) and~(ii) formally imply that $\Aa_0, \Aa_\delta$ are $\delta$-matched, using the standard Heine--Borel argument.
Indeed, let
\[
m = \sup( x \in [0,\delta] \mid \text{$\Aa_0$ and $\Aa_x$ are $x$-matched} ).
\]
First, $m$ is positive; specifically $m \geq \rho(0)$.
Applying (i) to $0 < m' < m$, where $\Aa_0, \Aa_{m'}$ are $m'$-matched and $m - m' < \rho(m)$, we deduce that $\Aa_0, \Aa_m$ are $m$-matched.
Suppose $m < \delta$. Applying (i) to $0 < m < m''$, where $m'' - m < \rho(m)$, we deduce that $\Aa_0, \Aa_{m''}$ are $m''$-matched. This contradicts the definition of~$m$. Therefore $m = \delta$, and $\Aa_0, \Aa_\delta$ are $\delta$-matched.
\end{proof}

{\bf\small Part 3.} {\it The theorem is true without assuming finite cardinality.}

\begin{proof}
Let $(\Dd_n) $ be an increasing sequence of open subsets of~$\Dd$ whose union equals~$\Dd$ and such that each~$\Dd_n$ has compact closure. Because $\Aa_x$ is locally finite, it follows that $\Aa_x \cap {\Dd_n}$ is finite for all $x, n$. We can therefore restrict the family of measures to each $\Dd_n$ in turn, and apply Part~2 to get a $\delta$-matching $\Mm_n$ between $(\Aa_0 \cap \Dd_n, \Dd_n)$ and $(\Aa_\delta \cap \Dd_n, \Dd_n)$.

We now take a limit $\Mm$ of the partial matchings~$\Mm_n$, using the construction in the proof of Theorem~\ref{thm:compact}.
(This works because $\Aa_0, \Aa_\delta$ are locally finite and therefore countable.)
Let $\chi, \chi_n$ denote the indicator functions of $\Mm, \Mm_n$.
As before, we have the Lemma that for any finite subset $\Ff \subset \Aa_0 \times \Aa_\delta$, there are infinitely many $n \in \Nn$ for which
\[
\chi(\alpha, \beta) = \chi_n(\alpha, \beta)
\]
for all $(\alpha, \beta) \in \Ff$.

We must show that $\Mm$ is a $\delta$-matching between $(\Aa_0, \Dd)$ and $(\Aa_\delta, \Dd)$. It is immediate that each matched pair is separated by at most~$\delta$, since this is true for every~$\Mm_n$. The argument that each $\alpha$ is matched with at most one~$\beta$, and vice versa, is the same as before.

Finally, suppose $\alpha \in \Aa_0$ with $\exit(\alpha, \Dd) > \delta$. The square $\alpha^\delta$ is contained in~$\Dd$ and is compact, and therefore is contained in $\Dd_n$ for sufficiently large~$n$. This means that $\exit(\alpha, \Dd_n) > \delta$ and hence $\alpha$ is matched in $\Mm_n$ for sufficiently large~$n$. 
Now $\alpha$ has only finitely many $\delta$-neighbours $\beta_1, \dots, \beta_k$ in the locally finite set~$\Aa_\delta$, so by the Lemma there are infinitely many~$n$ such that $\chi(\alpha, \beta_i) = \chi_n(\alpha, \beta_i)$ for all~$i$. By taking a sufficiently large such~$n$, we conclude that
\[
\chi(\alpha, \beta_i) = \chi_n(\alpha,\beta_i) = 1
\]
for some~$i$. Thus $\alpha$ is matched.

By symmetry, any $\beta \in \Aa_\delta$ with $\exit(\beta, \Delta) > \delta$ is matched in~$\Mm$ to some~$\alpha \in \Aa_0$.

It follows that $\Mm$ is the required $\delta$-matching.
\end{proof}

{\bf\small The theorem at infinity.} Now suppose $\Dd \subseteq \RR^2$ meets any of the strata at infinity. For each of the four lines at infinity, the 3-part proof given above works almost verbatim, if we replace $\Dd$ with its intersection with the chosen line, and each r-measure $\mu_x$ with the corresponding measure at infinity. The other change is to replace the word `square' with the word `interval'. The necessary box inequality at infinity is found in Proposition~\ref{prop:inf-box}.

For the four corners $(\pm\infty,\pm\infty)$, it is easier still: the box inequality at each corner implies that $\mu_0, \mu_\delta$ have the same multiplicity there. The interpolating measures are not needed.
\qed

This completes the proof of the stability theorem for finite measures on an open domain~$\Dd$, and hence the stability theorem for q-tame persistence modules, and hence the isometry theorem for q-tame persistence modules.

\subsection{The measure stability theorem (continued)}
\label{subsec:mstability2}

The stability theorem generalises to measures that are not necessarily finite. By the extension convention, we may suppose that the measures are defined on~$\RR^2$ (rather than just a subset of~$\RR^2$). Given a 1-parameter family $(\mu_x \mid x \in [0,\delta])$, the finite interiors
\[
\fin_x = \fint(\mu_x)
\]
now depend on~$x$; whereas previously we had $\fin_x = \Dd$ for all~$x$.

For $\fin \subset \RR^2$ an open set and $\delta \geq 0$, the `reverse offset' is the open set
\[
\fin^{-\delta} =
\left\{ \alpha \in \fin \mid \exit(\alpha, \fin) > \delta \right\}
=
\left\{ \alpha \in \fin \mid \alpha^\delta \subset \fin \right\}.
\]
Intuitively, this shrinks~$\fin$ by~$\delta$ at the boundary. Clearly $\fin \supseteq \gin$ implies $\fin^{-\delta} \supseteq \gin^{-\delta}$, and $(\fin^{-\delta_1})^{-\delta_2} = \fin^{-(\delta_1+\delta_2)}$.
Note also that $(\fin\cap\gin)^{-\delta} = \fin^{-\delta} \cap \gin^{-\delta}$. This is easiest to see from the second characterisation.

\begin{remark}
The operation $[\cdot]^{-\delta}$ has no effect on the corners at infinity, and acts independently on the standard plane and on the four lines at infinity.
\end{remark}

We define $\delta$-matchings for multisets in unequal domains.
Let $\fin, \gin$ be open subsets of $\RR^2$, let $\Aa, \Bb$ be multisets in $\fin, \gin$ respectively, and let $\delta > 0$. A {\bf $\delta$-matching between $(\Aa, \fin), (\Bb, \gin)$} is a partial matching $\Mm$ between $\Aa,\Bb$ such that the following four conditions hold:
\begin{itemize}
\item
$\fin \supseteq \gin^{-\delta}$ and $\gin \supseteq \fin^{-\delta}$,

\item
if $(\alpha, \beta) \in \Mm$ then $\ellinf(\alpha,\beta) \leq \delta$,

\item
every $\alpha \in \Aa \cap \gin^{-\delta}$ is matched with some $\beta \in \Bb$,

\item
every $\beta \in \Bb \cap \fin^{-\delta}$ is matched with some $\alpha \in \Aa$.
\end{itemize}
The first of these is a compatibility condition between the domains: they cannot be too unequal. This is automatic if $\fin = \gin$, which is why we haven't seen it before.
Notice the cross-over in the last two conditions: a point in~$\Aa$ is allowed to be unmatched only if it is close to the boundary of $\Bb$'s domain~$\gin$, and vice versa.

\begin{proposition}[triangle inequality]
\label{prop:m-triangle2}
If $\Aa, \Bb, \Cc$ are multisets in~$\fin, \gin, \hin$ respectively, and there exist a $\delta_1$-matching between $(\Aa, \fin), (\Bb,\gin)$ and a $\delta_2$-matching between $(\Bb,\gin), (\Cc,\hin)$, then there exists a $(\delta_1+\delta_2)$-matching between $(\Aa,\fin), (\Cc,\hin)$.
\end{proposition}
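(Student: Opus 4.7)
The plan is to follow the compositional strategy of Proposition~\ref{prop:bottle-triangle}: set
\[
\Mm = \{ (\alpha,\gamma) \in \Aa \times \Cc \mid \exists\, \beta \in \Bb \text{ with } (\alpha,\beta) \in \Mm_1 \text{ and } (\beta,\gamma) \in \Mm_2 \}
\]
and verify that $\Mm$ is a $(\delta_1+\delta_2)$-matching between $(\Aa,\fin)$ and $(\Cc,\hin)$. That $\Mm$ is a partial matching (at most one $\gamma$ per $\alpha$, and symmetrically) is immediate since $\Mm_1, \Mm_2$ are partial matchings. The bounded-distance condition $\ellinf(\alpha,\gamma) \leq \delta_1+\delta_2$ for $(\alpha,\gamma) \in \Mm$ is the usual triangle inequality for $\ellinf$ applied to the linking point $\beta$, exactly as in Proposition~\ref{prop:bottle-triangle}.

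The novelty lies in the unequal domains. First I would dispatch the domain compatibility condition. Using the monotonicity of $[\cdot]^{-\delta}$ and the composition rule $(\fin^{-\delta_1})^{-\delta_2} = \fin^{-(\delta_1+\delta_2)}$ already noted in the preamble, I chain the hypotheses $\fin \supseteq \gin^{-\delta_1}$ (from $\Mm_1$) and $\gin \supseteq \hin^{-\delta_2}$ (from $\Mm_2$) to get
\[
\fin \;\supseteq\; \gin^{-\delta_1} \;\supseteq\; (\hin^{-\delta_2})^{-\delta_1} \;=\; \hin^{-(\delta_1+\delta_2)};
\]
the symmetric inclusion $\hin \supseteq \fin^{-(\delta_1+\delta_2)}$ follows by the same argument with the roles reversed. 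The same computation also provides the crucial inclusions $\hin^{-(\delta_1+\delta_2)} \subseteq \gin^{-\delta_1}$ and $\fin^{-(\delta_1+\delta_2)} \subseteq \gin^{-\delta_2}$ that drive the next step.

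The main obstacle is checking the two cross-matching conditions, since we have to ``relay'' matches across unequal domains and the quantifier pattern is subtle. Consider $\alpha \in \Aa \cap \hin^{-(\delta_1+\delta_2)}$. By the inclusion above $\alpha \in \gin^{-\delta_1}$, so $\Mm_1$ supplies some $\beta \in \Bb$ with $(\alpha,\beta) \in \Mm_1$ and $\ellinf(\alpha,\beta) \leq \delta_1$. To apply $\Mm_2$ I need $\beta \in \hin^{-\delta_2}$; this is the key geometric check. But $\alpha^{\delta_1+\delta_2} \subseteq \hin$ by hypothesis, and $\beta^{\delta_2} \subseteq \alpha^{\delta_1+\delta_2}$ because $\ellinf(\alpha,\beta) \leq \delta_1$, so $\beta^{\delta_2} \subseteq \hin$, i.e., $\beta \in \hin^{-\delta_2}$. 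Hence $\Mm_2$ produces $\gamma \in \Cc$ with $(\beta,\gamma) \in \Mm_2$, giving $(\alpha,\gamma) \in \Mm$. A symmetric argument, using $\fin^{-(\delta_1+\delta_2)} \subseteq \gin^{-\delta_2}$ and the same geometric shrinking of thickenings, shows that every $\gamma \in \Cc \cap \fin^{-(\delta_1+\delta_2)}$ is matched in $\Mm$. As in the remark following Proposition~\ref{prop:bottle-triangle}, any ambiguity in forming $\Mm$ when $\Bb$ has repeated points is harmless, since we only need the existence of some such $\Mm$.
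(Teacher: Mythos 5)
Your proof is correct and takes essentially the same route as the paper: compose the matchings, verify the domain compatibility condition by chaining $\fin \supseteq \gin^{-\delta_1} \supseteq \hin^{-(\delta_1+\delta_2)}$, and establish the cross-matching conditions by showing that the intermediate match $\beta$ lands in $\hin^{-\delta_2}$ (resp.\ $\gin^{-\delta_2}$) via the thickening argument $\beta^{\delta_2} \subseteq \alpha^{\delta_1+\delta_2} \subseteq \hin$. You spell out a geometric step that the paper only asserts, but the argument is the same.
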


\begin{proof}
As usual, compose the two partial matchings to get a partial matching $\Mm$ between $\Aa, \Cc$. Writing $\delta = \delta_1 + \delta_2$, we must check that this is a $\delta$-matching between $(\Aa,\fin), (\Cc,\hin)$. For first condition we see that
\[
\fin \supseteq \gin^{-\delta_1} \supseteq (\hin^{-\delta_2})^{-\delta_1} = \hin^{-\delta}
\quad
\text{and}
\quad
\hin \supseteq \gin^{-\delta_2} \supseteq (\fin^{-\delta_1})^{-\delta_2} = \fin^{-\delta}.
\]
The second condition follows from the triangle inequality for~$\ellinf$.
For the third condition, if $\alpha \in \Aa$ lies in $\hin^{-\delta}$ then by the inclusion above it lies in~$\gin^{-\delta_1}$. Therefore $\alpha$ is matched with $\beta \in \Bb$. Moreover $\beta$ must then lie in $\hin^{-(\delta-\delta_1)} = \hin^{-\delta_2}$ and so is matched with $\gamma \in \Cc$. The fourth condition follows by symmetry.
\end{proof}

\begin{remark}
There is no triangle inequality if the condition on the domains is dropped.
\end{remark}

Here is the main theorem of this section and the final theorem of the paper. Again we use the abbreviation $\fin_x = \fint(\mu_x)$ for the finite interiors.

\begin{theorem}[stability for measures]
\label{thm:m-stability2}
Suppose $( \mu_x \mid x \in [0, \delta] )$ is a 1-parameter family of r-measures on~$\RR^2$. Suppose for all $x,y \in [0,\delta]$ the box inequality
\[
\mu_x(R) \leq \mu_y(R^{|y-x|})
\]
holds for all rectangles $R \in \rect(\RR^2)$. Then there exists a $\delta$-matching between the undecorated diagrams $(\dgm(\mu_{0}), \fin_0)$ and $(\dgm(\mu_{\delta}), \fin_\delta)$.
\end{theorem}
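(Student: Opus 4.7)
The plan is to adapt the three-part proof of Theorem~\ref{thm:m-stability}, with the box inequality supplying the needed compatibility between the varying finite interiors $\fin_x$. First I would establish that $\fin_y^{-|y-x|} \subseteq \fin_x$ for all $x, y \in [0, \delta]$: given $\alpha \in \fin_y^{-|y-x|}$, pick $\epsilon > 0$ with $\alpha^{|y-x|+\epsilon} \subseteq \fin_y$, so that $\mu_y$ is finite on every rectangle inside this square; then for the small rectangle $R = \alpha^{\epsilon/2}$, which contains $\alpha$ in its interior and satisfies $R^{|y-x|} \subseteq \alpha^{|y-x|+\epsilon}$, the box inequality yields $\mu_x(R) \leq \mu_y(R^{|y-x|}) < \infty$, whence $\alpha \in \fin_x$. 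Specialising to $x \in \{0, \delta\}$ gives the compatibility $\fin_0 \supseteq \fin_\delta^{-\delta}$ and $\fin_\delta \supseteq \fin_0^{-\delta}$ required by the matching definition.

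With compatibility in hand, Part~1 of the earlier proof (the Hausdorff bound) adapts verbatim: for $\alpha \in \dgm(\mu_x) \cap \fin_y^{-|y-x|}$, the box inequality $\mu_y(\alpha^{|y-x|+\epsilon}) \geq \mu_x(\alpha^\epsilon) \geq 1$ combined with local finiteness of $\dgm(\mu_y)$ in $\fin_y$ produces a partner $\beta \in \dgm(\mu_y)$ with $\ellinf(\alpha,\beta) \leq |y-x|$. Part~2 (the local matching for $y$ close to $x$) also adapts: the two-sided box sandwich $\mu_x(\alpha^\epsilon) \leq \mu_y(\alpha^{|y-x|+\epsilon}) \leq \mu_x(\alpha^{2|y-x|+\epsilon})$ shows that each distinct $\alpha \in \dgm(\mu_x) \cap K$ (for a fixed compact $K$) of multiplicity $n_\alpha$ has exactly $n_\alpha$ points of $\dgm(\mu_y)$ in a canonical neighbourhood, giving the local matching; the Heine--Borel continuity argument of the original Part~2 then patches these into a matching along $[0,\delta]$.

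The hard part is Part~3, the exhaustion, since the restricted family must be finite at every parameter value. The natural common domain is the safe set
\[
\Dd = \{\alpha \in \RR^2 : \exit(\alpha, \fin_0) + \exit(\alpha, \fin_\delta) > \delta\},
\]
which is contained in every $\fin_x$: given $\alpha \in \Dd$ with $\exit(\alpha,\fin_0) = a$ and $\exit(\alpha,\fin_\delta) = b$ (so $a+b > \delta$), choose $a' < a$ and $b' < b$ with $a' + b' = \delta$, so that $\alpha \in \fin_0^{-a'}$ and $\alpha \in \fin_\delta^{-b'}$; the compatibility of the first paragraph then gives $\alpha \in \fin_x$ for $x \leq a'$ and for $x \geq a'$ respectively, together covering $[0,\delta]$. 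One can then exhaust $\Dd$ by open subsets with compact closure, apply Theorem~\ref{thm:m-stability} on each, and take a limit matching via the construction in the proof of Theorem~\ref{thm:compact}. The delicate point is that an important point $\alpha \in \dgm(\mu_0) \cap \fin_\delta^{-\delta}$ may lie near the boundary of $\Dd$, so the direct restriction matches only the ``doubly deep'' points; the remaining boundary points must be handled by combining the Hausdorff bound of the previous paragraph with the triangle inequality for varying-domain matchings (Proposition~\ref{prop:m-triangle2}) to extend the matching consistently, and the strata at infinity must be handled via the box inequalities at infinity from Proposition~\ref{prop:inf-box}.
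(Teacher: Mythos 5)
Your preliminary steps are correct and align with the paper: the compatibility inclusion $\fin_x \supseteq \fin_y^{-|y-x|}$ is exactly Proposition~\ref{prop:stable-fint} and is proved the same way, and Parts 1 and 2 (Hausdorff bound, two-sided box sandwich, Heine--Borel continuation using Proposition~\ref{prop:m-triangle2}) are the right adaptations of the finite-measure proof. The gap is in Part 3.

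Your proposed common domain $\Dd = \{\alpha : \exit(\alpha, \fin_0) + \exit(\alpha, \fin_\delta) > \delta\}$ is indeed contained in every $\fin_x$, and each $\mu_x|_\Dd$ is finite, so Theorem~\ref{thm:m-stability} applied on $\Dd$ yields a $\delta$-matching $\Mm$ between $(\Aa_0 \cap \Dd, \Dd)$ and $(\Aa_\delta \cap \Dd, \Dd)$, where $\Aa_x = \dgm(\mu_x)$. But this only guarantees matching of points in $\Dd^{-\delta}$, whereas the theorem demands matching of every $\alpha \in \Aa_0 \cap \fin_\delta^{-\delta}$ and every $\beta \in \Aa_\delta \cap \fin_0^{-\delta}$. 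The containment $\fin_\delta^{-\delta} \subseteq \Dd^{-\delta}$ fails: for instance (in a one-dimensional caricature) take $\fin_0 = (-2,2)$, $\fin_\delta = (-1,3)$, $\delta = 1$; the domains satisfy the compatibility condition, and $\Dd = (-1,2)$, but $\fin_\delta^{-\delta} = (0,2)$ while $\Dd^{-\delta} = (0,1)$, so the point $1.5 \in \fin_\delta^{-\delta} \setminus \Dd^{-\delta}$ is required to be matched yet $\Mm$ gives no such guarantee. You acknowledge this (``the direct restriction matches only the doubly deep points''), but the proposed remedy --- Hausdorff bound plus Proposition~\ref{prop:m-triangle2} ``to extend the matching consistently'' --- does not supply a mechanism for doing so: the Hausdorff bound only tells you each leftover $\alpha$ has \emph{some} $\delta$-close candidate $\beta$, and those candidates may already be spoken for by $\Mm$; resolving such conflicts is precisely the global combinatorial difficulty the theorem is supposed to settle, so it cannot be waved through.

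The paper closes this by never fixing a common domain. Instead it keeps the truncated domains varying with $x$: set $\hat\fin_x = \fin_x^{-\epsilon} \cap \qin^r$ and define $\hat\mu_x$ to agree with $\mu_x$ on rectangles contained in $\hat\fin_x$ and be $\infty$ elsewhere. This is still an r-measure with $\fint(\hat\mu_x) = \hat\fin_x$, and the shrinking operation commutes with intersections, so the family $(\hat\fin_x)$ inherits the compatibility condition, which in turn forces the box inequality to survive the truncation. Each $\hat\fin_x$ is relatively compact in $\fin_x$, so each $\dgm(\hat\mu_x)$ is finite and Part~2 applies directly, yielding a $\delta$-matching that covers all $\alpha \in \Aa_0 \cap (\fin_\delta^{-(\delta+\epsilon)} \cap \qin^{r-\delta})$ and symmetrically for $\beta$. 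Letting $\epsilon_n \to 0$, $r_n \to +\infty$ makes these sets exhaust $\fin_\delta^{-\delta}$ and $\fin_0^{-\delta}$ exactly, and the diagonal-subsequence limit from Theorem~\ref{thm:compact} then produces the required $\delta$-matching with no leftover boundary stratum to patch. The essential idea you are missing, then, is to truncate each $\fin_x$ individually rather than intersecting them all into a single static domain; this preserves the structural hypotheses needed for Part~2 and makes the exhaustion hit the right target sets.
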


The first condition for a $\delta$-matching, on the domains $\fin_0, \fin_\delta$, can be checked easily:

\begin{proposition}
\label{prop:stable-fint}
Under the hypotheses of Theorem~\ref{thm:m-stability2}, we have inclusions
\[
\fin_x \supseteq \fin_y^{-|y-x|}
\]
for all $x,y \in [0,\delta]$.
\end{proposition}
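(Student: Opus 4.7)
The plan is to show that any $\alpha \in \fin_y^{-\eta}$ (with $\eta = |y-x|$) lies in $\fin_x$. By definition of the reverse offset, $\alpha^\eta \subseteq \fin_y$, and I will exhibit a rectangle $R$ with $\alpha \in R\intsup$ and $\mu_x(R) < \infty$; the box inequality $\mu_x(R) \leq \mu_y(R^\eta)$ then reduces everything to finding an $R$ whose thickening has finite $\mu_y$-measure.

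The core of the argument is a standard compactness-and-thickening construction. Each $\beta \in \alpha^\eta$ sits in some finite-measure rectangle $S_\beta$ by the definition of $\fin_y$. Viewing $\RR^2$ as a compact square in the usual way (as in the extended-plane version of Corollary~\ref{cor:f-diagram}), the set $\alpha^\eta$ is compact, and so is covered by finitely many interiors $S_1\intsup, \dots, S_k\intsup$. The complement $\RR^2 \setminus \bigcup_i S_i\intsup$ is then a closed, hence compact, set disjoint from $\alpha^\eta$, so the two are separated by a strictly positive $\ellinf$-distance. Pick $\epsilon > 0$ smaller than this distance and set $T = \alpha^{\eta+\epsilon}$: then $\alpha^\eta \subseteq T\intsup \subseteq T \subseteq \bigcup_i S_i$, and Proposition~\ref{prop:subadditive} gives $\mu_y(T) \leq \sum_i \mu_y(S_i) < \infty$.

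Finally, taking $R = \alpha^{\epsilon/2}$ we have $\alpha \in R\intsup$ and $R^\eta = \alpha^{\eta + \epsilon/2} \subseteq T$, so the box inequality and monotonicity yield $\mu_x(R) \leq \mu_y(R^\eta) \leq \mu_y(T) < \infty$, establishing $\alpha \in \fin_x$.

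I expect the only real technicality to be the stratification of $\RR^2$ under $\ellinf$: when $\alpha$ lies on one of the lines or corners at infinity, the thickening $\alpha^\eta$ stays in its own stratum, and the relevant rectangles and measures are those associated with that stratum. The same cover-and-thicken scheme carries through stratum by stratum, using the measures at infinity and the box inequalities of Proposition~\ref{prop:inf-box}.
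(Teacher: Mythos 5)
Your proof is correct and reaches the conclusion by the same route as the paper: thicken the compact square $\alpha^\eta$ to some $\alpha^{\eta+\epsilon}$ still inside $\fin_y$, verify it has finite $\mu_y$-measure, then apply the box inequality to deduce $\mu_x(\alpha^\epsilon) < \infty$ and hence $\alpha \in \fin_x$. The only difference is that the paper gets finiteness of $\mu_y(\alpha^{\eta+\epsilon})$ at once by citing Proposition~\ref{prop:fint-subtle} (a rectangle contained in $\fint(\mu_y)$ has finite $\mu_y$-measure), whereas you reprove that fact inline via the cover-and-subadditivity argument; either way the substance is identical, and you could shorten your write-up by invoking that proposition directly with $R = \alpha^\epsilon$.
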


\begin{proof}
Suppose $\alpha \in \fin_y^{-|y-x|}$, then equivalently $\alpha^{|y-x|} \subset \fin_y$. Since the square $\alpha^{|y-x|}$ is compact and $\fin_y$ is open, there exists $\epsilon > 0$ such that $\alpha^{|y-x| + \epsilon} \subset \fin_y$. The box inequality gives
\[
\mu_x(\alpha^\epsilon) \leq \mu_y(\alpha^{|y-x| + \epsilon})
\]
and the right-hand side is finite by Proposition~\ref{prop:fint-subtle}. Thus $\alpha \in \fint(\mu_x) = \fin_x$.
\end{proof}

\begin{proof}[Proof of Theorem~\ref{thm:m-stability2}]
The argument closely follows the proof of the stability theorem for finite measure, so we will confine ourselves to indicating the necessary modifications. We use the abbreviation $\Aa_x = \dgm(\mu_x)$.

{\bf\small Initial remark.}
Recall that the proof is conducted separately for each of the nine strata. The four corners at infinity are handled easily (each corner belongs to both $\fin_0$ and~$\fin_\delta$, or to neither; in the former case the $\mu_0, \mu_\delta$ multiplicities agree). The proof is described for the points in the standard plane. The same proof applies to each of the four lines at infinity, replacing each $\mu_x$ with the corresponding measure at infinity.

{\bf\small Part 1.} {\it The Hausdorff distance between $(\Aa_x, \fin_x)$, and $(\Aa_y, \fin_y)$ is at most $\eta = |y-x|$.}

The assertion is understood to mean:

\begin{vlist}
\item
If $\alpha \in \Aa_x$ and $\exit(\alpha, \fin_y) > \eta$, then there exists $\beta \in \Aa_y$ with $\ellinf(\alpha, \beta) \leq \eta$.

\item
If $\beta \in \Aa_y$ and $\exit(\beta, \fin_x) > \eta$, then there exists $\alpha \in \Aa_x$ with $\ellinf(\alpha, \beta) \leq \eta$.
\end{vlist}

Proof: for all $\epsilon > 0$ with $\eta + \epsilon < \exit(\alpha, \fin_y)$, we have
$
1 \leq \mu_x(\alpha^\epsilon) \leq \mu_y(\alpha^{\eta+\epsilon})
$
so there is at least one point of~$\Aa_y$ in $\alpha^\eta$.

\medskip
{\bf\small Part 2.} {\it The theorem is true if $\Aa_x$ has finite cardinality for all~$x$.}

Item~(i) is given by the triangle inequality (Proposition~\ref{prop:m-triangle2}). 

Item~(ii) uses the same strategy as before. Let $(\alpha_i)$ be a finite enumeration of the distinct points of $\Aa_x$, with respective multiplicities~$(n_i)$. Then $\rho(x)$ is chosen to satisfy:
\[
0< \rho(x) \leq 
\begin{cases}
{\textstyle \half} \exit(\alpha_i, \fin_x)
	&\text{all~$i$}
\\
{\textstyle \half} \ellinf(\alpha_i, \alpha_j)
	&\text{all $i,j$ distinct}
\end{cases}
\]
If $\eta = |y-x| < \rho(x)$, then Part~1 implies that $\Aa_y$ is contained in the disjoint union
\[
(\Rr^2 - \fin_x)^{\eta} \cup
\alpha_1^\eta \cup \dots \cup \alpha_k^\eta.
\]
The box inequality is then used to count precisely $n_i$~points of~$\Aa_y$ in the square $\alpha_i^\eta$. This defines a partial matching where all points of~$\Aa_x$ are matched and all points of $\Aa_y \cap \fin_x^{-\eta}$ are matched.

The formal deduction of Part~2 from (i) and~(ii) is unchanged, since it is a formal deduction.

\medskip
{\bf\small Part 3.} {\it The theorem is true without assuming finite cardinality.}

The idea is to restrict each measure $\mu_x$ to a relatively compact open subset $\hat\fin_x \subset \fin_x = \fint(\mu_x)$. The subsets satisfy the compatibility condition
\[
\hat\fin_x \supseteq \hat\fin_y^{-|y-x|}
\]
for all $x,y \in [0,\delta]$.

Specifically, for $\epsilon > 0$ and $r > \delta$, let
\[
\hat\fin_x = \fin_x^{-\epsilon} \cap \qin^r
\]
where $\qin^r = (-r,r)\times(-r,r)$ is the open $\ellinf$-disk of radius~$r$.
Define a function on rectangles as follows:
\[
\hat\mu_x(R) = 
\begin{cases}
\mu_x(R)
	&\text{if $R \subset \hat\fin_x$}
\\
\infty
	&\text{otherwise}
\end{cases}
\]
It is easy to check that $\hat\mu_x$ is an r-measure (additivity still holds), that $\fint(\hat\mu_x) = \hat\fin_x$, and that $\dgm(\hat\mu_x) = \dgm(\mu_x) \cap \hat\fin_x$.

\begin{lemma*}
The family $(\hat\mu_x)$ satisfies the box inequality $\hat\mu_x(R) \leq \hat\mu_y(R^{|y-x|})$ for all $x, y \in [0,\delta]$.
\end{lemma*}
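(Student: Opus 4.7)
The plan is to reduce the inequality to a statement about the original family $(\mu_x)$, which already satisfies the box inequality by hypothesis. Since $\hat\mu_y$ takes only the values $\mu_y(R^{|y-x|})$ or $\infty$, the inequality is trivial whenever the right-hand side is infinite. So the real content is the case in which $R^{|y-x|} \subset \hat\fin_y$; in that case I need to show that the left-hand side is \emph{also} finite, namely that $R \subset \hat\fin_x$, and then apply the original box inequality.

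The first step is the easy inclusion $R \subset \qin^r$, which follows from $R \subset R^{|y-x|} \subset \hat\fin_y \subset \qin^r$. The heart of the argument is the inclusion $R \subset \fin_x^{-\epsilon}$. For this I would fix $\alpha \in R$ and show that $\alpha^\epsilon \subset \fin_x$ by going via $\fin_y$ and invoking Proposition~\ref{prop:stable-fint}, which provides $\fin_x \supseteq \fin_y^{-|y-x|}$. Since $\alpha \in R$, the square $\alpha^{|y-x|}$ is contained in $R^{|y-x|} \subset \fin_y^{-\epsilon}$, and a routine Minkowski-style expansion then gives $\alpha^{|y-x|+\epsilon} \subset \fin_y$. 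A second such expansion, applied in the reverse direction, yields $\alpha^\epsilon \subset \fin_y^{-|y-x|}$, and one final application of Proposition~\ref{prop:stable-fint} produces $\alpha^\epsilon \subset \fin_x$, i.e.\ $\alpha \in \fin_x^{-\epsilon}$.

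Once $R \subset \hat\fin_x$ is established, both $\hat\mu_x(R)$ and $\hat\mu_y(R^{|y-x|})$ agree with the corresponding values of $\mu_x$ and $\mu_y$, and the desired inequality is just the box inequality already assumed on the unhatted family.

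The main obstacle, I expect, is bookkeeping the three nested offsets $\epsilon$, $|y-x|$, and $\epsilon+|y-x|$ carefully enough that the Minkowski-style identity $(\fin^{-a})\supseteq\{\alpha : \alpha^a\subset\fin\}$ is applied in the correct direction twice: once to pass from $R^{|y-x|}\subset\fin_y^{-\epsilon}$ to $\alpha^{|y-x|+\epsilon}\subset\fin_y$, and once more to conclude $\alpha^\epsilon\subset\fin_y^{-|y-x|}$. Everything else is formal.
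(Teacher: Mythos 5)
Your proposal is correct and takes essentially the same route as the paper: both reduce the claim to showing $R\subset\hat\fin_x$ whenever $R^{|y-x|}\subset\hat\fin_y$ (equivalently, $\hat\fin_y^{-|y-x|}\subseteq\hat\fin_x$), and both rest on Proposition~\ref{prop:stable-fint}. The only difference is cosmetic: the paper proves the domain inclusion in one line using the algebraic identities $(\fin\cap\gin)^{-\delta}=\fin^{-\delta}\cap\gin^{-\delta}$ and $(\fin^{-\delta_1})^{-\delta_2}=\fin^{-(\delta_1+\delta_2)}$ established just before the theorem, whereas you unwind those identities into a pointwise argument on squares $\alpha^\eta$.
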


\begin{proof}
Since the box inequality is assumed to hold for~$(\mu_x)$, it will automatically hold for $(\hat\mu_x)$; except possibly for rectangles~$R$ where the left-hand side of the inequality has become infinite while the right-hand side hasn't. This happens when $R \not\subset\hat\fin_x$ while $R^{|y-x|} \subset \hat\fin_y$, and we can prevent it by ensuring that $\hat\fin_x \supseteq \hat\fin_y^{-|y-x|}$.
And, indeed,
\[
\hat\fin_y^{-|y-x|}
=
(\fin_y^{-\epsilon} \cap \qin^{r})^{-|y-x|}
=
\fin_y^{-(\epsilon+|y-x|)} \cap \qin^{r-|y-x|}
\subseteq
\fin_x^{-\epsilon} \cap \qin^{r}
=
\hat\fin_x.
\qedhere
\]
\end{proof}

Since $\hat\fin_x$ has compact closure in~$\fin_x$, and $\Aa_x$ is locally finite, it follows that $\hat\Aa_x = \dgm(\hat\mu_x) = \Aa_x \cap \hat\fin_x$ has finite cardinality. We can therefore apply Part~2 to the family $(\hat\mu_x)$ to get a $\delta$-matching between $(\hat\Aa_0, \hat\fin_0)$ and  $(\hat\Aa_\delta, \hat\fin_\delta)$. This can be interpreted as a partial $\delta$-matching between $\Aa_0, \Aa_\delta$ where:
\begin{vlist}
\item
$\alpha \in \Aa_0$ is matched whenever $\alpha \in (\fin_\delta^{-\epsilon} \cap \omega^{r})^{-\delta} = \fin_\delta^{-(\delta+\epsilon)} \cap \omega^{r-\delta}$
\item
$\beta \in \Aa_\delta$ is matched whenever $\beta \in (\fin_0^{-\epsilon} \cap \omega^{r})^{-\delta} = \fin_0^{-(\delta+\epsilon)} \cap \omega^{r-\delta}$
\end{vlist}

Repeat this argument for a sequence $(\epsilon_n, r_n)$ where $\epsilon_n \to 0$ and $r_n \to +\infty$. This gives a sequence of $\delta$-matchings~$\Mm_n$, and we can form a limit~$\Mm$ as before.

If $\alpha \in \Aa_0 \cap \fin_\delta^{-\delta}$ then eventually $\alpha \in \fin_\delta^{-(\delta+\epsilon_n)} \cap \omega^{r_n-\delta}$ and so $\alpha$ is matched by~$\Mm_n$ for all sufficiently large~$n$. The same is true for $\beta \in \Aa_\delta \cap \fin_0^{-\delta}$. With this information, we can complete the usual proof that $\Mm$ is a $\delta$-matching between $(\Aa_0, \fin_0)$ and $(\Aa_\delta, \fin_\delta)$.
\end{proof}

This concludes the proof of Theorem~\ref{thm:m-stability2}. Here is a sample consequence.

\begin{example}[Stability of the Webb module]
Let $\Vv$ be a persistence module which is $\delta$-interleaved with the module~$\Ww$ of Example~\ref{ex:webb2}. By interpolation and the box lemma, the measure stability theorem applies here. We get a $\delta$-matching between $(\dgm(\mu_\Vv), \fint(\mu_\Vv))$ and $(\dgm(\mu_\Ww), \fint(\mu_\Ww))$.
This amounts to the following.

\begin{vlist}
\item
In the finite part $\upper$ of the half-plane:

\medskip
\noindent
The singular support of~$\mu_\Vv$ is contained in the diagonal strip $\Delta_{[0,\delta]}$. Each point of $\dgm(\mu_\Vv)$ outside this strip is matched with some point $(-n,0) \in \dgm(\mu_\Ww)$. Conversely, the only unmatched points of $\dgm(\mu_\Ww)$ must lie within distance~$\delta$ of the diagonal or of the singular support of~$\mu_\Vv$.

\medskip
\noindent
In particular, if $\delta < \frac{1}{4}$ then all points of~$\dgm(\mu_\Ww)$ are matched.

\item
On the line $(-\infty,\Rr)$: All points and singularities of $\mu_\Vv$ are contained in the interval $(-\infty, [-\delta,+\delta])$. There is at least one singular point.

\item
On $(\Rr, +\infty)$ and at $(-\infty, +\infty)$: The measure $\mu_\Vv$ has no points or singularities.

\end{vlist}
\end{example}

\section{Examples}
\label{sec:examples}

\subsection{Partial interleavings}
\label{subsec:p-interleavings}

In some practical data analysis situations, one considers persistence modules which are only partially interleaved.
One such scenario is presented by Chazal et al.\ in the context of clustering by mode-seeking~\cite{Chazal_G_O_S_2012}. A filtered simplicial complex on an input point cloud is compared with the sublevelset filtration of the density function it was sampled from. In low-density regions, the sample is too sparse to expect there to be an interleaving.
Nevertheless, there is interleaving when the density is sufficiently high.

This leads to the following notion of partial interleaving, adapted from~\cite{Chazal_G_O_S_2012}.
Two persistence modules $\Uu$ and $\Vv$ are said to be \textbf{$\delta$-interleaved up to time~$t_0$} if there are maps $\phi_t:U_t\to V_{t+\delta}$ and $\psi_t:V_t\to U_{t+\delta}$ defined for all $t\leq t_0$, such that the diagrams of equation~\ref{eq:int_expansive} commute for all values $s<t\leq t_0$; that is, for all values where the maps are defined.

A weaker version of the stability theorem, illustrated in Figure~\ref{fig:diagram_quadrants}~(left), can be proven:
\begin{theorem}[from~\cite{Chazal_G_O_S_2012}]
\label{th:partial_interleaving_stability}
Let $\Uu$ and $\Vv$ be two q-tame persistence modules that are $\delta$-interleaved up to time~$t_0$. Then, there is a partial matching $\Mm \subset \dgm(\Uu) \times \dgm(\Vv)$ with the following properties:

\begin{vlist}
\item
Points $(p,q)$ in either diagram for which $\half |p-q| \leq \delta$ are not required to be matched.

\item
Points $(p,q)$ in either diagram for which $p \geq t_0 - \delta$ are not required to be matched.
\end{vlist}

All other points must be matched. Then:

\begin{vlist}
\item
If $\alpha, \beta$ are matched, then the $p$-coordinates of $\alpha, \beta$ differ by at most~$\delta$.

\item
If $\alpha, \beta$ are matched and one of $\alpha, \beta$ lies below the line $q = t_0$, then $\ellinf(\alpha, \beta) \leq \delta$.

\end{vlist}
\end{theorem}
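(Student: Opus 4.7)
My plan is to reduce the theorem to the full stability theorem (Theorem~\ref{thm:isometry}) by truncating both modules at time $t_0$. Define $\Uu^{t_0}$ by $U^{t_0}_t = U_t$ for $t \leq t_0$ and $U^{t_0}_t = 0$ for $t > t_0$, keeping the structure maps of $\Uu$ wherever both source and target remain nonzero and zero otherwise; define $\Vv^{t_0}$ analogously. A short case check shows that these truncated modules are q-tame and globally $\delta$-interleaved. The candidate map $\phi^{t_0}_t$ coincides with $\phi_t$ precisely when both $U_t$ and $V_{t+\delta}$ are nonzero (which forces $t \leq t_0 - \delta$, lying within the partial-interleaving range) and is zero otherwise, and similarly for $\psi^{t_0}_t$. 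The identities $\psi^{t_0}\phi^{t_0} = 1^{2\delta}_{\Uu^{t_0}}$ and $\phi^{t_0}\psi^{t_0} = 1^{2\delta}_{\Vv^{t_0}}$ then reduce either to the original identities (when all indices involved stay below $t_0$, where the partial hypothesis applies) or to $0 = 0$ trivially.

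Applying Theorem~\ref{thm:isometry} yields a $\delta$-matching $\Mm^{t_0}$ between $\dgm(\Uu^{t_0})$ and $\dgm(\Vv^{t_0})$. Using the quiver calculus I would compute
\[
\mu_{\Uu^{t_0}}(R) = \begin{cases} \mu_\Uu(R) & \text{if } d \leq t_0, \\ \mu_\Uu([a,b]\times[c,+\infty]) & \text{if } c \leq t_0 < d, \\ 0 & \text{if } t_0 < c, \end{cases}
\]
from which it follows that $\dgm(\Uu^{t_0})$ agrees with the part of $\dgm(\Uu)$ strictly below the line $\{q=t_0\}$, together with a horizontal cluster on $\{q=t_0\}$ into which all points of $\dgm(\Uu)$ with $q \geq t_0$ collapse column-by-column; the analogous description holds for $\Vv$. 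Lifting $\Mm^{t_0}$ to the originals by choosing arbitrary liftings within each collapsed column produces a partial matching on $\dgm(\Uu), \dgm(\Vv)$ that automatically satisfies the $|p-p'| \leq \delta$ bound, since this bound transfers from the $\ellinf$-bound on $\Mm^{t_0}$ and is invariant under lifting.

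The main obstacle is that the naive lifted matching can fail in two distinct ways. A \emph{cross} pair, in which $\Mm^{t_0}$ matches a genuine low point to a collapsed high point, destroys the $\ellinf \leq \delta$ condition required for matched pairs with one side below $\{q=t_0\}$, because the collapse controls only the distance to $(p',t_0)$ and not to the true $q'$. Moreover, a collapsed high point with $p \in [t_0 - 2\delta, t_0 - \delta)$ sits within $\delta$ of the diagonal in the truncated diagram, so $\Mm^{t_0}$ is permitted to leave it unmatched, yet its preimage with $q > t_0$ must be matched by the theorem. I would resolve both difficulties simultaneously by handling the two strata separately. For the low region, apply the measure stability theorem (Theorem~\ref{thm:m-stability2}) to the restricted measures on the open set $\{p < q < t_0\}$: every rectangle there has $d < t_0$ so the box inequalities of Lemma~\ref{lem:box} apply by the same 8-term module argument (all of $\phi_b, \phi_d, \psi_A, \psi_C$ lie within the partial range), and the required interpolating $1$-parameter family is supplied by the interpolation lemma applied to the globally $\delta$-interleaved pair $\Uu^{t_0}, \Vv^{t_0}$.

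For the high region, the same 8-term module argument yields a vertical-strip box inequality $\mu_\Uu([a,b]\times[c,+\infty]) \leq \mu_\Vv([a-\delta,b+\delta]\times[c-\delta,+\infty])$ whenever $b \leq t_0$, and a one-dimensional analogue of the continuity argument used in Theorem~\ref{thm:m-stability}, applied to the column measures $\lambda_\Uu([a,b]) = \mu_\Uu([a,b] \times [t_0,+\infty])$, produces a $p$-coordinate-only matching between the high parts of the two diagrams. Merging the low and high matchings is straightforward for their respective domains; the only delicate point is the horizontal strip $\{t_0 - \delta < q \leq t_0\}$, where the vertical-strip inequality carries an error term counting $\dgm(\Vv)$-points that may already have been consumed by the low matching, and a careful bookkeeping that reserves the appropriate leftover points completes the construction. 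The theorem's unmatched-point conditions ($p \geq t_0 - \delta$ or $\half|p-q| \leq \delta$) translate into distance-$\delta$ proximity to the boundary of the respective domain and follow directly from the two sub-matchings.
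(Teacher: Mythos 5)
Your starting strategy — truncate the modules, compute the truncated diagram from the persistence measure, and feed the result into the stability theorem — is exactly the paper's approach, and your measure computation (including the $d\le t_0$ / $c\le t_0<d$ / $t_0<c$ case split) is correct in form. The difficulty you then identify is real: with a cutoff at $t_0$, a low point $\alpha$ with $q_\alpha<t_0$ can be matched by the truncated $\delta$-matching to a collapsed point $(p',t_0)$ whose preimage has arbitrarily large $q'$, so the lift breaks the $\ellinf\leq\delta$ guarantee, and the collapsed unmatched points only satisfy $p\geq t_0-2\delta$ rather than $p\geq t_0-\delta$. But this is not an intrinsic obstruction to be routed around with a two-strata argument; it is a symptom of truncating at the wrong height.

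The paper truncates at $T=t_0+\delta$ instead of $t_0$. This single change eliminates both of your problems at once. A collapsed unmatched point $(p,T^+)$ satisfies $\frac{1}{2}(T-p)\leq\delta$, i.e.\ $p\geq T-2\delta=t_0-\delta$, which is exactly the threshold in the statement. And there are no cross pairs: if $\alpha$ has $q_\alpha<t_0$ and is matched to $\tilde\beta$ with $\ellinf(\alpha,\tilde\beta)\leq\delta$, then $q_{\tilde\beta}<t_0+\delta=T$, so $\tilde\beta$ is a genuine (uncollapsed) point of $\dgm(\Vv)$ and the $\ellinf$ bound lifts verbatim. The interleaving-up-to-$t_0$ maps $\phi_t,\psi_t$ (defined for $t\leq t_0$) restrict to a global $\delta$-interleaving of the $T$-truncations because the codomains $\tilde U_{t+2\delta},\tilde V_{t+2\delta}$ vanish precisely for $t>t_0-\delta$, making the interleaving identities hold either by the original hypothesis or trivially. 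Your two-strata construction --- separate measure stability on $\{q<t_0\}$, a one-dimensional column-measure argument for the high region, and a merging step --- is left substantially incomplete (the box inequality for $\lambda_\Uu$ produces vertical strips with mismatched lower ends, and the ``careful bookkeeping'' at the interface is exactly where the proof would need to do real work), and it is unnecessary once the cutoff is placed at $t_0+\delta$.
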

\definecolor{grey6}{rgb}{0.6,0.6,0.6}
\begin{figure}[tb]
\centerline{
\includegraphics[height=6cm]{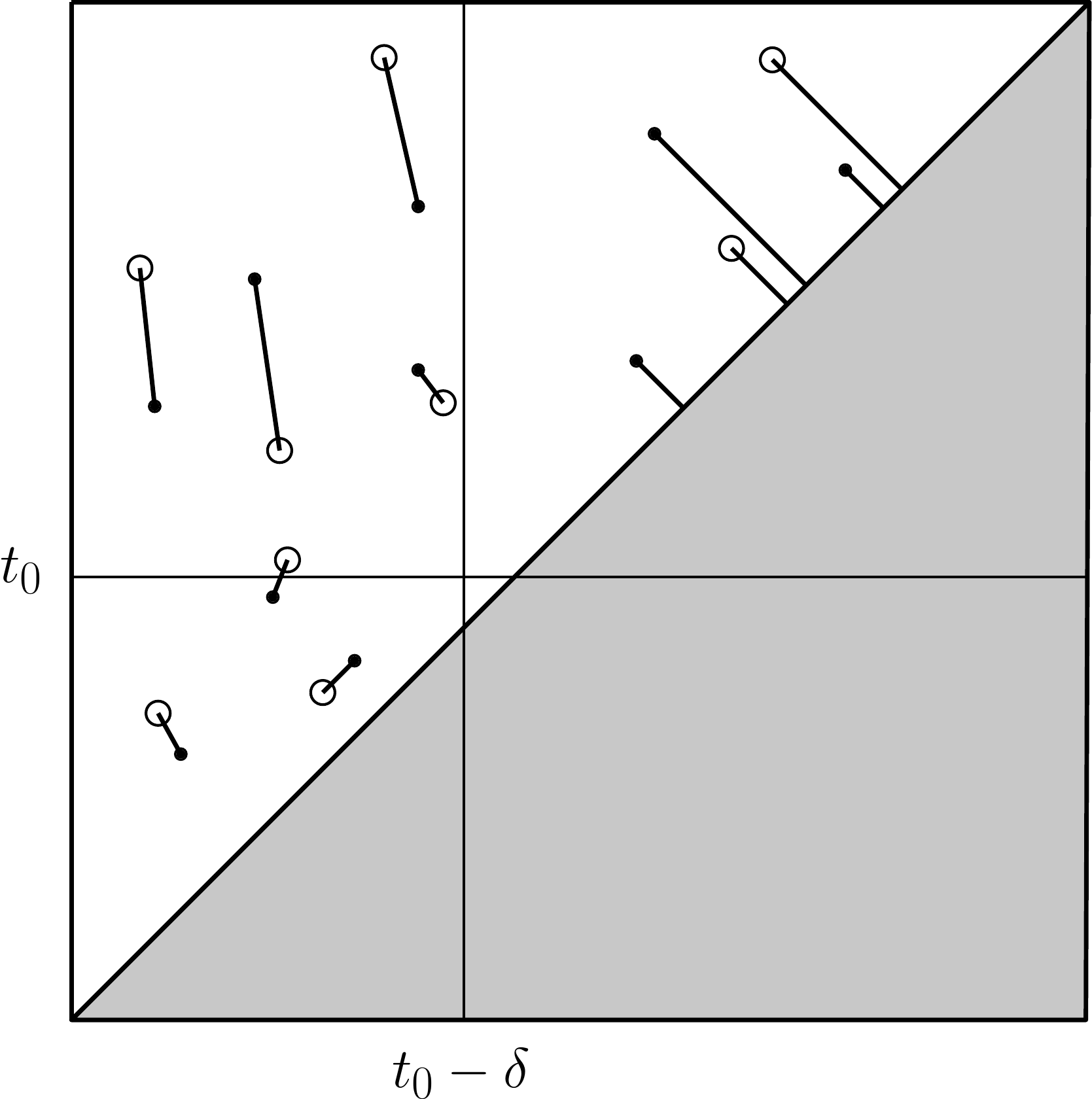}
\hspace{0.05\textwidth}
\includegraphics[height=6cm]{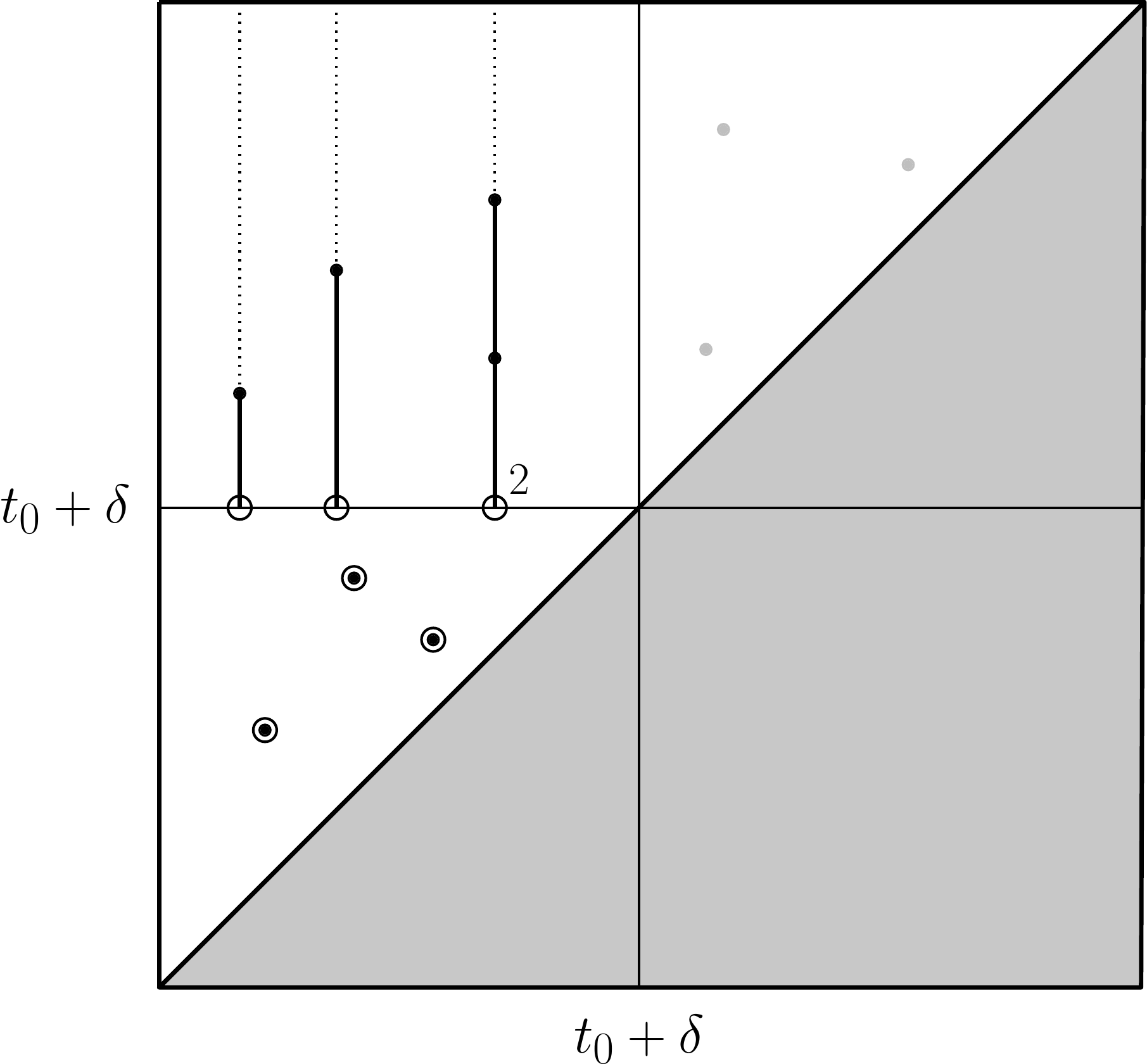}}
\caption{
Left: the partial matching of Theorem~\ref{th:partial_interleaving_stability} between $\dgm(\Uu)$~($\bullet$) and $\dgm(\Vv)$~($\circ$).
Right: projection from $\dgm(\Uu)$~(${\bullet}$ and {\color{grey6}$\bullet$}) to $\dgm(\tilde\Uu)$~($\circ$). The grey points are the ones that disappear.
}
\label{fig:diagram_quadrants} 
\end{figure}
%


For the proof, we introduce two new persistence modules $\tilde \Uu, \tilde \Vv$.
\begin{alignat*}{4}
\tilde U_t &= U_t \quad&&\text{if $t \leq t_0 + \delta$}
\quad \text{and} \quad
\tilde U_t &= 0 \quad&&\text{otherwise}
\\
\tilde V_t &= V_t \quad&&\text{if $t \leq t_0 + \delta$}
\quad \text{and} \quad
\tilde V_t &= 0 \quad&&\text{otherwise}
\end{alignat*}
with maps
\begin{alignat*}{4}
\tilde u_s^t &= u_s^t \quad&&\text{if $t \leq t_0 + \delta$}
\quad \text{and} \quad
\tilde u_s^t &= 0 \quad&&\text{otherwise}
\\
\tilde v_s^t &= v_s^t \quad&&\text{if $t \leq t_0 + \delta$}
\quad \text{and} \quad
\tilde v_s^t &= 0 \quad&&\text{otherwise}
\end{alignat*}
for all $s\leq t$. We may call $\hat\Uu, \hat\Vv$ the \textbf{truncations} of $\Uu, \Vv$ to $(-\infty,T]$, where $T = t_0 + \delta$.

\begin{proof}
There are three steps.

{\small\bf Step 1.}
The decorated diagram of a persistence module~$\Uu$ determines the decorated diagram of its truncation $\tilde\Uu$, in a straightforward way. Specifically, transform each point $(p^*, q^*) \in \Dgm(\Uu)$ as follows:
\[
(p^*, q^*) \mapsto
\begin{cases}
(p^*, q^*) & \text{if $q^* < T$}
\\
(p^*, T^+) & \text{if $p^* < T < q^*$}
\\
\text{disappears} \;& \text{if $T < p^*$}
\end{cases}
\tag{\P}\label{eq:partial-inter}
\]
Then $\Dgm(\tilde\Uu)$ is the result of this transformation. The consequent relationship between the undecorated diagrams is illustrated in Figure~\ref{fig:diagram_quadrants}~(right).

{\small\bf Step 2.}
If $\Uu, \Vv$ are $\delta$-interleaved up to time~$t_0$, then $\tilde\Uu, \tilde\Vv$ are $\delta$-interleaved.

Combining the first two steps we get the third.

{\small\bf Step 3.}
The stability theorem gives a $\delta$-matching between $\dgm(\tilde\Uu), \dgm(\tilde\Vv)$. This lifts to a matching between $\dgm(\Uu), \dgm(\Vv)$ which has the properties stated in the theorem.

\medskip
The second and third steps are straightforward. Only the first step requires any technical input, intuitively plausible as it may be. The framework developed in~\cite{Chazal_CS_G_G_O_2009} leads to a 2-page argument, presented in the appendix
of~\cite{Chazal_G_O_S_2012}. 

Here is a shorter proof. Write $\mu = \mu_\Uu$ and $\tilde\mu = \mu_{\tilde\Uu}$.
Let $\Aa$ denote the multiset obtained from $\Dgm(\Uu)$ by applying the transformation~\eqref{eq:partial-inter}. Consider an arbitrary rectangle $[a,b] \times [c,d] \in \rect(\Upper)$. We easily see:
\[
\card(\Aa|_{[a,b] \times [c,d]}) = 
\begin{cases}
\mu([a,b] \times [c,d]) & \text{if $d \leq T$}
\\
\mu([a,b] \times [c,+\infty]) & \text{if $c \leq T < d$}
\\
0 & \text{if $T < c$}
\end{cases}
\]
To show that we have correctly determined $\Dgm(\tilde\Uu)$, it suffices to show that $\card(\Aa|_{[a,b] \times [c,d]}) = \tilde\mu([a,b]\times[c,d])$ for all rectangles.
And indeed:
\begin{vlist}
\item
If $d \leq T$, then:
\begin{align*}
\tilde\mu([a,b] \times [c,d])
&=
\langle \qoff{a}\qem\qon{b}\qem\qon{c}\qem\qoff{d} \mid \tilde\Uu \rangle
\\
&=
\langle \qoff{a}\qem\qon{b}\qem\qon{c}\qem\qoff{d} \mid \Uu \rangle
=
\mu([a,b] \times [c,d])
\end{align*}

\item
If $c \leq T < d$, then:
\begin{align*}
\tilde\mu([a,b] \times [c,d])
&=
\langle \qoff{a}\qem\qon{b}\qem\qon{c}\qem\qoff{d} \mid \tilde\Uu \rangle
\\
&=
\langle \qoff{a}\qem\qon{b}\qem\qon{c}\qem\qno \mid \Uu \rangle
=
\mu([a,b] \times [c,+\infty])
\end{align*}
since $\tilde{U}_d = 0$.

\item
If $T < c$, then:
\begin{align*}
\tilde\mu([a,b] \times [c,d])
&=
\langle \qoff{a}\qem\qon{b}\qem\qon{c}\qem\qoff{d} \mid \tilde\Uu \rangle
=
0
\end{align*}
since $\tilde{U}_c = 0$.
\end{vlist}
It follows that $\Dgm(\tilde\Uu) = \Aa$ as claimed.
\end{proof}

\subsection{Extended persistence}
\label{subsec:extended}

Cohen-Steiner, Edelsbrunner and Harer~\cite{CS_E_H_2008} introduced extended persistence to capture the homological information carried by a pair $(X,f)$.
Some but not all of this information is recovered by the sublevelset persistence $\Hgr(\Xx_\sub)$.
The idea is to grow the space from the bottom up, through sublevelsets; and then to relativise the space from the top down, with superlevelsets. Extended persistence is the persistent homology of this sequence of spaces and pairs.

It is usually assumed that $(X,f)$ has finitely many homological critical points~$(a_i)$. One applies a homology functor to the finite sequence%
\footnote{We write $X^t = (X,f)^t = f^{-1}(-\infty,t]$ and $X_t = (X,f)_t = f^{-1}[t,+\infty)$ for sublevelsets and superlevelsets.
}
\[
X^{a_0} \to
X^{a_1} \to
\dots \to
X^{a_{n-1}} \to
X \to
(X, X_{a_{n}}) \to
\dots \to
(X, X_{a_{2}}) \to
(X, X_{a_{1}})
\]
to get a quiver representation. The indecomposable summands of this representation are interpreted as features, and are drawn as points in the `extended persistence diagram'. There are three kinds of feature:
\begin{vlist}
\item
ordinary features (which are born and die before the central~$X$);

\item
relative features (which are born and die after the central~$X$);

\item
extended features (which are born before the~$X$ and die after it).
\end{vlist}
The finiteness assumption is satisfied when $(X,f)$ is a compact manifold with a Morse function, or a compact polyhedron with a piecewise-linear map. In the former situation, there are extra symmetries (Poincar\'{e}, Lefschetz) which are explored in~\cite{CS_E_H_2008}.

In fact, it is perfectly straightforward to define the extended persistence diagram under a weaker hypothesis. Suppose $X$ is a compact polyhedron and $f$ is a continuous real-valued function on~$X$. Then:
\begin{vlist}
\item
$\rank\left( \Hgr(X^s) \to \Hgr(X^t) \right) < \infty$ whenever $s < t$; and

\item
$\rank\left( \Hgr(X, X_s) \to \Hgr(X, X_t) \right) < \infty$ whenever $s > t$.
\end{vlist}

The first of these facts is Theorem~\ref{thm:poly-q-tame}. The second is proved similarly, by factorising the map through some $\Hgr(X,Y)$, where $Y$ is a subpolyhedron of~$X$ nested between $X_s, X_t$.

Define the ordered set
\[
\rR = \{ \ov{t} \mid t \in \Rr \}
\quad
\text{ordered by}
\quad
\ov{s} \leq \ov{t} \;\Leftrightarrow\; s \geq t,
\]
thought of as a `backwards' copy of the real line, with bars under numbers to remind us.
For extended persistence we may work with the set
\[
\Rr_\ep
=
\Rr \cup \{ +\infty \} \cup \rR
\]
with the ordering $s < +\infty < \ov{t}$ for all $s, \ov{t}$.

The extended persistence module $\Xx_\ep = \Xx_\ep^f$ for $(X,f)$ is defined as follows:
\begin{alignat*}{3}
&V_t &&= \Hgr(X^t) && \text{for $t \in \Rr$}
\\
&V_{+\infty} &&= \Hgr(X)
\\
&V_{\ov{t}} &&= \Hgr(X,X_t) \quad && \text{for $\ov{t} \in \rR$}
\end{alignat*}
Since $\Rr_\ep$ is order-isomorphic to the real line, we may interpret $\Xx_\ep$ it as a persistence module over~$\Rr$. The two facts cited above imply that it is q-tame, so the decorated diagram is defined away from the diagonal.

Alternatively, we can define the extended persistence diagram in three pieces:
\begin{alignat*}{3}
\mu_\ord([a,b] \times [c,d])
&=
\langle \qoff{a}\qem\qon{b}\qem\qon{c}\qem\qoff{d} \rangle
\quad
&& \text{for $-\infty \leq a < b \leq c < d \leq +\infty$}
\\
\mu_\rel([\ov{a}, \ov{b}] \times [\ov{c}, \ov{d}])
&=
\langle \qoff{\ov{a}}\qem\qon{\ov{b}}\qem\qon{\ov{c}}\qem\qoff{\ov{d}} \rangle
&&
\text{for $+\ov{\infty} \leq \ov{a} < \ov{b} \leq \ov{c} < \ov{d} \leq -\ov{\infty}$}
\\
\mu_\ext([a, b] \times [\ov{c}, \ov{d}])
&=
\langle \qoff{{a}}\qem\qon{{b}}\qem\qon{\ov{c}}\qem\qoff{\ov{d}} \rangle
&&
\text{for $-\infty \leq \ov{a} < \ov{b} \leq +\infty$ and $+\ov{\infty} \leq \ov{c} < \ov{d} \leq -\ov{\infty}$}
\end{alignat*}
taking $V_{-\infty} = 0$ and $V_{-\ov{\infty}} = 0$ whenever needed.

The measures $\mu_{\text{ord}}, \mu_{\text{rel}}$ are defined over the half-plane $\Upper$, whereas $\mu_{\text{ext}}$ is defined over~$\RR^2$.

Stability for $\dgm_\ord$, $\dgm_\rel$ and~$\dgm_\ext$ is proved individually for each diagram. Given two functions $f,g$ which are $\delta$-close in the supremum norm, there are inclusions
\begin{alignat*}{2}
&
(X,f)^t \subseteq (X,g)^{t+\delta}
\qquad
&
(X,f)_t \subseteq (X,g)_{t-\delta}
\\
&
(X,g)^t \subseteq (X,f)^{t+\delta}
&
(X,g)_t \subseteq (X,f)_{t-\delta}
\end{alignat*}
using which we can prove the box lemma for each measure. Since linear combinations of continuous functions are continuous, we can interpolate between $f$ and~$g$ to satisfy the hypotheses required by the measure stability theorem.

\begin{remark}
In the spirit of Theorem~\ref{thm:poly-hv-tame}, one may treat the case where $X$ is a locally compact polyhedron and $f$ is proper. We leave it as an exercise for the sufficiently persistent reader to carry this out and locate the possible singularities of the measures.
\end{remark}

\section*{Acknowledgements}

Various people helped improve this paper. We thank William Crawley-Boevey for some invaluable correspondence regarding the decomposition of persistence modules. Michael Lesnick has been helpful in many ways: making available early versions of~\cite{Lesnick_2011}, drawing our attention to the paper of Webb~\cite{Webb_1985}, and correcting our original statement of Theorem~\ref{thm:gabriel+}.
Last but not least, we deeply appreciate the intellectual influence of David Cohen-Steiner and Leo Guibas, as co-authors of~\cite{Chazal_CS_G_G_O_2008}, and Gunnar Carlsson, as co-author of~\cite{Carlsson_deSilva_2010}.

The authors have been supported by the following research grants.

\begin{vlist}
\item
The Digiteo project C3TTA and the Digiteo Chair (held by the second author) which made this collaboration possible.

\item
European project CG-Learning EC: contract~255827.

\item
ANR GIGA: contract ANR-09-BLAN-0331-01.

\item
DARPA  project Sensor Topology and Minimal Planning (SToMP): HR0011-07-1-0002.
\end{vlist}

We thank our home institutions, for their ongoing support: INRIA Saclay -- Ile-de-France (FC, MG, SO) and Pomona College (VdS).

\bibliographystyle{plain}
\bibliography{pM-final}

\end{document}